\numberwithin{equation}{section}
\theoremstyle{plain}
\newtheorem{theorem}{Theorem}[section]
\newtheorem{proposition}[theorem]{Proposition}
\newtheorem{lemma}[theorem]{Lemma}
\newtheorem{corollary}[theorem]{Corollary}
\theoremstyle{definition}
\newtheorem{definition}[theorem]{Definition}
\newtheorem{remark}[theorem]{Remark}
\newcommand{\C}{\mathbb{C}}
\newcommand{\R}{\mathbb{R}}
\newcommand{\N}{\mathbb{N}}
\renewcommand{\S}{\mathbb{S}}
\renewcommand{\hat}{\widehat}
\newcommand{\black}{\color{black}}
\newcommand{\blue}{\color{black}}
\newcommand{\ba}{\color{black}}
\newcommand{\bu}{\color{black}}
\newcommand{\red}{\color{black}}
\newcommand{\fA}{\mathcal{A}}
\newcommand{\fm}{\mathtt{v}}
\newcommand{\cX}{\mathcal{X}}
\newcommand{\cH}{\mathcal{H}}
\newcommand{\cS}{\mathcal{S}}
\newcommand{\cF}{\mathcal{F}}
\newcommand{\cG}{\mathcal{G}}
\newcommand{\cW}{\mathcal{W}}
\newcommand{\cV}{\mathcal{V}}
\newcommand{\p}{\partial}
\newcommand{\tr}{{\rm tr}}
\newcommand{\wk}{\rightharpoonup}
\newcommand{\cl}[1]{\overline{#1}}
\newcommand{\strictlyincluded}{\subset\subset}
\newcommand{\res}{\mathop{\hbox{\vrule height 9pt width 0.5pt depth 0pt
\vrule height 0.5pt width 5pt depth 0pt}}\nolimits}
\renewcommand{\tilde}{\widetilde}
\newcommand{\supp}{\mathrm{supp}\,}
\newcommand{\dist}{\mathrm{dist}}
\newcommand{\sdist}{\mathrm{sdist}}
\newcommand{\loc}{\mathrm{loc}}
\newcommand{\Lip}{\mathrm{Lip}}
\newcommand{\Int}[1]{\mathrm{Int}(#1)}
\newcommand{\aplim}{\mathop{\mathrm{ap\,\, lim}}}
\newcommand{\openset}{O}
\newcommand{\str}[1]{e(#1)}
\newcommand{\mtwo}{\mathbb{M}^{2\times 2}_{\rm sym}}
\newcommand{\admissible}{\mathcal{C}}
\newcommand{\substrate}{S}
\newcommand{\Ins}[1]{{\rm Int}{(#1\cup \substrate\cup \Sigma)}}
\newcommand{\indexset}{N}
\title[Existence of minimizers for the SDRI model in 2d]{Existence of minimizers for the SDRI model in 2d: wetting and dewetting regime with mismatch strain}
\author[Sh. Kholmatov]{Shokhrukh Yu.\ Kholmatov} 
\address[Shokhrukh Yu. Kholmatov]{University of Vienna\\ Faculty of Mathematics\\  
Oskar-Morgenstern Platz 1\\1090 Vienna, Austria}
\email[Sh. Kholmatov]{shokhrukh.kholmatov@univie.ac.at}
\author[Paolo Piovano]{Paolo Piovano*} 
\thanks{*Corresponding author.}
\address[Paolo Piovano]{Dipartimento di Matematica, Politecnico di Milano, P.zza Leonardo da Vinci 32, 20133 Milano, Italy\footnote{MUR Excellence Department 2023-2027} \& WPI c/o Research Platform MMM ``Mathematics-Magnetism-Materials'', Fak.\ Mathematik Univ.\ Wien, A1090 Vienna}
\email[P. Piovano]{paolo.piovano@polimi.it}
\subjclass[2010]{49J45, 35R35, 74G65}
\keywords{Minimal configurations, elastic energy, surface energy, existence, regularity, density estimates,  SDRI, interface instabilities, thin films, crystal
cavities, fractures}
\date{\today}
\begin{document}

\begin{abstract}  
The model introduced in  \cite{HP:2019} in the framework of the theory on Stress-Driven Rearrangement Instabilities (SDRI) \cite{AT:1972,Gr:1993} for the morphology of crystalline  materials under stress  is considered. As in \cite{HP:2019} and  in agreement with the models in \cite{Li-etal,S2}, a mismatch strain, rather than a Dirichlet condition as in \cite{CF:2020_arma}, is \bu included into the analysis to represent \ba the lattice mismatch between the crystal and possible adjacent (supporting) materials. The existence of solutions is established in dimension two in the absence of graph-like assumptions and of the restriction to a finite number $m$ of connected components for the free boundary of the region occupied by the crystalline material, thus extending previous results for epitaxially strained thin films and material cavities \cite{BCh:2002,FFLM:2007,FFLM:2011,HP:2019}.  Due to the lack of compactness and lower semicontinuity  for the sequences of $m$-minimizers, i.e., minimizers among configurations with at most $m$ connected boundary components, a minimizing candidate is directly constructed, and then shown to be a minimizer by means of uniform density estimates and the convergence of $m$-minimizers' energies to the energy infimum as $m\to\infty$. Finally,  regularity properties for the morphology  satisfied by every minimizer are  established.
\end{abstract}

\maketitle

\vspace{1cm}

\section{Introduction}

In this paper we establish existence and regularity properties for the solutions of the  variational  model  for Stress-Driven Rearrangement Instabilities (SDRI)  \cite{AT:1972,D:2001,Gr:1993} that was introduced in \cite{HP:2019}. Under the name of \bu SDRI \ba are included all those 
material morphologies, such as boundary irregularities, cracks, filaments, and surface patterns, which a crystalline material may exhibit in the presence of external forces, such as in particular the  chemical bonding forces with adjacent materials. In order to release the induced stresses, atoms rearrange from the material optimal crystalline order and instabilities may develop.

The main  advancement provided by  the results in this manuscript with respect to \cite{HP:2019} is the absence of the unphysical restriction on the number of connected components for the boundary of the region occupied by the crystalline material, by also avoiding graph-like assumptions for such boundaries assumed for the specific settings of epitaxially strained thin films in \cite{BCh:2002,CF:2020_arma,FFLM:2007} and material voids in \cite{FFLM:2011}.  In particular, with respect to \cite{CF:2020_arma} we include  into the analysis  the \emph{dewetting regime}, i.e., the presence of other fixed materials with possibly different boundary surface tensions, even if by only treating the two dimensional case, and we establish regularity results for the crystalline morphologies and instabilities satisfied by every  minimizer. Furthermore, our strategy stems from the approach used in \cite{DMMS:1992} for the \emph{Mumford-Shah} functional, and hence differs from  the method introduced in \cite{CF:2020_arma}, which instead is  based on  allowing displacements  
to attain \emph{a limit value $\infty$} on sets with positive measure (and on technically assigning a zero cost to the elastic-energy contribution related to those sets)\bu. \ba

The SDRI model of \cite{HP:2019} is a variational model introduced in the framework of the SDRI theory initiated in the seminal papers of \cite{AT:1972} and \cite{Gr:1993}, and  on the basis of the subsequent analytical descriptions provided in  the  context of epitaxially strained thin films \cite{BCh:2002,DP:2017, DP:2018_1,FFLM:2007},  crystal cavities \cite{BChS:2007,FFLM:2011}, capillarity droplets \cite{GB-W:2004,DPhM:2015}, fractures  
\cite{BFM:2008,CCI:2019.jmpa,CFI:2016,CFI:2018poincare,FGL:2019}, and boundary debonding and delamination \cite{Babadjian:2016,Baldelli:2014}. All such settings are included and can be treated simultaneously   in the SDRI model \cite{HP:2019} (see Section \ref{examples}). In agreement with  \cite{AT:1972,Gr:1993} since SDRI morphologies  relate  to the boundary of crystalline materials and depend on the bulk rearrangements,  the energy $\cF$ characterizing the SDRI model displays both an {\it elastic bulk energy}  and a {\it surface energy} denoted by $\mathcal{W}$ and $\cS$, respectively.  More precisely, the energy $\cF$ is defined as
 \begin{equation}\label{energy_intro}
 \cF(A,u):=\cS(A,u) +  \mathcal{W}(A,u)
\end{equation}
for any admissible {\it configurational pair}  $(A,u)$ consisting of a set  $A$ that represents the region occupied by the crystalline material in a fixed {\it container} $\Omega\subset\R^d$  for $d\in\N$, i.e., 
 $$A\in \fA:=\{A\subset\overline{\Omega} :\,\,\text{$A$ is $\mathcal{L}^2$-measurable and $\p A$ is $\cH^1$-rectifiable}\},$$  
and  of a displacement function  $u$  of the bulk materials (with respect to the optimal crystal arrangement)  given by 
$$
u\in GSBD^2(\Ins{A};\R^d)\cap H_\loc^1(\Int{A}\cup \substrate;\R^d), 
$$ 
where   $\substrate\subset\R^d\setminus \Omega$  is  the region occupied by  a fixed material, which we denote {\it substrate}  in analogy with the thin-film setting and we consider possibly different from the material in the container, and  
$$\Sigma:=\p \substrate\cap \p \Omega$$
 represents   the  {\it contact surface} between the container $\Omega$ and  the substrate $\substrate$.  In the following we refer to $\mathcal{C}$ as the {\it configurational space} and  to  each configuration $(A,u)\in\mathcal{C}$ as a  {\it  free crystal} with $A$ and $u$ as the {\it free-crystal region} and the {\it free-crystal displacement}, respectively (see Figure \ref{free_crystal}).
\begin{figure}[htp] 
\begin{center}
\includegraphics[scale=0.55]{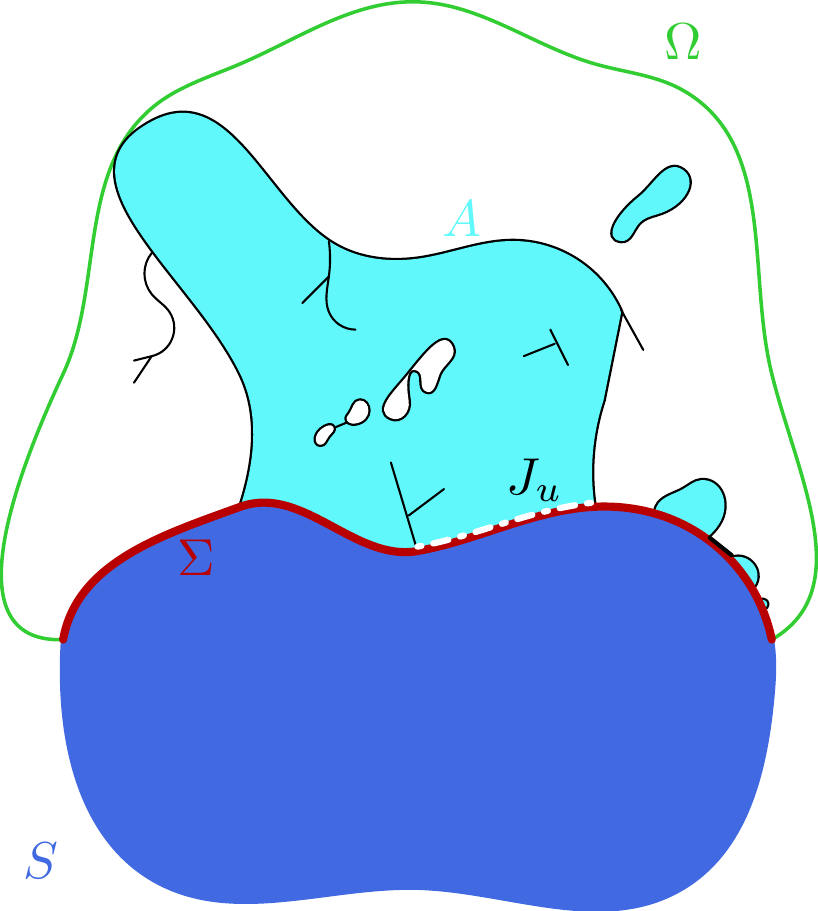} 
\caption{An admissible free-crystal 
region $A$ is displayed in light blue 
in the container $\Omega$, while the substrate $S$ is represented
 in dark blue. The boundary of $A$
(with the cracks) is depicted in black, the container boundary in 
 green, the contact surface
 $\Sigma$ in red (thicker line) while the free-crystal delamination region $J_u$ with a white dashed line. }
\label{free_crystal}
\end{center}
\end{figure}

The bulk elastic energy $\mathcal{W}$ in \eqref{energy_intro} is defined in  \cite{HP:2019} by
$$
\mathcal{W}(A,u)=\int_{A\cup S} W(z,\str{u}-M_0)\,d z,
$$
where the  {\it elastic  density} $W$ is given by 
\begin{equation}\label{intro:energy_density}
W(z,M):=\mathbb{C}(z) M:M
\end{equation}
for any $z\in \Omega\cup S$ and any  $(d\times d)$-symmetric matrix $M\in\mathbb{M}^{d\times d}_{\rm sym}$, and  for a positive-definite   elasticity tensor $\mathbb{C}$, and attains its minimum value zero for  every  $z$ at a fixed strain  $M_0\in M\in\mathbb{M}^{d\times d}_{\rm sym}$ in the following referred to as {\it mismatch strain}. The inclusion  in \eqref{intro:energy_density} of   a mismatch strain $M_0$ defined  by
\begin{equation}\label{intro:mismatch}
 M_0:= 
\begin{cases}
\str{u_0} & \text{in $\Omega ,$}\\
0 &  \text{in $\substrate,$}
\end{cases}
\end{equation}
for a fixed $ u_0\in H^1(\R^d;\R^d)$,  together with the fact that both $M_0$ and $\mathbb{C}$ are let free of jumping across $\Sigma$, allows to model the presence of two different materials in the substrate and in the free crystals, and in particular to take into account the {\it lattice mismatch}  between their optimal crystalline lattices that is crucial, e.g., in the setting of heteroepitaxy \cite{DP:2017,DP:2018_1}.

The surface 
energy $\cS$ in \eqref{energy_intro} is defined as 
  $$
\cS(A,u)= \int_{\partial A} \psi(z,u,\nu)\,d\mathcal{H}^{d-1},
  $$
where the {\it surface tension} $\psi$ is given by
 \begin{equation}\label{surface_tension}
 \psi(z,u,\nu):=\begin{cases}
 \varphi(z,\nu_A(z)) & z\in\Omega\cap\partial^*A,\\
2 \varphi(z,\nu_A(z)) & z\in \Omega\cap (A^{(1)}\cup A^{(0)})\cap\partial A,\\
\varphi(z,\nu_S(z)) + \beta(z) & z\in\Sigma\cap A^{(0)}\cap \partial A,\\
\beta(z) & z\in\Sigma\cap \partial^*A\setminus J_u,\\
 \varphi(z,\nu_S(z)) &  z\in J_u,
\end{cases}
\end{equation}
with   $\varphi\in C(\cl{\Omega}\times\R^d;[0,+\infty))$ being a Finsler norm  such that
$c_1|\xi| \le \varphi(x,\xi) \le c_2|\xi|$ 
for some $c_1,c_2>0$ and representing the
\emph{anisotropy} of the free-crystal material, $\beta$ denoting the \emph{relative adhesion
coefficient}  on $\Sigma$ such that, as for capillarity problems \cite{GB-W:2004,DPhM:2015},
\begin{equation*} 
|\beta(z)|\leq\varphi(z,\nu_S(z))
\end{equation*}
for \bu every \ba $z\in\Sigma$,  $\nu$ coinciding with the exterior 
normal on the reduced boundary $\partial^*A$, 
and $A^{(\delta)}$ denoting the 
set of points of $A$ with density $\delta\in[0,1]$.   

The anisotropic form of $\psi$ in \eqref{surface_tension} distinguishes  various  portions of  the free-crystal topological boundary $\partial A$: the {\it free boundary}  $\partial^*A\cap\Omega$, the  family of {\it internal cracks} $A^{(1)}\cap\Omega\cap\partial A$,    the   family of {\it external filaments}  $A^{(0)}\cap\Omega\cap\partial A$, the {\it delaminated region} $J_u$
\bu, i.e., \ba the portion on the contact surface $\Sigma$ where there is no bonding between the free crystal and the substrate (even if they are adjacent),  the {\it adhesion area} where the free-crystal displacement is continuous  through $\Sigma$, i.e., $\Sigma\cap \partial^*A\setminus J_u$,  and the {\it wetting layer} represented by the filaments on $\Sigma$, i.e., $\Sigma\cap A^{(0)}$. 
In particular, $\psi$ weights the different portions of  $\partial A$ in relation to the active chemical bondings present at each portion, i.e., $\varphi$ when there is no extra chemical bonding, such as at the free profile and  at  the delaminated region, and $\beta$ at the adhesion contact area with the substrate, while  both the cracks and at external filaments are counted $2\varphi$ and the wetting layer sees the contribution of both $\psi$ and $\beta$. 

We consider the case $d=2$ as in \cite{HP:2019},  with the fixed sets $\Omega $ 
and $\substrate$   being  bounded Lipschitz open connected  sets such that   $\Sigma$ is a Lipschitz 1-manifold. For $d\geq3$  results are available for 
the isotropic Griffith model with $L^p$-fidelity term (of the type \eqref{eq:fidelity})  in \cite{CCI:2019.jmpa}  and  with Dirichlet conditions for the displacements at the boundary in \cite{ChC:2019_arxiv}. Moreover, a similar energy as the SDRI energy introduced in  \cite{HP:2019} was subsequently found  in \cite{CF:2020_arma} as a relaxation formula separately for thin films and material voids, for the different setting 
with a Dirichlet condition imposed at $\p\Omega$, and in the wetting regime, i.e., the case where free crystals are expected to cover the substrate. Unfortunately the strategy employed in  \cite{CF:2020_arma} is not implementable in our setting, where rather than prescribing a Dirichlet condition as in  \cite{CF:2020_arma}, the mismatch strain  \eqref{intro:mismatch} (which depends on the substrate region $S$)  is considered in the elastic energy in analogy with the models in \cite{S2} and \cite[Section 4.2.2]{Li-etal} (see also the mathematical treatments \cite{DP:2017,DP:2018_1,FFLM:2007,KP:2021})\bu. \ba 

In fact, the existence results in  \cite{CF:2020_arma}  are achieved by working (in the proofs) with displacements in a larger space than the classical framework of small displacements of linearized elasticity, namely  the space $GSBD_{\infty}^p$ for $p>1$ that includes displacements attaining a limit value $\infty$ 
 in a set of finite perimeter (on which their strain $e(u)$ is defined to be zero \cite[Page 1055]{CF:2020_arma}). Such a method works well with a Dirichlet condition that keeps the displacements anchored, while in our setting it would be always convenient for the displacements in $GSBD_{\infty}^p$ of the minimizing sequences to escape to infinity, as this would result with the definition of the energy in  \cite{CF:2020_arma} in the minimum (zero) value of the elastic energy for the limiting free-crystal region. 
  A treatment for $d\geq3$ of the model under consideration in this paper  with mismatch strain (and without Dirichlet conditions) is under preparation \cite{HP:2022}  by implementing the ideas in this manuscript together with the ones in  \cite{HP:2019}, but without the need of Golab's Theorem (and without employing  the space $GSBD_{\infty}^p$ for the displacements).

Therefore, we must proceed differently here and we rely on the results of \cite{HP:2019} for $d=2$. \bu We begin by observing that, as \ba shown \bu in \cite{HP:2019}, \ba the specific weights of \eqref{surface_tension} are crucial to obtain the lower semicontinuity of the energy $\cF$ \bu  under the constraint on a fixed number $m\in\mathbb{N}$ of boundary connected components for the free-crystal regions\bu, which represented \ba an extension of the more restrictive graph condition assumed in \cite{FFLM:2007}  for the particular setting of epitaxially strained thin films and the starshapedness condition in \cite{FFLM:2011} for material cavities. \bu  More precisely, by considering \ba the subfamily $\admissible_m$ of configurations with \bu free crystals presenting at most \ba $m\in\mathbb{N}$ boundary connected components, namely
$$
\admissible_m:=\Big\{(A,u)\in \admissible:\,\,\text{$\p A$ has  at most $m$ connected components} \Big\}, 
$$ 
\bu in  \cite[Theorem 2.8]{HP:2019} it is shown that \ba
$$
\liminf\limits_{k\to\infty} \cF(A_k,u_k) \ge \cF(A,u)
$$
for every sequence $\{(A_k,u_k)\}\subset\admissible_m$ \bu  converging in a properly chosen topology  $\tau_{\admissible}$ to a configuration \ba $(A,u)\in \admissible_m$.  \bu In particular, the convergence with respect to  $\tau_{\admissible}$  prescribes that  \ba  $\cH^1(\p A_k)$ are equibounded, $\sdist(\cdot,\p A_k)\to \sdist(\cdot,\p A)$ 
locally uniformly in $\R^2$ with {\it sdist} representing the {\it signed distance} function (recall definition at \eqref{signed_distance}), and $u_n\to u$  a.e.\ in $\Int{A}\cup\substrate$. 
\bu We notice that \ba the restriction to the subfamily $\admissible_m$ \bu was needed in \cite{HP:2019}  to establish not only the lower semicontinuity, but also the compactness with respect to $\tau_{\admissible}$, which  indeed \ba fails in $\admissible$  (see Remark \ref{rem:counterexample})\bu, so that by means of the {\it direct method} of the calculus of variations,  the existence of minimizers $(A_m,u_m) \in\admissible_m$ of $\cF$ among all configurations in $\admissible_m$ followed  in  \cite[Theorem 2.6]{HP:2019}. \ba
%

The  aim of the investigation  contained in this paper is to recover   the full generality  avoiding any extra hypothesis on the admissible free-crystal  regions.  
This is achieved by retrieving compactness with respect to the free-crystal regions at least for any sequence of $m$-minimizers $(A_m,u_m)\in\admissible_m$, and  by combining the strategies  of  \cite{DMMS:1992}  and \cite{HP:2019}. 
More precisely, the use in \cite{HP:2019} of the Golab-type Theorem \cite{Gi:2002} is avoided  for  the compactness  of the free-crystal regions by adapting  to our setting  the classical {\it density-estimate} arguments first introduced for surface energies and the Mumford-Shah functional (see, e.g., \cite{AFP:2000,DCL:1989,Ma:2012}), and then extended to the Griffith functional  \cite{ChC:2019_arxiv,CFI:2018poincare}\bu, which in turns allow us also to establish some regularity results. \ba 
\bu Moreover, in our setting there is the extra  difficulty with respect to \cite{DMMS:1992} that the \ba compactness and lower semicontinuity along  sequences of $m$-minimizers (with respect to the topology used to find such $m$-minimizers through the direct method) are both missing. \bu We overcome this issue, by \ba directly constructing a minimizing candidate, proving that it belongs to the class 
$$
\tilde \fA:=\Big\{ A\subset\bu\overline{\Omega}\ba:\, \text{$A$ is $\mathcal{L}^2$-measurable and $\cH^1(\p A)<+\infty$}\Big\},
$$
 and establishing a ``lower-semicontinuity inequality'' (see \eqref{lsc_type} below)  along the selected sequence of $m$-minimizers $(A_m,u_m)$ (see Subsection \ref{subsec:organization}  for more details). 

 Since \bu $\fA\subset \tilde  \fA,$ \ba for proving such lower-semicontinuity property we introduce an auxiliary energy $\tilde \cF$  defined in the larger family $\tilde \admissible$ of configurations $(A,u)$  for which  $A\in\tilde  \fA$, i.e.,
  $$
\tilde \cF(A,u) : = \tilde \cS(A,u) + \cW(A,u),
$$
with auxiliary surface energy $\tilde \cS$  defined as
$$
\tilde \cS(A,u)=\int_{\p A} \tilde\psi(z,u,\nu)d\cH^{d-1},
$$
 where the surface tension $\tilde\psi$ is given by
\begin{equation*} 
 \tilde\psi(z,u,\nu):=
 \begin{cases}
 \varphi(z,\nu_A(z)) & z\in\Omega\cap\partial^*A,\\
2 \varphi(z,\nu_A(z)) & z\in S_u^A,\\
\beta(z) & z\in\Sigma\cap \partial^*A\setminus J_u,\\
 \varphi(z,\nu_S(z)) &  z\in J_u 
\end{cases}
\end{equation*}
for $S_u^A$ denoting the jump set of $u$ along  the $\mathcal{H}^1$-rectifiable portion of the cracks (see \eqref{defini_Su} for the precise definition).

The \bu results \ba of this paper are twofold:   The  existence results contained in Theorem \ref{teo:global_existence} and  the regularity properties of   Theorem \ref{teo:regularity_of_minimizers}. More precisely, in Theorem \ref{teo:global_existence} we prove the existence of a minimum configuration of $\cF$  and $\tilde \cF$ among all configurations in $\admissible$ and $\tilde \admissible,$ respectively, with free-crystal region satisfying a volume constraint, i.e., we solve the minimum problems
\begin{equation}\label{constrainedproblem}
\inf\limits_{(A,u)\in \admissible,\,\,|A| = \fm} \cF(A,u)   
\end{equation}
and 
\begin{equation}\label{constrainedproblemtilde}
\inf\limits_{(A,u)\in \tilde \admissible,\,\,|A| = \fm} \tilde \cF(A,u)   
\end{equation}
for a fixed volume parameter $\fm\in(0,|\Omega|)$  or, if $S=\emptyset$, $\fm=|\Omega|$. Furthermore,  the minimum problems \eqref{constrainedproblem} and \eqref{constrainedproblemtilde} are proven to be equivalent to the \emph{unconstraint minimum problems} consisting in minimizing {\it volume-penalized versions} $\cF^\lambda$ and $\tilde\cF^\lambda$ of the functionals $\cF$ and $\tilde\cF$, for a  {\it penalization constant} $\lambda>0$ provided that $\lambda\geq\lambda_1$ for some uniform constant $\lambda_1>0$.  

In Theorem \ref{teo:regularity_of_minimizers}  regularity properties shared by all solutions  of \eqref{constrainedproblem} and \eqref{constrainedproblemtilde}  are found.  Notice that we cannot directly apply the arguments of \cite{FFLM:2007,FFLM:2011}  based on the \emph{external sphere condition}  considered in \cite{ChL:2003} because of the absence of graph and star-shapedness assumptions on the admissible free-crystal regions. 
As a byproduct of Theorem \ref{teo:global_existence}   and Proposition \ref{prop:min_extend}  
given a configuration $(A,u)$ minimizing \eqref{constrainedproblem} resp.\ \eqref{constrainedproblemtilde}, we can construct  a configuration $(A',u)\in\admissible$ which  minimizes both minimum problems \eqref{constrainedproblem} and \eqref{constrainedproblemtilde} such that $A'$ is an open set with cracks coinciding in $\Omega$ with the jump set of the corresponding  minimizing free-crystal  displacement $u$, and  boundary   $\p A'$  satisfying uniform upper and lower density estimates. Furthermore, we also observe that, any connected component $E$
of $A'$ that \bu does \ba not intersect $\Sigma\setminus J_{u}$ (up to $\cH^1$-negligible sets), must have a sufficiently large area, i.e.,
$$
|E|\ge (c_1 \sqrt{4\pi} /\lambda_1)^2,
$$
and must satisfy $u=u_0$ in $E$ up to  adding a   rigid displacement.

\subsection{Paper organization and detail of the proofs}\label{subsec:organization} 

The paper is organized in 5 sections. In  Section \ref{sec:setting_results} we introduce the mathematical setting, recall the SDRI model from \cite{HP:2019}, and carefully state the main results of the paper. 

In Section \ref{sec:decay_estimates} we prove the upper and lower density estimates for the local decay of  the energy $\cF$ on any sequence of $m$-minimizers $(A_m,u_m) \in\admissible_m$  (see Theorem \ref{teo:density_estimates}) by considering a local version of $\cF^\lambda$ (see \eqref{local_cF}), adapting arguments of \cite{AFP:2000,ChC:2019_arxiv,CFI:2018poincare}
to our setting with   displacements  paired with free-crystal regions, and paying extra care to the fact that $\mathbb{C}$ is possibly not constant (but in $L^\infty(\Omega\cup\substrate)\cap C^{0}(\Omega)$).

In Section \ref{sec:compact_lsc_property}  we prove compactness and lower-semicontinuity properties for a sequence of $m$-minimizers. We  begin by establishing in Proposition \ref{prop:compact_A_m} the compactness  for a sequence of $m$-minimizers $\{(A_m,u_m)\}$ with free-crystal regions $A_m$ not containing isolated points of such free-crystal regions  to a limiting set of finite perimeter $A\subset\Omega$ by means of both  the Blaschke-type selection principle \cite[Proposition 3.1]{HP:2019} and the density estimates established in Section \ref{sec:decay_estimates}. Then, in Proposition \ref{prop:convergence_tangent_line}, we further extend the (already  generalized) Golab-type Theorem \cite[Theorem 4.2]{Gi:2002} to a priori  not-connected  $\mathcal{H}^1$-measurable  (not necessarily $\cH^1$-rectifiable)   sets satisfying uniform density estimates (see \cite{DMMS:1992} for the isotropic case).  The compactness of the displacements in $\{(A_m,u_m)\}$ is then proved in  Propositions \ref{prop:existence_of_u} by carefully constructing the limiting displacement $u$ in view of the property that for every connected component $E_i$ of $A$ the set in which displacements $u_m$ diverge is either the whole component $E_i$ or $\emptyset$, which follows from \cite[Theorem 3.7]{HP:2019}. 
Finally, in Proposition \ref{prop:lsc_surface}  we establish  the lower-semicontinuity property 
\begin{equation}\label{lsc_type}
\liminf\limits_{h\to\infty} \cF(A_{m_h},u_{m_h}) \ge \tilde \cF(A,u),
\end{equation}
 by treating separately the elastic and the surface energy.  For the latter we employ a \emph{blow-up method} differently performed for each portion of the $\p A$ where  $\tilde \psi$  is supported.  In particular extra care is needed for  the  jump set $J_u$ and jump set along cracks $S_u^A$  (since there is no bound on the number of connected components), where we need to extend some ideas from  \cite[Proposition 4.1]{HP:2019}.

In Section \ref{sec:proof_thm1} we prove  the main results of the manuscript, i.e.,  the existence and regularity results that are contained in Theorems \ref{teo:global_existence} and  \ref{teo:regularity_of_minimizers}, respectively.  In order to prove Theorem \ref{teo:global_existence} we first establish in Proposition \ref{prop:min_extend} the \bu equalities \ba
\begin{equation}\label{equal_minimums}
\inf\limits_{\bu(B,v)\ba\in \tilde \admissible,\,\,|\bu B\ba|=\fm} \tilde\cF\bu(B,v)\ba
=\inf\limits_{\bu(B,v)\ba\in \admissible,\,\,|\bu B\ba|=\fm} \cF\bu(B,v)\ba
= \lim\limits_{m\to\infty} \inf\limits_{\bu(B,v)\ba\in \admissible_m,
\,\,|\bu B\ba|=\fm} \cF\bu(B,v)\ba.   
\end{equation}
(recall that the second equality follows from \cite[Theorem 2.6]{HP:2019})  by using similar arguments previously  used in \cite[Theorem 2.6]{HP:2019}. 
In particular, \eqref{lsc_type} and \eqref{equal_minimums} imply that the configuration $(A,u)\in\tilde\admissible$ is a minimizer of $\tilde \cF$ in $\tilde\admissible.$ In Theorem \ref{teo:density_estimates_tilde} we establish the uniform density estimates for the  jump set $S_u^A$ of $u$ along cracks for a minimizer $(A,u)$ of $\tilde\cF.$ In particular, $S_u^A$ is  then essentially closed, and using this fact in Proposition \ref{prop:constructed_min_F} we construct a configuration  $(A',u)\in\admissible,$ which minimizes boths $\tilde \cF$ and $\cF,$ starting from a minimizer $(A,u)$ of $\tilde\cF$ in $\tilde\admissible.$ Moreover, $  (A',u)$ solves both \eqref{constrainedproblem} and \eqref{constrainedproblemtilde} and  satisfies the  properties stated in Theorem \ref{teo:regularity_of_minimizers}.   Theorem \ref{teo:regularity_of_minimizers} is then  a direct consequence of Proposition \ref{prop:constructed_min_F}, comparison arguments, the isoperimetric inequality in $\mathbb{R}^2$, and the equivalence of the constrained minimum problems and the unconstrained penalized minimum problem related  to the energies $\cF^\lambda$ and $\tilde\cF^\lambda$. 
 
We conclude the manuscript  with Appendix \ref{appendix} that contains some subsidiary results recalled for Reader's convenience since  very relevant in the arguments used throughout the paper.

\section{Mathematical setting}\label{sec:setting_results}

In this section we recall the SDRI model from \cite{HP:2019}, collect all the definitions and \bu hypotheses \ba and state the main results of the paper. Since our model is two-dimensional, unless otherwise stated, all sets we consider are subsets of $\R^2.$ We choose the standard basis $\{{\bf e_1}=(1,0), {\bf e_2}=(0,1)\}$ in $\R^2$ and denote the coordinates of $x\in\R^2$  with respect to this basis by $(x_1,x_2).$ We denote by $\Int A$ the interior of $A\subset \R^2.$ Given a Lebesgue measurable set $E,$ we denote by $\chi_E$ its characteristic  function and by $|E|$ its Lebesgue measure. The set 
$$
E^{(\alpha)}:=
\Big\{x\in\R^2:\,\, \lim\limits_{r\to0} \frac{|E\cap B_r(x)|}{|B_r(x)|}=\alpha \Big\},
\qquad \alpha\in [0,1],
$$
where $B_r(x)$ denotes the ball in $\R^2$ centered at $x$ of radius $r>0,$ is called the set of points of density $\alpha$ of $E.$  Clearly, $E^{(\alpha)}\subset \p E$ for any $\alpha\in (0,1),$ where 
$$
\p E:=
\{x\in \R^2:\,\,
\text{$B_r(x)\cap E \ne \emptyset$ and $B_r(x)\setminus E \ne \emptyset$ for any $r>0$}\} 
$$
is the topological boundary. The set $E^{(1)}$ is the {\it Lebesgue  set} of $E$ and $|E^{(1)}\Delta E|=0.$ We denote by $\p^*E$ the {\it reduced} boundary of a set $E$  of finite perimeter \cite{AFP:2000,Gi:1984}, i.e.,
$$
\p^*E:=\Big\{x\in\R^2:\,\, \exists \nu_E(x):=-\lim\limits_{r\to0}
\frac{D\chi_E(B_r(x))}{|D\chi_E|(B_r(x))},\quad |\nu_E(x)|=1\Big\}. 
$$
The vector $\nu_E(x)$ is called the {\it generalized outer  normal} to $E.$ 

\begin{remark}

If $E$ is a set of finite perimeter, then  
\begin{itemize}

\item[{\tiny$\bullet$}]  $\cl{\p ^*E} =\p E^{(1)}$ (see e.g., \cite[Eq. 15.3]{Ma:2012});

\item[{\tiny$\bullet$}]  $\p^*E \subseteq E^{(1/2)}$ and $\cH^{1}(E^{(1/2)}\setminus \p ^*E) =0$ (see e.g., \cite[Theorem 16.2]{Ma:2012});

\item[{\tiny$\bullet$}]    $P(E,B) = \cH^{1}(B\cap \p^*E)= \cH^{1}(B\cap E^{(1/2)})$ for any Borel set $E;$
\end{itemize} 
where $P(E,B)$ and $\cH^1$ denote the \emph{perimeter} of $E$ in $B$ and the $1$-dimensional Hausdorff measure, respectively. 

\end{remark}

An $\cH^1$-measurable set $K$  is called $\cH^1$-{\it rectifiable} if $\cH^1(K)<\infty$  and there exist countably many Lipschitz functions $f_i:\R\to\R^2$ such that 
\begin{equation}\label{rectifiability_def}
\cH^1\Big(K\setminus \bigcup\limits_{i\ge1} f_i(\R)\Big) =0 
\end{equation}
(see e.g., \cite[Definition 2.57]{AFP:2000}).  Notice that one can assume in \eqref{rectifiability_def} that the functions $f_i$ are $C^1$,  since Lipschitz functions are a.e.\  differentiable. %
By the Besicovitch-Marstrand-Mattila Theorem (\cite[Theorem 2.63]{AFP:2000} a Borel set $K\subset\R^2$ with $\cH^1(K)<+\infty$ is $\cH^1$-rectifiable if and only if
$\theta^*(K,x)=\theta_*(K,x) =1$ for $\cH^1$-a.e. $x\in K,$  where 
$$
\theta^*(K,x):=\limsup\limits_{r\to0^+} \frac{\cH^1(B_r(x)\cap K)}{2r}
\quad \text{and}\quad 
\theta_*(K,x):=\liminf\limits_{r\to0^+} \frac{\cH^1(B_r(x)\cap K)}{2r}.
$$
In particular, any $\cH^1$-rectifiable set $K$ admits a approximate tangent line at $\cH^1$-a.e. $x\in K,$ see e.g., \cite[Remark 10.3]{Ma:2012}. When $\theta_*(K,x)=\theta^*(K,x)=1,$ we write for simplicity $\theta(K,x)=1.$
 A Borel set $K\subset \R^2$ with $\cH^1(K)<+\infty$ is said \emph{purely unrectifiable} if $\cH^1(K\cap \Gamma)=0$ for every $1$-dimensional Lipschitz graph $\Gamma\subset \R^2$ (see e.g., \cite[Definition 2.64]{AFP:2000}).

Moreover, by \cite[Theorem 5.7]{D:2008_book} , if $K\subset\R^2$ is   an arbitrary Borel set with $\cH^1(E)<+\infty,$ then there exist Borel subsets $K^r$ and $K^u$ of $K$ such that $K=K^r\cup K^u,$  $K^r$ is $\cH^1$-rectifiable and $K^u$ is purely unrectifiable, and such a decomposition is unique up to a $\cH^1$-negligible set. More precisely, if $K=L^r\cup L^u$ with $\cH^1$-rectifiable $L^r$ and purely unrectifiable $L^u,$ then $\cH^1(K^r\Delta L^r)=\cH^1(K^u\Delta L^u)=0$. In what follows we call $K^r$ and $K^u$ the rectifiable and purely unrectifiable parts of $K,$ respectively. When $A\subset\R^2$ with $\cH^1(\p A)<+\infty,$ we denote by $\p^rA$ and $\p^uA$ the $\cH^1$-rectifiable and purely unrectifiable parts of $\p A,$ respectively.

The notation $\dist(\cdot,E)$ stands for 
the distance function from the set $E\subset\R^2$
with the convention that $\dist(\cdot,\emptyset)\equiv+\infty.$
Given a set $A\subset\R^2,$ we consider also signed distance function
from $\p A,$ negative inside, defined as 
\begin{equation}\label{signed_distance}
\sdist(x,\p A):= 
\begin{cases}
\dist(x, A) &\text{if $x\in \R^2\setminus A,$}\\ 
-\dist(x,\R^2\setminus A) &\text{if $x\in  A.$}  
\end{cases}
\end{equation}

\begin{remark} 
The following assertions are equivalent:
\begin{itemize}  
\item[(a)] $\sdist(x,\p E_k) \to \sdist(x,\p E)$ locally uniformly in $\R^2;$

\item[(b)] $E_k\overset{K}\to \cl{E}$ and $\R^2\setminus E_k\overset{K}\to \R^2\setminus \Int E,$ \bu
where $K$ denotes the \ba Kuratowski convergence of sets \cite{D:1993}.
\end{itemize}
Moreover, either assumption  implies 
$\p E_k \overset{K}{\to} \p E.$
\end{remark} 

Given $r>0,$ $\nu\in\S^1$ and $x\in\R^2$ we denote  by $ Q_{r,\nu}(x)$ the square of sidelength $2r$ centered at $x$ whose sides are either parallel or perpendicular to $\nu$. When $\nu={\bf e_2}$ or $\nu={\bf e_1},$ \bu we \ba drop the dependence on $\nu$ and write $ Q_r(x)$\bu.    If \ba in addition $x=0,$ we write just $ Q_r.$  We also set 
\begin{equation}\label{def:I_r_Q_rpm}
\red I_r:=[-r,r]\times\{0\},\, Q_r^+ = \{x\in Q_r:\,\, x\cdot \mathbf{e}_2 > 0\},\,\text{and}\, Q_r^- = \{x\in Q_r:\,\, x\cdot \mathbf{e}_2 < 0\}. 
\end{equation}
Given $x\in\R^2$ and $r>0,$ the blow-up map $\sigma_{x,r}$ is defined as
\begin{equation}\label{blow_ups}
\sigma_{x,r} (y)= \frac{y-x}{r}. 
\end{equation}
The blow-up \bu   of \ba$ K\subset\R^2$ is defined as  $\sigma_{x,r}(K)$.

Given an open set $U\subset\R^2$ and a metric space $X$ we denote by $\Lip(U;X)$ the family of all Lipschitz functions $\psi:U\to X.$ We denote by $\Lip(\psi)$ the Lipschitz constant of $\psi\in \Lip(U;X).$
 Furthermore,  $GSBD(U;\R^2)$ denotes the collection of all \emph{generalized special functions of bounded deformation} (see \cite{ChC:2019_jems,D:2013} for their definition and properties).  Given  $u\in GSBD(U;\R^2)$ we denote with $\str{u}\in\mtwo$ the
\emph{approximate symmetric gradient} of $u$, for which  
$$
\aplim\limits_{y\to x} \frac{\red[\ba u(y) - u(x) - \str{u}(x)(y-x)\red]\ba \cdot (y-x)}{|y-x|^2}=0
$$
holds for a.e.\ $x\in U$ by \cite[Theorem 9.1]{D:2013}, and with $J_u$ the \emph{jump set} of $u$, which is  $\cH^1$-rectifiable by \cite[Theorem 6.2]{D:2013}.  Let us also define 
$$
GSBD^2(U,\R^2):=\{u\in GSBD(U;\R^2):\,\,\str{u}\in L^2(U;\mtwo)\}.
$$ 
Given a $\cH^1$-rectifiable set $M\subset \overline{U},$ we
consider a normal vector $\nu_M$ to its approximate tangent line and we  
denote by $u_M^+$ and $u_M^-$ the approximate limits of $u\in GSBD^2(U;\R^2)$ with respect to $\nu_M,$  i.e., 
\begin{equation}\label{app_limits_ofu}
u_M^+(x):=\aplim\limits_{\substack{(y-x)\cdot \nu_M>0,\\y\in U}} \,\,u(y)\quad \text{and}\quad u_M^-(x):= \aplim\limits_{\substack{(y-x)\cdot \nu_M<0\\y\in U}}\,\, u(y)  
\end{equation}
for every $x\in M$ whenever they exist (see \cite[Definition 2.4]{D:2013}). 
We refer to $u_M^+$ and $u_M^-$ as the \emph{two-sided traces} of $u$ at $M$ and we notice that  they are uniquely determined up to a permutation when changing the sign of $\nu_M.$ 
If $U=\Int{A}$ for some measurable set $A$ with $\cH^1(\p A)<+\infty$ and $M:=\p^r A$, we use the simplified notations  $u_{\p A}^\pm$ on $A^{(1)}\cap \p^r A,$ and $\tr_Au:=u_{\p A}^+$ on $\p^* A,$ where on $\p^*A$ we always choose $\nu_M$  in \eqref{app_limits_ofu} as the generalized outer unit normal to $A.$ Moreover, we define
\begin{equation}\label{defini_Su}
S_u^A:=\{x\in A^{(1)}\cap\p^r A:\, u_{\p A}^+(x) \ne u_{\p A}^-(x)\}.
\end{equation}
Note that $S_u^A$ is $\cH^1$-rectifiable. We refer to $S_u^A$ the jump set of $u$ along the cracks of $A$.

A linear function $a:\R^2\to\R^2,$ defined as $Ax =Mx+b,$ where $M$ is $2\times 2$-matrix and $b\in\R^2,$ is an (infinitesimal) rigid displacement if  $M=-M^T.$

\subsection{The SDRI model} \label{subsec:SDRI_model}
Given two  nonempty bounded \bu Lipschitz connected \ba open sets $\Omega \subset\R^2$ and $\substrate\subset\R^2\setminus\Omega$ \bu such that \ba $\overline{\Omega}\cap\overline{\substrate}\ne\emptyset$ \bu and the set $\Sigma:=\p \substrate\cap \p \Omega$ is a Lipschitz 1-manifold,  \ba we define the family of admissible regions  for the {\it free crystal} and  the space of {\it admissible  configurations} by 
$$
\fA:=\{A\subset\overline{\Omega} :\,\, \text{$A$ is $\mathcal{L}^2$-measurable and $\p A$ is $\cH^1$-rectifiable}\}
$$
and 
$$
\begin{aligned}
\admissible:=\big\{(A,u):\,\,&  A\in \fA,\\
& u\in GSBD^2(\Ins{A};\R^2) \cap H_\loc^1(\Int{A}\cup \substrate;\R^2)\big\},
\end{aligned}
$$
respectively.   
By Proposition \ref{prop:adm_sets_have_finite_per} any $A\in\fA$ has finite perimeter. Furthermore, $J_u\subset\Sigma\cap \overline{\p^* A}$ since $u\in H_\loc^1(\Int{A}\cup \substrate;\R^2).$

The {\it energy} of admissible configurations is given by $\cF:\admissible\to[-\infty,+\infty],$ 
\begin{equation}\label{SDRIenergy} 
\cF:= \cS + \cW, 
\end{equation}
where $\cS$ and $\cW$ are the surface and elastic energies of the configuration, respectively. The surface energy of $(A,u)\in\admissible$ is defined as 
\begin{align}\label{func_surface_energy}
\cS(A,u):=& \int_{\Omega \cap\p^*A} \varphi(x,\nu_A(x))d\cH^1(x) \nonumber \\
&+\int_{\Omega \cap (A^{(1)}\cup A^{(0)})\cap\p A}
\big(\varphi(x,\nu_A(x)) + \varphi(x,-\nu_A(x))\big)d\cH^1(x)\nonumber\\  
& + \int_{\Sigma\cap A^{(0)}\cap\p A} \big(\varphi(x,\nu_\Sigma(x))
+ \beta(x)\big)d\cH^1(x) \nonumber \\
& + \int_{\Sigma\cap \p^*A\setminus J_u} \beta(x) d\cH^1(x) 
 + \int_{J_u} \varphi(x,-\nu_\Sigma(x))\,d\cH^1(x), 
\end{align}
where $\varphi:\overline\Omega\times\S^1\to[0,+\infty)$ and $\beta:\Sigma\to\R$  are  Borel functions  denoting the {\it anisotropy} of crystal and the {\it relative adhesion} coefficient of the substrate, respectively, and  $\nu_\Sigma:=\nu_\substrate$. In the following we refer to the first term in \eqref{func_surface_energy} as  the {\it free-boundary energy}, to the second as the {\it energy of
internal cracks and external filaments}, to the third as the {\it wetting-layer energy},  to the fourth as the {\it contact energy}, and to the last as the {\it delamination energy}. In applications instead of $\varphi(x,\cdot)$ it is more convenient to use  its positively one-homogeneous extension $|\xi|\varphi(x,\xi/|\xi|).$ With a slight abuse of notation we denote this extension  also by $\varphi.$

The elastic energy of $(A,u)\in\admissible$ is defined as
$$
{\mathcal W}(A,u):= \int_{A\cup  \substrate } W(x,\str{u(x)}  -  M_0  (x))dx,
$$
where the elastic density $W$ is determined as the quadratic form 
$$
W(x,M): =  \C(x)M:M,
$$
by the so-called {\it stress-tensor}, a measurable function $x\in\Omega\cup\substrate\to\C(x),$ where 
$\C(x)$ is a nonnegative  fourth-order tensor in the Hilbert space  $\mtwo$ of all $2\times 2$-symmetric matrices with the natural inner product 
$$
M:N=\sum\limits_{i,j=1}^2 M_{ij}N_{ij}
$$
for $M=(M_{ij})_{i,j=1}^2,$ $N=(N_{ij})_{i,j=1}^2\in\mtwo.$

The {\it mismatch strain}  $x\in\Omega\cup\substrate\mapsto M_0(x)\in\mtwo$ is given by  
$$
M_0: = 
\begin{cases}
\str{u_0} & \text{in $\Omega ,$}\\
0 &  \text{in $\substrate,$}
\end{cases}
$$ 
for a fixed $ u_0\in H^1(\R^2;\R^2)$.

Given $m\in \N,$ let $\fA_m$ be a collection of all $A\in\fA$ such that $\p A$  has  at most $m$ connected components and let 
$$
\admissible_m:=\Big\{(A,u)\in \admissible:\,\,A\in \fA_m \Big\} 
$$ 
to be the set of constrained admissible configurations. For simplicity, we assume that $\admissible_\infty=\admissible.$ 

 \begin{remark}\label{rem:counterexample}
The reason to introduce $\admissible_m$ is that $\admissible_m$ is both closed under  $\tau_\admissible$-convergence (see \cite[Definition 2.5]{HP:2019}) and $\cF$ is lower semicontinuous with respect to $\tau_\admissible$ in  $\admissible_m$ (see  \cite[Theorems 2.7 and 2.8]{HP:2019}). Such two properties do not apply instead to $\admissible$ as the following examples show.

We begin by recalling that a sequence $\{(A_k,u_k)\}\subset\admissible$ is said to $\tau_{\admissible}$-converge
to $(A,u)\subset\admissible$  and  we denote by $(A_k,u_k)\overset{\tau_\admissible}{\to} (A,u)$, if
\begin{itemize}
\item[--] $\sup\limits_{k\ge1} \cH^1(\p A_k) < \infty,$
\item[--] $\sdist(\cdot,\p A_k)\to \sdist(\cdot,\p A)$ 
locally uniformly in $\R^2$ as $k\to \infty$,
\item[--] $u_k\to u$ a.e.\ in $\Int{A}\cup S$.
\end{itemize} 

Let $X:=\{x_n\}$ be a 
countable dense set in 
$\Omega$ and $A\in\mathcal{A}$ such that $|A|=\fm\in(0,|\Omega|]$. Then the sets  $A_k:=A\setminus\{x_1,\ldots,x_k\}\in\fA$, $k\in\mathbb{N}$, are such that  $|A_k|=\fm\in(0,|\Omega|)$, $\cH^1(\p A_k) = \cH^1(\p A),$ and 
$(A_k,0)\overset{\tau_{\admissible}}{\to} (A\setminus X,0)$ as $k\to\infty$,
but $A\setminus X\notin\fA$
since $\p (A\setminus X) = \overline{A}.$ 
Therefore, compactness with respect to $\tau_\admissible$ fails in $\admissible$. 

Furthermore, let $\Gamma\subset A$ be a segment such that $\mathcal{H}^1(\Gamma)>0$, $B:=A\setminus\Gamma$, $B_k:=A\setminus(\Gamma\cap\{x_1,\ldots,x_k\})$ for every $k\in\mathbb{N}$, and assume that $X$ is dense in $\Gamma$. We notice that $\{(B_k,0)\}\subset\admissible$, $(B,0)\in\admissible$, $|B_k|=|B|=|A|$, $(B_k,0)\overset{\tau_{\admissible}}{\to} (B,0)$ as $k\to\infty$. However, 
$$
\cF(B_k,0)=\cF(A,0)<\cF(A\setminus\Gamma,0)=\cF(B,0).
$$
Therefore, lower semicontinuity of $\cF$ with respect to $\tau_\admissible$ fails in $\admissible$. 

\end{remark}

\subsection{Localized energies} 

In this section we introduce the notion of quasi minimizers of $\cF$ and $\tilde \cF$ in $\Omega$ and the localized version $\cF(\cdot;\openset):\admissible_m\to\R$ of $\cF$ for open sets  $\openset\subset\Omega$ and for $m\in\N\cup\{\infty\}$ with the convention $\admissible_\infty:=\admissible.$   We define
\begin{equation}\label{local_cF}
\cF(A,u;\openset):=\cS(A;\openset) + \cW(A,u; \openset), 
\end{equation}
where 
$$
\cS(A;\openset):= \int_{\openset \cap \p^*A} \varphi(y,\nu_A)d\cH^1
+ 2\int_{\openset \cap (A^{(1)}\cup A^{(0)})\cap \p A} \varphi(y,\nu_A)d\cH^1 
$$
and 
$$
\cW(A,u; \openset) = \int_{\openset\cap A} \C(y)\str{u}:\str{u}dy, 
$$
are the localized versions of the surface  and elastic energies, respectively. Since we define the localized energy $\cF(\cdot;\openset)$  only for open subsets $\openset$ of $\Omega,$ the localized surface energy $\cS(\cdot;\openset)$ does not depend on $u$ and the localized elastic energy $\cW(\cdot;\openset)$ can be defined without $u_0$; see also Remark \ref{rem:passage_toU0_teng0} below.   

\begin{definition} 
Given $\Lambda\ge 0$ and $m\in \N\cup\{\infty\},$ the configuration $(A,u)\in\admissible_m$
is a local {\it $(\Lambda,m)$-minimizer} of $\cF:\admissible_m\to\R$ in $\openset$ if
\begin{equation*} 
\cF(A,u;\openset) \le \cF(B,v;\openset) + \Lambda|A\Delta B|
\end{equation*}
whenever $(B,v)\in\admissible_m$ with $A\Delta B\strictlyincluded \openset$ and $\supp(u-v)\strictlyincluded \openset.$ 
Furthermore, we define 
\begin{align}\label{minimal_with_dirixle}
\Phi(A,u;\openset):= 
\inf\Big\{ 
\cF(B,v;\openset):\,\,\nonumber  & (B,v)\in\admissible_m,\\
& B\Delta A\strictlyincluded \openset,\,\, 
\supp (u-v)\strictlyincluded \openset 
\Big\}
\end{align}
and
\begin{equation}\label{deviation}
\Psi(A,u;\openset): = \cF(A,u;\openset) - \Phi(A,u;\openset) 
\end{equation}
for every $(A,u)\in\admissible_m$ and every open set $\openset \strictlyincluded \Omega.$  
\end{definition}

 \noindent
\begin{remark}\label{rem:passage_toU0_teng0} 
\red By  \cite[Theorem 2.6]{HP:2019} (see also  \eqref{zur_tenglik} below) \ba for any minimizer $(A,u)$ of $\cF$ in $\admissible_m,$ the configuration $(A,u-u_0)$ is a $(\lambda_0,m)$-minimizer of $\cF(\cdot,\cdot;\Omega).$  Indeed, since $(A,u)$ is a minimizer of $\cF^{\lambda_0}$ in $\admissible_m,$ the function $\hat u:= u - u_0$ minimizes $\admissible_m\ni(B,v)\mapsto \hat \cF^{\lambda_0}(B,v):=\cF^{\lambda_0}(B,v + u_0).$
Hence,  
for any open set $\openset\subset\Omega$ and $(B,v)\in\admissible_m$ with $A\Delta B\strictlyincluded \openset$ and $\supp(u - u_0 - v) \strictlyincluded \openset$ we have $\hat\cF^{\lambda_0}(A,u-\hat u_0) \le \hat\cF^{\lambda_0}(B,v)$ so that  
$$
\cF(A,u - u_0;\openset) \le \cF(B,v;\openset) + \lambda_0\big||A| - |B|\big| \le \cF(B,v;\openset) + \lambda_0\big|A\Delta B\big|. 
$$
Similarly, if $(A,u)$ is a minimizer of $\tilde \cF$ in $\tilde \admissible,$ the configuration $(A,u-u_0)$ is a $\lambda_0$-minimizer of $\tilde \cF(\cdot;\openset).$
\end{remark}

\subsection{Auxiliary model} \label{sec:auxiliary}

We also introduce a \emph{weak} formulation of the SRDI model defined in Section \ref{subsec:SDRI_model} for which the more general family $\tilde \admissible$ of 
admissible 
configurations, given by
$$
\begin{aligned}
\tilde \admissible:=\big\{(A,u):\,\,& A\in \tilde \fA,\\
& u\in GSBD^2(\Ins{A};\R^2) \cap H_\loc^1(\Int{A}\cup \substrate;\R^2)\big\}, 
\end{aligned}
$$
is considered, where  
$$
\tilde \fA:=\Big\{ A\subset\bu\overline{\Omega}\ba:\, \text{$A$ is $\mathcal{L}^2$-measurable and $\cH^1(\p A)<+\infty$}\Big\}. 
$$
The auxiliary energy $\tilde\cF:\tilde\admissible\to\R$ is defined as 
$$
\tilde \cF: = \tilde \cS + \cW,
$$
where
\begin{align}\label{surface_tilde}
\tilde \cS(A,u):= &  \int_{\Omega \cap\p^*A} \varphi(x,\nu_A(x))d\cH^1(x) \nonumber \\
&+\int_{S_u^A}
\big(\varphi(x,\nu_A(x)) + \varphi(x,-\nu_A(x))\big)d\cH^1(x)\nonumber\\  
& + \int_{\Sigma\cap \p^*A\setminus J_u} \beta(x) d\cH^1(x) 
 + \int_{J_u} \varphi(x,-\nu_\Sigma(x))\,d\cH^1(x),
\end{align} 
where $S_u^A\subset\Omega$ by definition \eqref{defini_Su}.

\subsection{Main results}  \label{subsec:main_results}

We begin by stating the hypotheses which will be assumed throughout the paper:  
\begin{itemize}
\item[(H1)] $\varphi\in C(\cl{\Omega}\times \R^2)$ and is a 
Finsler norm, i.e., there exist $c_2\ge c_1>0$ such that 
for every $x\in \cl{\Omega },$ $\varphi(x,\cdot)$ is a norm in $\R^2$
satisfying  
\begin{equation}\label{finsler_norm}
c_1|\xi| \le \varphi(x,\xi) \le c_2|\xi| 
\end{equation}
for any $x\in\cl{\Omega}$ and $\xi\in\R^2;$

\item[(H2)] $\beta\in L^\infty(\Sigma)$ and 
satisfies
\begin{equation}\label{hyp:bound_anis}
-\varphi(x,\nu_\Sigma(x))\le \beta(x) \le \varphi(x,\nu_\Sigma(x)) 
\end{equation}
for $\cH^1$-a.e. $x\in \Sigma;$

\item[(H3)]   
$\C\in L^\infty(\Omega\cup\substrate)\cap C^{0}(\cl{\Omega})$ and there exists 
$c_4\ge c_3>0$ such that 
\begin{equation}\label{hyp:elastic}
2c_3\,M:M \le \C(x)M:M \le 2c_4\,M:M 
\end{equation}
for any $x\in\Omega\cup\substrate$ and $M\in\mtwo;$

\item[(H4)] Either $\fm\in (0,|\Omega |)$ or $\substrate=\emptyset.$
\end{itemize}
\bigskip

\noindent 
 Given  $\cG\in\{\cF,\tilde\cF\},$ we use the notation: 
$$
\cX_{\cG}:=\begin{cases}\admissible & \text{if $\cG=\cF,$}\\ 
\tilde\admissible & \text{if $\cG=\tilde\cF$.}
\end{cases}
$$  

 The first result is the {\it existence} of solutions without constraint on the number of free-crystal boundary components. 

\begin{theorem}[\textbf{Existence}]\label{teo:global_existence}
Assume {\rm (H1)-(H4)}. Let $\cG\in\{\cF,\tilde\cF\}.$ Then the minimum problem 
\begin{equation}\label{min_prob_globals}
\inf\limits_{(\bu B,v\ba)\in \cX_\cG,\,\,|\bu B \ba| = \fm} \cG(\bu B,v\ba) 
\end{equation}
admits a solution.  Moreover, there exists $\lambda_1>0$ such that $(A,u)\in \cX_\cG$ is a solution of \eqref{min_prob_globals} if and only if it solves  
$$
\inf\limits_{(\bu B,v\ba)\in \cX_\cG} \cG^\lambda(\bu B,v\ba)
$$
for every $\lambda\ge \lambda_1,$ where  
\begin{equation}\label{eq:flambda}
\cG^\lambda(\bu B,v\ba):=\cG(\bu B,v\ba) +\lambda\big||\bu B \ba| - \fm\big|; 
\end{equation}
\end{theorem}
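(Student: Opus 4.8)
The plan is to follow the strategy outlined in the introduction: rather than attempting compactness and lower semicontinuity directly in $\admissible$ or $\tilde\admissible$ (which both fail, see Remark \ref{rem:counterexample}), I would construct a minimizing candidate as a limit of $m$-minimizers and verify it is optimal via the density estimates and the energy-convergence chain \eqref{equal_minimums}. Concretely, for each $m\in\N$ let $(A_m,u_m)\in\admissible_m$ be an $m$-minimizer of $\cF^{\lambda}$ with $|A_m|=\fm$, whose existence is guaranteed by \cite[Theorem 2.6]{HP:2019}; these are in particular local $(\lambda_0,m)$-minimizers in $\Omega$ after subtracting $u_0$ (Remark \ref{rem:passage_toU0_teng0}). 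Applying the uniform density estimates of Theorem \ref{teo:density_estimates} one passes (Propositions \ref{prop:compact_A_m}, \ref{prop:convergence_tangent_line}, \ref{prop:existence_of_u}) to a subsequence $(A_{m_h},u_{m_h})$ with $A_{m_h}\to A$ in the appropriate sense and $u_{m_h}\to u$, obtaining a configuration $(A,u)\in\tilde\admissible$. The lower-semicontinuity inequality \eqref{lsc_type}, namely $\liminf_h \cF(A_{m_h},u_{m_h})\ge\tilde\cF(A,u)$, is Proposition \ref{prop:lsc_surface}. Combining this with the infimum equalities \eqref{equal_minimums} of Proposition \ref{prop:min_extend} (whose last equality is \cite[Theorem 2.6]{HP:2019}) immediately yields that $(A,u)$ minimizes $\tilde\cF$ over $\tilde\admissible$ subject to $|A|=\fm$, proving \eqref{min_prob_globals} for $\cG=\tilde\cF$.

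For the case $\cG=\cF$ I would then invoke the improvement step: by Theorem \ref{teo:density_estimates_tilde} any minimizer $(A,u)$ of $\tilde\cF$ has a jump set $S_u^A$ along cracks satisfying uniform density estimates, hence $S_u^A$ is essentially closed; Proposition \ref{prop:constructed_min_F} then upgrades $(A,u)$ to a configuration $(A',u')\in\admissible$ (with $A'$ open, cracks equal to $J_{u'}$ in $\Omega$, density estimates on $\p A'$) that minimizes \emph{both} $\tilde\cF$ and $\cF$. Since $\admissible\subset\tilde\admissible$ would give $\inf_{\admissible}\cF\ge\inf_{\tilde\admissible}\tilde\cF$ is false in general (the tilde surface tension is smaller), one uses instead \eqref{equal_minimums}: the middle term equals the left, so a configuration in $\admissible$ achieving the common value solves \eqref{min_prob_globals} for $\cF$ as well. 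This also simultaneously delivers the existence claim for $\cF$.

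For the equivalence with the penalized problems, I would argue as in \cite[Theorem 2.6]{HP:2019}. One direction is trivial: if $(A,u)$ minimizes $\cG^\lambda$ over $\cX_\cG$ with $|A|=\fm$ then $\cG^\lambda(A,u)=\cG(A,u)$ and for any competitor with the volume constraint the penalty vanishes, so $(A,u)$ solves \eqref{min_prob_globals}; conversely if $(A,u)$ minimizes $\cG^\lambda$ and the penalty were nonzero one shows it beats a constrained minimizer. The nontrivial direction requires a uniform $\lambda_1$: given a solution $(A,u)$ of the constrained problem and a competitor $(B,v)$ with $|B|\neq\fm$, one must produce from $(B,v)$ a new competitor with the exact volume $\fm$ at a controlled surface-and-elastic-energy cost proportional to $||B|-\fm|$, so that choosing $\lambda\ge\lambda_1$ makes $\cG^\lambda(A,u)\le\cG^\lambda(B,v)$. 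This volume-adjustment construction — enlarging or shrinking the free-crystal region by adding or removing a small ball (or a piece near the substrate) and estimating the change in $\cS$ and $\cW$, using the Finsler bounds \eqref{finsler_norm}, the elastic bounds \eqref{hyp:elastic}, and the isoperimetric inequality in $\R^2$ — is exactly the argument of \cite[Theorem 2.6]{HP:2019} and carries over verbatim, the only subtlety being the dewetting terms (the $\beta$ and wetting-layer contributions) which are controlled using (H2).

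The main obstacle is the lower-semicontinuity inequality \eqref{lsc_type} against $\tilde\cF$ rather than $\cF$: because there is no bound on the number of connected components of $\p A_{m_h}$, the Golab-type theorem of \cite{HP:2019,Gi:2002} does not apply directly, and one must instead use the extended Golab-type result for not-necessarily-connected sets with uniform density estimates (Proposition \ref{prop:convergence_tangent_line}), and then perform the blow-up analysis portion-by-portion of $\p A$ — free boundary, cracks $S_u^A$, delamination set $J_u$, contact set — with particular care on $J_u$ and $S_u^A$ where the ideas of \cite[Proposition 4.1]{HP:2019} must be adapted. However, all of this is encapsulated in the propositions of Sections \ref{sec:decay_estimates}–\ref{sec:proof_thm1}, so at the level of deducing Theorem \ref{teo:global_existence} the remaining work is the bookkeeping of assembling these pieces plus the classical penalization argument above.
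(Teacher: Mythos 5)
Your proposal follows essentially the same route as the paper: construct the candidate as a limit of $m$-minimizers via the density estimates and the compactness propositions, combine the lower-semicontinuity inequality (you cite Proposition~\ref{prop:lsc_surface}; strictly the elastic part needs Proposition~\ref{prop:def_of_u} as well) with the infimum chain~\eqref{equal_minimums} of Proposition~\ref{prop:min_extend}, upgrade to $(A',u')\in\admissible$ via Theorem~\ref{teo:density_estimates_tilde} and Proposition~\ref{prop:constructed_min_F}, and recover the penalized equivalence by the volume-adjustment argument (the paper cites \cite[Theorem 1.1]{EF:2011} and \cite[Proposition A.6]{HP:2019} rather than \cite[Theorem 2.6]{HP:2019}, but the content is the same). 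The assembly is correct; the only cosmetic slip is the remark about $\admissible\subset\tilde\admissible$, which does not hold (as the paper notes, $\fA\not\subset\tilde\fA$), though you rightly bypass it via~\eqref{equal_minimums}.
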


\noindent For simplicity we call the solutions of \eqref{min_prob_globals}   {\it global minimizers}.

The second result is a {\it partial regularity} of the free-crystal boundaries. \bu We recall that the definition of $S_u^A$ is provided in \eqref{defini_Su}. \ba

\begin{theorem}[\textbf{Properties of global minimizers}]\label{teo:regularity_of_minimizers}   
Assume {\rm (H1)-(H4)}. Let $\cG\in\{\cF, \tilde\cF\}$ and $(A,u)\in \cX_\cG$ be a solution of \eqref{min_prob_globals}.  
Define 
\begin{equation}\label{def_a_prime}
 A':=\Int{A^{(1)}}\setminus\overline \Gamma, 
\end{equation}
where $\overline\Gamma$ is the closure of $\{x\in S_u^A:\,\theta_*(S_u^A,x)>0\},$ and, with a slight abuse of notation,  consider $u$ as defined in $A'\cup \substrate$ \emph{(}and so, also on  the $\mathcal{L}^2$-negligible set $A'\setminus \Int{A}$\emph{)}. 
Then:  
\begin{itemize}
\item[(1)]  $A'$ is open, $\theta_*(S_{u}^{A'},x)>0$ for all $x\in S_{u}^{A'},$ $|A'\Delta A|=0,$ $\cH^1(\p A\Delta\p A')=0,$  $\cH^1(S_u\Delta S_{u}^{A'})=0$, $(A',u)\in \admissible,$ and 
 $$
\cG(A,u)=\cF(A',u)=\inf\limits_{(B,v)\in\admissible,\,|B|=\fm} \cF(B,v)=\inf\limits_{(B,v)\in\tilde\admissible,\,|B|=\fm} \tilde \cF(B,v);
$$   

\item[(2)] for any $x\in\Omega$ and $r\in(0,\min\{1,\dist(x,\p \Omega)\}),$
$$
\frac{\cH^1( Q_r(x)\cap \p A')}{r} \le \frac{16c_2 + 4\lambda_1}{c_1};
$$

\item[(3)] there exist $\varsigma_0=\varsigma_0(c_3,c_4)\in(0,1)$ and $R_0=R_0(c_1,c_2,c_3,c_4,\lambda_1)>0,$ where $\lambda_1>0$ is given in Theorem \ref{teo:global_existence}, with the following property: if  $x\in \Omega\cap \p A',$ 
then 
$$
\frac{\cH^1( Q_r(x)\cap \p A')}{r}\ge \varsigma_0 
$$
for any square $ Q_r(x)\strictlyincluded\Omega$ with  $r\in(0,R_0).$

\item[(4)]$ \overline{A'^{(1)}\cap\p A'}=\cl{S_{u}^{A'}}$ and 
$$
\cH^1(\cl{S_{u}^{A'}}\setminus S_{u}^{A' })=0,
$$
hence cracks essentially coincide with the jump set for the displacement $u$;

\item[(5)] 
If $E\subset A'$ is any connected component of $A'$ with $\cH^1(\p E\cap\Sigma\setminus J_{u})=0,$ then $|E|\ge (c_1 \sqrt{4\pi} /\lambda_1)^2$  and $u=u_0 + a$ in $E,$ where $a$ is a rigid displacement. 
\end{itemize}
\end{theorem}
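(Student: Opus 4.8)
The plan is to reduce the statement to a single canonical minimizer of the auxiliary functional $\tilde\cF$ and then read off all five items from Proposition \ref{prop:constructed_min_F}, completing the local estimates by comparison. First I would dispose of the case $\cG=\cF$: if $(A,u)\in\admissible$ solves \eqref{min_prob_globals}, then $A\cap\Omega\in\tilde\fA$ with $|A\cap\Omega|=\fm$ (since $A\setminus\Omega\subset\p\Omega$ is $\cL^2$-negligible), $(A\cap\Omega,u)\in\tilde\admissible$, and, because $\tilde\cS$ is obtained from $\cS$ by discarding the external-filament, wetting-layer and $A^{(0)}$-contact terms (nonnegative by {\rm (H1)}--{\rm (H2)}) and replacing $A^{(1)}\cap\p A$ by the smaller set $S_u^A$, one gets $\tilde\cF(A\cap\Omega,u)\le\cF(A,u)$; since $\cF(A,u)=\inf_{\admissible}\cF=\inf_{\tilde\admissible}\tilde\cF$ by Proposition \ref{prop:min_extend}, the pair $(A\cap\Omega,u)$ minimizes $\tilde\cF$ in $\tilde\admissible$ under the volume constraint. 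Hence in both cases we are reduced to a minimizer $(A,u)$ of $\tilde\cF$. By Theorem \ref{teo:density_estimates_tilde}, $S_u^A$ satisfies uniform lower density estimates, so $\cH^1(\overline{S_u^A}\setminus S_u^A)=0$ and the set $\bar\Gamma$ in \eqref{def_a_prime} equals $\overline{S_u^A}$ up to an $\cH^1$-negligible set (noting $A^{(1)}=(A\cap\Omega)^{(1)}$ and $\cH^1(S_u^A\Delta S_u^{A\cap\Omega})=0$ in the case $\cG=\cF$); consequently $A'=\Int{A^{(1)}}\setminus\bar\Gamma$ is precisely the configuration constructed in Proposition \ref{prop:constructed_min_F}, up to the equalities asserted in item (1). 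This yields item (1) in full --- $A'$ open, $|A'\Delta A|=0$, $\cH^1(\p A\Delta\p A')=\cH^1(S_u^A\Delta S_{u'}^{A'})=0$, $u\chi_{A\cup \substrate}=u'\chi_{A'\cup \substrate}$ a.e., $(A',u')\in\admissible$, the minimality of $(A',u')$ for both $\cF$ and $\tilde\cF$, and the chain of equalities between $\cG(A,u)$, $\cF(A',u')$ and the two infima --- together with item (4), that $\overline{[A']^{(1)}\cap\p A'}=\overline{S_{u'}^{A'}}$ with $\cH^1(\overline{S_{u'}^{A'}}\setminus S_{u'}^{A'})=0$.

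For items (2) and (3) I would work with $(A',u')$. By Theorem \ref{teo:global_existence} it minimizes $\cF^{\lambda}$ for every $\lambda\ge\lambda_1$, so by Remark \ref{rem:passage_toU0_teng0} the pair $(A',u'-u_0)$ is a $(\lambda_1,\infty)$-minimizer of the localized energy $\cF(\cdot,\cdot;\Omega)$; items (2) and (3) then follow by applying the upper and lower density estimates of Theorem \ref{teo:density_estimates} to this configuration, the openness of $A'$ and item (4) being used to identify $\cH^1(\p A'\cap Q_r(x))$ with the surface energy localized in $Q_r(x)$. Concretely, (2) is a one-line comparison: for $x\in\Omega$ and $r<\min\{1,\dist(x,\p\Omega)\}$ test $\cF^{\lambda_1}$-minimality against the admissible competitor $(A'\setminus Q_r(x),\,u'\chi_{\R^2\setminus Q_r(x)})$, whose modification is supported in $\overline{Q_r(x)}\strictlyincluded\Omega$; the surface energy lost is $\ge c_1\cH^1(\p A'\cap Q_r(x))$, the surface energy gained along $\p Q_r(x)$ is $\le 2c_2\,\cH^1(\p Q_r(x))=16c_2r$ (counting $\p Q_r(x)$ with the crack/filament weight to be safe), the elastic energy only decreases, and the volume penalty changes by at most $\lambda_1|A'\cap Q_r(x)|\le 4\lambda_1 r$, which rearranged gives the stated constant. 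The lower bound (3) is a de Giorgi--Carriero--Leaci iteration: if $x\in\Omega\cap\p A'$ and $\cH^1(\p A'\cap Q_r(x))<\varsigma_0r$ for some admissible $Q_r(x)$ with $r<R_0$, then comparing with the competitor that empties $Q_\rho(x)$ for a well-chosen $\rho\le r$ and using the relative isoperimetric inequality in $Q_\rho(x)$ together with $\cF^{\lambda_1}$-minimality produces a differential inequality for $\rho\mapsto|A'\cap Q_\rho(x)|$ whose sub-threshold solutions force $P(A';Q_{r/2}(x))=0$; since $A'$ is open and, by item (4), $\p A'\cap Q_{r/2}(x)$ carries no other type of boundary, one deduces $\p A'\cap Q_{r/2}(x)=\emptyset$, contradicting $x\in\p A'$. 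The dependence $\varsigma_0=\varsigma_0(c_3,c_4)$, $R_0=R_0(c_1,\dots,c_4,\lambda_1)$ comes from controlling the elastic contribution of the competitor through \eqref{hyp:elastic}.

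Finally, for item (5) let $E$ be a connected component of the open set $A'$ with $\cH^1(\p E\cap\Sigma\setminus J_{u'})=0$. Since $E$ is clopen in $A'$ one has $\p E\cap\Int{A'}=\emptyset$, and, as $\Sigma\cap\Int(A'\cup \substrate)=\emptyset$, also $\p E\cap(\Int{A'}\cup \substrate)=\emptyset$; hence the two competitors $(A'\setminus E,\,u'\chi_{\R^2\setminus E})$ and $(A',\tilde u)$, where $\tilde u:=u'$ on $A'\setminus E$ and $\tilde u:=u_0+a$ on $E$ for an infinitesimal rigid displacement $a$, are admissible and neither increases the surface energy --- the only part of $\p E$ carrying nonzero surface tension is of free-boundary, crack or delamination type, each weighed $\ge c_1$, and replacing $u'$ by $\tilde u$ on $E$ can only remove crack contributions in $S_{u'}^{A'}\cap\p E$. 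Testing $\cF^{\lambda_1}$-minimality against the first competitor gives that the surface energy carried by $\p E$ plus the nonnegative $\cW(A',u';E)$ is $\le\lambda_1|E|$; since that surface energy is $\ge c_1P(E)$ under the hypothesis on $E$, the planar isoperimetric inequality $P(E)\ge\sqrt{4\pi|E|}$ forces $|E|\ge(c_1\sqrt{4\pi}/\lambda_1)^2$. Testing against the second competitor with $a=0$ gives $\cW(A',u';E)\le\cW(A',\tilde u;E)=0$, and since $W(x,M-M_0)>0$ for $M\ne M_0=\str{u_0}$ on $\Omega\supseteq E$ this forces $\str{u'-u_0}=0$ a.e.\ in $E$; by connectedness of $E$, $u'-u_0$ equals a single infinitesimal rigid displacement on $E$.

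The main obstacle I expect is item (3): carrying the de Giorgi--Carriero--Leaci iteration through uniformly in the presence of the (possibly non-constant, merely $L^\infty\cap C^0$) elasticity tensor, and keeping every competitor inside $\admissible$ --- rectifiability of $\p(A'\setminus Q_\rho)$, validity of the $H^1_\loc(\Int{A'}\cup \substrate)$ constraint for the truncated displacement, and the correct accounting of the surface tension along $\p Q_\rho$. The reduction in the first paragraph and items (2), (4), (5) are comparatively routine once Proposition \ref{prop:constructed_min_F} and Theorem \ref{teo:global_existence} are available.
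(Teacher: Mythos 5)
Your overall strategy mirrors the paper's: reduce both cases to a minimizer of $\tilde\cF$, feed it into Proposition \ref{prop:constructed_min_F} to obtain $(A',u')$ together with items (2)--(4), and then handle item (5) by a remove-the-component comparison plus the isoperimetric inequality. Two small points are to your credit. First, your reduction of the case $\cG=\cF$ to $\cG=\tilde\cF$ by replacing $A$ with $A\cap\Omega$ (so that $(A\cap\Omega,u)\in\tilde\admissible$ and $\tilde\cF(A\cap\Omega,u)\le\cF(A,u)$) is more careful than the paper's, which passes to $\tilde\admissible$ without addressing that $A\subset\overline\Omega$ need not be a subset of $\Omega$. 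Second, your alternative de Giorgi--Carriero--Leaci sketch for item (3) is a genuinely different route from the one the paper actually uses (the blow-up/contradiction machinery of Propositions \ref{prop:conti_prop_3.4}--\ref{prop:functional_decay} packaged inside Theorems \ref{teo:density_estimates} and \ref{teo:density_estimates_tilde}); but since you also cite those theorems, which is what the paper does, nothing is lost.

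The genuine gap is in item (1). After invoking Proposition \ref{prop:constructed_min_F} you write ``This yields item (1) in full,'' but that proposition only delivers $A'$ open, $|A'\Delta A|=0$, $\theta_*(S_{u'}^{A'},\cdot)>0$ on $S_{u'}^{A'}$, $u\chi_{A\cup\substrate}=u'\chi_{A'\cup\substrate}$ a.e., $(A',u')\in\admissible$, and $\tilde\cF(A,u)=\cF(A',u')$. It does \emph{not} give the two measure-theoretic identities $\cH^1(\p A\Delta\p A')=0$ and $\cH^1(S_u^A\Delta S_{u'}^{A'})=0$, and these do not follow from $|A\Delta A'|=0$ alone: a priori $\p A$ may carry extra mass, e.g. a purely unrectifiable piece or points of $\p A$ at which the crack density vanishes, none of which are detected by the Lebesgue-measure statement. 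The paper closes this by an energy balance: since $(A',u')\in\tilde\admissible$ with $|A'|=\fm$, minimality of $(A,u)$ gives $\tilde\cF(A,u)=\tilde\cF(A',u')$ in addition to $\tilde\cF(A,u)=\cF(A',u')$; writing out the differences of the two functionals using $\p^*A=\p^*A'$, $\cW(A,u)=\cW(A',u')$ and $J_u=J_{u'}$, the only terms left are the crack/filament integrals, which must vanish, and then $\varphi\ge c_1>0$ forces $\cH^1(S_u^A\Delta S_{u'}^{A'})=0$ and $\cH^1(A^{(1)}\cap(\p A\Delta\p A'))=0$. You should add this step: your observation that $\cH^1(\bar\Gamma\setminus S_u^A)=0$ (from Theorem \ref{teo:density_estimates_tilde}) is necessary but not sufficient, as it says nothing about the part of $\p A$ that is not of crack type.
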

  
\noindent 
 In what follows we refer to the estimates in (2) and (3) as the (uniform) {\it upper and lower density estimate}, respectively. Note that by assertion (1), the assertions (3) and (5) directly hold also for solutions $(A,u)$ of \eqref{min_prob_globals}.

\subsection{Examples} \label{examples}

We recall from \cite{HP:2019} that the SDRI energy  \eqref{SDRIenergy} coincides with the functionals of the following free-boundary problems considered in the Literature when restricted to the corresponding subfamilies of admissible configurations in $\mathcal{C}$:  

\begin{itemize}[leftmargin=\dimexpr+4mm]
\item[(a)]
{\it Epitaxially strained thin films, e.g.,} 
\cite{BCh:2002,DP:2017,DP:2018_1,FFLM:2007,FM:2012,KP:2021}:
$\Omega :=(a,b)\times (0,+\infty),$  
$\substrate :=(a,b)\times(-\infty,0)$ 
for some $a<b,$  
free crystals in the subfamily
$$
\begin{aligned}
\quad \mathcal{A}_{\rm subgraph}:=\{A\subset\Omega:\, 
\text{$\exists h\in BV(\Sigma;[0,\infty))$ and 
l.s.c. such that $A=A_h$} \}\subset\fA_1,
\end{aligned}
$$ 
where $A_h:=\{(x^1,x^2)\,:\, 0<x^2<h(x^1)\}$,
and admissible configurations in the subspace 
$$
\mathcal{C}_{\rm subgraph}:=\{(A,u):\, A\in\fA_{\rm subgraph},\,
u\in H_\loc^1(\Ins{A};\R^2)\}\subset\admissible_1
$$
(see also \cite{BGZ:2015,GZ:2014});

\item[(b)] {\it Crystal cavities, e.g.,} \cite{FFLM:2011,FJM:2018,SMV:2004,WL:2003}: 
$\Omega \subset\R^2$ smooth set containing the origin, 
$\substrate :=\R^2\setminus\Omega,$ free crystals in the subfamily
$$
\mathcal{A}_{\rm starshaped}:=\{A\subset\Omega:\, 
\text{\red open  
and $\Omega\setminus A$ starshaped w.r.t. $(0,0)$} \}\subset\fA_1,
$$ 
and the space of admissible configurations
$$
\mathcal{C}_{\rm starshaped}:=\{(A,u):\, \, A\in\fA_{\rm starshaped},\,
u\in H_\loc^1(\Ins{A};\R^2)\}\subset\admissible_1;
$$ 

\item[(b)]
{\it Capillarity droplets, e.g.,} \cite{CM:2007,GB-W:2004,DPhM:2015}: 
$\Omega\subset\R^2$ is a bounded Lipschitz open set (or a cylinder), 
admissible configurations in the collection
$$
\mathcal{C}_{\rm capillarity}:=\{(A,u_0):\, \,A\in \fA\}\subset \admissible \quad\text{or} 
\quad 
 \tilde {\mathcal{C}}_{\rm capillarity}:=\{(A,u_0):\, \,A\in \tilde \fA\}\subset \tilde \admissible;
$$

\item[(d)]
{\it Griffith fracture model, e.g.,} \cite{BFM:2008,CCI:2019.jmpa,ChC:2019_arxiv,CFI:2016,CFI:2018poincare,FGL:2019,FM:1998.jmps}:
$\substrate=\Sigma=\emptyset$,   $E_0\equiv0$, and 
the space of configurations
$$
\mathcal{C}_{\rm Griffith}:=\{(\Omega \setminus K,u):\,
\text{$K$ closed, $\mathcal{H}^1$-rectifiable},\,
u\in H_\loc^1(\Omega\setminus K;\R^2)\}\subset\admissible;
$$

\item[(e)]
{\it Mumford-Shah model,  e.g.,} \cite{AFP:2000,DMMS:1992,MS:2001}:
$\substrate=\Sigma=\emptyset,$ $E_0=0,$
$\C$ is such that the elastic energy $\mathcal{W}$ reduces 
to the Dirichlet energy, and
the space of configurations
$$
\admissible_{\rm Mumfard-Shah}: = 
\{
(\Omega \setminus K,u)\in \admissible_{\rm Griffith}:\,\,
u=(u_1,0)\}\subset\admissible;
$$ 

\item[(f)] {\it Boundary delaminations, e.g.,}  
\cite{Babadjian:2016,Deng:1995,HS:1991,Baldelli:2013,Baldelli:2014,Xia:2000}:  
 the SDRI model includes also the setting of 
debonding and edge delamination in composites \cite{Xia:2000}. The focus is here on  the 2-dimensional film and substrate vertical section, while in  \cite{Babadjian:2016,Baldelli:2013,Baldelli:2014} a reduced model for the  horizontal  interface between the film and the substrate is derived. \\

\end{itemize}

For the cases (a) and (b), the existence results for the SDRI model in $\admissible_{\rm subgraph}$ and $\admissible_{\rm starshaped}$ can be found for example in  \cite[Theorem 2.9 and Remark 2.10]{HP:2019}. For (c), the same statements of Theorems \ref{teo:global_existence} and \ref{teo:regularity_of_minimizers} hold with $\cX_\cG:=\admissible_{\rm capillarity}$ if $\cG=\cF$ or $\cX_\cG:=\tilde \admissible_{\rm capillarity}$ if $\cG=\tilde\cF$  (note that $S_u$ and $\Gamma$ are empty in this case).  
For (d)-(f), we postpone the analysis  
to future investigations since some modifications in the proofs is needed to include \emph{boundary Dirichlet conditions} or {\it fidelity terms} of type 
\begin{equation}\label{eq:fidelity}
\kappa\int_{\Omega\setminus K} |u-g|^p dx
\end{equation}
for  $p\in(1,\infty)$, $\kappa>0$, and $g\in L^{\infty}(\Omega),$ which are generally considered (and needed) in these mechanical applications.

\section{Decay estimates for $m$-minimizers}\label{sec:decay_estimates} 

In this section we always assume (H4). 
We recall that by \cite[Theorem 2.6]{HP:2019}   under the hypotheses  {\rm (H1)-(H3)} both the volume-contrained minimum problem
$$ 
\inf\limits_{(A,u)\in \admissible_m,\,\,|A| = \fm} \cF(A,u), 
$$
and the unconstrained minimum problem
$$ 
\inf\limits_{(A,u)\in \admissible_m} \cF^\lambda(A,u) 
$$
admit a solution for any $m\in\N$.
Moreover, by  \cite[Theorem 2.6]{HP:2019} 
there exists $\lambda_0>0$ such that 
\begin{equation}\label{zur_tenglik}
\inf\limits_{(A,u)\in \admissible,\,\,|A|=\fm} \cF(A,u) 
= \inf\limits_{(A,u)\in \admissible} \cF^\lambda(A,u) 
= \lim\limits_{m\to\infty} \inf\limits_{(A,u)\in \admissible_m,
\,\,|A|=\fm} \cF(A,u) 
\end{equation}
for every $\lambda\ge \lambda_0.$

The main results of this section are the following density  estimates for the  quasi-minimizers of $\cF$ in $\admissible_m$ with $m\in\N\cup\{\infty\}.$

\begin{theorem}[\textbf{Density estimates for  $(\Lambda,m)$-minimizers}]\label{teo:density_estimates}
There exist  $\varsigma_*=\varsigma_*(c_3,c_4)\in(0,1)$ and $R_*=R_*(c_1,c_2,c_3,c_4,\lambda_0)>0,$ where $c_i$ are given by \eqref{finsler_norm} and \eqref{hyp:elastic}, with the following property. Let  $(A,u)\in\admissible_m$  be a $(\Lambda,m)$-minimizer of 
$\cF(\cdot,\cdot;\Omega)$ in $\admissible_m$ for some $ m\in\N\cup \{\infty\}.$ Then for any $x\in\Omega$ and $r\in(0,\dist(x,\p \Omega)),$
\begin{equation}\label{min_seq_density_up}
\frac{\cH^1( Q_r(x)\cap \p A  )}{r} \le \frac{16c_2 + 4\Lambda}{c_1}. 
\end{equation}
Moreover, if  $x\in\Omega$ belongs to the closure of the set  $\{y\in \Omega\cap \p A:\,\theta_*(\p A,y)>0\},$  
then 
\begin{equation}\label{min_seq_density_low}
\frac{\cH^1( Q_r(x)\cap \p A )}{r}\ge \varsigma_* 
\end{equation}
for any square $ Q_r(x)\strictlyincluded\Omega$ with 
 $r\in(0,R_*).$
\end{theorem}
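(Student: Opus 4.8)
The plan is to prove the two estimates separately, following the classical density-estimate scheme for surface-type energies (as in \cite{AFP:2000,CFI:2018poincare,ChC:2019_arxiv}), adapted to the coupling $(A,u)$ with $A$ a set rather than a crack and to the variable, merely continuous-on-$\overline\Omega$ tensor $\C$. First I would reduce matters, via Remark \ref{rem:passage_toU0_teng0}, to a $(\Lambda,m)$-minimizer of the localized energy $\cF(\cdot,\cdot;\openset)$, so that $M_0$ and $u_0$ drop out and one works with $\cS(A;\openset)$ and the pure Dirichlet-type elastic energy $\int_{\openset\cap A}\C\,e(u){:}e(u)$. For the \emph{upper} bound \eqref{min_seq_density_up}: fix $x\in\Omega$ and $r<\dist(x,\p\Omega)$, and use the competitor obtained by \emph{filling in} the square, i.e.\ replace $(A,u)$ inside $Q_r(x)$ by $(A\cup Q_r(x), u\chi_{A\setminus Q_r(x)})$ (so $A$ becomes the full square there and the displacement is set to $0$ on the added piece; one must be slightly careful that the modified displacement still lies in $GSBD^2$, which it does since we only added bulk on which $e(u)=0$). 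Minimality gives
$$
\cS(A;Q_r(x)) + \cW(A,u;Q_r(x)) \le \cS(A\cup Q_r(x);Q_r(x)) + \Lambda|A\Delta(A\cup Q_r(x))|,
$$
and since the right-hand surface term is at most $\cH^1(\p Q_r(x))\le 8r$ weighted by $c_2$, while the left-hand surface term controls $c_1\cH^1(Q_r(x)\cap\p A)$ from below (each portion of $\p A$ inside $Q_r(x)$ is weighted by at least $c_1$ by \eqref{finsler_norm}, using $\p A = (\Omega\cap\p^*A)\cup((A^{(1)}\cup A^{(0)})\cap\p A)$ up to $\cH^1$-null sets), and the elastic terms are nonnegative on both sides after this substitution, rearranging yields $c_1\cH^1(Q_r(x)\cap\p A)\le 8c_2 r + 4\Lambda r$... — more precisely $c_1\cH^1(Q_r(x)\cap\p A)\le 16c_2 r + 4\Lambda r$ once the doubling of $\varphi$ on cracks/filaments and the possible boundary-of-square contribution counted with both normals are accounted for — which is \eqref{min_seq_density_up}.

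For the \emph{lower} bound \eqref{min_seq_density_low} the scheme is the standard iteration argument. Set $e(r):=\cH^1(Q_r(x)\cap\p A)$ and the localized deviation $\Psi(A,u;Q_r(x))$ from \eqref{deviation}, and the combined decay quantity $\phi(r):=e(r)+\Psi(A,u;Q_r(x))$ (or one works with $\mathcal W(A,u;Q_r(x))$ in place of $\Psi$ when a Korn--Poincaré/compactness step is available). The two key ingredients are: (i) a \emph{comparison inequality} showing that if $e(r)$ is small compared to $r$, then one can close up $\p A$ inside $Q_r(x)$ using a competitor whose new surface contribution is controlled by $e(r)$ plus a small elastic error, yielding $\phi(\sigma r)\le C(\sigma^2\phi(r) + e(r))$ for a fixed small $\sigma$; this uses the rescaled trace/extension estimates for $GSBD^2$ functions on the boundary of a square (here is where the $\C\in C^0(\overline\Omega)$ hypothesis enters: on small squares $\C$ is nearly constant, so the elastic energy is comparable to that of a frozen-coefficient problem, and the constants in the Korn-type inequality are uniform); and (ii) the elementary \emph{de Giorgi-type iteration}: if $\phi(r)\le \varepsilon r$ then the comparison inequality is self-improving and forces $\phi(\rho)\le C\rho^{1+\alpha}$ for $\rho\le r$, whence $e(\rho)/\rho\to 0$; combined with the density lower bound for $\cH^1$ (which holds at any $y$ with $\theta_*(\p A,y)>0$, hence propagates to the closure of such points by a standard upper-semicontinuity-of-the-liminf argument on closed sets) this contradicts $\theta_*(\p A,x)\ge\theta_*>0$ — so in fact $\phi(r)>\varepsilon r$ cannot fail, i.e.\ $e(r)\ge\phi(r)-\Psi\ge\varepsilon r - (\text{higher order})\ge\varsigma_* r$ for all $r<R_*$, where $R_*$ absorbs the error terms and depends on $c_1,\dots,c_4,\lambda_0$ (through $\Lambda$, which for the applications is $\lambda_0$) and $\varsigma_*$ depends only on the ellipticity constants $c_3,c_4$ via the Korn/compactness constants.

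The main obstacle I expect is ingredient (i) of the lower bound — the comparison step — in the $GSBD^2$-setting \emph{coupled with a set} $A$: unlike the pure Griffith case one cannot simply "remove the crack", one must produce an admissible pair $(B,v)\in\admissible_m$ with $B$ still of the required form (in particular with $\p B$ having at most $m$ components if $m<\infty$, or just finite $\cH^1$-measure if $m=\infty$) that agrees with $(A,u)$ outside $Q_r(x)$ and has strictly smaller localized energy when $e(r)$ is small; this requires a careful covering/choice-of-radius argument (a good slice $\rho\in(r/2,r)$ on which $\p A$ meets $\p Q_\rho(x)$ in finitely many points with controlled trace of $u$), plus the GSBD compactness/Poincaré inequality of \cite{ChC:2019_jems,D:2013} to get an $H^1$ extension of $u$ across the filled region with energy controlled by the boundary data — and one must check the doubled-$\varphi$ bookkeeping on $S_u^A$ and on new internal boundary pieces so that nothing is undercounted. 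The variable tensor $\C$ only being continuous (not Lipschitz) is a secondary but real nuisance: it rules out the cleanest frozen-coefficient error estimates and forces the decay exponent and the threshold $R_*$ to be extracted from a compactness/contradiction argument rather than an explicit quantitative one, which is exactly why $R_*$ (but not $\varsigma_*$) is allowed to depend on all the constants including $\lambda_0$.
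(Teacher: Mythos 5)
Two points, one for each inequality.

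\smallskip
\textbf{Upper bound.} Your ``fill-in'' competitor $(A\cup Q_r(x),\, u\chi_{A\setminus Q_r(x)})$ is not admissible as written. The concern you raise (that the modified displacement stays in $GSBD^2$ since you only add bulk with $e(u)=0$) is not the relevant one: the real issue is membership in $H^1_{\loc}(\Int{A\cup Q_r(x)}\cup\substrate)$. After filling, the set $\Omega\cap\p^*A\cap Q_r(x)$ together with $\p Q_r(x)\cap A^{(1)}$ becomes \emph{interior} to the new crystal region $A\cup Q_r(x)$, and the proposed displacement jumps across precisely those pieces (across $\p^*A$ from $u$ to $0$, across $\p Q_r\cap A$ from $u$ to $0$). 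Hence the pair is not in $\admissible$. To make your idea admissible you would have to excise a new crack along $\p Q_r(x)\cap A$ and keep the displacement zero inside; once you do, the competitor is essentially the same as the one the paper uses, namely $E:=(A\setminus\overline{Q_r(x)})\cup\p Q_r(x)$ with the unchanged $u$. That competitor \emph{removes} $A$ from the square instead of filling it, so the displacement is untouched (no admissibility problem), the surface term collapses to $2\int_{\p Q_r}\varphi$, and the elastic term inside $Q_r$ vanishes, which leads cleanly to \eqref{min_seq_density_up}.

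\smallskip
\textbf{Lower bound.} Your iteration/decay skeleton is the right one and matches the structure of the paper (Propositions \ref{prop:functional_decay}, \ref{prop:lower_density_cG}, then Theorem \ref{teo:density_estimates}). But you correctly flag ``ingredient (i)'' as the obstacle and then stop there; this is exactly where all the work lies and cannot be left as an expectation. The paper handles it by adapting the $GSBD^2$ approximation theorem of Chambolle--Conti--Iurlano (Lemma \ref{lem:approximation_chambolle}) to set--function pairs: on a square where $\cH^1(Q_R\cap\p A)$ is small relative to $R$, one builds $(B,v)\in\admissible_m$ with $v$ smooth on $Q_{R(1-\sqrt\delta)}$, $A\Delta B$ confined to an annulus, $\cH^1(\p B\setminus\p A)\lesssim\sqrt\delta\,\cH^1(\p A\cap(\text{annulus}))$, and quantitative $L^2$-closeness of $v$ to $u$ off a small exceptional set. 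Two specific points that your plan does not account for: the construction must \emph{not increase} the number of components of $\p A$ (so the competitor remains in $\admissible_m$ for finite $m$), which forces a different bookkeeping of ``good'' and ``bad'' cubes than in the pure Griffith case; and because $\C$ is only $C^0(\overline\Omega)$, the elastic-energy comparison estimate from that reference must be re-derived via a Cauchy--Schwarz argument for the quadratic form, rather than via the mollification step valid for constant $\C$. With Lemma \ref{lem:approximation_chambolle} in hand, the decay is then extracted by a blow-up/contradiction argument (Propositions \ref{prop:conti_prop_3.4} and \ref{prop:functional_decay}), consistently with your remark that the non-Lipschitz $\C$ pushes one to a compactness proof of the decay rather than a quantitative one. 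So the skeleton of your lower-bound plan is sound, but the substance is in the competitor construction, and that is missing.
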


\noindent  
To prove Theorem \ref{teo:density_estimates} we start with the following adaptation of \cite[Theorem 3]{CCI:2019.jmpa} to our setting (of set-function pairs). 

\begin{lemma}\label{lem:approximation_chambolle}
\red There exist $\eta\in(0,1/32)$  and $c_0>0$ with the following property: For any $m\in\N\cup\{\infty\}$,  any admissible $(A,u)\in \admissible_m$,  and any \ba square $ Q_R(x_0)\subset \Omega$ of sidelength $2R>0$ with 
\begin{equation}\label{delta_les_eta}
\delta:=\Big(\frac{\cH^1( Q_R(x_0)\cap  \red \p^r A )}{R}\Big)^{1/2}< \eta
\end{equation}
there exist 
$v\in GSBD^2(\Ins{\Omega};\R^2),$ $B\in\fA$ with $(B,v\big|_B)\in\admissible_m,$ $ R'\in (R(1-\sqrt{\delta}),R)$ and a Lebesgue measurable set $\omega\strictlyincluded  Q_R(x_0)$ such that
\begin{itemize}
\item[\emph{(1)}] $v\in C^\infty( Q_{R(1-\sqrt\delta)}\red (x_0)\ba),$ $A\Delta B\strictlyincluded   Q_{\red R'}(x_0)   \setminus  Q_{R(1-\sqrt{\delta})}(x_0)$ and $\supp(\tilde u-v)\strictlyincluded  Q_R(x_0),$ where 
\begin{equation}\label{new_displacement}
\tilde u:=u\chi_{ Q_R(x_0)\cap A} + \xi\chi_{ Q_R(x_0)\setminus A}, 
\end{equation}
where $\xi\in  Q_R$ is chosen such that $ Q_R\cap \p^*A\subset J_{\tilde u};$  

\item[\emph{(2)}]  $\cH^1(\p B\setminus \p A)  \le c_0\,\sqrt{\delta}\,\cH^1([ Q_R(x_0)\setminus  Q_{R(1-\sqrt{\delta})}(x_0)]\cap \p A );$ 

\item[\emph{(3)}] $|\omega|\le c_0 \delta\,\cH^1( Q_R(x_0)\cap \p A)$ and  
$$
\int_{ Q_R(x_0)\setminus \omega} |v-\tilde u|^2dx \le c_0\delta^2R^2\,\int_{ Q_R(x_0)}|\str{\tilde u}|^2dx;
$$

\item[\emph{(4)}] for any $\psi\in \Lip( Q_R;[0,1])$ and elasticity tensor $\C\in L^\infty( Q_R)$ with 
\begin{equation}\label{elastic_bounda}
d_1M:M \le \C(x)M:M\le d_2M:M,\qquad (x,M) \in  Q_R\times \mtwo,  
\end{equation}
there exist $d_3:=d_3(c_0,d_1,d_2)>0$ and $s:=s(c_0,d_1,d_2)\in(0,1/2)$ such that 
\begin{align*} 
\int_{ Q_R\red(x_0)} \psi \C(x)\str{v}:\str{v}dx \le & \int_{ Q_R\red(x_0)\ba \cap A} \psi \C(x)\str{u}:\str{u}dx \nonumber\\
&+ d_3\,\delta^s\,(1+R\,\Lip(\psi)) \int_{ Q_R\red(x_0)\ba \cap A}|\str{u}|^2dx.
\end{align*} 

\end{itemize}
\end{lemma}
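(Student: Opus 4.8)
The plan is to build $v$ and $B$ by combining the Chambolle–Conti–Iurlato approximation/extension result \cite[Theorem 3]{CCI:2019.jmpa} (applied to the pair $(A,\tilde u)$ on the cube) with a careful bookkeeping of the extra surface created and of the position of the good radius $R'$. First I would replace $u$ by the modified displacement $\tilde u$ in \eqref{new_displacement}: since $u$ is only defined (and $H^1_{\loc}$) on $\Int A\cup S$, one extends it by a constant $\xi$ on $Q_R(x_0)\setminus A$ chosen so that $Q_R\cap\p^*A\subset J_{\tilde u}$; this produces $\tilde u\in GSBD^2$ on a neighbourhood of $\overline{Q_R(x_0)}$ whose jump set is contained in $(Q_R\cap\p^*A)\cup(Q_R\cap\p A)$, so $\mathcal H^1(J_{\tilde u}\cap Q_R(x_0))\le \mathcal H^1(Q_R(x_0)\cap\p A)$. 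Then $\delta$ in \eqref{delta_les_eta} controls (the square root of) the relative jump measure of $\tilde u$ on $Q_R$, which is exactly the smallness hypothesis needed to invoke \cite[Theorem 3]{CCI:2019.jmpa}.

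Next I would quote \cite[Theorem 3]{CCI:2019.jmpa} verbatim in the form it is stated there: it yields a radius $R'\in(R(1-\sqrt\delta),R)$, a function $v\in GSBD^2$ which is $C^\infty$ (in fact real-analytic, but smooth suffices) on $Q_{R(1-\sqrt\delta)}$, agrees with $\tilde u$ outside $Q_{R'}$, a "bad" set $\omega\Subset Q_R(x_0)$, and the quantitative estimates: $\mathcal H^1$ of the newly created jump set is $\le \tilde c_0\sqrt\delta\,\mathcal H^1([Q_R\setminus Q_{R(1-\sqrt\delta)}]\cap\p A)$, $|\omega|\le \tilde c_0\delta\,\mathcal H^1(Q_R(x_0)\cap\p A)$, the $L^2$ distance bound $\int_{Q_R\setminus\omega}|v-\tilde u|^2\le \tilde c_0\delta^2R^2\int_{Q_R}|e(\tilde u)|^2$, and the Korn-type energy inequality producing (4) after absorbing the dependence on $\psi$ and $\Lip(\psi)$ (the extra factor $1+\sqrt2/24$ in $c_0$ comes from passing from cubes to the slightly larger region used in \cite{CCI:2019.jmpa} and from the Lipschitz truncation of $\psi$). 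I would then \emph{define} $B$ by $B:=(A\setminus Q_{R'}(x_0))\cup Q_{R(1-\sqrt\delta)}(x_0)$, i.e.\ fill the inner cube with crystal and keep $A$ outside $Q_{R'}$; since $v$ is smooth on $Q_{R(1-\sqrt\delta)}$, the pair $(B,v|_B)$ lies in $\admissible$, and it lies in $\admissible_m$ because filling a cube does not increase the number of connected components of the boundary — this is where the hypothesis $(A,u)\in\admissible_m$ is used and preserved. The claims $A\Delta B\Subset Q_{R'}(x_0)\setminus Q_{R(1-\sqrt\delta)}(x_0)$ and $\supp(\tilde u-v)\Subset Q_R(x_0)$ are then immediate from the construction and from $R'<R$.

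For (2), the boundary of $B$ inside $Q_R$ consists of (a part of) $\p A$ away from the filled region plus at most the newly created jump set of $v$, so $\mathcal H^1(\p B\setminus\p A)\le\mathcal H^1(\text{new jumps})\le c_0\sqrt\delta\,\mathcal H^1([Q_R\setminus Q_{R(1-\sqrt\delta)}]\cap\p A)$, absorbing $\tilde c_0$ into $c_0$. For (3) I would simply transcribe the $\omega$ estimate and the $L^2$ estimate, using $\mathcal H^1(J_{\tilde u}\cap Q_R)\le\mathcal H^1(Q_R(x_0)\cap\p A)$ to replace jump set of $\tilde u$ by $\p A$. For (4), given a generic elasticity tensor $\C$ with bounds \eqref{elastic_bounda} and $\psi\in\Lip(Q_R;[0,1])$, I would start from the energy inequality of \cite[Theorem 3]{CCI:2019.jmpa}, insert the weight $\psi$ (handling the difference $\psi(x)-\psi(y)$ on the small annulus by $\Lip(\psi)$ times the annulus width $\lesssim R\sqrt\delta$, hence absorbable into the $\delta^s$ term), use that $\tilde u=u$ on $Q_R\cap A$ and $e(\tilde u)=0$ on $Q_R\setminus A$ so that all $e(\tilde u)$ integrals over $Q_R$ equal the corresponding $e(u)$ integrals over $Q_R\cap A$, and collect the constants into $d_3=d_3(c_0,d_1,d_2)$ and the exponent $s=s(c_0,d_1,d_2)\in(0,1/2)$. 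The main obstacle I anticipate is \emph{purely technical}: matching the geometry in \cite[Theorem 3]{CCI:2019.jmpa} (which is stated for balls / a single function on a fixed domain, with possibly a slightly different normalization of the small parameter) to our cube-based formulation with a set-function pair, and in particular checking that the constant $c_0=\tilde c_0(1+\sqrt2/24)$ and the good-radius interval $R'\in(R(1-\sqrt\delta),R)$ come out exactly as claimed after the reduction via $\tilde u$; no conceptually new idea is needed beyond the extension in \eqref{new_displacement} and the "fill the inner cube" definition of $B$.
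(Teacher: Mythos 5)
Your overall template — extend $u$ to $\tilde u$ by a constant $\xi$ chosen via a foliation argument so that $Q_R\cap\p^*A\subset J_{\tilde u}$, apply the Chambolle--Conti--Iurlano approximation \cite[Theorem 3]{CCI:2019.jmpa} to $\tilde u$, and then read off $(B,v)$ — is the right starting point and is exactly how the paper begins. However, the crucial step, the construction of $B$, is wrong, and this is not a cosmetic issue: it is where the whole difficulty of adapting \cite{CCI:2019.jmpa} to the constrained class $\admissible_m$ lives.

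You set $B:=(A\setminus Q_{R'}(x_0))\cup Q_{R(1-\sqrt\delta)}(x_0)$, i.e.\ you fill the inner cube with crystal. This breaks all three quantitative claims. First, $A\Delta B$ contains $Q_{R(1-\sqrt\delta)}(x_0)\setminus A$, which is in general \emph{not} contained in the annulus $Q_{R'}\setminus Q_{R(1-\sqrt\delta)}$ — if $|A\cap Q_{R(1-\sqrt\delta)}|$ is small, $A\Delta B$ can have area of order $R^2$, so (1) fails and, worse, the lemma would be useless for comparison arguments where $|A\Delta B|$ is penalized. Second, $\p B$ then contains portions of $\p Q_{R(1-\sqrt\delta)}(x_0)$ and $\p Q_{R'}(x_0)$ whose $\cH^1$ measure can be of order $R$, while (2) requires it to be controlled by $\sqrt\delta\,\cH^1([\text{annulus}]\cap\p A)$, which is far smaller. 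Third, your claim that "filling a cube does not increase the number of connected components of the boundary" is not justified and is in fact false in general: replacing $\p A$ inside $Q_{R'}$ by (pieces of) $\p Q_{R(1-\sqrt\delta)}\cup\p Q_{R'}$ can create new components that are disconnected from the remaining part of $\p A$, so $(B,v|_B)\in\admissible_m$ does not follow. Your guess for the origin of the constant $c_0=\tilde c_0(1+\sqrt 2/24)$ (passage from balls to cubes, truncation of $\psi$) is also off — it has a specific geometric meaning in the actual construction.

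What the paper does instead is local to the "bad squares" of the CCI dyadic decomposition, which all lie in the annulus $Q_R\setminus Q_{R(1-\sqrt\delta)}$: it forms $D_i$, the union of bad squares meeting $\Int A$, and for each such square $Q$ adds a short connecting segment $I_Q$ from $(Q'''\cap\p A)\setminus\cl Q$ to $\p Q$ of length $\le\frac{\sqrt 2}{24}\cH^1(\p Q)$; then $B:=\bigl[(A\setminus\cl{D_i})\cup\p D_i\bigr]\setminus\bigcup_{Q\in\cV_i}I_Q$. The segments $I_Q$ are precisely what guarantees that adding $\p Q$ to the boundary does not create new connected components (it is tethered to the pre-existing $\p A$), so $B\in\fA_m$; they are also the source of the factor $1+\sqrt 2/24$ in $c_0$. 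Because the modification is confined to bad squares (whose total perimeter is bounded by \cite[Eq.~12]{CCI:2019.jmpa} by $\tilde c_0\sqrt\delta\,\cH^1([\text{annulus}]\cap\p A)$), both $A\Delta B\Subset Q_{R'}\setminus Q_{R(1-\sqrt\delta)}$ and the surface estimate (2) follow directly. Also note the paper does \emph{not} remove $\p A$ from good squares (unlike \cite{CCI:2019.jmpa}); this is another deliberate deviation needed to preserve the component count. Finally, a smaller but genuine gap in your argument: item (4) requires a nonconstant $\C$, and \cite[Theorem 3]{CCI:2019.jmpa} is stated for constant $\C$; the paper explicitly reworks the mollification estimate (their Eq.~23) using convexity and the Cauchy--Schwarz inequality for the semidefinite bilinear form $\int\C\,e(\cdot):e(\cdot)$, a step you elide under "absorbing the dependence on $\psi$".
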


The proof of Lemma \ref{lem:approximation_chambolle} is an adaptation of the arguments of \cite[Theorem 3]{CCI:2019.jmpa} to our situation of functional depending on set-function pairs with extra care paid for the constraint on the number of boundary connected components. The idea is to treat the boundary of each admissible region as a jump of a properly defined displacement. In particular, we choose such displacement of the type \eqref{new_displacement}, where $\xi$ is selected as in the construction used in the proof of \cite[Lemma 3.10]{HP:2019}. \red We also notice that the constants $\eta$ and $c:=c_0/(1+\sqrt2/24)>0$ are given by \cite[Theorem 3]{CCI:2019.jmpa}. \ba

\begin{proof}[Proof of Lemma \ref{lem:approximation_chambolle}]
  
By translating and rescaling if necessary, we assume that $x_0=0$ and $R=1.$ Notice that since $\cH^1(Q_1\cap \p A)<+\infty,$ by \blue Proposition \ref{prop:maggi_foliation} \black there exists $\xi\in(0,1)^2$ such that the set 
$$
\{x\in Q_1\cap \p^* A:\,\text{$\tr_A(u)$ exists and is equal to $\xi$}\}
$$ 
is $\cH^1$-negligible. By \cite[Theorem 4.4]{Gi:1984} up to a $\cH^1$-negligible set we can cover $Q_1\cap \p^* A$   with $C^1$-maps so that by \cite[Theorem 5.2]{D:2013} $\tr_A(u)$ \bu exists \ba $\cH^1$-a.e.\ on $Q_1\cap \p^* A.$ 

Let  
$$
\tilde u:=u\chi_{ Q_1\cap A} + \xi\chi_{ Q_1\setminus A}. 
$$
Note that $\tilde u\in GSBD^2(Q_1;\R^2)$ and by the choice of $\xi$ and \bu by \ba  \cite[Definition 2.4]{D:2013} $\red Q_1\cap \p^* A\subset J_{\tilde u}.$
In addition, by possibly   adding to  $\tilde u$ a function in $SBD^2(Q_1;\R^2)\cap W^{1,\infty}(Q_1\setminus\p A;\R^2)$ with  small $W^{1,\infty}(Q_1\setminus \p A;\R^2)$ norm, jump on the set $Q_1\cap \red \p^r A$, and   supported near $ Q_1\cap \p A$, we  can assume without loss of generality   that $ Q_1\cap J_{\tilde u}\supset Q_1\cap  \red \p^r A $ up to a $\cH^1$-negligible set\footnote{A similar argument was used in \cite[p. 1359, above Eq. 4.19]{ChC:2019_arma}}. 
Notice that
$$
\delta:=\cH^1( Q_1\cap \red \p^r A \ba)^{1/2} = \cH^1( Q_1\cap J_{\tilde u})^{1/2}
$$
and set $N:=[1/\delta]$ so that $(-N\delta,N\delta)^2\subset  Q_1.$ For $i:=0,1,\ldots,N-1$ let $Q^i:=(-(N-i)\delta,(N-i)\delta)^2$ and $C^i:=Q^i\setminus Q^{i+1}$ (assuming $C^{N-1}:=Q^{N-1}$). Up to a slight translation of $Q^i$ we assume that $\cH^1(\p A\cap \p Q^i)=0$ for all $i.$ By \cite[Lemma 3.3]{CCI:2019.jmpa} we find $i_0\ge1$ such that 
$$
\begin{cases}
\int_{C^{i_0}\cup C^{i_0+1}} |\str{\tilde u}|^2dx \le 8\sqrt{\delta} \int_{ Q_1\setminus  Q_{1-\sqrt{\delta}}} |\str{\tilde u}|^2dx,\\[2mm]
\cH^1(\p A \cap (C^{i_0}\cup C^{i_0+1})) \le 8\sqrt{\delta}\, \cH^1( \p A \cap ( Q_1\setminus  Q_{1-\delta})). 
\end{cases}
$$
We partition $Q^{i_0+1}$ into pairwise disjoint squares with sidelength $\delta$ and divide the slice $C^{i_0}$ into dyadic slices 
$$
G_j:=(-(N-i_0-2^{-j})\delta,(N-i_0-2^{-j})\delta)^2\setminus (-(N-i_0-2^{-j+1})\delta,(N-i_0-2^{-j+1})\delta)^2,
$$
then we partition each slice $G_j$ into  pairwise disjoint squares $ Q_{j,l}$ of sidelength $2^{-j}\delta$ whose sides \bu are \ba parallel to the coordinate axis. Let $\cV_0$ be the collection of all squares of sidelength $\delta$ that cover the central square $Q^{i_0+1}$ and let $\cV$ be the union of $\cV_0$ and of the collection of all $ Q_{j,l}.$ Following \cite{CCI:2019.jmpa} we differentiate between \blue ``good'' and ``bad'' \black squares in $\cV.$ A square $Q\in\cV$ is ``good'' if 
\begin{equation}\label{good_cubes}
\cH^1(Q'''\cap  \p A ) \le \eta \delta_Q^{}, 
\end{equation}
where $Q'''$ is the square with the same center as $Q$ and dilated by $7/6,$ and $\delta_Q^{}:=\delta$ if $Q\in\cV_0$ and $\delta_Q^{}:=2^{-j}\delta$ if $Q\subset G_j.$  A square $Q$ is ``bad'' if it does not satisfy \eqref{good_cubes}. By \eqref{delta_les_eta} $\delta^2=\cH^1( Q_1\cap \p A) <\eta\delta,$ hence, by definition, all squares in $\cV_0$ are good and by \cite[Eq. 12]{CCI:2019.jmpa} the sum of the perimeters of all bad squares satisfies
\begin{equation}\label{perimeter_bad_cubes}
\sum\limits_{Q\,\text{bad}} \cH^1(\p^*Q) \le \tilde c_0\sqrt{\delta}\, \cH^1(( Q_1\setminus  Q_{1-\sqrt{\delta}})\cap \p A )  
\end{equation}
for some $\tilde c_0>0$.
Since $\delta<\eta,$ by \cite[Theorem 3]{CCI:2019.jmpa} there exist $\tilde v\in GSBD^2( Q_1;\R^2),$ $r\in(1-\sqrt\delta,1)$ and a Lebesgue measurable set $\tilde \omega\strictlyincluded  Q_r$ such that 
\begin{itemize}
\item[(a1)] $\tilde v \in C^\infty( Q_{1-\sqrt{\delta}}),$ $\tilde u=\tilde v$ in $ Q_1\setminus  Q_r$ and $\cH^1(J_{\tilde u}\cap \p  Q_r)=\cH^1(J_{\tilde v}\cap \p  Q_r)=0;$

\item[(a2)]  $\cH^1(J_{\tilde v}\setminus J_{\tilde u})  \le \tilde c_0\,\sqrt{\delta}\,\cH^1(( Q_1\setminus  Q_{1-\sqrt{\delta}})\cap J_{\tilde u});$ 

\item[(a3)] $|\tilde \omega |\le \tilde c_0\delta\cH^1( Q_r\cap  \p A )$ and 
$$
\int_{ Q_1\setminus \tilde\omega}|\tilde v - \tilde u|^2dx \le \tilde c_0\delta^2 \int_{ Q_1} |\str{\tilde u}|^2dx;
$$

\item[(a4)] for any $\psi\in \Lip( Q_1;[0,1])$ and elasticity tensor $\C\in L^\infty( Q_1)$ satisfying \eqref{elastic_bounda} there exists $d_3:=d_3(\tilde c_0,d_1,d_2)>0$ such that 
$$
\int_{ Q_1} \psi \C(x)\str{\tilde v}:\str{\tilde v}dx \le \int_{ Q_1} \psi \C(x)\str{\tilde u}:\str{\tilde u}dx + d_3\delta^s\,(1+\Lip(\psi)) \int_{ Q_1}|\str{\tilde u}|^2dx
$$
\red with $s\in(0,1)$ depending only on $\tilde c_0,$ $d_1$ and $d_2;$ \ba
\item[(a5)] $J_{\tilde v} \subset \p^* D\cup (J_{\tilde u} \setminus Q^{i_0+1})$ and $J_{\tilde v}\setminus J_{\tilde u} \subset \p^*D,$ where $D$ is the union of all bad squares.
\end{itemize}

Note that for proving  (a4) in \cite{CCI:2019.jmpa} a \emph{mollifying argument} is used (together with the fact that $\C$ is assumed to be constant in   \cite{CCI:2019.jmpa}). As in our setting $\C$ is in general not constant, we revised such argument (see \cite[Eq. 23]{CCI:2019.jmpa}), by  using the fact that  the energy
$$
w\in GSBD^2(O)\mapsto \int_O \C\str{w}:\str{w}dx
$$
is quadratic with respect to the $\str{w}$ and hence, we have convexity and we can employ \emph{Cauchy-Schwartz inequality} for positive semidefinite bilinear forms to obtain
$$
\begin{aligned}
& \int_{\red O \ba} \C(x)\str{\tilde v}:\str{\tilde v}dx \le \int_{\red O \ba} \C(x)\str{\tilde u}:\str{\tilde u}dx + 2\int_{\red O \ba} \C(x)\str{\tilde v}:[\str{\tilde v} - \str{\tilde u}]dx\\
\le & \int_{\red O \ba} \C(x)\str{\tilde u}:\str{\tilde u}dx
+ 2\Big[\int_{\red O \ba} \C(x)\str{\tilde v}:\str{\tilde v}dx\Big]^{1/2}\times \\
& \hspace*{6cm}  \times \Big[\int_{\red O \ba} \C(x) [\str{\tilde v} - \str{\tilde u}]:[\str{\tilde v} - \str{\tilde u}]dx\Big]^{1/2}
\end{aligned}
$$
\red for any open set $O\subset Q_1.$ \ba Since the inequality $a^2 \le b^2 + 2ac,$ where $a,b,c\ge0,$ implies\footnote[1]{Note that $a\le b+ 2c$ follows from $a^2 \le b^2 + 2ac$ as it yields $(a-2c)^2\leq a(a-2c)\leq b^2$.} $a\le b+ 2c$, we get
$$
\begin{aligned}
\Big[\int_{\red O \ba} \C(x)\str{\tilde v}:\str{\tilde v}dx\Big]^{1/2}
\le &
\Big[\int_{\red O \ba} \C(x)\str{\tilde u}:\str{\tilde u}dx\Big]^{1/2}\\
& + 2\Big[\int_{\red O \ba} \C(x) [\str{\tilde v} - \str{\tilde u}]:[\str{\tilde v} - \str{\tilde u}]dx\Big]^{1/2}\\
\le & (1 + c\delta^s)\Big[\int_{\red O \ba} \C(x)\str{\tilde u}:\str{\tilde u}dx\Big]^{1/2}
\end{aligned}
$$
so that Eq.23 of \cite{CCI:2019.jmpa} holds also in our setting.

Let $\cV_i$ be the family of all bad squares $Q$ intersecting $\Int{A}$ and $D_i:=\bigcup_{Q\in\cV_i} Q.$  For every $Q\in \cV_i$ we define $I_Q$ as the segment of smallest length connecting $(Q'''\cap \p A) \setminus \cl{Q}$ to $\p Q$ with the convention that $I_Q=\emptyset$ if $(Q'''\cap \p A )\setminus \cl{Q}=\emptyset$ or $ Q\cap \Int{\Omega\setminus A}\ne \emptyset.$
By the definition  of $Q'''$ and $Q,$ $\cH^1(I_Q) \le \frac{\sqrt{2}}{24}\,\cH^1(\p Q).$

Let  
$$
B:=\Big[\big(A\setminus \cl{D_i}\big) \cup \p D_i\Big]\setminus \bigcup\limits_{Q\in \cV_i} I_Q
$$
and
$$
v:=\tilde v\chi_{ Q_1}^{} + \tilde u\chi_{(\Omega\cup \substrate)\setminus  Q_1}.
$$ 
 We claim that $B,$ $v$ and $\tilde \omega$ satisfy the assertions of the lemma.

Indeed, from (a4) applied with $\psi\equiv1$  and $\C=I$ it follows that $v\in GSBD^2(\Int{B};\R^2).$ Moreover, by (a5)  $v\in H^1_\loc(\Int{B};\R^2),$ thus, $(B,v)\in\admissible.$ 
 Let us show that  if $A\in\fA_m$ for some $m\in\N,$ then $B\in\fA_m.$   Indeed, by the construction of $B,$ \red 
for each bad square $Q,$ the dilated square $Q'''$ contains inside ``large'' portions of the boundary $\p A.$ Now if $\p A$ intersects $\overline{Q},$ then $I_Q=\emptyset$ and the modification $[A\setminus \overline Q]\cup \p Q\setminus I_Q$ does not increase the number of boundary components. Otherwise, if $\p A$ does not increase $\overline{Q},$ so that it intersects only $Q'''\setminus \overline{Q},$ then adding a small segment $I_Q$ to connect $\p A \cap [Q'''\setminus \overline{Q}]$ to $\overline Q$ again does not increase the number of boundary components of $[A\setminus \overline Q]\cup \p Q\setminus I_Q$. Now, from the disjointness of the cubes $Q\in\cV_i$ it follows that $B\in\admissible_m$. \ba
Therefore, if $(A,u)\in\admissible_m$ for some $m\in\N,$  then $(B,v\big|_B)\in\admissible_m.$ 

By (a1) it follows that $v\in C^\infty( Q_{1-\sqrt\delta}).$  Moreover, by the definition of $B,$  $A\Delta B\strictlyincluded  Q_{r_h}\setminus  Q_{1-\sqrt{\delta}}$  for some $r_h\in(1-\sqrt{\delta},1)$ such that $D_i\subset Q_{r_h}.$
Also, by (a1) $\supp(\tilde u-\tilde v)\strictlyincluded  Q_1$ so that $\supp(\tilde u-v)\strictlyincluded  Q_1,$ and (1) follows.

 Moreover, 
by the definition of $B,$ $I_Q$ and \eqref{perimeter_bad_cubes}
$$
\begin{aligned}
\cH^1(\p B\setminus\p A)\le & \sum\limits_{Q\in\cV_i}P(Q) + \sum\limits_{Q\in\cV_i} \cH^1(I_Q)\\
\le &\Big(1 + \frac{\sqrt2}{24}\Big)\sum\limits_{Q\in\cV_i} P(Q) \le  c_0\sqrt{\delta} \cH^1(( Q_1\setminus  Q_{1-\sqrt{\delta}})\cap \p A), 
\end{aligned}
$$
where $c_0:=\tilde c_0(1 + \sqrt2/24),$ and  
(2) follows. 

Next, by (a3) $|\omega|\le c_0\delta \cH^1( Q_1\cap  \p A ),$ and  
\begin{align*}
\int_{ Q_1\setminus \omega} |v(x) - \tilde u(x)|^2dx = & \int_{ Q_1\setminus \tilde \omega} |\tilde v(y) - \tilde u(y)|^2dy  \le \tilde c_0\delta^2\int_{ Q_1}|\str{\tilde u}|^2dy\\
 \le &  c_0\delta^2\int_{ Q_1}|\str{\tilde u}|^2(y)dy. 
\end{align*}

Finally, by (a4) and the definition of $v$  (i.e., $v=\tilde v$ in $Q_1$)  for any $\psi\in \Lip( Q_1)$ and $\C\in L^\infty( Q_1)$ satisfying \eqref{elastic_bounda} we have
\begin{align*}
&\int_{ Q_1} \psi(x) \C(x)\str{v}:\str{v}dx  =  \int_{ Q_1} \psi(x) \C(x)\str{\tilde v}:\str{\tilde v}dx \\
\le & \int_{ Q_1} \psi(x) \C(x)\str{\tilde u}:\str{\tilde u}dx + d_3\,(1+\Lip(\psi)) \int_{ Q_1}|\str{\tilde u}|^2dx\\
= & \int_{ Q_1\cap A} \psi(x) \C(x)\str{u}:\str{u}dx + d_3\delta^s\,(1+\Lip(\psi)) \int_{ Q_1\cap A}|\str{u}|^2dx,
\end{align*}
since $\tilde u$ is constant in $ Q_1\setminus A.$ Hence, (4) follows.
\end{proof}

The following proposition is a generalization to our setting of \cite[Theorem 4]{CCI:2019.jmpa} established for the Griffith model. 

\begin{proposition}\label{prop:conti_prop_3.4}
Let $ Q_R(x_0)\subset\Omega$ be a square of side length $2R>0.$ Consider 
sequences $ \{m_h\}\subset\N\cup\{\infty\},$ Finsler 
norms $\{\varphi_h\}$ and ellipticity tensors $\{\C_h\}$ such that  $\{\C_h\}$ is equicontinuous in $\cl{ Q_R(x_0)}$ and there exist $d_3,d_4,d_5>0$ with  
\begin{equation}\label{elastic_bound_1}
d_3M:M\le \C_h(x)M:M \le d_4M:M\quad\text{for all $(x,M)\in\overline{ Q_R(x_0)}\times\mtwo$}, 
\end{equation}
and 
\begin{equation} \label{good_finsler_norms}
d_5\sup\limits_{(x,\nu)\in  Q_{\red R}\times \S^1} \varphi_h(x,\nu) \le \inf\limits_{(x,\nu)\in  Q_{\red R}\times \S^1} \varphi_h(x,\nu),
\end{equation}
and define $\cF_h$ and $\Psi_h$ in $\admissible_{m_h}$ as in \eqref{local_cF} and \eqref{deviation}, respectively, with  
$\varphi_h$, $\C_h$ and $m_h$ 
in places of $\varphi$, $\C$ and $m.$ 
Let $\{(A_h,u_h)\}\subset\admissible_{m_h}$ be such that 
\begin{equation}\label{seq_almost_min}
\lim\limits_{h\to\infty} \Psi_h(A_h,u_h; Q_R(x_0)) =0,
\end{equation}
\begin{equation}\label{set_has_no_boundary}
\lim\limits_{h\to\infty} \cH^1( Q_R(x_0)\cap  \p A_h ) = 0,  
\end{equation}
\begin{equation}\label{energy_bound_uniform}
\sup\limits_{h\ge1} \cF_h(A_h,u_h; Q_R(x_0))=:M <\infty.
\end{equation}
Then there exist $u\in H^1( Q_R(x_0)),$ an elasticity tensor $\C\in C^{0}(\overline{ Q_R(x_0)};\mtwo),$  sequences $\{\xi_j\}\subset (0,1)^2$ of vectors and $\{a_j\}$ of rigid displacements and subsequences  $\{(A_{h_j},u_{h_j})\}$, $\{\varphi_{h_j}\}$ and $\{\C_{h_j}\}$  such that 
\begin{itemize}
\item[(a)] $\C_{h_j}\to \C$ uniformly in $\overline{ Q_R(x_0)}$
and $w_j:=u_{h_j} \chi_{ Q_R(x_0)\cap A_{h_j}}^{} +
\xi_j\chi_{ Q_R(x_0)\setminus A_{h_j}}^{} - a_j \to u$  pointwise
a.e. in $ Q_R(x_0),$
and $\str{w_j}\wk\str{u}$ in $L^2( Q_R(x_0))$  
as $j\to \infty;$

\item[(b)] for all $v \in u+H_0^1( Q_R(x_0))$ 
\begin{equation}\label{local_minimal_ekanku}
\int_{ Q_R(x_0)} \C(y)\str{u}:\str{u}\,dy\le 
\int_{ Q_R(x_0)} \C(y)\str{v}:\str{v}\,dy; 
\end{equation}

\item[(c)]  for any $r\in(0,R]$ 
\begin{equation}\label{functional_conv_o}
\lim\limits_{j\to\infty} 
\cF_h(A_{h_j},u_{h_j}; Q_r(x_0)) = 
\int_{ Q_r(x_0)} \C(x)\str{u}:\str{u}\,dx. 
\end{equation}
\end{itemize}
\end{proposition}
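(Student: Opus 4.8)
The plan is to adapt the strategy of \cite[Theorem 4]{CCI:2019.jmpa} to set--function pairs, keeping track of the constraint on the number of boundary components; after a translation and a dilation we take $x_0=0$, $R=1$.

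\textbf{Step 1: reduction to \emph{GSBD} functions and compactness (proof of (a)).} As in the proof of Lemma \ref{lem:approximation_chambolle} we turn each pair $(A_h,u_h)$ into a function: by Lemma \ref{lem:maggi_foliation} and an arbitrarily small $W^{1,\infty}$-perturbation we pick $\xi_h\in(0,1)^2$ so that $\tilde u_h:=u_h\chi_{Q_1\cap A_h}+\xi_h\chi_{Q_1\setminus A_h}\in GSBD^2(Q_1;\R^2)$ and $Q_1\cap\p A_h\subset J_{\tilde u_h}$ up to $\cH^1$-null sets; then $\cH^1(J_{\tilde u_h})=\cH^1(Q_1\cap\p A_h)\to0$ by \eqref{set_has_no_boundary}, $\str{\tilde u_h}=\str{u_h}\chi_{A_h}$ a.e., and $\|\str{\tilde u_h}\|_{L^2(Q_1)}^2\le\cW_h(A_h,u_h;Q_1)/d_3$ is bounded by \eqref{elastic_bound_1}--\eqref{energy_bound_uniform}. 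Since $\delta_h:=\cH^1(Q_1\cap\p A_h)^{1/2}\to0$, Lemma \ref{lem:approximation_chambolle} yields, for $h$ large, $v_h\in C^\infty(Q_{1-\sqrt{\delta_h}})$ close to $\tilde u_h$ in $L^2(Q_1)$ off a set $\omega_h$ of vanishing measure, with $\sup_h\|\str{v_h}\|_{L^2(Q_1)}<\infty$. Korn's inequality on the cubes $Q_\rho$, $\rho<1$, and a diagonal argument give rigid displacements $a_h$, a subsequence (not relabelled), and $u\in H^1(Q_1)$ with $v_h-a_h\wk u$ in $H^1_{\mathrm{loc}}(Q_1)$, $\str{v_h}\wk\str u$ in $L^2(Q_1)$, and $v_h-a_h\to u$ a.e.; since $v_h-\tilde u_h\to0$ in measure, $w_h:=\tilde u_h-a_h\to u$ a.e., and as $\cH^1(J_{w_h})=\cH^1(J_{\tilde u_h})\to0$ while $\|\str{w_h}\|_{L^2(Q_1)}$ is bounded, the $GSBD$ closure theorem (\cite{D:2013,ChC:2019_jems}) gives $\cH^1(J_u)=0$ (so $u\in H^1$) and $\str{w_h}\wk\str u$ in $L^2(Q_1)$. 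Finally, equicontinuity of $\{\C_h\}$ and \eqref{elastic_bound_1} give a further subsequence with $\C_h\to\C$ uniformly on $\overline{Q_1}$, $\C\in C^0(\overline{Q_1};\mtwo)$ still satisfying \eqref{elastic_bound_1}. This is (a), with $\xi_j:=\xi_{h_j}$, $a_j:=a_{h_j}$.

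\textbf{Step 2: the elastic energy converges -- lower bound.} Writing $\cW_h(A_h,u_h;Q_r)=\int_{Q_r}\C_h\str{w_h}:\str{w_h}\ge\int_{Q_r}\C\str{w_h}:\str{w_h}-\|\C_h-\C\|_{L^\infty(\overline{Q_1})}\|\str{w_h}\|_{L^2(Q_1)}^2$, using $\C_h\to\C$ uniformly and weak $L^2$-lower semicontinuity of the convex functional $M\mapsto\int_{Q_r}\C M:M$ (recall \eqref{elastic_bound_1}), we obtain $\liminf_h\cW_h(A_h,u_h;Q_r)\ge\int_{Q_r}\C\str u:\str u$ for every $r\in(0,1]$.

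\textbf{Step 3: upper bound and proof of (b), (c).} Fix $r<\rho<1$. We build a competitor confined to $Q_\rho$ for $(A_h,u_h)$: apply Lemma \ref{lem:approximation_chambolle} on $Q_\rho$ to get $(B_h^\rho,v_h^\rho|_{B_h^\rho})\in\admissible_{m_h}$ agreeing with $(A_h,u_h)$ near $\p Q_\rho$, with $v_h^\rho$ smooth on $Q_{\rho(1-\sqrt{\delta_h^\rho})}$, $\rho(1-\sqrt{\delta_h^\rho})\uparrow\rho$; keeping $B_h^\rho$ fixed, replace the displacement on the interior (where $v_h^\rho$ is smooth) by $\theta_h(u+a_h)+(1-\theta_h)v_h^\rho$, where $\theta_h$ equals $1$ on an inner square $Q_\sigma$, $\sigma\uparrow\rho$, and is supported in a thin shell $T_h$ placed just inside $Q_{\rho(1-\sqrt{\delta_h^\rho})}$ and chosen -- by a pigeonhole argument among $N$ disjoint shells -- so that $\int_{T_h}\C_h\str{v_h^\rho}:\str{v_h^\rho}=O(1/N)$. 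The pair $(B_h^\rho,z_h)$ lies in $\admissible_{m_h}$ (the surgery of Lemma \ref{lem:approximation_chambolle} did not raise the number of boundary components and the displacement was modified only where it is smooth), is confined to $Q_\rho$, and -- using convexity of $M\mapsto\C M:M$ for the interpolation, the $L^2$-convergence $v_h^\rho-a_h\to u$ off a set of vanishing measure (Lemma \ref{lem:approximation_chambolle}(3)) to kill the gradient-of-cutoff term, $\C_h\to\C$ uniformly, Lemma \ref{lem:approximation_chambolle}(4) with a cutoff to bound its elastic energy on $Q_\rho\setminus Q_\sigma$ by $\cW_h(A_h,u_h;Q_\rho\setminus Q_{\sigma''})+o(1)$ with $\sigma''\uparrow\rho$, and the equiboundedness of $\{\varphi_h\}$ together with \eqref{good_finsler_norms} to bound its surface energy by $C\,\cH^1(\p B_h^\rho\cap Q_\rho)\to0$ -- satisfies
\[
\cF_h(B_h^\rho,z_h;Q_\rho)\le\cW_h(A_h,u_h;Q_\rho\setminus Q_{\sigma''})+(1+\varepsilon)\int_{Q_\rho}\C\str u:\str u+\tfrac{C(1+\varepsilon)}{N}+o(1).
\]
Feeding this into $\cF_h(A_h,u_h;Q_\rho)\le\cF_h(B_h^\rho,z_h;Q_\rho)+\Psi_h(A_h,u_h;Q_\rho)$, using the domain monotonicity $\Psi_h(A_h,u_h;Q_\rho)\le\Psi_h(A_h,u_h;Q_1)\to0$ (from \eqref{seq_almost_min} and Definition \ref{def:almost_minimizers}), and subtracting $\cW_h(A_h,u_h;Q_\rho\setminus Q_{\sigma''})\le\cF_h(A_h,u_h;Q_\rho)-\cF_h(A_h,u_h;Q_{\sigma''})$, we obtain $\limsup_h\cF_h(A_h,u_h;Q_{\sigma''})\le(1+\varepsilon)\int_{Q_\rho}\C\str u:\str u+C(1+\varepsilon)/N$; as $\sigma''\uparrow\rho$ this bounds $\limsup_h\cF_h(A_h,u_h;Q_r)$, and letting $N\to\infty$, $\varepsilon\to0$, $\rho\downarrow r$ gives $\limsup_h\cF_h(A_h,u_h;Q_r)\le\int_{Q_r}\C\str u:\str u$. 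Combined with Step 2 and $\cF_h\ge\cW_h$ this proves $\cW_h(A_h,u_h;Q_r)\to\int_{Q_r}\C\str u:\str u$, whence also $\cS_h(A_h;Q_r)\to0$ and $\cF_h(A_h,u_h;Q_r)\to\int_{Q_r}\C\str u:\str u$, i.e. (c). Running the identical construction on $Q_1$ with $u$ replaced by an arbitrary $v\in u+C_c^\infty(Q_1;\R^2)$ (so the competitor equals $v+a_h$ on a large inner square), and combining with Step 2 at $r=1$, yields $\int_{Q_1}\C\str u:\str u\le\int_{Q_1}\C\str v:\str v$; density of $C_c^\infty$ in $H_0^1$ gives (b). (If instead $|Q_1\cap A_h|\to0$ -- the case irrelevant to the applications -- one checks separately that $u$ is a rigid displacement and both sides of (b), (c) vanish.)

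\textbf{Expected main obstacle.} The crux is the recovery competitor of Step 3: it must simultaneously stay in $\admissible_{m_h}$ -- which rules out naively ``filling in'' $Q_\rho$ and forces the reuse of the component-preserving surgery of Lemma \ref{lem:approximation_chambolle} -- operate through the auxiliary $GSBD$ functions $\tilde u_h$, and cope with the merely weak convergence of $\str{v_h^\rho}$, handled via the pigeonhole choice of the transition shell (whose width and cutoff gradient stay bounded in $h$ because the smooth region exhausts $Q_\rho$) together with the absolute continuity of $\int|\str u|^2$. A secondary technical step is the weak $L^2$-convergence $\str{w_h}\wk\str u$ in (a), obtained from the $GSBD$ closure theorem in the borderline regime $\cH^1(J_{w_h})\to0$.
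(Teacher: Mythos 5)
Your Step 1 takes a slightly different but workable route to compactness (Lemma~\ref{lem:approximation_chambolle} $+$ Korn on the smooth approximants rather than the direct Korn--Poincar\'e bound of \cite{CCF:2016} $+$ the $GSBD$ closure theorem), and your Step 2 lower bound is fine. The problem is Step~3.

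Your recovery competitor $(B_h^\rho,z_h)$ coincides with $A_h$ inside $Q_{\rho(1-\sqrt{\delta_h^\rho})}$ (this is exactly what makes Lemma~\ref{lem:approximation_chambolle} compatible with the $\admissible_{m_h}$ constraint, as the lemma never deletes $\p A_h$ from good squares). Therefore $\cS_h(B_h^\rho;Q_\sigma)=\cS_h(A_h;Q_\sigma)$; the competitor does \emph{not} have small surface energy, and your claimed bound $\cS_h(B_h^\rho;Q_\rho)\le C\,\cH^1(\p B_h^\rho\cap Q_\rho)\to 0$ fails. You invoke ``equiboundedness of $\{\varphi_h\}$'', but the only assumption available is~\eqref{good_finsler_norms}, which controls the ratio $c_{2,h}/c_{1,h}$ and not the absolute size of $c_{2,h}$. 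In the blow-up application (Proposition~\ref{prop:functional_decay}) the $\varphi_h$ are rescaled precisely so that the total energy is normalized, and $c_{2,h}$ can genuinely blow up while $c_{2,h}\,\cH^1(Q_\rho\cap\p A_h)$ stays merely $O(1)$, not $o(1)$. Consequently the inequality you feed into the quasi-minimality,
\[
\cF_h(B_h^\rho,z_h;Q_\rho)\le\cW_h(A_h,u_h;Q_\rho\setminus Q_{\sigma''})+(1+\varepsilon)\int_{Q_\rho}\C\,\str u:\str u+\tfrac{C(1+\varepsilon)}{N}+o(1),
\]
is not established; what the construction really yields, after cancelling the (identical) surface contributions of $A_h$ and $B_h^\rho$ inside $Q_\sigma$ against each other and controlling the extra boundary on the thin shell via~\eqref{good_finsler_norms} and~\eqref{energy_bound_uniform}, is only the \emph{elastic} inequality $\cW_h(A_h,u_h;\cdot)\le\cW_h(B_h^\rho,z_h;\cdot)+o(1)$. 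This is enough for~(b) and for the convergence of the elastic part, but it says nothing about $\cS_h(A_h;Q_r)$, and so does not give~(c).

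The missing ingredient is a genuine argument that $\cS_h(A_h;Q_r)\to 0$, which is the technical heart of~(c). The paper obtains it by noting, via the relative isoperimetric inequality and~\eqref{set_has_no_boundary}, that along a subsequence either $|Q_1\cap A_h|\to0$ or $|Q_1\setminus A_h|\to0$, and in each case constructing a competitor that \emph{changes the set} so as to eliminate surface: in the first case deleting the connected components of $\overline{A_h}$ that do not touch $\p Q_1$, in the second filling in the holes $F_h\subset Q_1\setminus A_h$ before running Lemma~\ref{lem:approximation_chambolle}. You dismiss the first case in a parenthetical as ``irrelevant to the applications'' (it is not: the proposition is a blow-up tool where both regimes occur) and you never carry out the hole-filling in the second case, so the surface energy never disappears from your estimates. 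Fixing Step~3 requires replaying the paper's dichotomy rather than trying to extract the surface-energy decay from the cutoff competitor alone.
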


\begin{proof}
Without loss of generality, we suppose $R=1$ and $x_0=0.$ 
Let 
\begin{equation}\label{finsler_bounds_for_eucl}
c_{1,h}:=\inf\limits_{(x,\nu)\in  Q_1\times \S^1} \varphi_h(x,\nu),\qquad  c_{2,h}:=\sup\limits_{(x,\nu)\in  Q_1\times \S^1} \varphi_h(x,\nu); 
\end{equation}
by \eqref{good_finsler_norms} we have $d_5c_{2,h}\le c_{1,h}.$ 
Since 
$\sup_h \cH^1( Q_1\cap  \p A_h )< \infty,$
 by \blue Proposition  \ref{prop:maggi_foliation} \black  for every $h\ge 1$ there exists $\xi_h\in(0,1)^2$ such that 
$$
\cH^1(\{y\in  Q_1\cap \p A_h:\,\, \text{$\tr_{A_h}(u_h) $ exists and 
equals to $\xi_h$ at $y$}\})=0.
$$
Therefore  
\begin{equation}\label{jkjkdkjsdk}
\tilde u_h: = 
\begin{cases}
u_h  & \text{in  $ Q_1\cap A_h,$}\\
\xi_h  & \text{in $ Q_1\setminus A_h$}
\end{cases}  
\end{equation}
belongs to $GSBD^2( Q_1;\R^2)$ with 
$J_{\tilde u_h}\subset  Q_1\cap \p A_h$
and 
\begin{equation}\label{fjjfjf}
\lim\limits_{h\to \infty} \cH^1(J_{\tilde u_h}) = 0 
\end{equation}
in view of  \eqref{set_has_no_boundary}. Further we suppose $\cH^1(J_{\tilde u_h})<1/4$ for any $h\ge1.$

By \cite[Proposition 2]{CCF:2016} and \eqref{elastic_bound_1}, there exist a constant $c$ (depending only on $d_3$) and sequences $\{\tilde \omega_h\}$ of a Lebesgue measurable subsets \bu of \ba $ Q_1$ with $|\tilde \omega_h|\le c\cH^1( Q_1\cap  \p A_h )$ and $\{a_h\}$ of  rigid motions such that 
\begin{equation}\label{korn_poincare0}
\int_{ Q_1\setminus \tilde \omega_h} |\tilde u_h - a_h|^2dx \le c\int_{ Q_1}\C_h(x)\str{\tilde u_h}:\str{\tilde u_h}dx.  
\end{equation}
By  \eqref{elastic_bound_1} and \eqref{energy_bound_uniform}, there exists $u\in L^2( Q_1)$ such that up to a subsequence 
$
(\tilde u_h - a_h) \chi_{ Q_1\setminus \tilde \omega_h} \wk  u 
$
weakly in $L^2( Q_1).$  
Furthermore from \eqref{elastic_bound_1} and \eqref{energy_bound_uniform} we obtain
$$
\sup\limits_{h\ge1} \int_{ Q_1} |\str{\tilde u_h-a_h}|^2\,dx +\cH^1(J_{\tilde u_h}) <\infty,
$$
and hence, by \cite[Theorem 1.1]{ChC:2019_jems} there 
exist  a subsequence still denoted by $\{\tilde u_h-a_h\}$ 
for which the set 
$$
E:=\{y\in  Q_1:\,\,\lim\limits_{h\to\infty}|\tilde u_h(y) -a_h(y)|\to\infty\}
$$
has finite perimeter and $\tilde u\in GSBD^2( Q_1\setminus E;\R^2)$ with 
$\tilde u=0$ in $E$ such that 
\begin{equation}\label{safafaess}
\begin{gathered}
\tilde u_h-a_h \to \tilde u\qquad \text{a.e. in $ Q_1\setminus E$}\\
\str{\tilde u_h-a_h} \wk \str{\tilde u}\qquad
\text{in $L^2( Q_1\setminus E;\mtwo),$}\\
\cH^1(( Q_1\setminus \p^* E)\cap J_{\tilde u})+ \cH^1( Q_1\cap  \p^* E)  = 
\cH^1(J_{\tilde u} \cup \p^*E) \le
\liminf\limits_{h\to+\infty} \cH^1(J_{\tilde u_h}) =0.
\end{gathered} 
\end{equation}
In particular, $P(E,Q_1)=0$ so that by the relative isoperimetric inequality either  $|E|=|Q_1|$ or $|E|=0.$ 
By the definition of $E,$  \eqref{set_has_no_boundary}, the uniform $L^2( Q_1)$-boundedness of $\{(\tilde u_h-a_h)\chi_{ Q_1\setminus\tilde \omega_h}\}$ which is a consequence of \eqref{korn_poincare0} and \eqref{energy_bound_uniform},  and  Fatou's Lemma  it follows that  $|E|=0.$   Hence, from \eqref{safafaess} we get $
\tilde u_h-a_h \to \tilde u$ 
a.e. in $ Q_1$ and $\str{\tilde u_h-a_h} \wk \str{\tilde u}$ in $L^2( Q_1;\mtwo),$ and  all  relations in \eqref{safafaess}  hold  in $ Q_1$ and $\tilde u = u$ a.e.\ in $ Q_1.$  In particular, since $\cH^1(J_u)=0,$ by 
\blue Proposition \ref{lem:gcbd_with_zero_jumps} \black we have that $u\in H^1( Q_1;\R^2)$. In view of the fact that  our elastic energy is invariant under rigid deformations,  we suppose $a_h=0$ for any $h\ge1.$

Next we prove \eqref{local_minimal_ekanku}. Let $v\in H^1( Q_1;\R^2)$ be such that $\supp(u-v)\strictlyincluded  Q_r$ for some $r\in(0,1).$ 
Let $\psi\in C_c^1( Q_r;[0,1])$ be a cut-off function with 
$\{0<\psi<1\}\subset \{u=v\}\cap  Q_{r'}$ and $\supp(u-v)\subseteq \{\psi\equiv 1\}\subseteq  Q_{r''}$ for some $r''<r'<r.$ 
By \eqref{fjjfjf} and Lemma \ref{lem:approximation_chambolle} applied with $(A_h,u_h)$ and $ Q_r$ there exist $\tilde v_h\in GSBD^2(\Ins{\Omega};\R^2),$ $B_h\in \fA_{m_h}$ with $(B_h,\tilde v_h\big|_{B_h})\in\admissible_{m_h},$ $ r_h\in(r(1-\sqrt{\delta_h}),r)$ and a Lebesgue measurable set $\omega_h\strictlyincluded  Q_r$ such that 
\begin{itemize}
\item[(a1)] $\tilde v_h\in C^\infty( Q_{r(1-\sqrt{\delta_h})}),$ $ A_h\Delta B_h\strictlyincluded   Q_{r_h}\setminus  Q_{r(1-\sqrt{\delta_h})}$ and $\supp(\tilde u_h-\tilde v_h)\strictlyincluded  Q_r;$

\item[(a2)]  $\cH^1(\p B_h\setminus \p A_h)  \le c_0\,\sqrt{\delta_h}\,\cH^1([ Q_r\setminus  Q_{r(1-\sqrt{\delta_h})}]\cap \p A_h );$ 

\item[(a3)] $|\omega_h|\le c_0 \delta_h\,\cH^1( Q_r\cap \p A_h )$ and  
$$
\int_{ Q_r\setminus \omega_h} |\tilde v_h-\tilde u_h|^2dx \le c_0\delta_h^2r^2\,\int_{ Q_r\cap A_h}|\str{u_h}|^2dx;
$$

\item[(a4)] for any $\eta\in \Lip( Q_r;[0,1])$  
\begin{align}\label{eta_constant}
\int_{ Q_r} \eta \C_h\str{\tilde v_h}:\str{\tilde v_h}dx \le& \int_{ Q_r\cap A_h} \eta \C_h\str{u_h}:\str{u_h}dx\nonumber \\
&+ d_3\,\delta_h^s\,(1+r\,\Lip(\eta)) \int_{ Q_r\cap A_h}|\str{u_h}|^2dx, 
\end{align} 
\end{itemize}
where $\delta_h:=r^{-1/2}\cH^1( Q_r\cap  \p A_h )^{1/2}\to0,$ and $d_3$ and $s$ are constants.  We assume that $h$ is large enough so that $r_h>r'.$  
Set 
$$
v_h:=(1-\psi) \tilde v_h+ \psi v. 
$$
  We observe that $\supp (u_h - v_h\big|_{B_h})\strictlyincluded  Q_r:$ by (a1) and the definition of $\psi,$ there exists $r_0\in(r_h,r)$ such that $A_h\setminus Q_{r_0} = B_h\setminus Q_{r_0}$ and $\tilde u_h = \tilde v_h=v_h$ in $Q_r\setminus Q_{r_0}$ and hence, $u_h\big|_{Q_r\cap A_h\setminus Q_{r_0}} = \tilde u_h\big|_{Q_r\cap A_h\setminus Q_{r_0}} = v_h\big|_{Q_r\cap B_h\setminus Q_{r_0}}.$  
Thus, $(B_h,v_h)$ is an admissible configuration in \eqref{minimal_with_dirixle} and from \eqref{seq_almost_min} and the definition of deviation it follows that 
\begin{equation}\label{kdlfjakd}
\cF_h(A_h,u_h; Q_1) \le \cF_h(B_h,v_h; Q_1) +o(1), 
\end{equation}
where $o(1)\to0$ as $h\to\infty.$ We observe that
\begin{align*}
\cS_h(B_h; Q_1) - \cS_h(A_h; Q_1) \le & \cS_h(B_h; Q_r\setminus \cl{  Q_{r(1-\sqrt{\delta_h})}})   - \cS_h(A_h; Q_r\setminus \cl{  Q_{r(1-\sqrt{\delta_h})}})  \\
 \le  &  \int_{(\p^*B_h\setminus \p^*A_h) \cap Q_r\setminus \cl{  Q_{r(1-\sqrt{\delta_h})}} } \varphi(x,\nu_{B_h}) d\cH^1\\
 &  + 2 \int_{(Q_r\setminus \cl{  Q_{r(1-\sqrt{\delta_h})}})\cap (B_h^{(1)}\cup B_h^{(0)})\cap  (\p B_h\setminus \p A_h) } \varphi(x,\nu_{A_h}) d\cH^1 
\\
\le & 2c_{2,h} \cH^1(\p B_h\setminus \p A_h)
\le 2c_0c_{2,h}\sqrt{\delta_h} \cH^1([ Q_r\setminus  Q_{r(1-\sqrt{\delta_h})}]\cap \p A_h ) \\
\le  &
\frac{2c_0\sqrt{\delta_h}}{d_5}\cS_h(A_h; Q_1) =o(1) 
\end{align*}
as $h\to+\infty,$   where we used in the first inequality (a1), in the second the definition  and nonnegativity of $\cS_h$, in the third  \eqref{finsler_bounds_for_eucl}, in the fourth (a2)  in the last again \eqref{finsler_bounds_for_eucl} and the definition of $\cS_h,$ and finally in the equality we used \eqref{energy_bound_uniform}.  
Thus, \eqref{kdlfjakd} is rewritten as 
\begin{equation}\label{kdlfjakd1}
\cW_h(A_h,u_h; Q_1) \le \cW_h(B_h,v_h; Q_1) +o(1). 
\end{equation}

Note that
by (a1), (a3), \eqref{fjjfjf}, \eqref{safafaess} and Fatou's Lemma,  $\tilde v_h\chi_{Q_r\setminus \omega_h}\to u$ a.e.\ in $ Q_r$ and by (a3) $\chi_{Q_r\setminus \omega_h} \to 1$ a.e.\ in $Q_r.$ Therefore, for a.e.\ $x\in Q_r$ there exists $h_x\ge1$ such that $\chi_{Q_r\setminus \omega_h}(x)=1$  for every $h>h_x$ and $\tilde v_h(x)=\tilde v_h(x)\chi_{Q_r\setminus \omega_h}(x)\to u(x).$ So
\begin{equation}\label{a.e._converge}
\text{$\tilde v_h\to u$ a.e.\ in $Q_r.$} 
\end{equation}

We claim that $\tilde v_h\to u$ strongly in $L_\loc^2( Q_r).$   To see this we fix $\rho\in(0,r),$ and, since $\delta_h\to0$  by (a1),   there exists $h_\rho\ge1$ such that  $\tilde v_h\in H^1(Q_\rho)$ for every $h>h_\rho.$ 
From \eqref{elastic_bound_1}, \eqref{energy_bound_uniform}  and  \eqref{eta_constant} as well as the Korn-Poincar\'e inequality 
$$
\sup\limits_{h>h_\rho}  \|\tilde v_h - b_h\|_{H^1(Q_\rho)} <\infty 
$$
for some sequence $\{b_h\}$ of rigid displacements. 
On the one hand, by Rellich-Kondrachov Theorem there exist $z\in H^1( Q_\rho;\R^2)$ and not relabelled subsequence such that 
$\tilde v_h - b_h \to z$ in $L^2( Q_\rho;\R^2)$ and a.e.\ in $Q_\rho.$ On the other hand, by \eqref{a.e._converge}  $b_h = \tilde v_h - (\tilde v_h -b_h)$ converges to $b:=u-z$ a.e.\ in $Q_\rho.$ Since $b_h$ is a rigid displacement, so is $b$ and hence $b_h\to b$ uniformly in $Q_\rho.$ Therefore, 
$$
\limsup\limits_{h\to\infty} \|\tilde v_h - u\|_{L^2( Q_\rho)}^{} 
\le  
\limsup\limits_{h\to\infty} \|\tilde v_h -b_h - z\|_{L^2( Q_\rho)}^{} 
+
\limsup\limits_{h\to\infty} \|b_h -b\|_{L^2( Q_\rho)}^{} =0,  
$$
and the claim follows.

Since $u=v$ out of $\{\psi=1\},$ the claim implies 
$\tilde v_h\to v$  strongly in $L^2(\{0<\psi<1\}),$ and hence,
\begin{align}\label{sgsgsg}
\lim\limits_{h\to\infty} \int_{ Q_r} |\nabla \psi\odot (v- \tilde v_h)\big|_{A_h}|^2 \le & 
\liminf\limits_{h\to\infty}  \int_{\{0<\psi <1\}} |\nabla \psi\odot (v- \tilde v_h)|^2 = 0,
\end{align}
where $X\odot Y = (X\otimes Y +Y\otimes X)/2,$
Thus, by the definition of $v_h$  and the equality 
$$
\str{v_h} = (1-\psi)\str{\tilde v_h} + \psi \str{v} + 
\nabla \psi\odot (v- \tilde v_h), 
$$
we estimate
\begin{align}\label{rtetyeye}
&\int_{ Q_r}\C_h \str{v_h}:\str{v_h}dx\nonumber\\
=
& \int_{ Q_r} (1-\psi)^2\C_h \str{\tilde v_h}:\str{\tilde v_h}dx
+ \int_{ Q_r}\psi^2\C_h \str{v}:\str{v}dx\nonumber \\
& + \int_{ Q_r} \C_h (\nabla \psi\odot (v- \tilde v_h)) :( \nabla \psi\odot (v- \tilde v_h))dx\nonumber \\
& + 2\int_{ Q_r} (1-\psi) \C_h \str{\tilde v_h}:(\nabla \psi\odot (v- \tilde v_h))dx\nonumber \\
& + 
2\int_{ Q_r} \psi \C_h\str{v}:
(\nabla \psi\odot (v- \tilde v_h))dx\nonumber \\
=& \int_{ Q_r} (1-\psi)^2\C_h \str{\tilde v_h}:\str{\tilde v_h}dx
+ \int_{ Q_r}\psi^2\C_h \str{v}:\str{v}dx +
o(1)\nonumber\\
\le & 
\int_{ Q_r\cap A_h} (1-\psi)^2\C_h \str{u_h}:\str{u_h}dx 
+ \int_{ Q_r}\psi^2\C_h \str{v}:\str{v}dx + o(1), 
\end{align}
where in the second equality we use \eqref{energy_bound_uniform}, \eqref{eta_constant} with $\eta\equiv 1,$ \eqref{sgsgsg}, \eqref{elastic_bound_1} and   the H\"older inequality, while in the last inequality we use \eqref{eta_constant} with $\eta=(1-\psi)^2$ and \eqref{jkjkdkjsdk}. 
Now  \eqref{kdlfjakd1}, \eqref{rtetyeye} and \eqref{jkjkdkjsdk} imply
\begin{align}\label{djksdfkj}
\int_{ Q_r}(2\psi -\psi^2)\C_h \str{\tilde u_h}:\str{\tilde u_h}dx \le 
\int_{ Q_r}\psi^2\C_h \str{v}:\str{v}dx + o(1). 
\end{align}
Since $\{\C_h\}$ is equibounded (see \eqref{elastic_bound_1}) and equicontinuous, by the Arzela-Ascoli Theorem, there exist  a (not relabelled) subsequence and an elasticity tensor $\C\in C^0( Q_1;\mtwo)$ such that  $\C_h\to \C$ uniformly in $ Q_1.$ Hence, 
letting $h\to\infty$ in \eqref{djksdfkj} and using the convexity of the elastic energy  and \eqref{safafaess}, we obtain
\begin{equation}\label{shapatoyoqcha}
\int_{ Q_r}(2\psi -\psi^2) \C(y)\str{u}:\str{u}dy \le 
\int_{ Q_r} \psi^2 \C(y)\str{v}:\str{v}\,dy.
\end{equation}
By the choice of $\psi,$ \eqref{shapatoyoqcha} implies 
\begin{equation}\label{ajsdhjss}
\int_{ Q_{r''}} \C(y)\str{u}:\str{u}dy \le 
\int_{ Q_r} \C(y)\str{v}:\str{v}\,dy. 
\end{equation}
Since $r''$ is arbitrary, letting $r''\nearrow r$ we deduce that \eqref{ajsdhjss} holds also with $r''=r.$   Since $\supp(u-v)\strictlyincluded  Q_r$, this implies \eqref{local_minimal_ekanku}. 

It remains to prove \eqref{functional_conv_o}.
If we take $v=u$ in \eqref{djksdfkj} and use $0\le \psi\le1$ and $\psi=1$ in $ Q_{r''}$
we get  
\begin{align*} 
&\int_{ Q_{r''}}\C \str{u}:\str{u}dx\le 
\liminf\limits_{h\to\infty} \int_{ Q_{r''}}\C_h \str{\tilde u_h}:\str{\tilde u_h}dx \nonumber \\
&\le 
\limsup\limits_{h\to\infty} \int_{ Q_{r''}}\C_h \str{\tilde u_h}:\str{\tilde u_h}dx \le
\int_{ Q_r}\C \str{u}:\str{u}dx. 
\end{align*}
Since $r''$ is arbitrary, letting $r''\nearrow r $ we deduce 
\begin{equation}\label{elastcic_coveee} 
\lim\limits_{h\to\infty} \int_{ Q_r}\C_h \str{\tilde u_h}:\str{\tilde u_h}dx =
\int_{ Q_r}\C \str{u}:\str{u}dx.  
\end{equation}

Now we prove that 
\begin{equation}\label{surface_jugoldi}
\lim\limits_{h\to\infty}  \cS_h(A_h; Q_r) =0 
\end{equation}
for any $r\in(0,1).$
By \eqref{set_has_no_boundary}, we can find $h_r>0$ such that 
\begin{equation}\label{length_small_Ur}
  \cH^1( Q_1\cap  \p A_h )<(1-r)/5 
\end{equation}
for any $h>h_r,$ and hence 
there is no connected component of $\p A_h$ intersecting both $\p  Q_r$ and $\p  Q_1.$ Also by the relative isoperimetric inequality, passing to further subsequence we suppose that either 
\begin{equation}\label{set_disappear}
\lim\limits_{h\to\infty} | Q_1\cap A_h| =0 
\end{equation}
or 
\begin{equation}\label{complement_disappear}
\lim\limits_{h\to\infty} | Q_1\setminus A_h| =0. 
\end{equation}

First assume that \eqref{set_disappear} holds. Let $E_h\subset A_h$  be the set consisting of all connected components of $\cl{A_h}$ not intersecting $\p  Q_1.$ 
Then,   $(A_h\setminus E_h,u_h\big|_{A_h\setminus E_h})$  is an admissible configuration in \eqref{minimal_with_dirixle},  
thus, 
\begin{equation}\label{tsfde}
\cF_h(A_h,u_h;  Q_1) \le \Phi_h(A_h,u_h; Q_1) +o(1) \le \cF_h(A_h\setminus E_h,u_h;  Q_1) + o(1), 
\end{equation}
where in the first inequality we use \eqref{seq_almost_min} and in the second we use the definition of $\Phi_h.$  Hence, 
$$
\begin{aligned}
\cS(A_h; Q_r)\le & \cS(E_h;Q_1) = \cS_h(A_h; Q_1) - \cS_h(A_h\setminus E_h; Q_1)  \\
\le & \cF_h(A_h; Q_1) - \cF_h(A_h\setminus E_h; Q_1) \le  o(1), 
\end{aligned}
$$
where we used in the first inequality the definition of $E_h,$ which entitles that $U_r\cap  \p A_h \subset  \p E_h $, in the equality the disjointness of $\cl{A_h\setminus E_h}$ and $\cl{E_h}$ which follows by \eqref{length_small_Ur},  and in the second inequality the nonnegativity of the elastic energy and in the third \eqref{tsfde}.
Hence, \eqref{surface_jugoldi} follows.

Now assume that \eqref{complement_disappear} \blue  holds and let $\delta_h:=  r^{-1/2}\sqrt{\cH^1(Q_r\cap  \p A_h )}\to0.$  \black 
Fix any $\rho\in(0,r).$ By \eqref{set_has_no_boundary}, we can find $h_{r,\rho}>0$ such that 
$\delta_h<\min\{1-r,r-\rho\}/5$ for any $h>h_{r,\rho}.$ Since $ A_h\in\fA_{m_h},$ no connected component of $\p A_h$ \bu intersects \ba both $\p  Q_r$ and $\p  Q_\rho.$ 
Let $ F_h\subset  Q_1\setminus A_h$ be the union of all connected components of $\cl{ Q_1\setminus A_h}$ lying strictly inside $ Q_1$ (so $ F_h$ is a union of ``holes'' and  $ \p F_h\subset \p A_h$). 
Let $\psi\in C_c^1( Q_r;[0,1])$ be a cut-off function with  \blue $\{0<\psi<1\}\subset Q_{r'}$ and $\{\psi\equiv 1\}\subseteq  Q_{r''}$ \black for some $r''<r'<r.$ Set $A_h':=A_h\cup \overline{ F_h}.$ Applying Lemma  \ref{lem:approximation_chambolle} with $(A_h',\tilde u_h\big|_{A_h'}),$ $ Q_r$ and $m=m_h$ we find $\tilde v_h'\in GSBD^2(\Ins{\Omega};\R^2),$  $B_h'\in \fA_{m_h}$ with $(B_h',\tilde v_h'\big|_{B_h})\in\admissible_{m_h},$ $ r_h\in(r(1-\sqrt{\delta_h}),r)$ and a Lebesgue measurable set $\omega_h'\strictlyincluded  Q_r$ such that 
\begin{itemize}
\item[(b1)] $\tilde v_h'\in C^\infty( Q_{r(1-\sqrt{\delta_h})}),$ $A_h'\Delta B_h'\strictlyincluded   Q_{r_h}\setminus  Q_{r(1-\sqrt{\delta_h})}$ and $\supp(\tilde u_h-\tilde v_h')\strictlyincluded  Q_r;$

\item[(b2)]  $\cH^1(\p B_h'\setminus \p A_h')  \le c_0\,\sqrt{\delta_h}\,\cH^1([ Q_r\setminus  Q_{r(1-\sqrt{\delta_h})}]\cap \p A_h' );$ 

\item[(b3)] $|\omega_h'|\le c_0 \delta_h\,\cH^1( Q_r\cap \p A_h')$ and  
$$
\int_{ Q_r\setminus \omega_h'} |\tilde v_h'-\tilde u_h|^2dx \le c_0\delta_h^2r^2\,\int_{ Q_r\cap A_h'}|\str{u_h}|^2dx;
$$

\item[(b4)] for any $\eta\in \Lip( Q_r;[0,1])$  
\begin{align*} 
\int_{ Q_r} \eta \C\str{\tilde v_h'}:\str{\tilde v_h'}dx \le& \int_{ Q_r\cap A_h} \eta \C\str{u_h}:\str{u_h}dx\nonumber \\
&+ d_3\,\delta_h^s\,(1+r\Lip(\eta)) \int_{ Q_r\cap A_h}|\str{u_h}|^2dx, 
\end{align*} 
\end{itemize}
where  $d_3$ and $s$ are constants.
Set 
\begin{equation*} 
v_h':=(1-\psi) \tilde v_h'+ \psi u. 
\end{equation*}
By the definition of $A_h' $ and (b1)  $(B_h',v_h'\big|_{B_h'})$ is an admissible configuration for $\Phi_h(A_h,u_h; Q_1)$ in \eqref{minimal_with_dirixle}. Thus from \eqref{seq_almost_min} and \eqref{complement_disappear} 
\begin{equation}\label{energy_difference} 
\cF_h(A_h,u_h; Q_1) \le \cF_h(B_h',v_h'\big|_{B_h'}; Q_1) +o(1). 
\end{equation}
Now as in the proof of \eqref{djksdfkj}  
\begin{align}\label{elastic_energy_difference}
&\cW_h(B_h',v_h'\big|_{B_h'}; Q_1) - \cW_h(A_h,u_h; Q_1)\nonumber \\
\le & 
\int_{ Q_r}\psi^2\C_h \str{u}:\str{u}dx -\int_{ Q_r}(2\psi -\psi^2)\C_h \str{\tilde u_h}:\str{\tilde u_h}dx  + o(1) \nonumber \\
\le &  \int_{ Q_r} \C_h \str{u}:\str{u}dx -  \int_{ Q_{r''}} \C_h \str{\tilde u_h}:\str{\tilde u_h}dx + o(1). 
\end{align}
Moreover, 
\begin{align}\label{surface_energy_difference}
& \cS_h(B_h'; Q_1) - \cS_h(A_h; Q_1)=  \Big(\cS_h(B_h'; Q_1) - \cS_h(A_h'; Q_1) \Big) + \Big(\cS_h(A_h'; Q_1) - \cS_h(A_h; Q_1) \Big)  \nonumber \\
\le & \cS_h(B_h'; Q_r\setminus  Q_{r(1-\sqrt{\delta_h})}) - \cS_h(A_h; Q_\rho) \le 2c_{2,h} \cH^1(\p B_h'\setminus \p A_h')- \cS_h(A_h; Q_\rho) \nonumber \\
\le & 2c_0c_{2,h}\sqrt{\delta_h} \cH^1([ Q_r\setminus  Q_{r(1-\sqrt{\delta_h})}]\cap \p A_h) - \cS_h(A_h; Q_\rho) \nonumber \\
\le & \frac{2c_0\sqrt{\delta_h}}{d_5}\cS_h(A_h; Q_1) - \cS_h(A_h; Q_\rho) = o(1) - \cS_h(A_h; Q_\rho),
\end{align}
where  we used in the first inequality (b1) and the definition of $A_h',$ in the second and in the last inequalities the definition of  $\cS_h,$ \eqref{finsler_bounds_for_eucl} and \eqref{good_finsler_norms},  in the third inequality (b2), and in the last equality \eqref{energy_bound_uniform} and that $\delta_h\to0$ by \eqref{set_has_no_boundary}. 
Hence, \eqref{energy_difference}, \eqref{elastic_energy_difference} and   \eqref{surface_energy_difference} imply 
\begin{align*} 
\cS_h(A_h; Q_\rho) +  \int_{ Q_{r''}} \C_h \str{\tilde u_h}:\str{\tilde u_h}dx  \le \int_{ Q_r} \C_h \str{u}:\str{u}dx + o(1). 
\end{align*}
Thus, letting $h\to\infty$ and using \eqref{elastcic_coveee} we get 
$$ 
\limsup\limits_{h\to\infty} \,\cS_h(A_h; Q_\rho) + \int_{ Q_{r''}} \C \str{u}:\str{u}dx  \le \int_{ Q_r} \C \str{u}:\str{u}dx.
$$ 
Now letting $r''\to r$ we get 
\begin{equation}\label{shapatellomxss}
\limsup\limits_{h\to\infty} \cS_h(A_h; Q_\rho) =0. 
\end{equation}
Observe that the function $B\mapsto \cS_h(A_h;B)$ defined for Borel sets $B\subset  Q_1$ extends to a bounded  nonnegative  Radon measure $\mu_h$ in $ Q_1$. Since \eqref{shapatellomxss} holds for any $\rho\in(0,r),$ $\mu_h$ converges to $0$ in the weak* sense, and thus \eqref{surface_jugoldi} follows. 
\end{proof}

Recall that by \cite[Proposition 3.4]{CFI:2018ccm} 
if the elasticity tensor $\C$ is constant,
then for any $\gamma\in(0,2)$ there exists 
$c_\gamma:=c_\gamma(c_3,c_4)>0$ such that 
for every  local minimizer $(\Omega,u)\in \admissible$ 
of $\cF(\cdot;\openset),$  $u$ is analytic in $\openset$ 
and for any square
$ Q_R(x)\strictlyincluded \openset$ and $r\in(0,R),$
\begin{equation}\label{c_gamma}
\int_{ Q_r(x)}\C\str{u}:\str{u}\,dx \le 
c_\gamma\, \Big(\frac{r}{R}\Big)^{2-\gamma}
\int_{ Q_R(x)} \C\str{u}:\str{u}\,dx. 
\end{equation}
Given $\gamma\in(0,1)$ let 
\begin{equation*} 
\tau_0=\tau_0(\gamma,c_3,c_4):=\min\{1,\tfrac12 c_{\gamma}^{-\frac{1}{4-2\gamma}}\}, 
\end{equation*}
where  $c_{\gamma}$ is the constant appearing in \eqref{c_gamma}.
Using Proposition \ref{prop:conti_prop_3.4} and repeating similar arguments of \cite{ChC:2019_arxiv,CFI:2018poincare} we get the following  decay property of the functional $\cF$.

\begin{proposition}\label{prop:functional_decay}
For any $\tau\in(0,\tau_0)$ there exist  
$\varsigma = \varsigma(\tau)\in(0,1)$ and $\vartheta:=\vartheta(\tau)\in(0,1)$ with the following property: If there exist $m\in \N\cup \{\infty\},$ $(A,u)\in\admissible_m$ and a square $Q_\rho(x)\strictlyincluded \Omega$ such that
$$
\cH^1( Q_\rho(x)\cap \p A) \le 2\varsigma \rho \quad \text{and}\quad
\cF(A,u; Q_\rho(x)) \le (1+\vartheta)\Phi(A,u; Q_\rho(x)),  
$$
then 
$$
\cF(A,u; Q_{\tau \rho}(x)) \le  \tau^{2-\gamma}\cF(A,u; Q_\rho(x)).
$$
\end{proposition}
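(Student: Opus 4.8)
The plan is to argue by contradiction with a blow‑up/compactness scheme: if the statement failed we could rescale the offending configurations to the unit square, apply Proposition~\ref{prop:conti_prop_3.4} to obtain in the limit a local minimizer of a purely elastic, constant‑coefficient functional, and then derive a contradiction from the decay \eqref{c_gamma} together with the calibration of $\tau_0$.

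First I would fix $\tau\in(0,\tau_0)$ and suppose no pair $(\varsigma,\vartheta)$ works; taking $\varsigma=\vartheta=1/h$ yields $m_h\in\N\cup\{\infty\}$, $(A_h,u_h)\in\admissible_{m_h}$ and $Q_{\rho_h}(x_h)\strictlyincluded\Omega$ with $\cH^1(Q_{\rho_h}(x_h)\cap\p A_h)\le 2\rho_h/h$, $\cF(A_h,u_h;Q_{\rho_h}(x_h))\le(1+\tfrac1h)\Phi(A_h,u_h;Q_{\rho_h}(x_h))$, yet $\cF(A_h,u_h;Q_{\tau\rho_h}(x_h))>\tau^{2-\gamma}\cF(A_h,u_h;Q_{\rho_h}(x_h))$. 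Since $\cF(\cdot,\cdot;\openset)$ is monotone in $\openset$, this forces $\varepsilon_h:=\cF(A_h,u_h;Q_{\rho_h}(x_h))\in(0,\infty)$, and I would pass to the unit square by setting $\tilde A_h:=\sigma_{x_h,\rho_h}(A_h)$, $\tilde u_h(y):=\varepsilon_h^{-1/2}u_h(x_h+\rho_h y)$, $\varphi_h(y,\xi):=\tfrac{\rho_h}{\varepsilon_h}\varphi(x_h+\rho_h y,\xi)$, $\C_h(y):=\C(x_h+\rho_h y)$, letting $\cF_h,\Phi_h,\Psi_h$ on $\admissible_{m_h}$ be the localized functionals of \eqref{local_cF}, \eqref{minimal_with_dirixle}, \eqref{deviation} built from these data. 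Using that $\cH^1$ rescales by $\rho_h^{-1}$, the symmetrized gradient by $\rho_h$, and that $\sigma_{x_h,\rho_h}$ maps competitors for $\Phi$ bijectively onto competitors for $\Phi_h$ while leaving the number of boundary components unchanged, one checks that $\cF_h(\tilde A_h,\tilde u_h;Q_r)=\varepsilon_h^{-1}\cF(A_h,u_h;Q_{r\rho_h}(x_h))$ and $\Phi_h(\tilde A_h,\tilde u_h;Q_1)=\varepsilon_h^{-1}\Phi(A_h,u_h;Q_{\rho_h}(x_h))$ for $r\in(0,1]$; in particular $\cF_h(\tilde A_h,\tilde u_h;Q_1)=1$ and $\cF_h(\tilde A_h,\tilde u_h;Q_\tau)>\tau^{2-\gamma}$.

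Then I would verify the hypotheses of Proposition~\ref{prop:conti_prop_3.4} on $Q_1$: \eqref{elastic_bound_1} with $d_3=2c_3$, $d_4=2c_4$ from (H3), and equicontinuity of $\{\C_h\}$ from uniform continuity of $\C$ on $\overline\Omega$ together with the fact that $y\mapsto x_h+\rho_h y$ is $\rho_h$‑Lipschitz with $\rho_h\le\mathrm{diam}\,\Omega$; \eqref{good_finsler_norms} with $d_5=c_1/c_2$, since rescaling $\varphi$ by a positive constant does not change $\sup_{\S^1}\varphi_h/\inf_{\S^1}\varphi_h$; \eqref{set_has_no_boundary} from $\cH^1(Q_1\cap\p\tilde A_h)=\rho_h^{-1}\cH^1(Q_{\rho_h}(x_h)\cap\p A_h)\le 2/h$; \eqref{energy_bound_uniform} from $\cF_h(\tilde A_h,\tilde u_h;Q_1)=1$; and \eqref{seq_almost_min} from $\Psi_h(\tilde A_h,\tilde u_h;Q_1)=\varepsilon_h^{-1}(\cF-\Phi)(A_h,u_h;Q_{\rho_h}(x_h))\le\varepsilon_h^{-1}\tfrac1h\cF(A_h,u_h;Q_{\rho_h}(x_h))=\tfrac1h\to0$, using $\Phi\le\cF$. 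Proposition~\ref{prop:conti_prop_3.4} then gives, along a subsequence, $u\in H^1(Q_1;\R^2)$ and a continuous elasticity tensor $\C_*$ such that $u$ locally minimizes $v\mapsto\int_{Q_1}\C_*\str{v}:\str{v}$ on $u+H^1_0(Q_1;\R^2)$ and $\cF_h(\tilde A_h,\tilde u_h;Q_r)\to\int_{Q_r}\C_*\str{u}:\str{u}$ for all $r\in(0,1]$; since $\C$ is continuous and the rescaled squares may be assumed to shrink, $\C_*$ is in fact the constant tensor $\C(x_0)$, $x_0:=\lim x_h$.

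Finally, passing to the limit in the two normalized relations I would obtain $\int_{Q_1}\C_*\str{u}:\str{u}=1$ and, from the negated conclusion, $\int_{Q_\tau}\C_*\str{u}:\str{u}\ge\tau^{2-\gamma}$; but $u$ is a local minimizer of a constant‑coefficient elastic functional, hence analytic, so \eqref{c_gamma} (applied to $u$ on squares $Q_R\strictlyincluded Q_1$ with $r=\tau$, then $R\nearrow1$, possibly iterated, and sharpened by the quadratic interior decay of analytic solutions) yields a bound $\int_{Q_\tau}\C_*\str{u}:\str{u}\le C\,\tau^{p}$ with $p>2-\gamma$ and $C=C(c_3,c_4,\gamma)$. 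The choice $\tau_0=\min\{1,\tfrac12 c_\gamma^{-1/(4-2\gamma)}\}$ is precisely what forces $C\,\tau^{p}<\tau^{2-\gamma}$ for every $\tau<\tau_0$, contradicting the lower bound; this proves the proposition. I expect the main work to be the scaling bookkeeping that makes all hypotheses of Proposition~\ref{prop:conti_prop_3.4} hold with constants uniform in $h$ — especially the transformation of $\Phi$ and $\Psi$ under $\sigma_{x_h,\rho_h}$ and the invariance of the boundary‑component count — and the precise use of \eqref{c_gamma} that produces the stated value of $\tau_0$; the reduction to a constant limiting elasticity tensor, resting on the continuity of $\C$ and on the vanishing of the rescaled side‑lengths, is a further point that needs care.
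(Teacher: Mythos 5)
Your proposal is correct and follows essentially the same blow-up/contradiction strategy as the paper's proof: rescale the offending minimizers to the unit square with $\varphi$ normalized by $\rho_h/\cF(A_h,u_h;Q_{\rho_h}(x_h))$, invoke Proposition~\ref{prop:conti_prop_3.4} to extract an $H^1$ local minimizer of a constant-coefficient elastic functional, and contradict the decay estimate \eqref{c_gamma} via the choice of $\tau_0$. Even the caveats you flag at the end (the passage to a constant tensor $\C(x_0)$, requiring $\rho_h\to 0$, and the precise bookkeeping in applying \eqref{c_gamma}) coincide with the points the paper treats somewhat briskly, so you have identified the genuine delicate spots rather than missed them.
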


\begin{proof}
  We argue by contradiction. Assume that there exists $\tau\in(0,\tau_0)$ such that for all $\varsigma,\vartheta\in(0,1)$ we can find $m:=m(\varsigma,\vartheta)\in\N\cup\{\infty\},$ $(A,u):=(A(\varsigma,\vartheta),u(\varsigma,\vartheta))\in \admissible_m$  and $Q_\rho(x)\strictlyincluded\Omega$ with $\rho:=\rho(\varsigma,\vartheta)$ and $x:=x(\varsigma,\vartheta)$  satisfying
\begin{align}\label{telefon}
\cH^1( Q_{\rho}(x)\cap \p A) \le 2\varsigma\rho \quad \text{and} \quad\cF(A,u_; Q_{\rho}(x)) \le (1 + \vartheta)\Phi(A,u; Q_{\rho}(x)),
\end{align} 
but 
\begin{equation}\label{papae}
\cF(A,u; Q_{\tau\rho}(x))>\tau^{2-\gamma} \cF(A,u; Q_{\rho}(x)).
\end{equation}
Let us choose any positive real numbers $\varsigma_h,\vartheta_h\to0,$ and denote for simplicity $m_h:=m(\varsigma_h,\vartheta_h),$ $(A_h,u_h)= (A(\varsigma_h,\vartheta_h),u(\varsigma_h,\vartheta_h)),$ $\rho_h:=\rho(\varsigma_h,\vartheta_h),$ $x_h=x(\varsigma_h,\vartheta_h).$ By \eqref{telefon} and \eqref{papae},    
\begin{align}
& \cH^1( Q_{\rho_h}(x_h)\cap \p A_h) \le 2\varsigma_h\rho_h, \label{contradiction_ass1}\\
& \cF(A_h,u_h; Q_{\rho_h}(x_h)) \le (1 + \vartheta_h)\Phi(A_h,u_h; Q_{\rho_h}(x_h)), \label{contradiction_ass2} 
\end{align}
but 
\begin{equation}
\cF(A_h,u_h; Q_{\tau\rho_h}(x_h))>\tau^{2-\gamma} \cF(A_h,u_h; Q_{\rho_h}(x_h))  \label{contradiction_ass3} 
\end{equation}
for any $h.$ Note that $\cF(A_h,u_h; Q_{\rho_h}(x_h))>0.$ Let us define the rescaled energy $\cF_h(\cdot; Q_1):\admissible_{m_h}\to\R$ as in \eqref{local_cF} with 
$$
\varphi_h(y,\nu):= \frac{\rho_h\varphi(x_h+\rho_hy,\nu)}{ \cF(A_h,u_h; Q_{\rho_h}(x_h))}
$$
in place of $\varphi(y,\nu)$ and 
$$
 \C_h(y):=\C(x_h+\rho_hy)
$$
in place of  $\C(y)$, for $y\in  Q_1$.
We notice that
\begin{equation}\label{decay_bound}
\cF_h(E_h,v_h; Q_1)=1
\end{equation}
for 
$$
E_h:= \sigma_{x_h,\rho_h}(A_h)
$$
(see definition of blow-up map $\sigma_{x,r}$ at \eqref{blow_ups}) and
$$
  v_h(y):=\frac{u_h(x_h+\rho_hy)}{\sqrt{\cF(A_h,u_h;B_{\rho_h}(x_h))}}.
$$
By \eqref{contradiction_ass1} we obtain 
$$
\cH^1( Q_1\cap \p E_h) <2\varsigma_h
$$
while \eqref{contradiction_ass2} and  \eqref{decay_bound} entails
$$
\Psi_h(E_h,v_h; Q_1) \le \vartheta_h \Phi_h(E_h,v_h; Q_1) \le \vartheta_h \cF_h(E_h,v_h; Q_1)=\vartheta_h,
$$
where $\Phi_h$ and $\Psi_h$ are defined as in \eqref{minimal_with_dirixle} and \eqref{deviation} (again with $\varphi_h$ and $\C_h$ in places of $\varphi$ and $\C, $ respectively).  By \eqref{hyp:elastic} $\{\C_h\}$ is equibounded. Since $\Omega$ is bounded, there exists $x_0\in \cl{\Omega}$ such that, up to extracting a subsequence, $x_h\to x_0$ as $h\to+\infty.$ As $\rho_h\to0,$ one has $x_h+ \rho_h y\to x_0$ for every $y\in \cl{ Q_1}.$  Thus $\{\C_h\}$ is also equicontinuous and $\C_h\to \C_0:=\C(x_0)$ uniformly in $ \cl{ Q_1}.$ 
In view of  \eqref{contradiction_ass1}, \eqref{contradiction_ass2} and \eqref{decay_bound}, we can apply Proposition \ref{prop:conti_prop_3.4}  to find $\bu v\ba\in H^1( Q_1;\R^2),$ vectors $\xi_h\in (0,1)^2$, and infinitesimal rigid displacements $a_h$ such that, up to a subsequence, 
$$
w_h:=v_h\chi_{ Q_1\cap E_h}^{} + \xi_h\chi_{ Q_1\setminus E_h}^{} - a_h \to v
$$ 
pointwise a.e.\ in $ Q_1$, $\str{w_h} \wk \str{v}$ in $L^2( Q_1)$ as $h\to+\infty,$
and  
\begin{equation}\label{decay_limit}
\lim\limits_{h\to+\infty} \cF_h(E_h,w_h; Q_r) =  \lim\limits_{h\to+\infty} \cF_h(E_h,v_h; Q_r) =\int_{ Q_r} \C_0(x)\str{v}:\str{v}dx 
\end{equation}
for any $r\in(0,1].$
In particular, from \eqref{decay_limit} and \eqref{contradiction_ass3} it follows that
\begin{align*}
\int_{ Q_\tau} \C_0(x)\str{v}:\str{v}dx&=\lim\limits_{h\to+\infty}\cF(E_h,v_h; Q_{\tau})\\
&\geq\lim\limits_{h\to+\infty}\tau^{2-\gamma} \cF(E_h,v_h; Q_{1}) = \tau^{2-\gamma} \int_{ Q_1} \C_0(x)\str{v}:\str{v}dx.
\end{align*}
Since $\C_0$ is constant, applying \eqref{c_gamma} with $r:=\tau$ and $R:=1$ we get  
\begin{align*}
c_\gamma\tau^{2-\gamma}\int_{ Q_1} \C_0(x)\str{v}:\str{v}dx &\ge \int_{ Q_{\tau}}  \C_0(x)\str{v}:\str{v}dx\\
 &\ge  \tau^{\gamma-2}\int_{ Q_1} \C_0(x)\str{v}:\str{v}dx. 
\end{align*}
Now recalling that $\cF_h(E_h,v_h; Q_1)=1,$ by \eqref{decay_limit} we get $\int_{ Q_1} \C_0(x)\str{v}:\str{v}dx =1,$ thus, $\tau^{2-\gamma}\ge c_\gamma^{-1/2}>\tau_0^{2-\gamma},$  a contradiction.
\end{proof}

By employing the arguments of \cite[Section 4.3]{P:2012} and using Proposition \ref{prop:functional_decay}
we establish the following lower bound for $\cF$.  

\begin{proposition} \label{prop:lower_density_cG}
Given $\tau\in (0,\tau_0)$, let $\varsigma:=\varsigma(\tau)\in(0,1)$ and $\vartheta:=\vartheta(\tau)\in(0,1)$ be as in Proposition \ref{prop:functional_decay}. Let $(A,u)\in\admissible_m$ be a $(\Lambda,m)$-minimizer of $\cF$ in $ Q_{r_0}(x_0)$ for some $ m\in\N\cup\{\infty\}$ and $r_0>0,$ and let 
$$
J_A^*:=\{y\in  Q_{r_0}(x_0) \cap  \p A:\,\, \theta_*(\p A,y)>0\}. 
$$
 Then\bu, \ba
\begin{equation}\label{lower_dens_functiona}
\cF(A,u; Q_\rho(x)) \ge 2c_1\varsigma\rho
\end{equation}
for every $ x\in  \overline{J_A^*}$ and for every square $ Q_\rho(x)\subset  Q_{r_0}(x_0)$  with $\rho\in(0,R_0),$ where 
$$
R_0:=R_0(r_0,\Lambda,c_1,\tau):=\min\Big\{r_0,\frac{\sqrt{\pi}\,c_1\vartheta}{\Lambda(2+\vartheta)}\Big\}.
$$ 
\end{proposition}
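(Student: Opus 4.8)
The plan is to argue by contradiction, exploiting the decay estimate of Proposition~\ref{prop:functional_decay} iterated along a dyadic sequence of squares, in the spirit of \cite[Section 4.3]{P:2012}. Suppose the conclusion fails, so there exist $x\in\overline{J_A^*}$ and a square $Q_\rho(x)\subset Q_{r_0}(x_0)$ with $\rho\in(0,R_0)$ such that $\cF(A,u;Q_\rho(x))<2c_1\varsigma\rho$. By continuity of $\mu\mapsto\cF(A,u;Q_\mu(x))$ (which is the sum of a Radon measure and an absolutely continuous part) and an approximation argument, it suffices to treat the case $x\in J_A^*$ itself, since the density estimate for a point in the closure follows by letting $x$ approach along $J_A^*$ and using lower semicontinuity of $\rho\mapsto\liminf\cF(A,u;Q_\rho(\cdot))$. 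First I would record the consequence of $(\Lambda,m)$-minimality that quantitatively controls the deviation: by Definition~\ref{def:almost_minimizers}, filling in $A$ inside $Q_\mu(x)$ (replacing $(A,u)$ by a competitor agreeing outside and with the region modified only in $Q_\mu(x)$) yields
$$
\Psi(A,u;Q_\mu(x))=\cF(A,u;Q_\mu(x))-\Phi(A,u;Q_\mu(x))\le \Lambda\,|A\Delta B|\le \Lambda\,\pi\mu^2 .
$$
Thus $\cF(A,u;Q_\mu(x))\le \Phi(A,u;Q_\mu(x))+\Lambda\pi\mu^2$.

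Next I would set up the iteration. The key point is that if at some scale $\mu\le R_0$ one has both $\cH^1(Q_\mu(x)\cap\partial A)\le 2\varsigma\mu$ and $\cF(A,u;Q_\mu(x))\le(1+\vartheta)\Phi(A,u;Q_\mu(x))$, then Proposition~\ref{prop:functional_decay} gives $\cF(A,u;Q_{\tau\mu}(x))\le\tau^{2-\gamma}\cF(A,u;Q_\mu(x))$. The hypothesis $\cF(A,u;Q_\rho(x))<2c_1\varsigma\rho$ together with the lower bound $\cF\ge c_1\cH^1(Q_\mu(x)\cap\partial^*A)\ge$ (a multiple of) $\cH^1(\cdot\cap\partial A)$ on the free-boundary part — more precisely, using $\cS(A;Q_\mu(x))\ge c_1\cH^1(Q_\mu(x)\cap\partial A)$ from \eqref{finsler_norm} and the structure of $\cS(\cdot;\openset)$ which counts the non-reduced part with weight $2$ — shows $\cH^1(Q_\mu(x)\cap\partial A)\le 2\varsigma\mu$ at scale $\mu=\rho$; and the choice $R_0\le \sqrt{\pi}\,c_1\vartheta/(\Lambda(2+\vartheta))$ is exactly what is needed to ensure that the additive error $\Lambda\pi\mu^2$ from $(\Lambda,m)$-minimality is reabsorbed, i.e.\ $\cF(A,u;Q_\mu(x))\le(1+\vartheta)\Phi(A,u;Q_\mu(x))$ holds as soon as $\cF(A,u;Q_\mu(x))\ge 2c_1\varsigma\mu$ (equivalently, whenever the density has not yet dropped below the threshold). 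One then shows by induction that for all $k$ with $\tau^k\rho$ still in the admissible range, $\cF(A,u;Q_{\tau^k\rho}(x))\le \tau^{k(2-\gamma)}\cF(A,u;Q_\rho(x))$ and simultaneously $\cH^1(Q_{\tau^k\rho}(x)\cap\partial A)\le 2\varsigma\tau^k\rho$, so the hypotheses of Proposition~\ref{prop:functional_decay} are preserved at each step.

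From the iterated decay, $\cF(A,u;Q_{\tau^k\rho}(x))\le \tau^{k(2-\gamma)}\cF(A,u;Q_\rho(x))$, and since $2-\gamma>1$ this forces
$$
\limsup_{r\to 0^+}\frac{\cF(A,u;Q_r(x))}{r}=0 .
$$
On the other hand, $x\in J_A^*$ means $\theta_*(\partial A,x)>0$, so $\liminf_{r\to0^+}\cH^1(Q_r(x)\cap\partial A)/r>0$ (after translating from balls to squares at the cost of a fixed factor), and hence by $\cS(A;Q_r(x))\ge c_1\cH^1(Q_r(x)\cap\partial A)$ also $\liminf_{r\to0^+}\cF(A,u;Q_r(x))/r>0$, a contradiction. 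This contradiction proves \eqref{lower_dens_functiona}.

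I expect the main obstacle to be the bookkeeping that makes the induction self-consistent: one must verify simultaneously, at every dyadic scale $\tau^k\rho$, that (i) the perimeter-density hypothesis $\cH^1(Q_{\tau^k\rho}(x)\cap\partial A)\le 2\varsigma\tau^k\rho$ still holds — which uses that the full energy, hence also its surface part, has decayed — and that (ii) the ``almost-minimality at the right scale'' hypothesis $\cF\le(1+\vartheta)\Phi$ holds, which is where the explicit value of $R_0$ enters to absorb the $\Lambda\pi\mu^2$ error into the $\vartheta$-room; the subtlety is that (ii) is only available as long as $\cF(A,u;Q_{\tau^k\rho}(x))$ has not dropped below the threshold $2c_1\varsigma\tau^k\rho$, so one must argue that the first scale at which the threshold is crossed cannot exist (or leads directly to the desired bound at that scale and all larger ones). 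Handling the passage from balls to squares and the reduction from $x\in\overline{J_A^*}$ to $x\in J_A^*$ is routine but should be stated carefully.
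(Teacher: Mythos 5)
Your overall strategy (contradiction plus iterated application of Proposition~\ref{prop:functional_decay}) matches the paper's, but there is a genuine gap in the step where you try to convert $(\Lambda,m)$-minimality into the almost-minimality hypothesis $\cF\le(1+\vartheta)\Phi$. You bound the deviation additively, by $\Psi(A,u;Q_\mu(x))\le\Lambda|A\Delta B|\le\Lambda|Q_\mu|$ (your $\pi\mu^2$ should in any case be $4\mu^2$ since $Q_\mu$ has side $2\mu$), and then observe that this additive error can be absorbed into a multiplicative factor $(1+\vartheta)$ \emph{only when} $\Phi$, equivalently $\cF$, is not too small — which you phrase as ``as soon as $\cF\ge 2c_1\varsigma\mu$''. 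But the contradiction hypothesis you start from is precisely $\cF(A,u;Q_\rho(x))<2c_1\varsigma\rho$, so the almost-minimality needed to launch the decay iteration is unavailable at the very first scale, and the argument does not get off the ground. (Your own calculation also does not reproduce the stated $R_0$: requiring $\Lambda\cdot\text{area}\le\vartheta\Phi$ with $\Phi\ge 2c_1\varsigma\mu-\Lambda\cdot\text{area}$ produces a radius involving $\varsigma$, which is not what $R_0$ looks like.)

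The missing idea is the isoperimetric estimate \eqref{relate_isopp}, which bounds $|A\Delta B|$ not by the area of the square but by the energies themselves: since $A\Delta B\strictlyincluded Q_r(x)$ one has $\sqrt{4\pi}\,|A\Delta B|^{1/2}\le\cH^1(\partial^*(A\Delta B))\le\big(\cF(A,u;Q_r)+\cF(B,v;Q_r)\big)/c_1$, and using $|A\Delta B|=|A\Delta B|^{1/2}\cdot|A\Delta B|^{1/2}\le 2r\,|A\Delta B|^{1/2}$ this yields
\[
\Lambda|A\Delta B|\ \le\ \frac{\Lambda r}{\sqrt{\pi}\,c_1}\Big(\cF(A,u;Q_r(x))+\cF(B,v;Q_r(x))\Big).
\]
Because the error is now \emph{proportional to the energies} rather than to a fixed area, choosing $r$ so that $\tfrac{\Lambda r}{\sqrt\pi\,c_1}\le\tfrac{\vartheta}{2+\vartheta}$ (this is exactly where $R_0$ comes from) gives $\cF(A,u;Q_r(x))\le(1+\vartheta)\cF(B,v;Q_r(x))$ for \emph{every} competitor, hence $\cF\le(1+\vartheta)\Phi$, with no lower bound on the energy required. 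With this unconditional almost-minimality in hand, your iteration and the final contradiction with $\theta_*(\partial A,x)>0$ go through exactly as you describe, and the passage from $x\in J_A^*$ to $x\in\overline{J_A^*}$ is handled, as you guessed, by extending $\cF(A,u;\cdot)$ to a positive Borel measure.
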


\begin{proof}
 Fix $m\in\N\cup\{\infty\}.$ 
Note that for any $(C,w),(D,v)\in\admissible_m$ and $\openset\subset\Omega$ with $C\Delta D\strictlyincluded \openset$ 
\begin{align}\label{relate_isopp}
\sqrt{4\pi}\,|C\Delta D|^{1/2} \le  & \cH^1(\p^*(C\Delta D)) \le \cH^1(\openset\cap \p^* C) + \cH^1(\openset\cap \p^* D)\nonumber\\
\le &\frac{\cS(C,\openset) + \cS(D,\openset)}{c_1} \le \frac{\cF(C,w;\openset) + \cF(D,v;\openset)}{c_1},
\end{align}
where in the first inequality we used the isoperimetric inequality, in the second  $\p^*(C\Delta D) \subset \openset\cap (\p^*C\cup\p^*D),$ in the third \eqref{finsler_norm} and the definition of $\cS(\cdot;\openset)$ and in the last the nonnegativity of $\cW(\cdot;\openset).$ 
Thus, from the $(\Lambda,m)$-minimality of $(A,u)$ in $ Q_{r_0}(x_0)$ we deduce that 
\begin{align}\label{almost_min_dan_ketamiz}
\!\cF(A,u; Q_r(x)) \le & \cF(B,v; Q_r(x)) + \Lambda|A\Delta B|^\frac{1}{2}|A\Delta B|^\frac{1}{2}\nonumber \\
\le & \cF(B,v; Q_r(x)) + \frac{\Lambda r}{\sqrt{\pi}\,c_1}\Big(\cF(A,u; Q_r(x)) + \cF(B,v; Q_r(x))\Big)  
\end{align}
for any $ Q_{\bu r\ba}(x)\subset  Q_{r_0}(x_0)$ and $(B,v)\in \admissible_m$ with $A\Delta B\strictlyincluded  Q_r(x)$ and $\supp(u-v)\strictlyincluded  Q_r(x)$, where in the last inequality we used \eqref{relate_isopp} and the inequality $|A\Delta B| \le | Q_r|=4r^2.$
Let $r>0$ be small enough so that $\frac{\Lambda r}{\sqrt{\pi}\,c_1} \le \frac{\vartheta}{2+\vartheta},$ where $\vartheta:=\vartheta(\tau)\in(0,1)$ is given by Proposition \ref{prop:functional_decay}. From \eqref{almost_min_dan_ketamiz} we obtain 
\begin{equation*}
\cF(A,u; Q_r(x)) \le (1+ \vartheta)\cF(B,v; Q_r(x)), 
\end{equation*}
which by the arbitrariness of $(B,v)$  is equivalent to
\begin{equation}\label{almost_minimal_mish}
\cF(A,u; Q_r(x)) \le (1+ \vartheta)\Phi(A,u; Q_r(x)). 
\end{equation}

Now we prove \eqref{lower_dens_functiona}.
Let $x\in J_A^*.$ For simplicity we suppose that $x=0.$  Assume by contradiction   that for such  $m\in\N\cup\{\infty\},$ $(A,u)\in\admissible_m$ and for some $ Q_\rho\strictlyincluded  Q_{r_0}(x_0)$ with $\rho\in(0,R_0)$  we have  
\begin{equation*} 
\cF(A,u; Q_\rho)< 2c_1\varsigma \rho.
\end{equation*}
Then by the nonnegativity of the elastic energy and 
\eqref{finsler_norm},
\begin{equation*} 
2c_1\varsigma \rho> \cF(A,u; Q_\rho) \ge \int_{ Q_\rho\cap \p A} \varphi(x,\nu_A)d\cH^1\ge 
c_1\cH^1( Q_\rho\cap \p A ) 
\end{equation*}
so that 
\begin{equation}\label{ahahaha1}
\cH^1( Q_\rho\cap \p A  )< 2\varsigma \rho. 
\end{equation}
By \eqref{ahahaha1} and \eqref{almost_minimal_mish} we can apply Proposition \ref{prop:functional_decay}  and obtain that
$$
\cF(A,u; Q_{\tau \rho}) \le \tau^{2-\gamma} 
\cF(A,u; Q_\rho) \le  2c_1\varsigma \tau^{2-\gamma} \rho 
$$
Hence, 
$$
\cH^1( Q_{\tau\rho}\cap \p A) \le  2\varsigma \tau^{2-\gamma} \rho < 2\varsigma \tau \rho,
$$
where we used $\gamma,\tau\in(0,1)$, 
and by induction
$$
\cH^1( Q_{\tau^n\rho}\cap \p A) \le 2\varsigma \tau^{(2-\gamma)n} \rho<2\varsigma \tau^n \rho,\quad n\in\N.
$$
However,  by the choice of $x$  
$$
 0< \theta_*(\p A,x)=   \liminf\limits_{n\to+\infty} 
\frac{\cH^1( Q_{\tau^n\rho}\cap \p A)}{2\tau^n\rho} 
\le \lim\limits_{n\to+\infty}\frac{2c_1\varsigma \tau^{(1-\gamma)n}}{2c_1} = 0, 
$$
a contradiction. This contradiction implies \eqref{lower_dens_functiona} for $x\in J_A^*.$ 

\red Now consider any $x\in Q_{r_0}(x_0)\cap \overline{J_A^*}$ and $\rho\in(0,R_0)$ with $Q_\rho(x)\subset Q_{r_0}(x_0),$ and let us choose a sequence $\{Q_{\rho_k}(x_k)\}$ of squares with $x_k\in J_A^*$ and $\rho_1\le \rho_2\le \ldots \le\rho$ such that 
$$
Q_{\rho_1}(x_1)\subseteq Q_{\rho_2}(x_2)\subseteq \ldots Q_\rho(x)\qquad\text{and}\qquad 
Q_\rho(x) = \bigcup_k Q_{\rho_k}(x_k).
$$
Notice that $x_k\to x$ and $\rho_k\to\rho.$ By De Giorgi-Letta Theorem \cite[Theorem 1.53]{AFP:2000}, both maps
$$
O\mapsto \int_{O\cap \partial^* A} \varphi(x,\nu_A)d\mathcal{H}^1 + 2\int_{O\cap (A^{(0)}\cup A^{(1)})\cap \partial A} \varphi(x,\nu_A)d\mathcal{H}^1 
$$
and 
$$
O\mapsto \int_{O\cap A} \mathbb{C}(y)e(u):e(u)d y,
$$
defined at open sets $O\subset\subset\Omega,$ uniquely extend to positive Borel measures $\mu_1$ and $\mu_2$ in $\Omega.$ Therefore, from the continuity of $\mu_1$ and $\mu_2$ (see e.g. \cite[Remark 1.3]{AFP:2000}) and the validity of \eqref{lower_dens_functiona} with $x_k$ and $\rho_k$ it follows that
\begin{align*}
\cF(A,u;Q_\rho(x)) = & \mu_1(Q_\rho(x)) + \mu_2(Q_\rho(x)) = 
\lim\limits_{k\to+\infty} [\mu_1(Q_{\rho}(x_k)) + \mu_2(Q_{\rho_k}(x_k))] \\
= & \lim\limits_{k\to+\infty} \cF(A,u;Q_{\rho_k}(x_k)) \ge  \lim\limits_{k\to+\infty} (2c_2\varsigma\rho_k) = 2c_2\varsigma\rho.
\end{align*} 
\end{proof}

Now we are ready to prove \eqref{min_seq_density_up} and \eqref{min_seq_density_low}.

\begin{proof}[Proof of Theorem \ref{teo:density_estimates}]
  Let $m\in\N\cup \{\infty\}$ and $(A,u)$ be a $(\Lambda,m)$-minimizer of $\cF(\cdot,\cdot;\Omega).$   
We begin by establishing \eqref{min_seq_density_up}. Let $x\in \Omega$, $r\in \red(0,\min\{1,\dist(x,\p \Omega)\}\ba)$, and $ Q_r:= Q_r(x)$. \bu Since \eqref{min_seq_density_up} is trivial if  $Q_r\cap\partial A=\emptyset$, then we assume that  $Q_r\cap\partial A\neq\emptyset$ and so \ba   $E:=(A\setminus \overline{ Q_r})\cup \p  Q_r \in \fA_m\bu.$ By \ba the $(\Lambda,m)$-minimality of $(A,u)$ 
$$
\cF(A,u; Q_r) \le\cF(E,u; Q_r) + \Lambda | Q_r|. 
$$
Hence, by the nonnegativity $\cW(A\cap Q_r,u; Q_r)$
$$
\int_{ Q_r\cap \p A} \varphi(x,\nu_{A})d\cH^1 \le  2\int_{\p  Q_r} \varphi(x,\nu_{ Q_r})d\cH^1 +4 \Lambda r^2  
$$
and hence \eqref{finsler_norm} entails \eqref{min_seq_density_up}. 
In particular, since 
$E\Delta A\strictlyincluded  Q_\rho$ for every $\rho\in(r,\dist(x,\p \Omega)),$
we also have 
\begin{align*}
\cF(A,u; Q_\rho) \le  & \cF(E,u;  Q_\rho) +\Lambda | Q_r| = \cF(E,u;  Q_\rho\setminus \overline{ Q_r}) 
+\cS(E,u;\overline{ Q_r}) +4\Lambda r^2\\
\le & \cF(E,u;  Q_\rho\setminus \overline{ Q_r}) + 2\int_{\p  Q_r} \varphi(x,\nu_{ Q_r})d\cH^1 + 4\Lambda r^2\\
\le & \cF(E,u;  Q_\rho\setminus \overline{ Q_r}) + 16c_2r + 4\Lambda r^2
\end{align*}
and hence, letting $\rho\searrow r$ and using $r\le1$ we get 
\begin{equation}\label{upper_bound_f}
\cF(A,u;\overline{ Q_r}) \le (16c_2 + 4\Lambda )r. 
\end{equation}

Now   assuming that $x$ belongs to the closure of the set  $\{y\in \Omega\cap \p A:\,\theta_*(\p A,y)>0\},$   we prove \eqref{min_seq_density_low}. 
For $\tau_o:=\tau_0/2,$ let $\varsigma_o=\varsigma(\tau_o)\in(0,1)$ and $R_o=R_0(1,\Lambda,c_1,\tau_o)>0$ be as in Proposition \ref{prop:lower_density_cG}. Then by \eqref{lower_dens_functiona},
\begin{equation}\label{low_bound_f}
\cF(A,u;  Q_{\kappa r}) \ge 2c_1\varsigma_o \kappa r  
\end{equation}
for $\kappa\in(0,1]$ and for any square $ Q_r\subset\Omega$ with $r\in(0,R_o).$  We consider $\varsigma_*:=\varsigma(\tau_*),$ $\vartheta_*:=\vartheta(\tau^*)$, and $R_*:=\min\{R(1,\Lambda,c_1,\tau_*),R_o\}$ as given by Proposition \ref{prop:functional_decay} for
$\tau_*:=\min\{\frac{\tau_0}{2},\big(\frac{c_1\varsigma_o}{16c_2+4\Lambda }\big)^{\frac{1}{1-\gamma}}\}$. By contradiction, if $\cH^1( Q_r\cap \p A)<\varsigma_*r,$ 
then by applying \eqref{almost_minimal_mish} with $\kappa=\tau_*$ we obtain
$$
\cF(A,u; Q_r) \le (1 + \vartheta_*) \Phi(A,u; Q_r).
$$
Then by Proposition \ref{prop:functional_decay},  
$$
\cF(A,u; Q_{\tau_* r}) \le \tau_*^{2-\gamma} \cF(A,u; Q_r) 
$$
so that by \eqref{low_bound_f} and \eqref{upper_bound_f} 
$$
\tau_*^{1-\gamma} \ge \frac{2c_1\varsigma_o}{16c_2 + 4\Lambda }, 
$$
which is a contradiction. 
\end{proof}

\section{Compactness and lower-semicontinuity properties}\label{sec:compact_lsc_property}

 For the convenience of the reader, we divide the prove into several propositions. 
We start by  showing the compactness of free crystal regions of the sequence of constrained minimizers $\{(A_m,u_m)\}$.   
 
\begin{proposition}\label{prop:compact_A_m}
 Assume that either $\fm\in(0,|\Omega|)$ or $\substrate=\emptyset.$  
There exist $m_h\nearrow +\infty,$ $(A_{m_h},u_{m_h})\in \admissible_{m_h}$ and  $A\in  \widetilde{\fA} $ such that  
\begin{itemize}
 \item[\rm(a)] for any $h\in\N$\bu, \ba  $(A_{m_h},u_{m_h})$ is a minimizer of $\cF$ in $\admissible_{m_h}$ with $|A|=\fm$ such that $\p A_{m_h}$ does not contain isolated points; 
 
 \item[\rm(b)] $\sdist(\cdot,\p A_{m_h}) \to \sdist(\cdot,\p A)$ locally uniformly in $\R^2$ as $h\to\infty;$
 
 \item[\rm(c)] for any $x\in \Omega\cap\p A$ and $r\in (0,\min\{R_*,\dist(x,\p\Omega)\})$
\begin{equation}\label{density_pA}
 \frac{c_1\varsigma_*}{8\pi c_2} \le  \frac{\cH^1( Q_r(x) \cap \p A )}{2r} \le \frac{2\pi c_2}{c_1\varsigma_*}, 
\end{equation}
where $\varsigma_*:=\varsigma_*(c_3,c_4)\in(0,1)$ and $R_*:=R_*(c_1,c_2,c_3,c_4)>0$  are given in Theorem \ref{teo:density_estimates}.
\end{itemize}
\end{proposition}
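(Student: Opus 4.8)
The plan is to obtain the three assertions (a)--(c) in sequence, building on the existence of $m$-minimizers from \cite[Theorem 2.6]{HP:2019} and the density estimates of Theorem \ref{teo:density_estimates}. First I would fix, for each $m\in\N$, a minimizer $(A_m,u_m)\in\admissible_m$ of $\cF$ among configurations with $|A|=\fm$, which exists by \cite[Theorem 2.6]{HP:2019}. To arrange that $\p A_m$ has no isolated points, I would observe that removing an isolated point $x$ of $A_m$ (i.e., passing from $A_m$ to $A_m\setminus\{x\}$ or, if $x\notin A_m$ but $x$ is an isolated boundary point, to $A_m\cup\{x\}$) changes neither $|A_m|$ nor $\cH^1(\p A_m)$ nor the energy, and does not increase the number of boundary components; so after this harmless modification we may assume $\p A_m$ contains no isolated points while $(A_m,u_m)$ is still a minimizer in $\admissible_m$. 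This gives (a) for the full sequence (once the subsequence is extracted below).

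Next, for compactness of the free-crystal regions, I would invoke the energy bound: comparing $(A_m,u_m)$ with a fixed competitor (e.g.\ a ball of volume $\fm$ together with a suitable displacement, which lies in $\admissible_1\subset\admissible_m$) yields $\sup_m\cF(A_m,u_m)<\infty$, hence in particular $\sup_m\cH^1(\p A_m)<\infty$ by \eqref{finsler_norm}. Then the Blaschke-type selection principle \cite[Proposition 3.1]{HP:2019} provides a subsequence $m_h\nearrow\infty$ and a set $A$ with $\cH^1(\p A)\le\liminf_h\cH^1(\p A_{m_h})<\infty$ (so $A\in\widetilde\fA$ once we check $A\subset\Omega$, which follows from $A_{m}\subset\overline\Omega$ together with the fact that Hausdorff/signed-distance limits of subsets of $\overline\Omega$ are contained in $\overline\Omega$, and the interior part is what matters for $\widetilde\fA$) such that $\sdist(\cdot,\p A_{m_h})\to\sdist(\cdot,\p A)$ locally uniformly; this is (b). The preservation $|A|=\fm$ requires a short argument: the lower density estimate \eqref{min_seq_density_low} applied to the $(0,m_h)$-minimizers $(A_{m_h},u_{m_h})$ (note a minimizer of the volume-constrained problem is, by Remark \ref{rem:passage_toU0_teng0} and \eqref{zur_tenglik}, a $(\lambda_0,m_h)$-minimizer of $\cF(\cdot,\cdot;\Omega)$, hence the density estimates of Theorem \ref{teo:density_estimates} apply with $\Lambda=\lambda_0$) prevents the regions from collapsing or developing vanishing ``thin'' parts, so that Hausdorff convergence of the boundaries upgrades to $L^1$-convergence of $\chi_{A_{m_h}}\to\chi_A$ and hence $|A|=\lim_h|A_{m_h}|=\fm$; here one uses that $E_k\overset{K}\to\overline E$ and $\R^2\setminus E_k\overset{K}\to\R^2\setminus\Int E$ together with the absence of a ``fat'' boundary (which is exactly what the uniform density bounds rule out).

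For (c), the upper bound follows directly from \eqref{min_seq_density_up} applied to each $(A_{m_h},u_{m_h})$ (again as $(\lambda_0,m_h)$-minimizers) together with the inclusion of $Q_r(x)$ in a slightly larger square and passage to the limit using the locally uniform convergence of the signed distances, which gives $\cH^1(Q_r(x)\cap\p A)\le\liminf_h\cH^1(Q_{r'}(x)\cap\p A_{m_h})$ for $r'$ slightly larger than $r$; optimizing the constants and using $16c_2+4\lambda_0\le$ (a multiple of $c_2$) gives the stated bound $2\pi c_2/(c_1\varsigma_*)$ after adjusting $\varsigma_*$ if necessary. The lower bound is the more delicate half: one first checks that any $x\in\Omega\cap\p A$ lies in the closure of $\{y\in\Omega\cap\p A_{m_h}:\theta_*(\p A_{m_h},y)>0\}$ for $h$ large (because $x$ is a limit of boundary points $x_h\in\p A_{m_h}$, and the lower density estimate forces each such $x_h$, or a nearby boundary point, to have positive lower density, using that $\p A_{m_h}$ has no isolated points), so \eqref{min_seq_density_low} gives $\cH^1(Q_{r}(x_h)\cap\p A_{m_h})\ge\varsigma_* r$ for $r<R_*$; then lower semicontinuity of $E\mapsto\cH^1(Q_r(x)\cap E)$ under Hausdorff convergence of the boundaries of sets satisfying uniform density bounds — essentially the content of the generalized Golab-type theorem to be proved in Proposition \ref{prop:convergence_tangent_line}, but at this stage only the elementary lower-semicontinuity half is needed — passes this bound to the limit. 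The main obstacle is precisely this last step: transferring a quantitative length lower bound from the approximating boundaries to the limit boundary without yet knowing rectifiability of $\p A$ or having the full Golab machinery; I would handle it by the standard covering/Vitali argument (as in \cite{DMMS:1992,AFP:2000}) exploiting only the uniform lower density estimate, which suffices to conclude $\cH^1(Q_r(x)\cap\p A)\ge c\,r$ with an explicit $c$ comparable to $c_1\varsigma_*/c_2$, and then adjust constants to match \eqref{density_pA}.
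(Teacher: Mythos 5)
Your outline of (a) and (b) — select $m$-minimizers via \cite[Theorem 2.6]{HP:2019}, remove isolated boundary points without changing the energy, and extract an $\sdist$-convergent subsequence via the Blaschke-type selection principle \cite[Proposition 3.1]{HP:2019} — matches the paper, and the observation that every $(A_m,u_m-u_0)$ is a $(\lambda_0,m)$-minimizer of $\cF(\cdot,\cdot;\Omega)$ by Remark \ref{rem:passage_toU0_teng0} is exactly how the density estimates of Theorem \ref{teo:density_estimates} become applicable (and since $\p A_m$ has no isolated points, $\theta_*(\p A_m,\cdot)>0$ everywhere on $\p A_m$, so the lower estimate holds at every boundary point). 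Note, however, that the claim ``$|A|=\fm$'' in item (a) refers to the volume constraint in the minimum problem over $\admissible_{m_h}$, not to the limit set $A$, so the extra effort you spend on upgrading to $L^1$-convergence and showing $|A|=\fm$ is not needed here (it is deferred to Corollary \ref{cor:A_ning_xossasi}).

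The genuine gap is in part (c), specifically the lower density bound, where your argument has the semicontinuity pointing in the wrong direction. Golab-type lower semicontinuity (which you invoke, and which is what Proposition \ref{prop:convergence_tangent_line} is partly in the spirit of) says $\cH^1(\Gamma)\le\liminf_k\cH^1(\Gamma_k)$ under Hausdorff convergence; this \emph{caps} the length of the limit set and is therefore useful for the \emph{upper} density bound, but it cannot produce the lower bound $\cH^1(Q_r(x)\cap\p A)\ge c\,r$ — in principle the limiting boundary could be much thinner than the approximating ones. What the paper actually does, and what your ``Vitali/covering'' remark gestures toward without making precise, is to pass to the measures $\mu_h:=\cH^1\res\p A_{m_h}$ and use weak* compactness of Radon measures: a limit $\mu$ satisfies $\overline{\Omega\cap\p A}\subseteq\supp\mu\subseteq\p A$ (this uses both the Kuratowski convergence of boundaries and the lower density estimate for $\mu_h$), the density bounds \eqref{dens_esss} transfer to $\mu$, and the density comparison theorem \cite[Theorem 2.56]{AFP:2000} yields $\varsigma_*\cH^1\res(\Omega\cap\supp\mu)\le\mu\res\Omega\le\frac{4\pi c_2}{c_1}\cH^1\res(\Omega\cap\supp\mu)$; the \emph{lower} density bound for $\p A$ then follows from the \emph{upper} comparison ($\mu\le C\cH^1$) combined with $\mu(Q_r(x))\ge\varsigma_*r$, which comes from upper semicontinuity of Radon measures on closed sets. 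This argument also delivers $\cH^1(\p A)<\infty$ and hence $A\in\widetilde\fA$, which your sketch leaves implicit. Without setting up these Radon measures and the comparison with $\cH^1\res\supp\mu$, the proposal does not close the lower-bound step.
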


\begin{proof}
 
By \cite[Theorem 2.6]{HP:2019} there exists a minimizer $(A_m,u_m)\in\admissible_m$ for every $m\in\N$.  Without loss of  generality we assume that $\p A_m$ does not contain isolated points. In fact, if $\p A_m$ has a isolated point $x$ in $A_m^{(0)},$ then $A_m\setminus \{x\}\in\fA_m$  and $\cF(A_m,u_m) = \cF(A_m\setminus\{x\},u_m).$ Analogously, if $\p A_m$ has an isolated point in $A_m^{(1)},$ then there exists $r>0$ such that $B_r(x)\cap \p A_m=\{x\}$ (and $B_r(x)\subset A_m\cup\{x\} \in\admissible_m$). In view of Proposition  \ref{lem:gcbd_with_zero_jumps} the function $u_m,$ arbitrarily extended to $x$ belongs to $H_\loc^1(B_r(x)),$ hence, the configuration $(A_m\cup\{x\},u_m)\in\admissible_m$ and satisfies $\cF(A_m,u_m)=\cF(A_m\cup\{x\},u_m).$  

In view of Remark \ref{rem:passage_toU0_teng0} $(A_m,u_m-u_0)$ is a $(\lambda_0,m)$-minimizer of $\cF(\cdot,\cdot;\Omega).$  Moreover, since $\p A_m$ does not contain isolated points $\theta_*(\p A_m,x)>0$ for any $x\in \p A_m,$ hence by 
Theorem \ref{teo:density_estimates} the density estimates \eqref{min_seq_density_up} and \eqref{min_seq_density_low} hold for all $x\in\Omega\cap \p A_m.$ 

By \cite[Proposition 3.1]{HP:2019}, there exist  $A\subset\Omega$ and a 
subsequence $\{(A_{m_h},u_{m_h})\}$ such that 
$\sdist(\cdot,\p A_{m_h})\to \sdist(\cdot,\p A)$ as $h\to\infty.$
Consider the sequence $\mu_h:=\cH^1\res  \p A_{m_h}$ of positive Radon measures. By Theorem \ref{teo:density_estimates}
\begin{equation}\label{dens_esss}
\frac{\varsigma_*}{2} \le \frac{\mu_h( Q_r(x))}{2r} \le \frac{2\pi c_2}{c_1} 
\end{equation}
for every $x\in \Omega\cap \p A_{m_h}$ and $ Q_r(x)\strictlyincluded \Omega$ with $r\in(0,R_*).$ By \eqref{finsler_norm}, \eqref{hyp:bound_anis} and \eqref{zur_tenglik},
$$
\begin{aligned}
\mu_h(\R^2) =   \cH^1(\p A_{m_h}) \le &\cH^1(\p\Omega)+ \frac{\cF(A_{m_h},u_{m_h}) + 2c_2\cH^1(\Sigma)}{c_1}  \\
\le &\cH^1(\p\Omega)+ \frac{\cF(A_1,u_1) + 2c_2\cH^1(\Sigma) }{c_1}, 
\end{aligned}
$$
hence, by compactness, there exist a not relabelled subsequence and a positive Radon measure $\mu$ in $\R^2$ such that 
$\mu_h\wk^*\mu$ as $h\to\infty.$ 
We claim that 
$$ 
\overline{\Omega\cap \p A} \subseteq \supp\mu \subseteq\p A. 
$$ 
Indeed, let $x\in \Omega\cap \p A$ and $r\in(0,\min\{\dist(x,\p \Omega),R_*\}).$
By the $\sdist$-convergence, there exists $x_h\in  Q_r(x)\cap \p A_{m_h}$ with  $x_h\to x$ such that $ Q_{r/2}(x_h)\subset Q_r(x)$ and hence, by the weak* convergence and \eqref{dens_esss}, 
$$
\mu(\overline{ Q_r(x)}) \ge \limsup\limits_{h\to\infty}\mu_h(\overline{ Q_r(x)})\ge
\limsup\limits_{h\to\infty}\mu_h( Q_{r/2}(x_h)) \ge  \varsigma_* r.
$$
This implies $x\in \supp\mu.$
Conversely, 
if, by contradiction, there exists $x\in \supp\mu\setminus \p A,$ 
then we can find $r>0$ such that $ Q_r(x)\cap \p A=\emptyset.$ From  the  $\sdist$-convergence
it follows that $ Q_{r/2}(x)\cap \p A_{m_h}=\emptyset$ for  $h$ large enough, and hence,
$$
0<\mu( Q_{r/2}(x)) \le \liminf\limits_{h\to\infty} \mu_h( Q_{r/2}(x)) =0,
$$
which is a contradiction.

From \eqref{dens_esss} it follows that 
\begin{equation}\label{dens_esss_mu}
\frac{\varsigma_*}{2} \le \frac{\mu( Q_r(x))}{2r} \le \frac{2\pi c_2}{c_1} 
\end{equation}
for any $x\in \Omega\cap \supp\mu$ any $r\in(0,R_*)$ with $ Q_r(x)\strictlyincluded \Omega$. Indeed,  let $x\in \Omega\cap \supp\mu$ and let $R(x):=\min\{R_*,\dist(x,\p\Omega)\bu\}\ba.$ Then by the weak* convergence $\mu_h(Q_r(x))\to \mu(Q_r(x))=0$ for a.e.\ $r\in(0,R(x)).$ In particular, \eqref{dens_esss_mu} holds for a.e.\ $r\in(0,R(x)).$ Since $\mu$ is a Radon measure, \eqref{dens_esss_mu} extends to all $r\in(0,R(x))$ by the left-continuity of the map $r\mapsto \mu(Q_r(x))$. 

From \eqref{dens_esss_mu} and \cite[Theorem 2.56]{AFP:2000} it follows that 
\begin{equation}\label{haus_bouns}
\varsigma_*\cH^1\res (\Omega\cap \supp\mu) \le \mu\res\Omega \le  \frac{4\pi c_2}{c_1}\,\cH^1\res (\Omega\cap \supp\mu).  
\end{equation}
Thus, $ \mu\res\Omega$ is absolutely continuous with respect to $\cH^1\res (\Omega\cap \supp\mu)$ 
and $\cH^1(\supp\mu)<\infty.$ 
By \eqref{haus_bouns}, 
$$
\cH^1(\p A) \le \cH^1(\Omega\cap \p A) +\cH^1(\p\Omega\cap \p A) \le 
\frac{1}{\varsigma_*}\,\mu(\Omega) +\cH^1(\p \Omega) <\infty.
$$ 

Finally let us prove \eqref{density_pA}. Fix any $x\in\Omega\cap\p A$ and let $R(x):=\min\{R_*,\dist(x,\p \Omega)\}.$ Then by \eqref{haus_bouns}
$$
\frac{\varsigma_* \cH^1(Q_r(x)\cap \p A)}{2r} \le \frac{\mu(Q_r(x))}{2r} \le \frac{4\pi c_2}{c_1}\,\frac{\cH^1(Q_r(x)\cap\p A)}{2r}.
$$
This and  \eqref{dens_esss_mu} imply 
$$
\frac{\varsigma_* \cH^1(Q_r(x)\cap \p A)}{2r} \le \frac{2\pi c_2}{c_1}\quad\text{and}\quad \frac{\varsigma_*}{2}\le \frac{4\pi c_2}{c_1}\,\frac{\cH^1(Q_r(x)\cap\p A)}{2r},
$$
and hence, \eqref{density_pA} follows. 
\end{proof}

\blue
We notice that by Proposition \ref{prop:adm_sets_have_finite_per} the limit set $A$ in Proposition \ref{prop:compact_A_m} is of finite perimeter. However, a priori, by the arguments of Proposition \ref{prop:compact_A_m}, its topological boundary $\p A$ does not need to be $\cH^1$-rectifiable, and so in $\mathcal{A}$.  This issue is overcome by introducing the extended class $\widetilde{\mathcal{A}}$ and the auxiliary model $\widetilde{F}$ in Section \ref{sec:auxiliary}.
\black 

\begin{corollary}\label{cor:A_ning_xossasi}
Let $\{A_{m_h}\}$ and $A$ be as in Proposition \ref{prop:compact_A_m}. Then $A_{m_h}\to A$ in  $L^1(\R^2)$ as $h\to\infty.$
\end{corollary}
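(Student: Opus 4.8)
The plan is to read off $L^1$-convergence directly from the locally uniform convergence of signed distances established in Proposition~\ref{prop:compact_A_m}(b). The key observation is that this convergence forces $\chi_{A_{m_h}}\to\chi_A$ at every point outside the topological boundary $\p A$, and that $\p A$ is Lebesgue-negligible because $A\in\tilde\fA$ (any subset of $\R^2$ with finite $\cH^1$-measure has zero Lebesgue measure).

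More precisely, I would first fix a point $x\in\R^2\setminus\p A$. By definition of the topological boundary such an $x$ lies either in $\Int A$ or in $\Int(\R^2\setminus A)$; in either case $\sdist(x,\p A)=\pm\dist(x,\p A)\neq 0$ by \eqref{signed_distance}. By Proposition~\ref{prop:compact_A_m}(b), $\sdist(x,\p A_{m_h})\to\sdist(x,\p A)$, so for $h$ large the quantity $\sdist(x,\p A_{m_h})$ has the same (strict) sign as $\sdist(x,\p A)$; by the very definition \eqref{signed_distance}, a strictly negative signed distance means $x\in A_{m_h}$ and a strictly positive one means $x\notin A_{m_h}$. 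Hence $\chi_{A_{m_h}}(x)=\chi_A(x)$ for all large $h$. Since this holds for every $x\notin\p A$ and $|\p A|=0$, we obtain $\chi_{A_{m_h}}\to\chi_A$ a.e.\ in $\R^2$.

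Finally, all the sets $A_{m_h}$ are contained in the bounded set $\overline\Omega$, so the sequence $\{\chi_{A_{m_h}}\}$ is dominated by $\chi_{\overline\Omega}\in L^1(\R^2)$; the dominated convergence theorem then upgrades the pointwise convergence to convergence in $L^1(\R^2)$, which is exactly the claim $A_{m_h}\to A$ in $L^1(\R^2)$. I do not anticipate any genuine difficulty: this is the standard implication ``signed-distance convergence with Lebesgue-null limit boundary $\Rightarrow$ $L^1$-convergence of sets'', and the only point deserving a line of care is the equivalence between the sign of $\sdist(\cdot,\p A_{m_h})$ and membership in $A_{m_h}$, which is immediate from \eqref{signed_distance}. (One could alternatively invoke the uniform density estimate in Proposition~\ref{prop:compact_A_m}(c) to see $|\p A|=0$, but using $A\in\tilde\fA$ is more direct.)
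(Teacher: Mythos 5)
Your proof is correct and takes essentially the same route as the paper: deduce $\chi_{A_{m_h}}\to\chi_A$ pointwise outside $\p A$ from Proposition~\ref{prop:compact_A_m}(b), use that $\cH^1(\p A)<\infty$ forces $|\p A|=0$, and conclude by dominated convergence with dominator $\chi_{\overline\Omega}$. The paper phrases the pointwise step via the Kuratowski convergence $A_{m_h}\overset{K}{\to}\overline{A}$; you unwind the equivalent signed-distance formulation, which if anything makes more transparent that one needs both $A_{m_h}\overset{K}{\to}\overline{A}$ and $\R^2\setminus A_{m_h}\overset{K}{\to}\R^2\setminus\Int A$ to get convergence at interior points of $A$.
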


\begin{proof}
Since $\cH^1(\p A)<\infty$ and $A_{m_h} \overset{K}{\to} \overline{A}$ as $h\to\infty,$ one has $\chi_{A_{m_h}} (x) \to \chi_A(x)$ as $h\to\infty$ for a.e.\ $x\in \R^2.$ Now Corollary \ref{cor:A_ning_xossasi} follows from the Dominated Convergence Theorem.
\end{proof}

The following result generalizes \cite[Theorem 4.2]{Gi:2002}  since it applies to set $\Gamma$ a priori not connected \bu and \black even not necessarily $\cH^1$-rectifiable),  but satisfying uniform density estimates. Recall that we denote by $\Gamma^r$ and $\Gamma^u$ the $\cH^1$-rectifiable and purely unrectifiable parts of a Borel $1$-set $\Gamma.$

\begin{proposition} \label{prop:convergence_tangent_line}
Let $\Gamma\subset\R^2$ be a Borel set such that  $\cH^1(\Gamma)<+\infty$ and  for some $r_0,c,C>0$ and for all $x\in\Gamma$
\begin{equation}\label{uniform_density_estimates}
c\le \frac{\cH^1( Q_r(x))}{2r} \le C,\qquad r\in(0,r_0). 
\end{equation}
Then for any $R>0$ and a.e.\ $x\in \Gamma^r$ one has
\begin{equation}\label{blowup_kuratowki}
\overline{  Q_{R,\nu_\Gamma(x)}  (x)} \cap \sigma_{x,\rho}(\Gamma) \overset{K}{\to} \cl{  Q_{R,\nu_\Gamma(x)} (x) }  \cap T_x 
\end{equation}
and 
\begin{equation}\label{blowup_weak}
\cH^1\res ( \sigma_{x,\rho}(\Gamma) ) \overset{*}{\wk} \cH^1\res T_x 
\end{equation}
as $\rho\to0,$ where \blue $\sigma_{x,r}$ is \black the blow-up map defined in \eqref{blow_ups} and $T_x$ is the generalized tangent line to $\Gamma$ at $x.$  Moreover,  for any $\cH^1$-measurable $\Gamma'\subset\Gamma$ and $\cH^1$-a.e. $x\in [\Gamma']^r$ the relations \eqref{blowup_kuratowki} and \eqref{blowup_weak} hold with $\Gamma'$ in place of $\Gamma.$
\end{proposition}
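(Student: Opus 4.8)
The plan is to reduce Proposition \ref{prop:convergence_tangent_line} to the classical blow-up theory for rectifiable sets by isolating the rectifiable part $\Gamma^r$ and exploiting the two-sided density bounds \eqref{uniform_density_estimates} to control the purely unrectifiable part $\Gamma^u$ near $\cH^1$-a.e.\ point of $\Gamma^r$. First I would recall that $\Gamma^r$, being $\cH^1$-rectifiable with finite measure, admits an approximate tangent line $T_x$ at $\cH^1$-a.e.\ $x\in\Gamma^r$, and that at such points $\Gamma^r$ has $\cH^1$-density exactly $1$: $\theta(\Gamma^r,x)=1$. On the other hand, by the Besicovitch--Federer projection theorem (or the fact that a purely unrectifiable set has lower density strictly less than $1$ at $\cH^1$-a.e.\ of its points, together with the mutual singularity statement from \cite[Theorem 5.7]{D:2008_book} recalled in the excerpt), for $\cH^1$-a.e.\ $x\in\Gamma^r$ one has $\theta^*(\Gamma^u,x)=0$. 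Combining these, for $\cH^1$-a.e.\ $x\in\Gamma^r$,
\begin{equation*}
\lim_{r\to0}\frac{\cH^1(\Gamma^u\cap Q_r(x))}{2r}=0,\qquad \lim_{r\to0}\frac{\cH^1(\Gamma^r\cap Q_r(x))}{2r}=1.
\end{equation*}
This says that, after blow-up, the unrectifiable part is $\cH^1$-negligible in the limit and plays no role; the weak-$*$ statement \eqref{blowup_weak} then follows from the classical blow-up theorem for rectifiable sets (e.g.\ \cite[Theorem 10.2]{Ma:2012} or the area-formula computation of the tangent measure of $\cH^1\res\Gamma^r$).

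The more delicate point is the Kuratowski convergence \eqref{blowup_kuratowki}, since it is a genuinely \emph{topological} statement and rectifiability alone gives only measure-theoretic control — this is precisely why the density lower bound in \eqref{uniform_density_estimates} is needed (as it was in \cite{Gi:2002}). The plan here is: the ``$\limsup\subset$'' inclusion $K$-$\limsup_\rho \overline{Q_{R,\nu}(x)}\cap\sigma_{x,\rho}(\Gamma)\subset \overline{Q_{R,\nu}(x)}\cap T_x$ follows from the fact that if $y_\rho\in\sigma_{x,\rho}(\Gamma)$ with $y_\rho\to y\notin T_x$, then a small ball $B_\varepsilon(y)$ avoids $T_x$, and so $\sigma_{x,\rho}(\Gamma)\cap B_\varepsilon(y)$ would have to carry $\cH^1$-mass $\ge c\varepsilon$ for all small $\rho$ (by the lower density bound applied at the point $x+\rho y_\rho\in\Gamma$), contradicting $\cH^1\res\sigma_{x,\rho}(\Gamma)\overset{*}{\wk}\cH^1\res T_x$ which assigns zero mass to $B_\varepsilon(y)$. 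Conversely, the ``$\liminf\supset$'' inclusion $\overline{Q_{R,\nu}(x)}\cap T_x\subset K$-$\liminf_\rho \sigma_{x,\rho}(\Gamma)$ is proved by contradiction: if some $y\in Q_{R,\nu}(x)\cap T_x$ were at distance $\ge\varepsilon$ from $\sigma_{x,\rho_k}(\Gamma)$ along a subsequence, then $\sigma_{x,\rho_k}(\Gamma)$ misses $B_\varepsilon(y)$; but the upper density bound forces $\cH^1(\Gamma\cap Q_{\rho_k(R+1)}(x))\le C'\rho_k$, so the blown-up sets have uniformly bounded length in $Q_{R+1,\nu}$, and one can invoke the (already generalized) Golab-type theorem \cite[Theorem 4.2]{Gi:2002} — or a direct connectedness/length argument — to conclude that a limiting compact connected piece of $T_x\cap\overline{Q_{R,\nu}(x)}$ would have to be ``cut'' near $y$, giving a limiting set of length strictly less than $\cH^1(T_x\cap\overline{Q_{R,\nu}(x)})=2R$; but lower semicontinuity of $\cH^1$ along $K$-convergence combined with the weak-$*$ convergence \eqref{blowup_weak} (which pins the total mass at exactly $2R$) yields a contradiction. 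I expect this second inclusion — making rigorous the passage from ``a gap in the blow-up'' to ``a deficit in length'' without a connectedness hypothesis on $\Gamma$ — to be the main obstacle, and it is exactly where the uniform density estimates substitute for connectedness, as in \cite{DMMS:1992,Gi:2002}.

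For the final sentence of the statement, concerning an arbitrary $\cH^1$-measurable subset $\Gamma'\subset\Gamma$: the key observation is that $[\Gamma']^r\subset\Gamma^r$ up to $\cH^1$-null sets, and at $\cH^1$-a.e.\ $x\in[\Gamma']^r$ we simultaneously have $\theta(\Gamma^r,x)=1$ and $\theta([\Gamma']^r,x)=1$, hence $\theta(\Gamma^r\setminus[\Gamma']^r,x)=0$; thus in the blow-up the pieces $\sigma_{x,\rho}(\Gamma)$ and $\sigma_{x,\rho}(\Gamma')$ differ by an $\cH^1$-vanishing amount, and moreover $\Gamma'$ has the same approximate tangent line $T_x$ at such $x$. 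The lower density bound \eqref{uniform_density_estimates} is only assumed for $\Gamma$, but that is all that is needed: in the ``$\limsup\subset$'' argument for $\Gamma'$ we may use the lower bound for the \emph{larger} set $\Gamma$ (a point of $\sigma_{x,\rho}(\Gamma')$ is a point of $\sigma_{x,\rho}(\Gamma)$), while in the ``$\liminf\supset$'' argument we use that $\cH^1\res\sigma_{x,\rho}(\Gamma')\overset{*}{\wk}\cH^1\res T_x$ as well (since the symmetric difference has vanishing density), so the same contradiction scheme applies verbatim. I would therefore present the proof of \eqref{blowup_kuratowki}--\eqref{blowup_weak} for $\Gamma$ in detail and then indicate that the argument for $\Gamma'$ is obtained by these density-comparison replacements.
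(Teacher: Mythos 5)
Your proof of \eqref{blowup_weak} and of the ``$K\text{-}\limsup\subset$'' inclusion in \eqref{blowup_kuratowki} are correct and essentially match the paper's argument (the paper packages the whole Kuratowski claim by extracting, from an arbitrary $r_k\searrow0$, a $K$-convergent subsequence $\sigma_{0,r_k}(\Gamma)\overset{K}\to L$ and then showing $L=T_0$; your $\limsup$ step is its ``$x\in L\setminus T_0$'' case). The genuine gap is in your ``$K\text{-}\liminf\supset$'' direction, which you yourself flag as the main obstacle and for which you propose invoking a Golab-type theorem, a connectedness/length argument, and ``lower semicontinuity of $\cH^1$ along $K$-convergence.'' That route would not go through: $\cH^1$ is not lower semicontinuous under Kuratowski convergence without a uniform bound on the number of connected components (finite sets $K$-converge to an interval with no loss of lower bound being available), and the Golab-type theorem in \cite[Theorem 4.2]{Gi:2002} requires exactly such a bound — which is what this proposition is designed to dispense with, and whose absence is the entire reason uniform density estimates are being used as a substitute.

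The correct argument for ``$\liminf\supset$'' is much simpler than what you envision and needs no Golab, no connectedness, and, in fact, no density estimate at all: once \eqref{blowup_weak} is in hand, it is self-sufficient. Write $\mu_\rho:=\cH^1\res\sigma_{x,\rho}(\Gamma)$ and $\mu_0:=\cH^1\res T_x$. If some $y\in T_x$ failed to lie in $K\text{-}\liminf_\rho\sigma_{x,\rho}(\Gamma)$, there would be $\varepsilon>0$ and $\rho_k\to0$ with $B_\varepsilon(y)\cap\sigma_{x,\rho_k}(\Gamma)=\emptyset$, hence $\mu_{\rho_k}(B_\varepsilon(y))=0$ for all $k$. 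But weak-$*$ convergence gives lower semicontinuity on open sets, so
\begin{equation*}
0<\varepsilon\le\cH^1(T_x\cap B_\varepsilon(y))=\mu_0(B_\varepsilon(y))\le\liminf_{k\to\infty}\mu_{\rho_k}(B_\varepsilon(y))=0,
\end{equation*}
a contradiction. The lower density bound in \eqref{uniform_density_estimates} is used only in your $\limsup$ direction (to see that a blown-up boundary point carries $\cH^1$-mass of order $c\varepsilon$ in a surrounding ball). You should replace the entire second half of your Kuratowski argument with this observation; the rest of your proposal, including the final reduction for a subset $\Gamma'\subset\Gamma$ via density comparison, is sound.
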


\begin{proof}
By \cite[Theorem 3.3]{Fa:1985}, $ \Gamma^r$ (and hence $ [\Gamma']^r$)  has a approximate tangent line  at $\cH^1$-a.e. $x,$ therefore,  \eqref{blowup_weak} follows from \cite[Remark 2.80]{AFP:2000}. To prove \eqref{blowup_kuratowki} with $\Gamma$ 
choose $x\in\Gamma$ such that $\theta(\Gamma,x)=1$ and $T_x$ exists.  Without loss of generality we assume that $x=0$ and $\nu_\Gamma(x)={\bf e_2}$ is  the unit normal to $T_x.$  
First we prove 
\begin{equation}\label{kuratowski_tangent}
\sigma_{0,r}(\Gamma) \overset{K}{\to} T_0 
\end{equation}
as $r\searrow0.$ Indeed, let $\mu_r:=\cH^1\res (\sigma_{0,r}(\Gamma))$ and $\mu_0:=\cH^1\res T_0.$ Given $r>0,$ since $\mu_r( Q_\rho(x))=\frac{\cH^1( Q_{\rho r}(rx))}{r},$ by  \eqref{uniform_density_estimates} for all $x\in \sigma_{0,r}(\Gamma)$ and $\rho\in(0,r_0/r)$ one has 
\begin{equation}\label{density_balls}
c\le \frac{\mu_r( Q_\rho(x))}{2\rho} \le C.  
\end{equation}
Let $r_k\searrow 0$ be any sequence. By compactness of sets in the Kuratowski convergence, passing to a further not relabelled subsequence if necessary we suppose that 
\begin{equation} \label{kuratowski_tangent1}
\sigma_{0,r_k}(\Gamma) \overset{K}{\to} L
\end{equation}
for some closed set $L\subset \R^2$ as $k\to\infty.$ We claim that $L=T_0.$ If there exists $x\in T_0\setminus L,$ then for some $\rho>0,$ $ Q_\rho(x)\cap L=\emptyset.$
By \eqref{kuratowski_tangent1}, $ Q_{\rho/2}(x)\cap \sigma_{0,r_k}(\Gamma)=\emptyset$ for all large $k$ so that $\mu_{r_k}( Q_{\rho/2}(x))=0.$  Then by \eqref{blowup_weak}
$$
0=\lim\limits_{k\to\infty} \mu_{r_k}( Q_{\rho/2}(x)) \ge \mu_0( Q_{\rho/2}(x))\ge\rho, 
$$
a contradiction. If there exists $x\in L\setminus T_0,$ then for some $ Q_\rho(x)\cap T_0=\emptyset$ for some $\rho>0$ and  there exists a sequence $x_k\in \sigma_{0,r_k}(\Gamma)$ such that $x_k\to x.$ Then  $ Q_{\rho/2}(x_k)\subset  Q_\rho(x)$ for all large $k$ so that by \eqref{blowup_weak} and \eqref{density_balls},
$$
0=\mu_0(\cl{ Q_\rho(x)}) \ge \limsup\limits_{k\to\infty} \mu_{r_k}(\cl{ Q_\rho(x)}) \ge \limsup\limits_{k\to\infty} \mu_{r_k}( Q_{\rho/2}(x_k))\ge c\rho,
$$
a contradiction. Thus, $L=T_0.$ Since the sequence $r_k\searrow 0$ is arbitrary, \eqref{kuratowski_tangent} follows. Now \eqref{blowup_kuratowki} is obvious.

\blue To prove the assertion for $\Gamma',$ fix any $x\in \Gamma'$ such that $\theta(\Gamma,x)=\theta(\Gamma',x)=1$ and both generalized tangents $T_x^\Gamma$ and $T_x^{\Gamma'}$ of $\Gamma$ and $\Gamma'$ exist. Note that $T_x^\Gamma=T_x^{\Gamma'} =:T_x.$ For shortness, assume that $x=0$ and $\nu_\Gamma(x)={\bf e_2}.$ Since in general $\Gamma'$ does not satisfy the uniform density estimates of type \eqref{uniform_density_estimates}, we cannot argue as above. 

Let $r_k\searrow0$ be arbitrary sequence such that $\sigma_{0,r_k}(\Gamma')\to L$ for some closed set $L\subset\R^2.$ Then for every $x\in L$ there exists a sequence $x_k\in \sigma_{0,r_k}(\Gamma')$ such that $x_k\to x.$ Since $\Gamma'\subset\Gamma$ and by  \eqref{kuratowski_tangent} $\sigma_{0,r_k}(\Gamma)\overset{K}{\to} T_0,$ we have  $x_k\in \sigma_{0,r_k}(\Gamma)$ and $x_k\to x\in T_0.$ Thus, $L\subset T_0.$ 
To prove the converse inclusion, assume that there exists $x\in T_0\setminus L.$ Since $L$ is closed there exists $r>0$ such that $B_{2r}(x)\cap L=\emptyset.$ As we mentioned in the beginning of the proof, for $\mu_k:=\cH^1\res (\sigma_{0,r_k}(\Gamma')) $ we have 
$\mu_k \overset{*}{\to} \cH^1(T_0).$ In particular, for every $\rho\in (0,r)$
$$
\lim\limits_{k\to+\infty} \mu_k(B_\rho(x))  = \cH^1(B_\rho(x)\cap T_0) =2\rho.
$$
Hence, $B_\rho(x)\cap \sigma_{0,r_k}(\Gamma')\ne\emptyset$  for each such $\rho$ and thus, taking a sequence $\rho_n\to0$ and using a diagonal argument we obtain a sequence $x_n\in \sigma_{0,r_{k_n}}(\Gamma')$ converging to $x.$ So $x\in L,$ a contradiction. 

Since $r_k\to0$ is arbitrary, one has $\sigma_{0,r}(\Gamma')\overset{K}{\to} T_0$ as $r\to0.$
\black 
\end{proof}

Next we turn to the compactness of displacements of the sequence of constrained minimizers $\{(A_m,u_m)\}.$ 

\begin{proposition}\label{prop:existence_of_u}
Let $A_{m_h}$ and $A$ be as in Proposition \ref{prop:compact_A_m}. Let $\{E_i\}_{i\in\N}$ be  \bu the family of \ba all connected components of $\Int{A}.$ There \bu exist \ba a further (not relabelled) subsequence of $\{(A_{m_h},u_h)\}$, a sequence $\{a_h\}$ of rigid displacements, a subset $\indexset$ of $\N$, a function $v_0\in H^1(\substrate)$ and a family $\{v_i\in GSBD^2(\Int{E_i})\cap H_\loc^1(\Int{E_i}\cup\substrate)\}_{i\in\indexset}$ such that  
$$
|u_{m_h} + a_h | \to +\infty 
$$
a.e. in $\bigcup_{i\in\N\setminus \indexset  }E_i,$
$$
u_{m_h} + a_h \wk v_0\chi_{\substrate} + \sum\limits_{i\in\indexset } v_i\chi_{E_i}
$$
weakly in $H_\loc^1((\cup_{i\in\indexset }E_i)\cup \substrate)$ (and hence a.e.\ in $(\cup_{i\in\indexset }E_i)\cup \substrate$), 
$$
\str{u_{m_h}} \to \str{ v_0}\chi_{\substrate} + \sum\limits_{i\in\indexset } \str{v_i}\chi_{E_i}
$$
weakly in $L_\loc^2((\cup_{i\in\indexset }E_i)\cup \substrate).$ 
\end{proposition}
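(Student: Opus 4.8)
The plan is to build the limiting displacement component by component, starting from the uniform elastic bound that minimality provides, anchoring a single sequence of rigid displacements on a fixed reference set, invoking the rigidity dichotomy of \cite[Theorem 3.7]{HP:2019} on each connected component $\Int{E_i}$, and finally diagonalising over $i\in\N$. First I would record the uniform bounds: since each $(A_{m_h},u_{m_h})$ minimises $\cF$ in $\admissible_{m_h}$ under the volume constraint $|A|=\fm$, the competitor argument of \cite[Theorem 2.6]{HP:2019} (also used in the proof of Proposition \ref{prop:compact_A_m}) gives $\sup_h\cF(A_{m_h},u_{m_h})<+\infty$, and then (H1)-(H3) together with $u_0\in H^1(\R^2;\R^2)$ yield
$$
\sup_{h}\Big(\int_{A_{m_h}\cup\substrate}|\str{u_{m_h}}|^2\,dx+\cH^1(\p A_{m_h})\Big)<+\infty .
$$

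Next I would fix the rigid displacements $\{a_h\}$. If $\substrate\ne\emptyset$, Korn's inequality on the connected bounded Lipschitz set $\substrate$ produces rigid displacements $a_h$ with $\|u_{m_h}+a_h\|_{H^1(\substrate)}\le C\|\str{u_{m_h}}\|_{L^2(\substrate)}\le C'$, so that up to a subsequence $u_{m_h}+a_h\wk v_0$ in $H^1(\substrate)$ for some $v_0\in H^1(\substrate)$; when $\substrate=\emptyset$ I would instead anchor $\{a_h\}$ on a fixed component $E_{i_0}$, exhausting it by compactly contained connected open sets and applying the Korn--Poincar\'e inequality there.

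Then, for each $i$, I would exhaust $\Int{E_i}$ by connected open sets $O_{i,1}\strictlyincluded O_{i,2}\strictlyincluded\cdots$ with $\bigcup_k O_{i,k}=\Int{E_i}$. Since $\sdist(\cdot,\p A_{m_h})\to\sdist(\cdot,\p A)$ locally uniformly, each $\overline{O_{i,k}}$ lies in $\Int{A_{m_h}}$ for $h$ large, whence $u_{m_h}\in H^1(O_{i,k})$ there, so $u_{m_h}$ has no jump inside $\Int{E_i}$. Applying \cite[Theorem 3.7]{HP:2019} (which relies on the $GSBD$-compactness theorem \cite[Theorem 1.1]{ChC:2019_jems} and on the absence of jumps, forcing the limiting divergence set to have empty essential boundary in $\Int{E_i}$ and hence, by connectedness, to be all of $\Int{E_i}$ or empty) to $(u_{m_h}+a_h)|_{O_{i,k}}$, and using a Korn--Poincar\'e estimate to carry the same correcting rigid displacement from $O_{i,k}$ to $O_{i,k+1}$, I obtain the alternative: either $|u_{m_h}+a_h|\to+\infty$ a.e.\ in $\Int{E_i}$, or, along a subsequence, $u_{m_h}+a_h\wk v_i$ in $H^1_{\loc}(\Int{E_i})$.

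Finally I would diagonalise over $i\in\N$ to obtain one (not relabelled) subsequence along which this alternative holds simultaneously for every component — legitimate since diverging a.e.\ and the a.e.\ limit are stable under further extraction — and set $\indexset:=\{i:\ u_{m_h}+a_h\ \text{does not diverge on }E_i\}$. For $i\in\indexset$, Fatou's lemma and the $H^1_{\loc}$ estimates give $v_i\in GSBD^2(\Int{E_i})$, the a.e.\ convergence of $u_{m_h}+a_h$ on $\Int{E_i}$ and on $\substrate$ (where it is locally $H^1$) gives $v_i\in H^1_{\loc}(\Int{E_i}\cup\substrate)$ compatible with $v_0$, and weak $H^1_{\loc}$ convergence yields $\str{u_{m_h}}\wk\str{v_0}\chi_{\substrate}+\sum_{i\in\indexset}\str{v_i}\chi_{E_i}$ in $L^2_{\loc}$. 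The main obstacle I anticipate is the reconciliation of rigid displacements: one must argue that the single anchored sequence $\{a_h\}$ produces on every $E_i$ a genuine all-or-nothing divergence — equivalently, that a rigid displacement whose $H^1$-norm blows up forces $|u_{m_h}+a_h|\to+\infty$ at $\mathcal L^2$-a.e.\ point of $E_i$, the zero set of the correcting rigid displacement being at most a moving line — which is exactly the point where \cite[Theorem 3.7]{HP:2019} is invoked; a secondary difficulty is the diagonal extraction over the a priori infinitely many components together with the $H^1_{\loc}$ gluing of the $v_i$ to $v_0$ across $\Sigma$.
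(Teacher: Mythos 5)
Your proposal is correct and follows essentially the same route as the paper's proof: anchor a single sequence $\{a_h\}$ of rigid displacements on $\substrate$ via the Korn--Poincar\'e and Rellich--Kondrachov theorems, obtain the all-or-nothing alternative on each connected component $E_i$ by exhausting it with connected open sets $D\strictlyincluded E_i$ and applying \cite[Proposition 3.7]{HP:2019}, then diagonalise over $i\in\N$ and define $\indexset$ accordingly. Two minor remarks: once $a_h$ is anchored and $u_{m_h}+a_h$ has a finite a.e.\ limit on a fixed ball $B_i\subset E_i$, \cite[Proposition 3.7]{HP:2019} already delivers weak $H^1_\loc$ and a.e.\ convergence of $u_{m_h}+a_h$ \emph{itself} on every connected $D\strictlyincluded E_i$ containing $B_i$ with no further rigid correction, so the ``carrying a correcting rigid displacement from $O_{i,k}$ to $O_{i,k+1}$'' bookkeeping you anticipate is absorbed into that proposition and need not be redone by hand; and your explicit treatment of the $\substrate=\emptyset$ case (allowed by (H4)) is a welcome refinement that the paper's written proof tacitly sidesteps by opening with ``$\substrate$ is connected and Lipschitz.''
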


The main difference of our compactness result from \cite[Theorem 1.1]{ChC:2019_jems} is not only that in our setting we have the set-function coupling, but also we need to select those components of limiting free crytal region where the displacements diverge and those in which they don't. This first requires to actually  prove that the behavior is consistent inside each component of the limiting free-crystal region, which is achieved using \cite[Proposition 3.7]{HP:2019}. 

\begin{proof}
Since $\substrate$ is connected and Lipschitz, by the Korn-Poincar\'e inequality and the Rellich-Kondrachov Theorem  there exists a further not relabelled subsequence $\{u_{m_h}\},$ a sequence $\{a_h\}$ of infinitesimal rigid displacements and $v_0\in H^1(\substrate;\R^2)$ such that $u_{m_h}  + a_h \to v_0$  weakly in $H^1(\substrate;\R^2)$ and a.e.\ in $\substrate.$  

\red We define the set $\indexset\subset \N$ as follows: For each $i\in\N$ fix some ball $B_i\strictlyincluded E_i.$ Since $A_{m_h} \overset{K}{\to} A,$ there exists $h_i^0>0$ such that $B_i\strictlyincluded A_{m_h}$ for all $h> h_i^0.$ By \eqref{hyp:elastic} and \eqref{zur_tenglik} 
$$
\sup_{h> h_i^0}\int_{B_i} |\str{u_{m_h} + a_h}|^2 dx \le \frac{1}{2c_3}\sup_{h> h_i^0} \int_{A_{m_h} \cup S} \C(x)\str{u_{m_h}}:\str{u_{m_h}}dx <+\infty,
$$
and thus, by \cite[Proposition 3.7]{HP:2019} either $|u_{m_h} + a_h|\to +\infty$ a.e.\ in $B_i$ or up to a subsequence, $u_{m_h} + a_h$ converges a.e.\ in $B_i.$ By a diagonal argument, we choose a further not relabelled subsequence $\{u_{m_h}\}$ and the subset $\indexset$ of indices $i\in \N$  such that for every $i\in \indexset$ the sequence $w_h:=u_{m_h}+ a_h \to v_i$  converges a.e.\ in $B_i$ as $h\to+\infty.$ \ba

We claim that for every $i\in\indexset$ there exists $v_i\in H_\loc^1(E_i;\R^2)\cap GSBD^2(E_i;\R^2)$ such that $w_h \to v_i$ weakly in $H_\loc^1(E_i;\R^2)$ and a.e.\ in $E_i$ as $h\to\infty$. To prove the claim we fix $i\in\indexset $ and let $D\strictlyincluded E_i$ be an arbitrary connected open set containing $B_i.$ Since $\sdist(\cdot,\p A_{m_h})\to\sdist(\cdot,\p A)$ locally uniformly in $\R^2,$ there exists $h_D>0$ such that $D\strictlyincluded \Int{A_{m_h}}$ for all  $h>h_D.$ 
Note that $w_h\in H^1(D)$ and
\begin{equation}\label{elastic_seq_kokok}
\sup\limits_{h>h_D} \int_D |\str{w_h}|^2dx \le 
C:=\frac{1}{2c_3}\sup\limits_{h>h_D} \int_{A_{m_h}\cup\substrate} \C(x)\str{u_{m_h}}:\str{u_{m_h}}dx<+\infty, 
\end{equation}
where in the first inequality we used \eqref{hyp:elastic} and in the second \eqref{zur_tenglik}. 
Since $w_h$ has finite limit a.e.\ in $B_i\subset D,$ by \cite[Proposition 3.7]{HP:2019} there exists $v_i^D\in H_\loc^1(D)\cap GSBD^2(D)$ and a  subsequence $\{w_h^D\}$ of $\{w_h\}$ such that $w_h^D \to v_i^D$ weakly in $H_\loc^1$ and a.e.\ in $D.$ Now choosing a sequence $D_1\strictlyincluded D_2\strictlyincluded \ldots \strictlyincluded E_i$ of connected open sets such that $B_i\subset D_1$ and $E_i = \cup_j D_j$ and using a diagonal argument we choose a (not relablled) subsequence $\{w_h\}$ and $v_i\in H_\loc^1(E_i)\cap GSBD_\loc^2(E_i)$ such that $w_h\to v_i$ weakly in $H_\loc^1(E_i)$ and a.e.\ in $E_i.$ In particular, $\str{w_h}\to \str{v_i}$  weakly in $L_\loc^2(E_i)$ and hence, by convexity and \eqref{elastic_seq_kokok}
$$
\int_{D_j} |\str{v_i}|^2dx \le \liminf\limits_{h\to+\infty} \int_{D_j} |\str{w_h}|^2dx \le C.
$$
Hence, letting $j\to\infty$ we get $v_i\in GSBD^2(E_i).$

\red Let us now show that by \ba the choice of $\indexset,$ for every $j\in \N\setminus \indexset $ one has $|u_{m_h}+ a_h|\to+\infty$ a.e.\ in $E_j$ as $h\to+\infty.$  
\red Indeed, by definition, if $i\notin N,$ then $|u_{m_h} + a_h|\to +\infty$ a.e.\ in $B_i\strictlyincluded E_i.$ Let $D\strictlyincluded E_i$ be any connected open set containing $B_i.$ As in \eqref{elastic_seq_kokok} we can show $\|\str{u_{m_h}+a_h}\|_{L^2(D)}^2$ is uniformly bounded for all sufficiently large $h,$ and therefore, by \cite[Proposition 4.7]{HP:2019} $|u_{m_h} + a_h|\to +\infty$ a.e.\ in $D.$ \ba

Finally, since $u_{m_h} + a_h \to u$ weakly in $H^1_\loc((\cup_{i\in \indexset } E_i)\cup \substrate),$  it follows that $\str{u_{m_h}}=\str{u_{m_h} + a_h} \to \str{u}$ weakly in $L^2_\loc((\cup_{i\in \indexset } E_i)\cup\substrate).$ 
\end{proof}

Proposition \ref{prop:existence_of_u} allows us to define a ``limit'' displacement.

\begin{proposition} \label{prop:def_of_u}
Let $\{(A_{m_h},u_{m_h})\},$ $\{a_h\},$  $A,$ $\indexset $ and $\{v_i\}_{i\in\indexset \cup\{0\}}$ satisfy the assertion of Proposition \ref{prop:existence_of_u} and let 
$$ 
u:=v_0 \chi_{\substrate} + \sum\limits_{i\in\indexset } v_i\chi_{E_i} + \sum\limits_{j\in\N\setminus \indexset } u_0\chi_{E_j}, 
$$ 
where $u_0$ is the displacement defining the mismatch strain $M_0$. Then 
\begin{equation}\label{lsc_elastic1}
\liminf\limits_{h\to\infty} \cW(A_{m_h},u_{m_h})
\ge \cW(A,u).
\end{equation}
\end{proposition}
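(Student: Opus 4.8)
The plan is to exploit the invariance of the elastic energy under rigid displacements together with the weak lower semicontinuity of convex integral functionals, the key structural point being that $u$ is set equal to $u_0$ precisely on those components of $\Int{A}$ along which the sequence diverges, so that these components contribute nothing to $\cW(A,u)$.

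\emph{Step 1: reduction and identification of $\cW(A,u)$.} Since $\str{a_h}=0$ we have $\str{u_{m_h}}=\str{w_h}$ with $w_h:=u_{m_h}+a_h$, hence $\cW(A_{m_h},u_{m_h})=\int_{A_{m_h}\cup\substrate}\C\,(\str{w_h}-M_0):(\str{w_h}-M_0)\,dx$. On the other side, $\cH^1(\p A)<+\infty$ gives $\mathcal L^2(\p A)=0$, so from $A\subset\Int{A}\cup\p A$ we get $|A\setminus\Int{A}|=0$ and therefore $\cW(A,u)=\int_{\Int{A}\cup\substrate}\C\,(\str{u}-M_0):(\str{u}-M_0)\,dx$. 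Now $\Int{A}=\bigsqcup_{i\in\N}E_i$, and on each $E_j$ with $j\in\N\setminus\indexset$ one has $u=u_0$ and $M_0=\str{u_0}$ (recall $E_j\subset\Omega$), so the integrand vanishes identically there. Consequently
$$
\cW(A,u)=\sum_{i\in\indexset}\int_{E_i}\C\,(\str{v_i}-M_0):(\str{v_i}-M_0)\,dx+\int_{\substrate}\C\,\str{v_0}:\str{v_0}\,dx,
$$
which is the quantity to be bounded from below.

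\emph{Step 2: localization and weak lower semicontinuity.} Fix a finite $I\subset\indexset$, open sets $D_i\strictlyincluded E_i$ for $i\in I$ and $D_0\strictlyincluded\substrate$, and put $\openset:=D_0\cup\bigcup_{i\in I}D_i$. By Proposition \ref{prop:compact_A_m}(b) the signed distances converge locally uniformly, so $\overline{D_i}\subset\Int{A_{m_h}}$ for $h$ large, whence $\openset\subset A_{m_h}\cup\substrate$; since the integrand is nonnegative, $\cW(A_{m_h},u_{m_h})\ge\int_{\openset}\C\,(\str{w_h}-M_0):(\str{w_h}-M_0)\,dx$. By Proposition \ref{prop:existence_of_u}, piecing together the finitely many weak $L^2_\loc$ convergences, $\str{w_h}\wk\str{u}$ weakly in $L^2(\openset;\mtwo)$. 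The functional $g\mapsto\int_{\openset}\C(x)(g-M_0):(g-M_0)\,dx$ is, by {\rm (H3)}, a continuous nonnegative (hence convex) quadratic functional on $L^2(\openset;\mtwo)$, thus weakly lower semicontinuous — equivalently, $g\mapsto\big(\int_{\openset}\C\,g:g\,dx\big)^{1/2}$ is a norm equivalent to the $L^2$-norm and translation by the fixed element $M_0\in L^2(\openset;\mtwo)$ preserves weak lower semicontinuity. Therefore
$$
\liminf_{h\to\infty}\cW(A_{m_h},u_{m_h})\ge\int_{\openset}\C\,(\str{u}-M_0):(\str{u}-M_0)\,dx.
$$

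\emph{Step 3: exhaustion.} Letting $I\uparrow\indexset$, $D_i\uparrow E_i$ and $D_0\uparrow\substrate$ along exhaustions by compactly contained open subsets, the monotone convergence theorem (the integrand being a fixed nonnegative function on $\Int{A}\cup\substrate$) gives $\int_{\openset}\C\,(\str{u}-M_0):(\str{u}-M_0)\,dx\to\cW(A,u)$ by the identity of Step 1, and \eqref{lsc_elastic1} follows. The only genuinely delicate point is Step 1: recognizing that the definition of $u$ makes the diverging components cost nothing is exactly what rescues the inequality (without it the right-hand side would be too large); once that is in place, what remains is the standard weak-lower-semicontinuity-plus-exhaustion scheme, with the only technical ingredients being $|A\setminus\Int{A}|=0$ and the compatibility of local weak $L^2$ convergence with integration over compactly contained sets.
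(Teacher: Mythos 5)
Your proposal is correct and takes essentially the same route as the paper's proof: identify that $\str{u}-M_0=0$ on the components $E_j$, $j\in\N\setminus\indexset$ (so they cost nothing on the right-hand side), restrict to compactly contained open sets inside $(\cup_{i\in\indexset}E_i)\cup\substrate$ where the weak $L^2_\loc$-convergence of the strains from Proposition \ref{prop:existence_of_u} applies, use nonnegativity and convexity (weak lower semicontinuity) of the elastic density, and then exhaust, with $|A\setminus\Int{A}|\le|\p A|=0$. The only presentational difference is that you make explicit the inclusion $\overline{D_i}\subset\Int{A_{m_h}}$ for large $h$ via the $\sdist$-convergence, a step the paper leaves implicit.
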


\begin{proof}
Fix arbitrary open set $D\strictlyincluded \Int{A}\cup \substrate.$  By Proposition \ref{prop:existence_of_u} $u_{m_h} +a_h \to u$  weakly in $L^2(D\cap [(\cup_{i\in\indexset } E_i) \cup\substrate]),$ hence, by the convexity of the elastic energy 
\begin{align*}
&\liminf\limits_{h\to\infty} \cW(A_{m_h},u_{m_h})   = 
\liminf\limits_{h\to\infty} \int_{A_{m_h}\cup \substrate} W(x,\str{u_{m_h}}-M_0)dx\nonumber \\
\ge &  \liminf\limits_{h\to\infty} \Big(\int_{D\cap \substrate} W(x,\str{u_{m_h}} - M_0)dx +\sum\limits_{j\in\indexset } \int_{D\cap E_i} W(x,\str{u_{m_h}}-M_0)dx\Big)\nonumber \\
\ge & \int_{D\cap \substrate} W(x,\str{u}-M_0)dx +\sum\limits_{i\in\indexset } \int_{D\cap E_i} W(x,\str{u} - M_0)dx, 
\end{align*}
where we recall that $M_0=\str{u_0}.$ Since $\str{u} - M_0 =0$ a.e.\ in $\cup_{j\in \N\setminus \indexset } E_j,$ this inequality can also be rewritten as 
$$
\liminf\limits_{h\to\infty} \cW(A_{m_h},u_{m_h})
\ge \int_{D\cap (A\cup \substrate)} W(x,\str{u}-M_0)dx. 
$$
Now letting $D\nearrow \Int{A}\cup \substrate$ and using $|A\setminus\Int{A}|\le |\p A| =0$ we get \eqref{lsc_elastic1}.
\end{proof}

Now we establish the following ``lower semicontinuity'' of $\cF(A_m,u_m).$ 

\begin{proposition}\label{prop:lsc_surface}
Let $\{(A_{m_h},u_{m_h})\},$ $A$ and $u$ be as in Proposition \ref{prop:def_of_u}. 
Then $(\Int{A},u)\in \tilde \admissible$ and
\begin{equation}\label{lsc_surface_energy}
\liminf\limits_{h\to\infty} \cS(A_{m_h},u_{m_h}) \ge \tilde \cS(\Int{A},u),
\end{equation}
where $\tilde \cS$ is defined in \eqref{surface_tilde}.
\end{proposition}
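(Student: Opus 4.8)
The plan is to split the surface energy into its contributions on $\Omega\cap\p^*A_{m_h}$, on the cracks and filaments $\Omega\cap(A_{m_h}^{(1)}\cup A_{m_h}^{(0)})\cap\p A_{m_h}$, on the wetting layer $\Sigma\cap A_{m_h}^{(0)}\cap\p A_{m_h}$, on the contact region $\Sigma\cap\p^*A_{m_h}\setminus J_{u_{m_h}}$, and on the delaminated region $J_{u_{m_h}}$, and prove a lower-semicontinuity inequality for each piece against the corresponding term of $\tilde\cS(\Int A,u)$. I would first record that $(\Int A,u)\in\tilde\admissible$: indeed $A\in\widetilde\fA$ by Proposition \ref{prop:compact_A_m}, so $\Int A\in\widetilde\fA$ as well (its boundary is contained in $\p A$ up to $\cH^1$-null sets by the density estimate \eqref{density_pA}, which forces $\overline{\Omega\cap\p^*A}=\Omega\cap\p A$ and $|\Int A\,\Delta\,A|=0$), and $u$ is admissible on $\Int A\cup\substrate$ by construction in Proposition \ref{prop:def_of_u} together with $J_u\subset\Sigma\cap\overline{\p^*A}$. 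Since $\cW$ is already handled by \eqref{lsc_elastic1}, the whole point is the surface part.

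The core of the argument is a blow-up method performed separately on each portion of $\p A$. Writing $\mu_h$ for the measure $\psi(z,u_{m_h},\nu)\,\cH^1\res\p A_{m_h}$ (with $\psi$ the surface tension from \eqref{surface_tension}), the bound $\sup_h\cH^1(\p A_{m_h})<\infty$ gives, up to a subsequence, $\mu_h\overset{*}{\wk}\mu$ for some finite Radon measure $\mu$; by the Besicovitch derivation theorem it suffices to show, for $\cH^1$-a.e.\ point $x$ of each piece of $\p A$ (equivalently of $\p\Int A$), that $\tfrac{d\mu}{d\cH^1\res\p A}(x)$ is at least the corresponding integrand of $\tilde\psi$. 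At such a point I would blow up: using the Blaschke/$\sdist$-convergence from Proposition \ref{prop:compact_A_m}(b) together with Proposition \ref{prop:convergence_tangent_line} (applied to the rectifiable part $\p^r A$, and to the subsets $S_u^A$ and $J_u$), the rescaled sets $\sigma_{x,\rho}(\p A_{m_h})$ converge to the tangent line $T_x$, so locally $\p A_{m_h}$ looks like a small perturbation of a segment through $x$ with normal $\nu_\Gamma(x)$. On the free boundary $\p^*A$ the lower bound is simply $\varphi(x,\nu_A(x))$, since each $\psi$-weight is $\geq\varphi$; on the purely unrectifiable part $\p^u A$ one gets a nonnegative contribution which is consistent with $\tilde\psi$ putting no charge there; on $J_u$ the weight is exactly $\varphi(x,-\nu_\Sigma)$ as in $\tilde\psi$; on $\Sigma\cap\p^*A\setminus J_u$ it is $\beta$, again matching $\tilde\psi$. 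The delicate cases are the crack/filament set and the jump set $S_u^A$, where the weight in $\tilde\cS$ is $2\varphi$: here I would argue that for $\cH^1$-a.e.\ $x\in S_u^A$ the two-sided traces $u^\pm_{\p A}$ differ, so after blow-up the displacements $u_{m_h}$ must exhibit a genuine jump across $T_x$, and by a slicing/lower-semicontinuity argument à la \cite[Proposition 4.1]{HP:2019} the $\p A_{m_h}$ near $x$ must asymptotically contain (at least) two nearly-parallel segments — or a doubled segment counted with multiplicity two in the relaxed sense — producing the factor $2$; a symmetric argument on $\Omega\cap A^{(1)}\cap\p A$ handles cracks, and for external filaments ($A^{(0)}$-side) the factor $2$ comes directly from the $2\varphi$ already present in $\cS$ on the approximating configurations.

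The main obstacle I anticipate is precisely this factor-$2$ bookkeeping on $S_u^A$ and on the crack set in the absence of any bound on the number of connected components of $\p A_{m_h}$: one must rule out that a crack of the limit is produced by a single one-sided boundary of $A_{m_h}$ (which would only give $\varphi$, not $2\varphi$), and instead show the jump in $u$ forces a two-sided structure. The key input is that $J_{S_u^A}$ is $\cH^1$-rectifiable with well-defined two-sided traces, that these traces arise as $L^2$-limits of traces of the $u_{m_h}$ on the appropriate sides of $\p A_{m_h}$ (using the density estimates to guarantee uniform trace control, as in Sections \ref{sec:decay_estimates}), and a careful adaptation of the blow-up/covering argument of \cite[Proposition 4.1]{HP:2019} to the present non-graph setting. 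Once each piece is handled, summing the Besicovitch lower bounds and using the mutual singularity of the various portions of $\p A$ yields $\liminf_h\cS(A_{m_h},u_{m_h})\geq\tilde\cS(\Int A,u)$, which is \eqref{lsc_surface_energy}.
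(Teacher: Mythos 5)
Your overall strategy matches the paper's: pass to Radon measures, use Besicovitch differentiation, and prove a pointwise density lower bound on each portion of $\p A$ by a blow-up, with Proposition \ref{prop:convergence_tangent_line} supplying the Kuratowski and weak-$*$ convergence of the rescaled boundaries. Two points, however, are genuine gaps rather than omitted details.

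First, the measure you propose, $\mu_h=\psi(\cdot,u_{m_h},\nu)\,\cH^1\res\p A_{m_h}$ with $\psi$ from \eqref{surface_tension}, is not a positive Radon measure when $\beta<0$ on part of $\Sigma$, so weak-$*$ compactness, Besicovitch differentiation, and the ``take $\liminf$ of densities'' step are not available as stated. The paper corrects for this by adding the fixed quantity $\int_\Sigma\varphi(x,\nu_\Sigma)\,d\cH^1$ on both sides: it works with the auxiliary nonnegative integrand $g(x,s)=\varphi(x,\nu_\Sigma)+s\beta(x)$, builds $\mu_h$ and $\mu$ out of $g$ so that every term is $\geq 0$ (using \eqref{hyp:bound_anis}), and only at the end subtracts the constant. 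Without some such device, your argument does not get off the ground on $\Sigma$.

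Second, you correctly flag the factor-$2$ bookkeeping on $S_u^A$ (and on $\Sigma\cap J_{\tilde u}$) as the delicate point, but the remedy you propose --- ``a careful adaptation of the blow-up/covering argument of \cite[Proposition 4.1]{HP:2019}'' --- does not actually close it. That result, and the accompanying \cite[Lemma 4.7]{HP:2019}, hinge on the uniform bound on the number of connected components of $\p A_{m_h}$, which is exactly what is being removed here. The real technical content of the paper is the new Lemma \ref{lem:internal_cracks}: starting from the $GSBD$ slicing/compactness estimates of \cite{ChC:2019_jems} it shows that for a.e.\ direction the one-dimensional slices must cross $J_{w_k}$, then replaces each connected component of $\p D_k$ by its convex hull so that every such slice crosses the resulting closed curves \emph{twice}, and from this extracts the $2\varphi$ lower bound with an arbitrarily small error. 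This lemma is also formulated so as to cover the case (your option ``(b2)'' in the paper's notation) in which one side of the limit jump comes from a component of $\Int A$ on which $|u_{m_h}+a_h|\to\infty$: there the limit trace doesn't exist, but the set $U_1^\infty$ in assumption (d) of the lemma still forces a full crossing on each slice. Your outline, which treats the jump only through two finite one-sided traces and ``uniform trace control,'' does not cover this diverging case and does not explain why the absence of a component bound doesn't let a single one-sided sheet produce the limit crack with weight $\varphi$ only.

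In short: the architecture (Besicovitch + blow-up) is right, but you need the nonnegative-measure normalization via $g$, and you need a replacement for \cite[Lemma 4.7]{HP:2019} --- this is precisely Lemma \ref{lem:internal_cracks}, whose proof (slicing in $GSBD$ + convexification of boundary components) is not a routine adaptation.
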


We postpone the proof of this proposition after the following auxiliary lemma, needed to treat the delamination and jumps along the cracks.

\begin{lemma} \label{lem:internal_cracks}
\red Recall the definition of the sets  $I_r$ and $Q_r^\pm$ from \eqref{def:I_r_Q_rpm}. \black Let $\phi$ be any norm in $\R^2.$  Let $\{D_k\}$ and $\{m_k\}$ be sequences  of subsets of $Q_4$ and of natural numbers, respectively,  satisfying 
\begin{itemize}
\item[(a)] the number of connected components $\p D_k$ lying strictly inside $Q_4$ does not exceed $m_k;$

\item[(b)]  $\sdist(\cdot,\p D_k)\to  - \dist(\cdot,I_4)$ uniformly in $Q_4$ and 
$$
\sup\limits_k \cH^1(Q_1\cap \p D_k)<+\infty;
$$

\item[(c)] there exists a sequence $\{w_k\}\subset GSBD^2(Q_4)$ such that $J_{w_k}\subset Q_1\cap \p D_k$ and 
$$
\sup\limits_k  \int_{Q_1} |\str{w_k}|^2dx<+\infty;
$$

\item[(d)]  there exist $\xi^\pm\in\R^2$ such that 
$$
w_k \to w_0:= \xi^+ \chi_{Q_1^-} + \xi^-\chi_{Q_1^+\setminus U_1^\infty}\quad\text{a.e. in $Q_1\setminus U_1^\infty$}
$$
and 
$$
|w_k|\to+\infty \quad\text{a.e. in $U_1^\infty,$}
$$
 where $U_1^\infty$ is either $\emptyset$ or $Q_1^+.$ 
\end{itemize}
Then  there exists a subsequence $\{k_h\}\subset\N$ such that for  any $\delta\in(0,1)$ we can find  $h_\delta>0$ for which 
\begin{equation}\label{something_decrease}
\int_{Q_1\cap \p^*D_{k_h}} \phi(\nu_{D_{k_h}}) d\cH^1 + 2\int_{Q_1\cap D_{k_h}^{(1)} \cap \p D_{k_h}} \phi(\nu_{D_{k_h}})d\cH^1 \ge 2\int_{I_1}\phi({\bf e_2})d\cH^1 -\delta  
\end{equation}
for all $h>h_\delta.$ 
\end{lemma}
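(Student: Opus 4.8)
The plan is to prove \eqref{something_decrease} by a blow-up/lower-semicontinuity argument applied to the rescaled sets $D_k$, exploiting the fact that, after blow-up, the boundaries $\partial D_k$ converge (in the Kuratowski/$\sdist$ sense) to the flat line $I_4 = [-4,4]\times\{0\}$, so their traces near $I_1$ should carry the weight $2\phi({\bf e_2})$ per unit length. First I would set up the relevant measures: for each $k$, introduce the surface measure $\mu_k := \cH^1\res(Q_1\cap\partial^*D_k)\,\phi(\nu_{D_k}) + 2\,\cH^1\res(Q_1\cap D_k^{(1)}\cap\partial D_k)\,\phi(\nu_{D_k})$, which has bounded mass by (b) and (H1)-type bounds on $\phi$ (here $\phi$ is an arbitrary norm, so $c|\xi|\le\phi(\xi)\le C|\xi|$ for some $c,C>0$). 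Passing to a subsequence $\{k_h\}$, I may assume $\mu_{k_h}\overset{*}{\wk}\mu$ for some finite nonnegative Radon measure $\mu$ on $Q_1$. The goal \eqref{something_decrease} is then equivalent to the statement $\mu(Q_1)\ge 2\int_{I_1}\phi({\bf e_2})\,d\cH^1 = 4\cdot 2\cdot\phi({\bf e_2})$ — wait, more precisely $2\int_{I_1}\phi({\bf e_2}) = 2\cdot 2\cdot\phi({\bf e_2}) = 4\phi({\bf e_2})$ — up to the error $\delta$, which follows once we show $\mu\ge 2\phi({\bf e_2})\,\cH^1\res I_1$ as measures (or at least with total mass at least that value). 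Since $\delta>0$ is arbitrary and the inequality with $\mu$ gives the liminf, the subsequence statement follows from lower semicontinuity of mass under weak-* convergence on the open set, combined with a careful choice of $h_\delta$.

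The core is therefore to prove the lower bound $\mu\ge 2\phi({\bf e_2})\,\cH^1\res I_1$. I would localize at $\cH^1$-a.e.\ point $x_0\in I_1$ and perform a second blow-up. Fix $x_0=(t,0)$ with $t\in(-1,1)$. For small $r$, rescale via $\sigma_{x_0,r}$: the key geometric input is that, because $\sdist(\cdot,\partial D_k)\to-\dist(\cdot,I_4)$, the rescaled sets $D_k$ (for a suitable diagonal subsequence in $k$ and $r$) have boundaries converging to the line $\R\times\{0\}$, so in the limit we see a flat interface with normal ${\bf e_2}$. Moreover, hypothesis (d) tells us the displacement $w_k$ has a genuine jump across $I_1$ in the non-divergent case (from $\xi^+$ to $\xi^-$), or $w_k\to\infty$ on one side, so that $I_1$ genuinely belongs (in the blow-up) either to $\partial^* D$ with a jump of the associated GSBD function, or to $D^{(1)}\cap\partial D$ (a crack), or to the topological boundary with both density-one points on one side — in all cases the structure of $\partial D_k$ near $I_1$ forces, in the blow-up limit, the appearance of $I_1$ counted \emph{twice}: once as $\partial^*$ from each side, since a slit/crack is approached from both sides by the region $D_k$ (recall $D_k^{(1)}\cap\partial D_k$ portions are weighted by $2\phi$ precisely for this reason). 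This doubling is exactly what yields the factor $2$. I would extend the argument of \cite[Proposition 4.1]{HP:2019}, which handled the analogous doubling for a bounded number of components, by using the uniform density estimates implicitly available (via Theorem \ref{teo:density_estimates} applied at the level of the $m_k$-minimizers from which these $D_k$ arise, or directly from (b)-(c)) to control the non-rectifiable part $\partial^u D_k$ and to guarantee that no mass is "lost" in the blow-up despite $m_k\to\infty$.

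The main obstacle I anticipate is precisely the lack of a uniform bound on the number of connected components of $\partial D_k$: the classical Golab-type lower semicontinuity and the argument in \cite[Proposition 4.1]{HP:2019} rely on connectedness (or finitely many components) to rule out the boundary "dissolving" into a purely unrectifiable dust that carries no weight in the limit. To handle this, I would combine two ingredients: (i) the generalized Golab theorem of Proposition \ref{prop:convergence_tangent_line}, which applies to a priori disconnected, possibly non-rectifiable sets satisfying uniform density estimates — the point is to first establish that $Q_1\cap\partial D_k$ (or an appropriate enlargement involving the cracks) does satisfy such density bounds, which in turn should follow from the density estimates of Section \ref{sec:decay_estimates} together with hypothesis (c) on the controlled strain; and (ii) the slicing/capacitary argument showing that across the limiting interface $I_1$ the trace values of $w_k$ jump (non-divergent case) or blow up (divergent case), forcing $\partial D_k$ to separate the two sides and hence to have $\cH^1$-measure (with the doubled weight) at least $2\cH^1(I_1)\phi({\bf e_2})$ in the limit. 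The technical heart will be making the "separation forces length" argument quantitative and stable under the weak-* passage to the limit with $m_k\to\infty$, which is where extending \cite[Proposition 4.1]{HP:2019} rather than merely citing it becomes necessary. Once the measure-level inequality $\mu\ge 2\phi({\bf e_2})\cH^1\res I_1$ is in hand, \eqref{something_decrease} follows immediately by testing with the open set $Q_1$, choosing $h_\delta$ so that $\mu_{k_h}(Q_1)\ge\mu(Q_1)-\delta\ge 2\int_{I_1}\phi({\bf e_2})\,d\cH^1-\delta$ for $h>h_\delta$.
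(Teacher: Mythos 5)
Your skeleton — introduce the weighted surface measures $\mu_k$, pass to a weak-* limit $\mu$ along a subsequence, show $\mu\ge 2\phi(\mathbf{e_2})\,\cH^1\res I_1$, and then conclude by testing with the open set $Q_1$ — is a legitimate reduction, but the two load-bearing steps of your argument are either missing or rest on claims that are false at the level of generality of the lemma.

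First, you invoke the density estimates of Section~\ref{sec:decay_estimates} and the generalized Golab Proposition~\ref{prop:convergence_tangent_line} as if they applied to $\{D_k\}$. They do not: the hypotheses (a)--(d) of Lemma~\ref{lem:internal_cracks} impose no minimality (or quasi-minimality) on $D_k$ whatsoever, and there is no reason for $\partial D_k$ to satisfy uniform local density bounds. The lemma is applied in the proof of Proposition~\ref{prop:lsc_surface} to blow-ups of the minimizers $A_{m_h}$, for which density estimates do hold, but the lemma itself must be, and in the paper is, proved under its own stated hypotheses only; nothing in (a)--(d) prevents $\partial D_k$ from having arbitrary local concentration or dust. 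So this ingredient is unavailable.

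Second, and more importantly, the mechanism that makes the factor $2$ rigorous is absent. You say the doubling follows because a slit/crack is approached from both sides and the crack term is already weighted $2\phi$, but that is where the argument needs to be careful: a neighborhood of the limiting interface $I_1$ can, for finite $k$, consist of thin \emph{closed} bubbles that separate $Q_1^-$ from $Q_1^+$, and each such bubble contributes only through $\partial^*D_k$ (weight $1$ per side, not the crack weight $2$). The doubling there comes from the fact that each bubble has two sides crossed by a generic transversal line, not from the weight on cracks. The paper captures exactly this by replacing each connected component of $\partial D_k$ (restricted to a central slab) by its convex envelope $V_k^i$, showing first that the left-hand side of \eqref{something_decrease} dominates $\sum_i\int_{\partial V_k^i}\phi(\nu_{V_k^i})\,d\cH^1$ up to a controllable error (this uses the $2\phi$ weight on $D_k^{(1)}\cap\partial D_k$ to ``close up'' open arcs into convex boundaries), then modifying $w_k$ to a function $v_k$ whose jump set is $\cup_i\partial V_k^i$. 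Convexity forces every line through the interior to cross $\partial V_k^i$ at least twice, so the slicing count in the \cite[Proposition 4.6]{HP:2019}-type lower bound is $\ge 2$ for $\cH^1$-a.e.\ projected point, which produces the factor $2$. This convex envelope replacement, adapted from \cite[Lemma 4.7]{HP:2019}, is the central idea your proposal is missing; without it, neither the slicing step (Step~1 of the paper's proof, which as it stands only yields the coefficient~$1$ via \cite[Proposition~4.6]{HP:2019}) nor your blow-up measure argument can reach the required doubled lower bound. Finally, the paper argues entirely through slicing and the dual-norm localization of De~Giorgi--Buttazzo--Dal~Maso, without ever passing to a weak-* limit measure; your measure-theoretic framing is not wrong, but it pushes all the difficulty into the unproved pointwise lower bound, where the convex envelope mechanism is still indispensable.
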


\bu 
Before the proof of Lemma \ref{lem:internal_cracks} we recall some notations and results from \cite{ChC:2019_jems}. Given $\xi\in \R^2\setminus\{0\},$ let $\Pi_\xi:=\{y\in\R^2:\,y\cdot \xi=0\}.$ For  every set $B\subset\R^2$ and for every $y\in\Pi_\xi$ we define 
$$
B_y^\xi:=\{t\in\R:\, y+t\xi \in B\}.
$$
Moreover, for every $u:B\to\R^2$ we define $\hat u_y^\xi:B_y^\xi\to\R$ by 
$$
\hat u_y^\xi (t):= u(y + t\xi)\cdot \xi.
$$
When $u\in GSBD^2(Q_1),$ then $\hat u_y^\xi \in SBV_\loc^2([Q_1]_y^\xi)$ for $\cH^1$-a.e. $\pi_\xi(Q_1)$ and for all $\xi\in \R^2\setminus\{0\}.$ In this case we define 
$$
I_y^\xi(u):=\int_{[Q_1]_y^\xi} |(\dot{u})_y^\xi|^2 dt,
$$
where $(\dot{u})_y^\xi$ is the density of the absolutely continuity part of $D \hat u_y^\xi$ and also 
$$
II_y^\xi(u):=|D(\tau(u\cdot \xi)_y^\xi)|([Q]_1^\xi),
$$
where $\tau(t):=\arctan(t).$ Recall that  
\begin{equation*}
\int_{\Pi_\xi} I_y^\xi(u) \cH^1(y) + 
\int_{\Pi_\xi} II_y^\xi(u) \cH^1(y) 
\le \int_{Q_1} |\str{u}|dx +  \int_{Q_1} |\str{u}|^2dx + \cH^1(J_u)
\end{equation*}
(see e.g. \cite[Eq. 3.8 and 3.9]{ChC:2019_jems}). \ba

\begin{proof}
The proof is similar  to \cite[Lemma 4.7]{HP:2019}. 
Since $\phi$ is even, 
$$
\phi(\xi) = \sup\limits_{\eta\in\R^2,\,\phi^o(\eta)=1} |\xi\cdot\eta|,\quad \xi\in\R^2,
$$
where $\phi^o$ is the dual norm of $\phi.$ By the compactness of $B^{\phi^o}:=\{\eta\in\R^2:\,\phi^o(\eta)=1\},$ for any countable set $\{\eta_i\}$ dense in $B^{\phi^o}$   and for any $\cH^1$-rectifiable set $K\subset\R^2$ 
$$
\phi(\nu_K(x)) = \sup\limits_{i\ge1} |\nu_K(x)\cdot\eta_i|\quad \text{for $\cH^1$-a.e. $x\in K.$}
$$
Hence, by \cite[Lemma 6]{DBD:1983} for any open set $U\subset\R^2$  
$$
\int_{U\cap K} \phi(\nu_K)d\cH^1 = \sup\limits_k \sup\Big\{\sum\limits_{i=1}^k\int_{A_i\cap K} |\nu_K\cdot\eta_i|d\cH^1:\,\text{$A_i\strictlyincluded U$ open and pairwise disjoint}\Big\}. 
$$
Moreover, by the area formula for any Borel set $B$
$$ 
\int_{B\cap K} |\nu_K\cdot \xi|d\cH^1 = |\xi| \int_{\pi_\xi(B)} \cH^0(K\cap B_y^\xi)d\cH^1(y),
$$ 
where 
$
\pi_\xi(z) = z - \big(z\cdot \tfrac{\xi}{|\xi|}\big)\,\tfrac{\xi}{|\xi|} 
$
and given $y\in\pi_\xi(B),$ $B_y^\xi\bu=\ba\pi_\xi^{-1}(y)\cap B.$

{\it Step 1:} There exists an at most countable set $\varUpsilon\subset B^{\phi^o}$ such that 
\begin{equation}
\lim\limits_{k\to+\infty} \cH^1(\pi_\xi(I_1)\setminus \pi_\xi(J_{w_k})) =0 
\label{good_projectinss}
\end{equation}
for any $\xi\in B^{\phi^o}\setminus \varUpsilon.$

Indeed, let $\varUpsilon$ be the set of all $\xi\in B^{\phi^o}$ for which there exists $y\in \pi_\xi(I_1)$ such that $\cH^1(\pi_\xi^{-1}(y)\cap \p D_k)>0.$  
By assumption (b) and \blue Proposition \ref{prop:maggi_foliation} \black the set $\varUpsilon$ is at most countable.
Let $\{w_{k_l}\}$ be arbitrary not relabelled subsequence of $\{w_k\}.$ In view of \cite[Eq. 3.23]{ChC:2019_jems} (applied with $A=U_1^\infty$) for any $\xi\in B^{\phi^o}\setminus \varUpsilon,$ $\epsilon>0$ and for $\cH^1$-a.e. $y\in \pi_\xi(Q_1)$ there exists a further subsequence $w_{k_{l_h}}$ (possibly depending on $\xi,$ $\epsilon$ and $y$)
\begin{gather}\label{soni_dayi}
\cH^0(J_{[\hat w_0]_y^\xi} \cap [Q_1\setminus U_1^\infty]_y^\xi) + \cH^0([\p U_1^\infty]_y^\xi) \le 
\liminf\limits_{h\to+\infty} \Big[
\cH^1(J_{[w_{k_{l_h}}]_y^\xi}) + \epsilon (I_y^\xi(w_{k_{l_h}}) + II_y^\xi{w_{k_{l_h}}})
\Big].
\end{gather}
By the definition of $w_0$ and $U_1^\infty,$ the left-side of \eqref{soni_dayi} is equal to $1$ for $\cH^1$-a.e. $y\in \pi_\xi(I_1).$ Theorefore, for such $y$ and for sufficiently small $\epsilon>0$ we have
$\liminf\limits_{h\to+\infty} \cH^1(J_{[w_{k_{l_h}}]_y^\xi})\ge1.$ Hence, for $\cH^1$-a.e. $y\in \pi_\xi(I_1)$ the line $\pi_\xi^{-1}(y)$ intersects $J_{w_{k_{l_h}}}$ for all $h$ and  \eqref{good_projectinss} follows.

Note that by \cite[Proposition 4.6]{HP:2019}  
\begin{equation}\label{eq:lscsd}
\liminf\limits_{k\to +\infty} \int_{Q_1\cap J_{w_k}} \phi(\nu_{J_{w_k}}) d\cH^1 \ge \int_{I_1} \phi({\bf e_2}) d\cH^1. 
\end{equation}

{\it Step 2:} Now we improve \eqref{eq:lscsd} by including coefficient 2 on the right-hand side of the inequality in the presence of a small error term. 

We proceed in three substeps. We redefine the displacement $w_k$ in the convex envelope $V_k^i$ of each connected component $K_k^i$ of $\p D_k$ in such a way that $\p V_k^i$ become jump sets with the left-hand side of \eqref{something_decrease} lowered up to a small error. 
\smallskip

{\it Substep 2.1:} First we identify $\{V_k^i\}.$

Fix any $\delta\in(0,1).$ 
By (b) there exists $k_\delta^1>0$ such that $([-2,2]\times [-2,-\delta]) \cup ([-2,2]\times [\delta,2])\subset \Int{D_k}$ for any $k\ge k_\delta^1.$ Let $F_k:=Q_1\cap D_k.$ Note that $\p F_k \subset (Q_1\cap \p D_k) \cup (\{\pm1\}\times[-\delta,\delta])$ and since $D_k\in \fA_{m_k},$ the  number of connected components $\{L_k^j\}_{j\ge1}$ of $\p F_k$ does not exceed $m_k$. Note that $F_k\subset [-1,1]\times[-\delta,\delta]$ and 
\begin{align}
\alpha_k: = & \int_{Q_1\cap \p^*E_{k}} \phi(\nu_{E_{k}}) d\cH^1 + 2\int_{Q_1\cap E_{k}^{(1)} \cap \p E_{k}} \phi(\nu_{E_{k}})d\cH^1 \nonumber \\ 
\ge & \int_{Q_2\cap \p^*F_k} \phi(\nu_{F_k}) d\cH^1 + 2\int_{Q_2\cap F_k^{(1)} \cap \p F_k} \phi(\nu_{F_k})d\cH^1 - 4\delta \nonumber\\
= & \sum\limits_{j\ge1}  \Big[\int_{Q_2\cap \p^*F_k\cap L_k^j} \phi(\nu_{F_k}) d\cH^1 + 2\int_{Q_2\cap F_k^{(1)} \cap \p F_k\cap L_k^j} \phi(\nu_{F_k})d\cH^1\Big] - 4\delta:=\alpha_k'.
\label{atlichna}
\end{align}
Next repeating the same arguments of Step 1 in the proof  of \cite[Lemma 4.7]{HP:2019} we can find a family $\{V_k^i\}_i$ of at most countably many pairwise disjoint closed convex sets with non-empty interior such that for each $L_k^j$ there exists a unique $V_i$ with  $L_k^j\subset V_k^i$ and 
\begin{equation}\label{pospsle}
\alpha_k' \ge \sum\limits_{i\ge 1} \int_{\p V_k^i} \phi(\nu_{V_k^i}^{}) d\cH^1 - 6\delta  
\end{equation}
see e.g. \cite[Eq. 4.34]{HP:2019}

{\it Substep 2.2:} Now we replace $w_k$ with another function $v_k$ associated to $V_k^i.$ Fix $\xi_0\in\R^2$ such that the jump set of the function 
$$
v_k:= w_k\chi_{Q_1\setminus \cup_i V_k^i} + \xi_0\chi_{\cup_i V_k^i}
$$
coincide with $\cup_i \p V_k^i$ (up to a $\cH^1$-negligible set).

By assumption (b) $\cup_i\p V_k^i \overset{K}\to I_1$ as $k\to+\infty.$ Moreover, as in Step 1 we can find a countable set $\varUpsilon'\subset B^{\phi^o}$ such that  
by assumption (b) and \eqref{good_projectinss}  
\begin{align*} 
\limsup\limits_{k\to+\infty} \cH^1(\pi_\xi(I_1)\setminus \pi_\xi(\cup_i\p V_k^i)) \le &  \limsup\limits_{k\to+\infty} \cH^1(\pi_\xi(I_1)\setminus \pi_\xi(\p D_k)) \nonumber \\
\le  & 
\lim\limits_{k\to+\infty} \cH^1(\pi_\xi(I_1)\setminus \pi_\xi(J_{w_k})) =0 
\end{align*}
for all $\xi \in B^{\phi^o}\setminus (\varUpsilon\cup\varUpsilon').$
Moreover, by assumption (d) $v_k \to w_0$ a.e.\ in $Q_1\setminus U_1^\infty$ and $|v_k|\to+\infty$ a.e.\ in $U_1^\infty.$

{\it Substep 2.3:}  By convexity of each $V_k^i$ we observe that 
$$
\liminf\limits_{k\to+\infty} \cH^0(\pi_\xi^{-1}(y) \cap J_{v_k}) \ge2 = 2\cH^0(J_{[\hat w_0]_y^\xi} \cap [Q_1\setminus U_1^\infty]_y^\xi). 
$$
for all $\xi\in B^{\phi^o}\setminus(\varUpsilon\cup\varUpsilon')$ and $\cH^1$-a.e.  $y\in\pi_\xi.$ 
Thus, by repeating the arguments of Step 1 in the proof of \cite[Proposition 4.6]{HP:2019} we get 
$$ 
\liminf\limits_{k\to+\infty} \int_{\cup_i \p V_k^i} \phi(\nu_{\cup_iV_k^i}^{})d \cH^1 = \liminf\limits_{k\to+\infty} \int_{J_{v_k}} \phi(\nu_{J_{v_k}}^{})d \cH^1 \ge 2\int_{I_1} \phi({\bf e_2})d\cH^1,
$$ 
which together with \eqref{atlichna} and \eqref{pospsle} implies the assertion of the lemma.
\end{proof}

Now we are ready to prove \eqref{lsc_surface_energy}. 

\begin{proof}[Proof of Proposition \ref{prop:lsc_surface}]
For shortness, let 
$$
G:=\Int{A}.
$$
We define
$$
\tilde u_h := (u_{m_h} + a_h)\chi_{A_{m_h}}^{} + \eta\chi_{\Omega\setminus A_{m_h}} 
$$
and 
$$
\tilde u:= u\chi_G^{} + \eta\chi_{\Omega\setminus G}^{}
$$
for $\eta\in (0,1)^2$ such that $\Omega\cap \p^*A_{m_h}\subset J_{\tilde u_h}$ and $\Omega\cap \p^*G\subset J_{\tilde u}$ up to an $\cH^1$-negligible set. Such $\eta$ exists by \blue Proposition \ref{prop:maggi_foliation} \black in view of the estimate 
$$
\cH^1(\p A_{m_h}) \le \frac{\cS(A_{m_h},u_{m_h})}{c_1} + \frac{2c_2\cH^1(\p \Omega)}{c_1}\le  \frac{\cF(A_1,u_1)}{c_1} + \frac{2c_2\cH^1(\p \Omega)}{c_1}, 
$$
which holds for every $h\ge1.$ 
Notice that $\tilde u_h\in GSBD^2(\Ins{\Omega})\cap  H_\loc^1((\Omega\cup \substrate) \setminus \p A_{m_h}),$ $\tilde u\in GSBD^2(\Ins{\Omega})\cap  H_\loc^1((\Omega\cup \substrate) \setminus \p G),$ $J_{\tilde u}\subset (\Omega\cap \p G)\cup (\Sigma\cap J_u)$  and
\begin{equation}
\cH^1(J_{\tilde u_h}) + \int_\Omega |\str{\tilde u_h}|^2dx
\le  \cF(A_{m_h},u_{m_h}) + \cH^1(\Sigma)\le M:= \cF(A_1,u_1)  + \cH^1(\Sigma)<\infty 
\label{good_estimatessssad}
\end{equation}
for every $h\ge1.$ Moreover, by Proposition \ref{prop:existence_of_u}, the definitions of $u,$ $\tilde u_h$ and $\tilde u,$
\begin{equation}\label{aeconverg}
\tilde u_h \to \tilde u \quad\text{a.e. in $[S\cup (\Omega\setminus G)]\cup  \bigcup\limits_{i\in\indexset } E_i$} 
\end{equation}
and 
\begin{equation}\label{aediverg}
|\tilde u_h| \to +\infty \quad\text{a.e. in $\bigcup\limits_{j\in\N\setminus\indexset } E_j,$} 
\end{equation}
where $\{E_i\}$ and $\indexset$ are provided by Proposition \ref{prop:existence_of_u}. 

We \blue recall \black that a priori $\p A,$ and hence $\p G$, does not need to be $\cH^1$-rectifiable. Therefore, by \cite[Theorem 6.2]{D:2013} $J_{\tilde u}\subset (\Omega\cap \p^r G)\cup (\Sigma\cap J_u),$ where we recall that $\p^rG$ is $\cH^1$-rectifiable part of $\p G.$

To prove \eqref{lsc_surface_energy} we use similar arguments as in \cite[Proposition 4.1]{HP:2019}. Let
$g \in L^\infty(\Sigma\times\{0,1\})$ be such that 
$$
g(x,s): = \varphi(x,\nu_\Sigma (x)) + s\beta(x)
$$  
for which we know by \eqref{hyp:bound_anis} that $g\ge0$ and 
\begin{equation}\label{g_ning_sharti}
|g(x,1) - g(x,0)| \le \varphi(x,\nu_\Sigma(x))\qquad\text{for  a.e.\ $x\in\Sigma.$ } 
\end{equation}
Let $\mu_h$ be the sequence of positive Radon measures defined at Borel sets $B\subset\R^2$ as 
$$
\begin{aligned}
\mu_h(B) &: =  \int_{B\cap \Omega \cap\p^*A_{m_h}} \varphi(x,\nu_{A_{m_h}})d\cH^1 + 2\int_{B\cap \Omega \cap (A_{m_h}^{(1)}\cup A_{m_h}^{(0)})\cap \p A_{m_h}}
\varphi(x,\nu_{A_{m_h}})d\cH^1\nonumber\\  
& + \int_{B\cap \Sigma\cap A_{m_h}^{(0)}\cap  \p A_{m_h} } \big[\varphi(x,\nu_\Sigma)
+ g(x,1)\big]d\cH^1(x)  + \int_{B\cap \Sigma\setminus \p A_{m_h}} g(x,0)d\cH^1 \nonumber \\
& + \int_{B\cap \Sigma\cap \p^*A_{m_h} \setminus J_{u_{m_h}} } g(x,1) d\cH^1 
 + \int_{B\cap\Sigma\cap J_{u_{m_h}}} \big[g(x,0) + \varphi(x,\nu_\Sigma)\big]\,d\cH^1 
\end{aligned}
$$
and
let $\mu$ be the positive measure defined at Borel sets $B\subset\R^2$ as 
$$
\begin{aligned}
& \mu(B) : =  \int_{B\cap \Omega \cap\p^*G} \varphi(x,\nu_A)d\cH^1 +2\int_{B\cap \Omega \cap G^{(1)} \cap  \p G\cap J_{\tilde u}} \varphi(x,\nu_G)d\cH^1\\
 +& \int_{B\cap \Sigma\setminus \p G} g(x,0)d\cH^1 + \int_{B\cap \Sigma\cap \p^*G \setminus J_u} g(x,1) d\cH^1 
 + \int_{B\cap\Sigma\cap J_{\tilde u}} \big[g(x,0) + \varphi(x,\nu_\Sigma)\big]\,d\cH^1.
\end{aligned}
$$  
Since  $S_{\tilde u}^A:=G^{(1)}\cap\p G\cap J_{\tilde u}$ and $\Sigma\cap J_{\tilde u} = \Sigma\cap J_u,$
we have
$$
\mu_h(\R^2) = \cS(A_{m_h} ,u_{m_h}) + \int_{\Sigma} \varphi(x,\nu_\Sigma)d\cH^1
$$
and 
$$
\mu(\R^2) = \tilde \cS(G,u) + \int_{\Sigma} \varphi(x,\nu_\Sigma)d\cH^1.
$$
Hence, to establish \eqref{lsc_surface_energy} it suffices to prove 
\begin{equation}\label{lsc_of_muh}
\liminf\limits_{h\to\infty} \mu_h(\R^2) \ge \mu(\R^2). 
\end{equation}
Since $\sup_h \mu_h(\R^2)<+\infty,$ by compactness, there exists a positive Radon measure $\mu_0$  in $\R^2$ such that (up to a subsequence) $\mu_h\wk^* \mu_0$ as $h\to\infty.$  
We show 
\begin{equation}\label{mu0_katta_mu}
\mu_0\ge \mu 
\end{equation}
and we observe that \eqref{lsc_of_muh} immediately follows from \eqref{mu0_katta_mu}.
To establish \eqref{mu0_katta_mu} it suffices to prove  
\begin{subequations}
\begin{align}
& \frac{d\mu_0}{d\cH^1\res (\Omega\cap\p^*G)}(x) \ge \varphi(x,\nu_G(x))\qquad \text{for a.e.\ $x\in \Omega\cap \p^*G,$}\label{eq:at_reduced_boundary}\\ 
& \frac{d\mu_0}{d\cH^1\res (\Sigma\cap \p^* G)}(x) \ge g(x,1) \qquad \text{for a.e.\ $x\in \Sigma\cap \p^* G,$}\label{eq:contact_of_G}\\ 
& \frac{d\mu_0}{d\cH^1\res (\Sigma\setminus \p G)}(x) \ge \varphi(x,\nu_\Sigma(x)) \qquad \text{for a.e.\ $x\in \Sigma\setminus \p G,$}\label{eq:density_Sigma}  \\
& \frac{d\mu_0}{d\cH^1\res S_{\tilde u}^A }(x) \ge 2\varphi(x,\nu_G(x))\qquad \text{for a.e.\ $x\in   S_{\tilde u}^A,$ }\label{eq:at_internal_crack}\\ 
& \frac{d\mu_0}{d\cH^1\res (\Sigma\cap J_{\tilde u})}(x) \ge 2\varphi(x,\nu_\Sigma(x)) \qquad \text{for a.e.\ $x\in \Sigma\cap  J_{\tilde u}$}\label{eq:at_delaminations} 
\end{align}
\end{subequations}
since $g(x,0)=\varphi(x,\nu_\Sigma).$ 

The proof of the estimates \eqref{eq:at_reduced_boundary}-\eqref{eq:at_delaminations} follows from similar arguments used in \cite[Proposition 4.1]{HP:2019} with special care needed for \eqref{eq:at_internal_crack}. In fact for \eqref{eq:at_internal_crack} we cannot employ the strategy used for  \cite[Eq. 4.40c]{HP:2019} that was hinged on the uniform bound on the number of boundary components, which here we do not have.  We instead adapt the arguments employed in \cite[Eq. 4.40g]{HP:2019} by using Lemma \ref{lem:internal_cracks}.  

Next we detail the proofs of \eqref{eq:at_reduced_boundary}-\eqref{eq:at_delaminations}.
\smallskip 

{\it Proof of \eqref{eq:at_reduced_boundary}.} Note that $A =G$ up to a negligible set.  
By Corollary \ref{cor:A_ning_xossasi}  $A_{m_h}\to A$ in $L^1(\R^2),$ thus, the proof of \eqref{eq:at_reduced_boundary} can be done following the arguments of  \cite[Eq. 4.40a]{HP:2019} using Reshetnyak lower semicontinuity Theorem \cite[Theorem 2.38]{AFP:2000}.
\smallskip

{\it Proof of \eqref{eq:contact_of_G}.}
Since $A_{m_h} \to G$ in $L^1(\R^2),$ we  have   $D\chi_{A_{m_h}}\wk^* D\chi_G.$ Thus, the proof of \eqref{eq:contact_of_G} directly follows from \cite[Lemma 3.8]{ADT:2017}  (see also the proof of \cite[Eq. 4.40d]{HP:2019}). 
\smallskip

{\it Proof of \eqref{eq:density_Sigma}.}
Let $x_0\in \Sigma\setminus \p G$ and let $r_0:=\dist(x_0,\p G) >0.$ Since  $\R^2\setminus \Int{A_{m_h}}\overset{K}{\to} \R^2\setminus \Int{A}=\R^2\setminus G,$ there exists $r_1\in(0,r_0)$ such that $B_r(x_0)\cap \Int{A_{m_h}}$ for any $r\in(0,r_1).$ Therefore, for any $r\in(0,r_1)$ 
$$
\begin{aligned}
\mu_h(\cl{B_r(x_0)}) = & \int_{\cl{B_r(x_0)}\cap \Sigma\cap A_{m_h}^{(0)}\cap  \p A_{m_h} } \big[\varphi(x,\nu_\Sigma)
+ g(x,1)\big]d\cH^1(x)  + \int_{\cl{B_r(x_0)}\cap \Sigma\setminus \p A_{m_h}} g(x,0)d\cH^1\\
\ge & \int_{\cl{B_r(x_0)}\cap \Sigma\cap A_{m_h}^{(0)}\cap  \p A_{m_h} } g(x,0)d\cH^1(x)  + \int_{\cl{B_r(x_0)}\cap \Sigma\setminus \p A_{m_h}} g(x,0)d\cH^1 \\
= & \int_{\cl{B_r(x_0)}\cap\Sigma} g(x,0)d\cH^1,
\end{aligned}
$$
where in the inequality we used \eqref{g_ning_sharti}. 
Thus, taking $limsup$ as $h\to+\infty$ and using $\mu_h \wk^*\mu_0$ we get 
$$
\mu_0(\cl{B_r(x_0)}) \ge \int_{B_r(x_0)\cap\Sigma} g(x,0)d\cH^1.
$$
Now \eqref{eq:density_Sigma} follows from the Besicovitch Differentiation Theorem.
\smallskip

{\it Proofs of \eqref{eq:at_internal_crack} and \eqref{eq:at_delaminations}.} We establish
\begin{equation}\label{eq:at_jump_of_extension}
\frac{d\mu_0}{d\cH^1\res K } \ge 2\phi(x,\nu_K) \quad\text{for $\cH^1$-a.e. $x\in K,$}
\end{equation}
where 
$$
K=S_{\tilde u}^A \cup (\Sigma\cap  J_{\tilde u}).
$$

Let $x\in K$ be such that $\theta(K,x)=1.$ Then either $x\in S_{\tilde u} \subset G^{(1)}\cap \p^r G$ or $x\in \Sigma\cap J_{\tilde u}.$  By setting $E_0:=\substrate$ and \bu recalling \ba that $\Int{A}=\cup_{i\in\N} E_i,$  in view of Proposition \ref{prop:existence_of_u}  
we \bu have  one \ba of the following:
\begin{itemize}
\item[(b1)] there exists $i_0\in\indexset $ such that  $x\in E_{i_0}^{(1)}\cap \p^r E_{i_0},$  
$
\theta(E_{i_0}^{(1)}\cap \p^r E_{i_0},x)=1
$ 
and $u_{m_h} + a_{m_h} \to u$ a.e.\ in $E_{i_0};$ 

\item[(b2)] there exist $i_0\in \indexset \cup \{0\}$ and $j_0\in \N\setminus \indexset $ 
such that $x\in \p^*E_{i_0}\cap\p^* E_{j_0}$ and $u_{m_h} + a_{m_h}\to u$ a.e.\ in $E_{i_0}$ and $|u_{m_h} + a_{m_h}|\to\infty$ a.e.\ in $E_{j_0};$

\item[(b3)] there exist $i_1,i_2\in\indexset \cup\{0\}$ with $i_1\ne i_2$ such that $x\in \p^*E_{i_1}\cap\p^* E_{i_2}$ and $u_{m_h} + a_{m_h}\to u$ a.e.\ in $E_{i_1}\cup E_{i_2}.$ 
\end{itemize}

\noindent 
Let  $L$ denote the set among $E_{i_0}^{(1)}\cap \p^r E_{i_0},$ $\p^*E_{i_0}\cap\p^* E_{j_0}$ and $\p^*E_{i_1}\cap\p^* E_{i_2}$ containing $x.$ Without loss of generality we assume that $x\in Y\subset L,$
where $Y$ is defined as the  set of points $y\in L\subset\p A$ satisfying: 
\begin{itemize}
\item[(c1)] $\theta(\p G,y) = \theta(\p A,y) =\theta(L,y)= 1$ and $\nu_G(y)=\nu_A(y)=\nu_L(y)$ exists. If $y\in\Sigma,$ then additionally, $\theta(\Sigma,x) =1$ and $\nu_\Sigma$ also exists;

\item[(c2)] as $\rho\to0$ the sets $\overline{ Q_{R,\nu_L}(y)}\cap \sigma_{\rho,x}(\p A), $ $\overline{ Q_{R,\nu_L}(x)}\cap \sigma_{\rho,y}(\p G) $ and $\overline{ Q_{R,\nu_L}(y)}\cap \sigma_{\rho,y}(L) $ converge  $\overline{Q_{R,\nu_L}(y)} \cap T_y$ in the Kuratowski sense, where  $R>0$ and $T_y$ is the generalized tangent line to $\p A$ at $y;$

\item[(c3)] one-sided traces $\tilde u^+(y)$ and $\tilde u^-(y)$ of $\tilde u$ w.r.t. $L$ exist and are not equal;

\item[(c4)] $\frac{d\mu_0}{ d\cH^1\res K}\,(y)$ exists and is finite.
\end{itemize}
In fact, $\cH^1(L\setminus Y)=0$ since for (c1) we notice that  $Y\subset L \subset \p^rA$ and $\p ^rA$ is $\cH^1$-rectifiable, for (c2) we use Proposition \ref{prop:convergence_tangent_line}  by observing that the points of  $\Sigma$ and $\Omega\cap \p A$ satisfy uniform density estimates  in view of the Lipchitzianity of $\Sigma$ and Proposition \ref{prop:compact_A_m}, respectively, for (c3) we use
\cite[Definition 2.4]{D:2013} and the existence of traces of $GBD$-functions along $C^1$-manifolds \cite[Theorem 6.2]{D:2013} and the fact that being a jump set of $\tilde u,$ the set  $K$ (and also $L$) can be covered by at most countably many one-dimensional $C^1$-graphs (up to a $\cH^1$-negligible set), and finally for (c4) we use Besicovitch  Differentiation Theorem.

Without loss of generality, we assume  $x=0,$ $\nu_K(x)={\bf e_2},$ $T_x$ is the $x_1$-axis and ${\bf e_2}$ is the outer normal of $E_{i_0}$. 

Let $4r_0:=\dist(0,\p\Omega)$ if $0\in\Omega$ and $4r_0:=\dist(0,\p\Sigma)$ if $0\in\Sigma;$ since $\Sigma$ is Lipschitz, it consists of at most countably many open connected components in $\p \Omega,$ and hence, $r_0>0$.  By weak convergence,
\begin{equation}\label{mu_r_convergence}
\lim\limits_{h\to\infty} \mu_h(\cl{ Q_r}) = \mu_0( Q_r) 
\end{equation}
for a.e.\ $r\in(0,r_0).$  By assumption (b3), \cite[Definition 2.4]{D:2013} and \cite[Remark 2.2]{D:2013} separately applied to $ Q_1^+:=Q_1\cap\{x_2>0\}$ and $ Q_1\setminus  Q_1^+$  we have
\begin{equation}\label{approximate_jump}
\lim\limits_{r\to 0} \int_{ Q_1} |\tau(\tilde u(rx)) - \tau(u_0(x))| dx =0,  
\end{equation}
where 
$$
u_0 := \tilde u^+(0)\chi_{ Q_1^+} + \tilde u^-(0)\chi_{ Q_1\setminus  Q_1^+} 
$$ 
and
$$
\tau(z) = (\arctan z_1,\arctan z_2),\quad z=(z_1,z_2)\in\R^2.
$$
For every $r\in(0,r_0)$ let 
$$
 U_r^\infty:=\{x\in  Q_1:\,\, \liminf\limits_{h\to\infty} |\tilde u_h(rx)|=+\infty\}.
$$
Unlike the proof of \cite[Eq. 4.40g]{HP:2019}, \eqref{aediverg} implies that  $U_r^\infty $ can have positive measure. 
By \eqref{aeconverg} and  the Dominated Convergence Theorem  
\begin{equation}\label{almost_convergence}
\lim\limits_{h\to\infty} \int_{ Q_1\setminus   U_r^\infty } |\tau(\tilde u_h(rx)) - \tau(\tilde u(rx))| dx = 0.  
\end{equation}
By (c2) applied with $R=8$, Proposition \ref{prop:convergence_tangent_line} and (c1)-(c3)  
\begin{align*}
 &  Q_8\cap \sigma_r(\p A) \overset{K}{\to} I_8\qquad \text{and}\qquad \cH^1\res ( Q_8\cap \sigma_r(\p A)) \overset{*}{\wk} \cH^1\res I_8,  
 \\
&  Q_8\cap \sigma_r(L) \overset{K}{\to} I_8\quad \text{and}\quad \cH^1\res ( Q_8\cap \sigma_r(L) )) \overset{*}{\wk} \cH^1\res I_8  
\end{align*}
as $r\to0.$ Hence, by \cite[Proposition A.5]{HP:2019} 
\begin{subequations}
\begin{align}
& \sdist(\cdot, \sigma_r(\p A)) \to - \dist(\cdot,T_0), \label{sdist_a_convergence}\\ 
& \sdist(\cdot, \sigma_r(\p E_{i_0})) \to -\dist(\cdot,T_0),\label{sdist_g_case_a}\\
&\sdist(\cdot, \sigma_r(\p [E_{i_0}\cup E_{j_0}])) \to -\dist(\cdot,T_0), \label{sdist_g_case_b}\\
&\sdist(\cdot, \sigma_r(\p [E_{i_1}\cup E_{i_2}])) \to -\dist(\cdot,T_0) \label{sdist_g_case_c}
\end{align}
\end{subequations}
locally uniformly in $\cl{ Q_4}$ as $r\to0.$ 
Let 
$$ 
U_0^\infty=
\begin{cases}
\emptyset & \text{in cases (c1) and (c3)},\\
Q_1^+ & \text{in case (c2)}. 
\end{cases} 
$$ 
By the definitions of $E_{i_0},$ $E_{j_0},$ $E_{i_1}$ and $E_{i_2}$ and \eqref{sdist_g_case_a}-\eqref{sdist_g_case_c}  
\begin{equation}\label{small_r_u_cheksiz}
\lim\limits_{r\to0} | U_r^\infty\Delta  U_0^\infty| =0.  
\end{equation}

{\it Step 1:} We choose sequences $h_k\nearrow \infty$ and $r_k\searrow 0$ as follows.  By \eqref{mu_r_convergence}, \eqref{approximate_jump}, \eqref{sdist_a_convergence} and \eqref{small_r_u_cheksiz} for any $k\in\N$ there exists  $r_k\in(0,\frac1k)$ such that \eqref{mu_r_convergence} holds with $r=r_k$ and 
\begin{subequations}
\begin{align}
&\|\sdist(\cdot, \sigma_{r_k} (\p A)) + \dist(\cdot,  T_0) \|_{L^\infty( Q_4)} <\frac{1}{k^2},\label{bound_A_conv_tangent}\\
&\int_{ Q_1} |\tau(\tilde u(r_k x)) - \tau(u_0(x))| dx <\frac{1}{k^2}, \label{defi_jump_set_trace}\\
&| U_{r_k}^\infty \Delta  U_0^\infty| < \frac{1}{k^2} \label{small_vol_disp_infinite}.
\end{align}
\end{subequations}

\noindent 
Given $k\ge1$ and $r_k$, since $A_{m_h}$ $sdist$-converges to  $A$ and the function $\tau$ is bounded, by \eqref{almost_convergence}, \eqref{small_vol_disp_infinite} and \eqref{mu_r_convergence} we can choose  $h_k$ such that 
\begin{subequations}
\begin{align}
&\frac{1}{h_kr_k} <\frac{1}{k},\label{biggg_h_k}\\
&\|\sdist(\cdot,   \sigma_{r_k} (\p A_{m_{h_k}} )) - \sdist(\cdot,   \sigma_{r_k} ( \p A))\|_{L^\infty( Q_4)}  <\frac{1}{k},\label{A_k_close_to_A}\\
&\int_{ Q_1\setminus  U_0^\infty} |\tau(\tilde u_{h_k}(r_kx)) - \tau(\tilde u(r_kx))| dx < \frac{1}{k},\label{w_h_close_to_w}\\
& \mu_{h_k}(\cl{ Q_{r_k}}) \le \mu_0( Q_{r_k}) +r_k^2.  \label{mu_k_close}
\end{align}
\end{subequations}
Notice that by \eqref{biggg_h_k}, $h_k\to\infty$ as $k\to\infty.$ 

Let 
$$ 
D_k:= \sigma_{r_k} (A_{m_{h_k}}\cup \substrate)
$$
and
$$
w_k(x):=\tilde u_{h_k}(r_kx),\quad x\in Q_1.
$$  
Then the number of connected components of $\p D_k$ lying strictly inside $Q_4$ does not exceed $m_{h_k},$ and $w_k\in GSBD^2(Q_1)$ with $J_{w_k}\subset Q_1\cap \p D_k.$
By \eqref{A_k_close_to_A} 
and \eqref{bound_A_conv_tangent}, 
\begin{equation*}
\sdist(\cdot,\p D_k) \to -\dist(\cdot, T_0) \quad \text{uniformly in $Q_4$ as $k\to\infty.$} 
\end{equation*}
Moreover, by \eqref{defi_jump_set_trace} and  \eqref{w_h_close_to_w}  $w_k\to u_0$ a.e.\ in $Q_1\setminus U_1^\infty$ and $|w_k|\to+\infty$ a.e.\ in $U_1^\infty.$ 
By the finiteness of 
$$
\frac{d\mu_0}{\cH^1\res L}(0) = \lim\limits_{k\to\infty} \frac{\mu_0( Q_{r_k})}{2r_k} 
$$
and \eqref{mu_k_close} 
\begin{equation}\label{vashmulle}
 \cH^1( Q_1\cap \p D_k) =
\frac{\cH^1( Q_{r_k}\cap \p A_{m_{h_k}} )}{r_k}  \le  \frac{\mu_{h_k}(\cl{ Q_{r_k}})}{c_1 r_k} \le C:=\frac{2}{c_1}\frac{d\mu_0}{\cH^1\res L}(0) + 1 
\end{equation}
for all large $k.$ 
Moreover, by changing variables as $x=r_ky$  and using \eqref{good_estimatessssad} we get 
$$ 
\int_{Q_1} |\str{w_k}|^2dx = \int_{Q_{r_k}} |\str{\tilde u_k}|^2dy \le M  
$$ 
for all $k;$ note that the first equality holds only in dimension two.

Fix $\delta\in(0,1).$ 
Since $\varphi$ is uniformly continuous, there exists $k_\delta^0>0$ such that  
$$
|\varphi(x,\nu) - \varphi(0,\nu)| < \delta,\qquad x\in \cl{ Q_{r_k}},\,\,\nu\in\S^1.
$$
Therefore, by the definitions of $D_k$ and $\mu_h,$ the nonnegativity of $g$ as well as \eqref{vashmulle} 
\begin{align}\label{shapat_estimates}
\frac{\mu_{h_k}(\cl{ Q_{r_k}})}{r_k} \ge 
&
\int_{ Q_1\cap \p^* D_k} \phi(\nu_{D_k})d\cH^1+ 2\int_{ Q_1\cap D_k^{(1)} \cap \p D_k} \phi(\nu_{D_k})d\cH^1 -  2Cc_2\delta,
\end{align}
where 
$$
\phi(\nu) = \varphi(0,\nu).
$$

By Lemma \ref{lem:internal_cracks} applied with sequences $\{D_k\}$ and $\{m_{h_k}\}$ we find $k_\delta^2>k_\delta^1$ such that 
$$
\int_{ Q_1\cap \p^* D_k} \phi(\nu_{D_k})d\cH^1+ 2\int_{ Q_1\cap D_k^{(1)} \cap \p D_k} \phi(\nu_{D_k})d\cH^1 \ge 
2\int_{I_1} \phi({\bf e_2})d\cH^1 - \delta.
$$
Thus, by \eqref{shapat_estimates}  and \eqref{mu_k_close} we get 
$$
\frac{\mu_0(Q_{r_k})}{2r_k} + \frac{r_k}{2} \ge  \int_{I_1} \phi({\bf e_2})d\cH^1 - \frac{2Cc_1 + 1}{2}\,\delta 
$$
for all $k>k_\delta^2.$ Now letting first $k\to+\infty$ and then $\delta\to0$ we get \eqref{eq:at_jump_of_extension}.
\end{proof}

\section{Proof of the main results}\label{sec:proof_thm1}

 The aim of this section is to prove theorems of Section \ref{subsec:main_results}.
We start by showing that the volume-constraint infima of $\cF$ in $\admissible$ and of $\tilde \cF$ in $\tilde\admissible$ in fact coincide.

\begin{proposition}\label{prop:min_extend}
Assume hypotheses (H1)-(H3) and let $\fm\in(0,|\Omega|)$ or $\substrate=\emptyset.$ 
Then 
\begin{equation}\label{equiv_min_problem}
\inf\limits_{(A,u)\in \admissible,\,|A|=\fm}\, \cF(A,u)\,\, = \inf\limits_{(A,u)\in\tilde  \admissible,\,|A|=\fm}\, \tilde \cF(A,u) = \inf\limits_{(A,u)\in\tilde  \admissible}\, \tilde \cF^\lambda(A,u) 
\end{equation}
for any $\lambda\ge \lambda_0,$ where $\lambda_0$ is given by \cite[Theorem 2.6]{HP:2019} and $\tilde \cF^\lambda$ is given by \eqref{def_cf_tilde_lambda}. 
\end{proposition}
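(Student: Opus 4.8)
The plan is to establish the chain of equalities in \eqref{equiv_min_problem} by proving two inequalities plus the penalization equivalence. Throughout, write
$$
I := \inf_{(A,u)\in\admissible,\,|A|=\fm}\cF(A,u),\qquad
\tilde I := \inf_{(A,u)\in\tilde\admissible,\,|A|=\fm}\tilde\cF(A,u),\qquad
\tilde I^\lambda := \inf_{(A,u)\in\tilde\admissible}\tilde\cF^\lambda(A,u).
$$
The inequality $\tilde I\le I$ is the easy direction: given $(A,u)\in\admissible$ with $|A|=\fm$, the set $\Int{A^{(1)}}$ (or more simply $A^{(1)}\cap\Omega$) belongs to $\tilde\fA$, has the same Lebesgue measure $\fm$, and one checks directly from the definitions \eqref{func_surface_energy} and \eqref{surface_tilde} that $\tilde\cS(A',u)\le\cS(A,u)$: the free-boundary and contact/delamination terms are unchanged since $\p^*A$, $\Sigma\cap\p^*A\setminus J_u$ and $J_u$ are not affected by passing to the Lebesgue representative, while the term over $\Omega\cap(A^{(1)}\cup A^{(0)})\cap\p A$ in \eqref{func_surface_energy} dominates the term over $S_u^A\subset A^{(1)}\cap\p^r A$ in \eqref{surface_tilde} (indeed $S_u^A\subset A^{(1)}\cap\p A$ and on the complement $\tilde\psi$ assigns zero). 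The elastic energy is literally the same since it only sees $A\cup\substrate$ up to a null set. Hence $\tilde\cF(A',u)\le\cF(A,u)$, giving $\tilde I\le I$.

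For the reverse inequality $I\le\tilde I$, one combines the work already done in Sections \ref{sec:decay_estimates}--\ref{sec:compact_lsc_property} with a direct construction. Let $(A,u)\in\tilde\admissible$, $|A|=\fm$, be arbitrary; I must produce a competitor in $\admissible$ of the same volume with energy at most $\tilde\cF(A,u)+\varepsilon$. The strategy here mirrors \cite[Theorem 2.6]{HP:2019}: approximate a general $\tilde\fA$-set by sets whose boundary has finitely many components. Concretely, for each $m$ one can perturb/regularize $A$ (e.g.\ intersecting with a fine grid of squares and discarding components, exactly as in the approximation arguments of \cite{HP:2019}) to obtain $(A_m,u_m)\in\admissible_m$ with $|A_m|=\fm$ and $\cF(A_m,u_m)\le\tilde\cF(A,u)+o(1)$; since $\p A_m$ is $\cH^1$-rectifiable and $\cH^1(\p A_m)<\infty$, and since the number of components is finite, $A_m\in\fA$. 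Taking the infimum over such competitors gives $\lim_{m\to\infty}\inf_{\admissible_m,\,|A|=\fm}\cF\le\tilde\cF(A,u)$, and then minimizing over $(A,u)\in\tilde\admissible$ yields
$$
\lim_{m\to\infty}\inf_{(A,u)\in\admissible_m,\,|A|=\fm}\cF(A,u)\le\tilde I.
$$
Conversely, by \eqref{zur_tenglik} the left-hand side equals $I$, so $I\le\tilde I$. Combined with $\tilde I\le I$ this closes the ring: $I=\tilde I=\lim_m\inf_{\admissible_m}\cF$.

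Finally, for the third equality $\tilde I=\tilde I^\lambda$ with $\lambda\ge\lambda_0$: one inequality is trivial, since every volume-constrained competitor $(A,u)$ with $|A|=\fm$ has $\tilde\cF^\lambda(A,u)=\tilde\cF(A,u)$, so $\tilde I^\lambda\le\tilde I$. For the other direction, take any $(A,u)\in\tilde\admissible$ and adjust its volume to $\fm$ at controlled cost: by the standard volume-fixing lemma (the analogue of the argument in \cite[Theorem 2.6]{HP:2019}, using that $\Omega$ and $\substrate$ are Lipschitz so that one can add or remove a small ball inside $\Omega\setminus\overline{A}$ resp.\ inside $A$ with surface cost controlled linearly by the volume change, with a uniform constant $\lambda_0$ depending only on $c_2$, $\Omega$, $\substrate$), one produces $(A',u')\in\tilde\admissible$ with $|A'|=\fm$ and $\tilde\cF(A',u')\le\tilde\cF(A,u)+\lambda_0\big||A|-\fm\big|\le\tilde\cF^\lambda(A,u)$ whenever $\lambda\ge\lambda_0$. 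Taking infima gives $\tilde I\le\tilde I^\lambda$, hence equality.

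The main obstacle is the reverse inequality $I\le\tilde I$: one must verify that the finite-component approximation $A_m$ of a general $A\in\tilde\fA$ can be carried out while (i) keeping the volume exactly equal to $\fm$, (ii) keeping the displacement admissible — i.e.\ $u_m\in GSBD^2(\Ins{A_m})\cap H^1_{\loc}(\Int{A_m}\cup\substrate)$ — and (iii) controlling $\tilde\cF$ from above, in particular not creating extra jump set along new cracks. This requires reusing carefully the constructions of \cite[Theorem 2.6 and Lemma 3.10]{HP:2019}; the surface-energy bookkeeping when cutting $A$ along grid lines (where the new boundary may land in $A^{(1)}$ and thus be counted with weight $2\varphi$) must be shown to be absorbed by the $o(1)$ term as the grid is refined, using $\cH^1(\p A)<\infty$ and a Vitali-type covering / foliation argument (Lemma \ref{lem:maggi_foliation}) to choose a good grid along which $\cH^1(\p A\cap\text{grid})=0$.
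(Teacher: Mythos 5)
Your overall plan (prove an easy chain of inequalities, then close the loop by producing a finite--boundary--component competitor) is structurally the same as the paper's: the paper first observes the trivial monotonicity
$I\ge\tilde I\ge\tilde I^\lambda$, and then, using \eqref{zur_tenglik}, reduces everything to showing that for each $\epsilon>0$ there is $n_\epsilon\in\N$ and $(A_\epsilon,u_\epsilon)\in\admissible_{n_\epsilon}$ with $\cF^{\lambda}(A_\epsilon,u_\epsilon)<\tilde I^\lambda+\epsilon$. Your easy direction and the logic of closing the chain are fine.

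The genuine gap is in the hard construction. You sketch ``intersecting $A$ with a fine grid of squares and discarding components,'' but this is not what the paper does, and it is not clear that it works: cutting $A$ along a grid does not by itself reduce $\p A$ to finitely many connected components (the boundary inside a single grid cell can still be arbitrarily complicated), and it says nothing about keeping $u$ admissible on the modified set. The paper's construction is quite different and proceeds in four substantive steps: (i) a thin strip is removed near $\Sigma$ (using a Vitali cover of Lipschitz sub-graphs) to convert the delamination set $J_u$ into genuine free boundary; (ii) small connected components of $B$ are discarded; (iii) small holes are filled with $u_0$; (iv) most importantly, the density theorem of \cite[Theorem~1.1]{ChC:2019_arma} is invoked to replace the displacement $v_5\in GSBD^2$ by $u_\epsilon\in SBV^2\cap L^\infty$ whose jump set $J_{u_\epsilon}$ is contained in a union $\Gamma$ of \emph{finitely many} closed connected $C^1$-arcs, with $\|\str{u_\epsilon}-\str{v_5}\|_{L^2}$ and $\cH^1(J_{u_\epsilon}\Delta J_{v_5})$ both small; one then takes $A_\epsilon:=B_5\setminus\overline{\Gamma'}$, which has finitely many boundary components by construction. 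Step (iv) — approximating the \emph{displacement} so that its jump set is carried by finitely many $C^1$ arcs — is the key idea that makes the boundary-component count finite, and it is absent from your proposal. Without it, the grid-based argument cannot deliver a competitor in some $\admissible_{n_\epsilon}$, which is exactly what the reduction to \eqref{zur_tenglik} requires.

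Two smaller remarks. First, you split the argument into $I=\tilde I$ and $\tilde I=\tilde I^\lambda$, with a separate volume-fixing lemma for the second equality; the paper instead proves $\tilde I^\lambda\ge I$ directly and combines it with the trivial chain, so only one nontrivial construction is needed (and the volume-fixing constant $\lambda_0$ is inherited from \cite{HP:2019} via \eqref{zur_tenglik} rather than re-derived). Second, your surface-energy bookkeeping for the easy direction is essentially right, but it is cleaner to note that $\tilde\psi\le\psi$ pointwise on $\p A$ for any $(A,u)\in\admissible$ (the wetting-layer term is dropped, and the crack term in $\tilde\cS$ integrates over $S_u^A\subset A^{(1)}\cap\p A$ only), so $\tilde\cF(A\cap\Omega,u)\le\cF(A,u)$ with no representative-change subtleties.
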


\begin{proof}
We repeat similar arguments to \cite[Section 5]{HP:2019}. 
Note that for any $\lambda>0$
\begin{equation}\label{easy_part}
\inf\limits_{(A,u)\in \admissible,\,|A|=\fm}\, \cF(A,u)\,\, \ge  \inf\limits_{(A,u)\in\tilde  \admissible,\,|A|=\fm}\, \tilde \cF(A,u) \ge \inf\limits_{(A,u)\in \tilde \admissible}\, \tilde \cF^{\lambda}(A,u). 
\end{equation}
Further we fix any $\lambda\ge\lambda_0.$
Recall that  from \cite{HP:2019} for such $\lambda$ 
$$
\inf\limits_{(A,u)\in \admissible,\,|A|=\fm}\, \cF(A,u) =\lim\limits_{m\to+\infty}
\,\min\limits_{(A,u)\in \admissible_m,\,|A|=\fm}\, \cF(A,u) 
=\lim\limits_{m\to+\infty}\,\min\limits_{(A,u)\in \admissible_m}\, \cF^{\lambda}(A,u),
$$
where $\cF^\lambda$ is given by \eqref{eq:flambda}. Thus, in view of \eqref{easy_part} to prove \eqref{equiv_min_problem}  it is enough to establish that \bu for \ba $\epsilon>0$ there exists $n_\epsilon\in\N$ and $(A_\epsilon,u_\epsilon)\in \admissible_{n_\epsilon}$ such that 
\begin{equation}\label{shshshshka}
\inf\limits_{(A,u)\in \tilde \admissible}\, \tilde \cF^{\lambda}(A,u) + \epsilon > \cF^{\lambda}(A_\epsilon,u_\epsilon). 
\end{equation}
To prove the existence of $n_\epsilon$ and $(A_\epsilon,u_\epsilon)\in\admissible_{n_\epsilon}$, we repeat essentially the same arguments of the proof of \cite[Eq. 5.4]{HP:2019}. For the convenience of the reader we give the detailed proof. Given $\epsilon>0$ let $(B_1,v_1)\in\tilde \admissible$ be such that 
\begin{equation}\label{defini_b1v1}
\inf\limits_{(A,u)\in \tilde\admissible}\, \cF^{\lambda}(A,u)> \cF^{\lambda}(B_1,v_1) -\epsilon. 
\end{equation}
Since $|B_1| = |\Int{B_1}|$ and $\cF^{\lambda}(B_1,v_1)\ge \cF^{\lambda}(\Int{B_1},v_1),$ we may assume that $B_1=\Int{B_1},$ i.e., $B_1$ is open.

{\it Step 1:} First we remove the jump set $J_{v_1}$ of $v_1$ on $\Sigma$ making a hole in $\Omega.$ Recall that by our choice, $\nu_\Sigma$ is always directed towards $\Omega.$ Since $\Sigma$ is Lipschitz, by the regularity of $\cH^1\res\Sigma,$ there  exists  \bu a relatively \ba open set $\Sigma'\subset \Sigma$ such that $\cH^1(J_{v_1}\setminus\Sigma')=0$ and $\cH^1(\Sigma'\setminus J_{v_1})<\frac{\epsilon}{c_2}.$ 

Let $r\in(0,\frac{\epsilon}{\lambda\cH^1(\Sigma)})$ be such that 
\begin{equation}\label{cont_phi_poop}
|\varphi(x,\nu) - \varphi(y,\nu)|<\frac{\epsilon}{\cH^1(\Sigma)}  
\end{equation}
whenever $|x-y|<4r.$ Since $\Sigma$ is Lipschitz, by Vitaly Covering Lemma we can find an at most countable family $\{Q_{r_j,\nu_{\Sigma}(x_j)}(x_j)\}_{j\ge1}$ of disjoint open squares such that $x_j\in\Sigma,$ $r_j\in(0,r),$ $\Sigma\cap Q_{r,\nu_{\Sigma}(x_j)}(x_j)$ is a graph in $\nu_\Sigma(x_j)$-direction, 
$\Sigma$ crosses  two opposite sides of each $Q_{r,\nu_{\Sigma}(x_j)}(x_j)$ parallel to $\nu_\Sigma(x_j)$ and 
\begin{equation}\label{good_cover_sigmaprime}
\cH^1\Big(\Sigma'\setminus\bigcup_j \cl{Q_{r_j,\nu_{\Sigma}(x_j)}(x_j)}\Big)=0.   
\end{equation}
Note that $\sum\limits_j r_j< \cH^1(\Sigma).$ 
For each $j$ define 
$$
\Sigma_j:=(\Sigma\cap \cl{Q_{r,\nu_{\Sigma}(x_j)}(x_j)}) + \rho_j\nu_\Sigma(x_j),
$$
where $\rho_j\in(0,r_j)$ is such that  $\Sigma_j$ still connects two vertical sides of $Q_{r,\nu_{\Sigma}(x_j)}(x_j)$ and $\sum_j \rho_j <\frac{\epsilon}{2c_2}.$ Let $U_j$ be the  
open set whose boundaries are $\Sigma_j,$ $\Sigma\cap \cl{Q_{r,\nu_{\Sigma}(x_j)}(x_j)}$ and two vertical sides of $Q_{r,\nu_{\Sigma}(x_j)}(x_j).$ Note that $\{U_j\}_j$ is a countable  family of pairwise disjoint open sets.

Let $B_2:=B_1\setminus \cl{\cup_j U_j}$ and $v_2:=v_1\Big|_{B_2\cup \substrate}.$ Then  using the localized version of $\cS$ we get
\begin{align}\label{letyouseee}
\cS(B_2,v_2) \le & \cS(B_1,v_1-u_0; \Omega\setminus \cl{\cup_j U_j}) + \sum\limits_{j} \Big(\int_{\Sigma_j} \varphi(x,\nu_\Sigma(x))d\cH^1 + 2c_2 \rho_j\Big). 
\end{align}
By \eqref{cont_phi_poop} and the definition of $\Sigma_j$
$$
\int_{\Sigma_j} \varphi(x,\nu_\Sigma(x))d\cH^1  \le \int_{\Sigma\cap Q_{r,\nu_{\Sigma}(x_j)}(x_j)} \varphi(y,\nu_\Sigma(y))d\cH^1  + \frac{\epsilon\cH^1(\Sigma\cap Q_{r,\nu_{\Sigma}(x_j)}(x_j))}{\cH^1(\Sigma)}.
$$
Thus summing this inequality in $j$ and using pairwise disjointness of $Q_{r,\nu_{\Sigma}(x_j)}(x_j)$ and \eqref{good_cover_sigmaprime} we get
$$
\sum\limits_j \int_{\Sigma_j} \varphi(x,\nu_\Sigma(x))d\cH^1 \le \int_{\Sigma'} \varphi(y,\nu_\Sigma(y))d\cH^1 + \frac{\epsilon\cH^1(\Sigma')}{\cH^1(\Sigma)}.
$$
Using the definition of $\Sigma'$  we obtain
$$
\sum\limits_j \int_{\Sigma_j} \varphi(x,\nu_\Sigma(x))d\cH^1 \le \int_{J_{v_1}} \varphi(y,\nu_\Sigma(y))d\cH^1 
+ 2\epsilon.
$$
Inserting this in \eqref{letyouseee} and using the inequality $\sum_j \rho_j <\frac{\epsilon}{2c_2}$  we get
$$
\cS(B_2,v_2) \le  \cS(B_1,v_1-u_0; \Omega\setminus \cl{\cup_j U_j}) + \int_{J_{v_1}} \varphi(y,\nu_\Sigma(y))d\cH^1 
+ 3\epsilon \le \cS(B_1,v_1) + 3\epsilon.
$$
Then by the nonnegativity of the elastic energy, for $(B_2,v_2)$ we get 
$$
\tilde \cF(B_2,v_2) \le \tilde \cF(B_1,v_1) + 3\epsilon.
$$
Notice that by our construction $\Sigma\cap J_{v_2}$ is $\cH^1$-negligible, hence by Proposition \ref{lem:gcbd_with_zero_jumps} $v_2\in H^1_\loc(\Ins{B_2}).$ 

Finally we estimate the volume contribution of $B_2.$ Since $U_j\subset Q_{r,\nu_{\Sigma}(x_j)}(x_j)$ and 
$r_j\le r<\frac{\epsilon}{\lambda\cH^1(\Sigma)},$ using $\sum_j r_j <\cH^1(\Sigma)$ we get 
$$
|B_1\setminus B_2| \le\sum\limits_j |U_j| \le \sum\limits_j r_j^2 \le r\sum\limits_j r_j < \frac{\epsilon}{\lambda}.
$$
Therefore, 
\begin{equation}\label{defini_b2v2}
 \tilde \cF^{\lambda}(B_1,v_1) \ge \tilde \cF^{\lambda}(B_2,v_2) - 4\epsilon.
\end{equation}

{\it Step 2:} Let $\{E_i\}_{i\ge1}$ be all open connected components of $B_2$ (recall that $B_2$ is open). We remove all sufficiently small connected components of $B_1.$ Using the localized versions of $\cS$ and $\cW$  we have
$$
\cW(B_2,v_2-u_0;\Omega) = \sum\limits_{i\ge1} \cW(E_i,v_2-u_0;\Omega). 
$$
Since $\p E_i\cap\p E_j\subset B_2^{(1)}\cap\p B_2$ and $\varphi(x,\cdot)$ is even,
$$
\cS(B_2,v_2;\Omega) = \sum\limits_{i\ge1} \cS(E_i,v_2;\Omega).
$$
Hence, there exists $N_1\in\N$ such that the set 
$
B_3:=\cup_{i=1}^{N_1} E_i 
$
satisfies 
$$
\begin{gathered}
\cS(B_2,v_2;\Omega) +\cW(B_2,v_2-u_0;\Omega)+ \epsilon >\cS(B_3,v_2;\Omega)+\cW(B_3,v_2-u_0;\Omega),\\
0 \le  |B_2| - |B_3| < \frac{\epsilon}{\lambda}.
\end{gathered}
$$
Thus, 
\begin{equation}\label{defini_b3v3}
\cF^{\lambda}(B_2,v_2) > \cF^{\lambda}(B_3,v_3) - 2\epsilon , 
\end{equation}
where $v_3:=v_2\Big|_{B_3}.$

{\it Step 3:} Let $\{F_j\}_{j\ge1}$ be all connected components of $\Omega\setminus \cl{B_3}$ such that $\p F_j\subset \p B_3$ (hence, $F_i$ are holes in $B_3$).  We fill in all sufficiently small holes. Since $\cS(B_3,v)<+\infty,$ there exists $N_2\ge1$ such that 
$$
\sum\limits_{i>N_2} \cS(F_i,v_3;\Omega) + \sum\limits_{i>N_2} \cW(F_i,v_3-u_0;\Omega) <\epsilon,\quad \sum\limits_{i>N_2} |F_i|<\frac{\epsilon}{\lambda}.
$$
Then the set $B_4:= B_3\cup (\cup_{i>N_2} F_i)$ and the function $v_4:=v_3\chi_{B_2\cup \substrate} + u_0\chi_{\cup_{i>N_2} F_i}$ satisfies 
\begin{equation}\label{defini_b4v4}
\cF^{\lambda}(B_3,v_3)>\cF^{\lambda}(B_4,v_4) - 2\epsilon .
\end{equation}
By construction, $\overline{\p^* B_4}$ has at most $N_1 + N_2$ connected components.  

{\it Step 5:} Finally we construct $(A_\epsilon,u_\epsilon)\in\admissible_{n_\epsilon}$ satisfying \eqref{shshshshka} for some $n_\epsilon\in\N.$ Let $B_5:=\Int{\cl{B_4}}.$ Since  $B_4$ can have finitely many ``substantial'' holes  $B_5\cap \p B_4=\emptyset.$ In particular, if we extend $v_4$ arbitrarily to the set $B_4^{(1)}\cap \p B_4$ and denote the extension by $v_5,$ then $v_5\in GSBD^2(\Ins{B_5})$ and $J_{v_5}=S_{v_4}^{B_4}$ up to a $\cH^1$-negligible set, where $S_u^A$ is defined in \eqref{defini_Su}. 
Since $v_5=v_4$ a.e.\ in $B_5,$ by \eqref{defini_b1v1}-\eqref{defini_b4v4}  
$$ 
\int_{B_5\cup\substrate} \C(x)\str{v_5}:\str{v_5} =  \cW(B_4,v_4) \le \tilde \cF(B_4,v_4) + c_2\cH^1(\Sigma) \le C+ 9\epsilon, 
$$ 
where $C:=\max\{1,\inf_{\tilde\admissible}\tilde \cF\}$ is independent of $\epsilon.$

By   \cite[Theorem 1.1]{ChC:2019_arma} there exists $u_\epsilon\in SBV^2(\Ins{B_5})\cap L^\infty(\Ins{B_5})$ such that $J_{u_\epsilon}$ is contained in a union $\Gamma$ of finitely many closed connected pieces of $C^1$-curves in $\Ins{B_5},$ $u_\epsilon\in W^{1,\infty}(\Ins{B_5})$ and 
\begin{equation}\label{elastic_close}
\int_{B_5\cup\substrate} |\str{u_\epsilon} - \str{v_5}|^2 dx \le \frac{\epsilon}{4(C+11\epsilon)(\|\C\|_\infty +1)}  
\end{equation}
and 
\begin{equation}\label{jump_approx000}
\cH^1(J_{u_\epsilon}\Delta J_{v_5})<\frac{\epsilon}{2c_2}. 
\end{equation}
Since $J_{v_5}\subset B_5,$ we can assume that the squares $\{Q_j\}_{j\ge1}$ of Vitali cover in \cite[Eq. 4.3a]{ChC:2019_arma} satisfies $Q_j\strictlyincluded B_5.$ Therefore, we may assume that $\Gamma\subset \cl{B_5}.$ Since $\cH^1\res \Gamma$ is regular, we may extract finitely many intervals of $\Gamma$ whose union $\Gamma'$ still covers $J_{u_\epsilon}$ and satisfies $\cH^1(\Gamma'\setminus J_{u_\epsilon})<\frac{\epsilon}{2c_2}.$ 
Now we define 
$$
A_\epsilon:=B_5\setminus \cl{\Gamma'}.
$$
Recall that both $\Sigma\cap J_{v_5}$ and $\Sigma\cap J_{e-\epsilon}$ are $\cH^1$-negligible.
By the definition of $B_5$ and $\Gamma',$ there exists $n_\epsilon\in\N$ such that $(A_\epsilon,u_\epsilon)\in \admissible_{n_\epsilon}.$ By the definition of $\tilde \cS,$ $B_5$ and $v_5$ as well as by \eqref{jump_approx000} we have
\begin{align*} 
\tilde \cS(B_4,v_4) = & \int_{\Omega\cap \p^* B_5} \varphi(x,\nu_{B_5})d\cH^1 + 2\int_{B_5\cap J_{v_5}} \varphi(x,\nu_{J_{v_5}})d\cH^1 + \int_{\Sigma\cap \p^*B_5} \beta d\cH^1\nonumber\\
\ge & \int_{\Omega\cap \p^* B_5} \varphi(x,\nu_{B_5})d\cH^1 + 2\int_{B_5\cap J_{u_\epsilon}} \varphi(x,\nu_{J_{u_\epsilon}})d\cH^1 + \int_{\Sigma\cap \p^*B_5} \beta d\cH^1 - \epsilon.
\end{align*}
Thus, by the definition of $A_\epsilon$ and $\Gamma'$ 
\begin{align} \label{surface_epsilon}
\tilde \cS(B_4,v_4) \ge & \int_{\Omega\cap \p^* A_\epsilon} \varphi(x,\nu_{A_\epsilon})d\cH^1 + 2\int_{A_\epsilon^{(1)}\cap \Gamma'} \varphi(x,\nu_{\Gamma'})d\cH^1 + \int_{\Sigma\cap \p^*A_\epsilon} \beta d\cH^1 - 2\epsilon\nonumber \\
= & \cS(A_\epsilon,u_\epsilon) - 2\epsilon.
\end{align}
Moreover, using the relations $|A_\epsilon\Delta B_4|=0$ and $v_4=v_5$ a.e.\ in $B_5$  and Cauchy-Schwartz inequality for nonnegative symmetric forms we obtain
\begin{align} \label{ahahaha010101}
\cW(A_\epsilon,u_\epsilon) \le & \cW(B_4,v_4) + 2\int_{B_5\cup\substrate} \C(x)\str{u_\epsilon}:(\str{u_\epsilon} - \str{v_5})\nonumber \\
 \le & \cW(B_4,v_4) + 2\Big(\int_{B_5\cup\substrate} \C(x)\str{u_\epsilon}:\str{u_\epsilon}dx\Big)^{1/2}\times \nonumber \\
 &\times 
\Big(\int_{B_5\cup\substrate} \C(x) (\str{u_\epsilon} - \str{v_5}):(\str{u_\epsilon} - \str{v_5})dx\Big)^{1/2}.
\end{align}
Similarly,
\begin{align*}
& \int_{B_4\cup\substrate} \C(x)\str{u_\epsilon}:\str{u_\epsilon}dx\\
\le &  \cW(B_4,v_4) + 2\Big(\cW(B_4,v_4)\Big)^{1/2}
\Big(\int_{B_5\cup\substrate} \C(x) (\str{u_\epsilon} - \str{v_5}):(\str{u_\epsilon} - \str{v_5})dx\Big)^{1/2}\\
\le & (C+9\epsilon) + 2\sqrt{(C+9\epsilon)\|\C\|_\infty}\,\, 
\|\str{u_\epsilon} - \str{v_5}\|_{L^2} \le C+10\epsilon,
\end{align*}
where in the last inequality we used \eqref{elastic_close}. 
Therefore, by \eqref{ahahaha010101} and again by \eqref{elastic_close}
\begin{align}\label{elastic_epsilon}
\cW(A_\epsilon,u_\epsilon) \le & \cW(B_4,v_4) + 2\sqrt{(C+10\epsilon)\|\C\|_\infty}\,\, 
\|\str{u_\epsilon} - \str{v_5}\|_{L^2} \le \cW(B_4,v_4) +\epsilon.
\end{align}
Now combining \eqref{surface_epsilon} and \eqref{elastic_epsilon} as well as using $|B_5|=|A_\epsilon|$ we get 
\begin{equation}\label{defini_beps}
\tilde \cF^{\lambda}(B_4,v_4) \ge  \cF^{\lambda}(A_\epsilon,u_\epsilon) - 3\epsilon.
\end{equation}
Since $(A_\epsilon,u_\epsilon)\in\admissible_{n_\epsilon},$ by \eqref{defini_b1v1}, \eqref{defini_b2v2}, \eqref{defini_b3v3}, \eqref{defini_b4v4} and \eqref{defini_beps} we get 
$$
\inf\limits_{(A,u)\in\tilde\admissible} \,\tilde\cF(A,u) + 12 \epsilon \ge \cF(A_\epsilon,u_\epsilon),
$$
and \eqref{shshshshka} follows.
\end{proof}
 
 Proposition \ref{prop:min_extend} implies that the configuration $(A,u)$ given by Proposition \ref{prop:lsc_surface} is a volume-constraint minimizer of $\tilde \cF$ in $\tilde\admissible.$

\begin{proposition}\label{prop:minimizers_tilde_cf}
Let $(A,u)\in\tilde\admissible$ be given by Proposition \ref{prop:lsc_surface}. Then 
$(\Int{A},u)$ is a minimizer of $\tilde \cF$ in $\tilde \admissible$ under the volume constraint $|A|=\fm.$ Moreover, let $\lambda_0$ be as in Proposition \ref{prop:min_extend} and let $\bu(\tilde A,\tilde u)\ba\in \tilde\admissible$ be any volume-constraint minimizer of $\tilde\cF.$ Then $\bu(\tilde A,\tilde u)\ba$ is a minimizer of $\tilde \cF^\lambda$ for all $\lambda\ge\lambda_0,$ where 
\begin{equation}\label{def_cf_tilde_lambda}
\tilde \cF^{\lambda}(B,v):=\tilde \cF(B,v) + \lambda\big||B| - \fm\big|,\quad (B,v) \in\tilde \admissible,\quad\lambda>0. 
\end{equation}
\end{proposition}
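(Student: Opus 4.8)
The plan is to assemble the volume-constrained minimality of $(\Int A,u)$ from three ingredients already in place: the equality of infima \eqref{equiv_min_problem} of Proposition \ref{prop:min_extend}, the identification of $\lim_h\cF(A_{m_h},u_{m_h})$ with the unconstrained infimum of $\cF$ coming from \eqref{zur_tenglik}, and the lower-semicontinuity inequalities of Propositions \ref{prop:def_of_u} and \ref{prop:lsc_surface}.

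First I would record that the volume constraint passes to the limit. By Corollary \ref{cor:A_ning_xossasi} one has $A_{m_h}\to A$ in $L^1(\R^2)$, so $|A|=\lim_h|A_{m_h}|=\fm$; and $\cH^1(\p A)<+\infty$ forces $|\p A|=0$, hence $|\Int A|=\fm$. Moreover $(\Int A,u)\in\tilde\admissible$ by Proposition \ref{prop:lsc_surface}. Next, since each $(A_{m_h},u_{m_h})$ is a minimizer of $\cF$ in $\admissible_{m_h}$ with $|A_{m_h}|=\fm$ and $m_h\nearrow\infty$, formula \eqref{zur_tenglik} gives
\[
\lim_{h\to\infty}\cF(A_{m_h},u_{m_h})=\inf_{(B,w)\in\admissible,\ |B|=\fm}\cF(B,w)=\inf_{(B,w)\in\tilde\admissible,\ |B|=\fm}\tilde\cF(B,w),
\]
the last equality by \eqref{equiv_min_problem}. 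Passing to a further (not relabelled) subsequence along which the nonnegative quantities $\cW(A_{m_h},u_{m_h})$ and $\cS(A_{m_h},u_{m_h})$ both converge, and invoking Proposition \ref{prop:def_of_u} for the elastic part and Proposition \ref{prop:lsc_surface} for the surface part, I obtain
\[
\inf_{(B,w)\in\tilde\admissible,\ |B|=\fm}\tilde\cF(B,w)=\lim_{h\to\infty}\cF(A_{m_h},u_{m_h})\ \ge\ \cW(A,u)+\tilde\cS(\Int A,u)=\tilde\cF(\Int A,u).
\]
Since $(\Int A,u)\in\tilde\admissible$ with $|\Int A|=\fm$, the reverse inequality $\tilde\cF(\Int A,u)\ge\inf_{\tilde\admissible,\,|\cdot|=\fm}\tilde\cF$ is trivial, so all these quantities coincide and $(\Int A,u)$ is a volume-constrained minimizer of $\tilde\cF$.

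For the second assertion, let $(B,v)\in\tilde\admissible$ be any volume-constrained minimizer of $\tilde\cF$ and fix $\lambda\ge\lambda_0$. Then $|B|=\fm$ yields $\tilde\cF^\lambda(B,v)=\tilde\cF(B,v)=\inf_{(A,u)\in\tilde\admissible,\ |A|=\fm}\tilde\cF(A,u)$, which by \eqref{equiv_min_problem} equals $\inf_{(A,u)\in\tilde\admissible}\tilde\cF^\lambda(A,u)$; hence $(B,v)$ minimizes $\tilde\cF^\lambda$ over all of $\tilde\admissible$.

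This proof introduces no genuinely new difficulty: the analytic content sits in Propositions \ref{prop:compact_A_m}--\ref{prop:lsc_surface} and in Proposition \ref{prop:min_extend}. The one step that deserves care is the identification of $\lim_h\cF(A_{m_h},u_{m_h})$ with the unconstrained infimum, which relies on the subsequence $\{m_h\}$ still tending to $+\infty$, so that the limit in \eqref{zur_tenglik} may be evaluated along it, together with the fact that the liminf inequalities of Propositions \ref{prop:def_of_u} and \ref{prop:lsc_surface} remain valid after passing to a further subsequence.
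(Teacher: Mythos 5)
Your proof is correct and takes essentially the same route as the paper: both combine the lower-semicontinuity inequalities of Propositions \ref{prop:def_of_u} and \ref{prop:lsc_surface} with the chain of equalities from \eqref{zur_tenglik} and \eqref{equiv_min_problem} to identify $\tilde\cF(\Int A,u)$ with the volume-constrained infimum, and then deduce the second assertion directly from \eqref{equiv_min_problem}. Your write-up is merely a bit more explicit than the paper's — in particular in verifying $|\Int A|=\fm$ via Corollary \ref{cor:A_ning_xossasi} and in noting that the subsequence passage is harmless — but no new idea or gap is involved (and in fact the extraction of a further subsequence is not even needed, since $\liminf(a_h+b_h)\ge\liminf a_h+\liminf b_h$ holds unconditionally for nonnegative sequences).
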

 
\begin{proof}
Note that since $|\Int{A}\Delta A|=0$ and $(\Int{A},u)\in\tilde\admissible,$ by  Propositions \ref{prop:def_of_u}, \ref{prop:lsc_surface} and \ref{prop:min_extend}  
$$ 
\tilde \cF(\Int{A},u) = \inf\limits_{(B,v)\in\admissible}\, \cF(B,v) = \inf\limits_{(B,v)\in\tilde \admissible} \tilde \cF(B,v) = \inf\limits_{(B,v)\in \admissible} \tilde  \cF^{\lambda}(B,v)
$$ 
for all $\lambda\ge\lambda_0.$
Thus, $(\Int{A},u)$ is a minimizer of both $\tilde \cF$ and $\tilde\cF^{\lambda_0}.$ The same is true for every minimizer $(B,v)$ of $\tilde \cF.$
\end{proof}

\begin{theorem}[\textbf{Density estimates for minimizers of $\tilde \cF^\lambda$}]\label{teo:density_estimates_tilde}
Given $\lambda>0,$ let $(A,u)\in\tilde \admissible$  be any minimizer of
$\tilde \cF^{\lambda}(\cdot,\cdot)$ in $\tilde \admissible$ and let $\xi\in\R^2$ be such that for the function 
$$
\tilde u:=u\chi_{A\cup\substrate} + \xi\chi_{\Omega\setminus A}
$$
one has $\Omega\cap\p^*A \subset J_{\tilde u}.$  Then for any $x\in \Omega$ and $r\in(0,\dist(x,\p \Omega)),$
\begin{equation}\label{min_seq_density_up_Ju}
\frac{\cH^1( Q_r(x)\cap J_{\tilde u})}{r} \le \frac{16c_2 + 4\lambda}{c_1}. 
\end{equation}
Moreover, there exist $\varsigma^*=\varsigma^*\in(0,1)$ and $R^*>0$ not depending on $(A,u)$ with the following property. If $x\in\Omega$ belongs to the closure $J^c_{\tilde u}$ of the set  $\{y\in \Omega\cap J_{\tilde u}:\,\theta_*(J_{\tilde u},y)>0\},$  
then 
\begin{equation}\label{min_seq_density_low_Ju}
\frac{\cH^1( Q_r(x)\cap J_{\tilde u} )}{r}\ge \varsigma^* 
\end{equation}
for any square $ Q_r(x)\strictlyincluded\Omega$ with 
 $r\in(0,\min\{R^*,\dist(x,\p \Omega)\}),$ and if $x\in\Omega$ belongs to the closure $S_u^c$ of $\{x\in S_u^A:\,\theta_*(S_u^A,x)>0\},$ then 
\begin{equation}\label{min_seq_density_low_Su}
\frac{\cH^1( Q_r(x)\cap S_u^A)}{r}\ge \varsigma^* 
\end{equation}
for any $r\in (0,\min\{R^*,\dist(x,\cl{\p^*A})\}.$
In particular, 
\begin{equation}\label{essential_closed_things} 
\cH^1(\Omega\cap (J_{\tilde u}^c\setminus J_{\tilde u})=\cH^1(\Int{A^{(1)}}\cap (S_u^c\setminus S_u^A)=0.  
\end{equation}
\end{theorem}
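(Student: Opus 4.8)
The plan is to transport the density-estimate machinery of Section~\ref{sec:decay_estimates}, developed there for $(\Lambda,m)$-minimizers of $\cF$, to minimizers of $\tilde\cF^\lambda$ in $\tilde\admissible$ by reading everything through the pair $(A,\tilde u)$. The starting point is that, since $u\in H^1_\loc(\Int A\cup\substrate)$ has no jump in $\Int A$, the choice of $\xi$ gives
\[
\Omega\cap J_{\tilde u}=(\Omega\cap\p^*A)\cup S_u^A\qquad\text{up to an $\cH^1$-negligible set,}
\]
the two pieces being disjoint, so that for every open $\openset\strictlyincluded\Omega$ the localized weak surface energy
\[
\tilde\cS(A,u;\openset):=\int_{\openset\cap\p^*A}\varphi(\cdot,\nu_A)\,d\cH^1+2\int_{\openset\cap S_u^A}\varphi(\cdot,\nu_A)\,d\cH^1
\]
satisfies $c_1\cH^1(\openset\cap J_{\tilde u})\le\tilde\cS(A,u;\openset)\le 2c_2\cH^1(\openset\cap J_{\tilde u})$. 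Moreover $(A,u)$ is a $\lambda$-minimizer of $\openset\mapsto\tilde\cS(A,u;\openset)+\cW(A,u;\openset)$ over $\tilde\admissible$, because $\big||A|-\fm\big|-\big||B|-\fm\big|\le|A\Delta B|$, exactly as in Remark~\ref{rem:passage_toU0_teng0}. Two features make this setting in fact simpler than Section~\ref{sec:decay_estimates}: there is no constraint on the number of boundary components ($\fA_m$ is replaced by $\tilde\fA$, i.e.\ $m=\infty$), and the purely unrectifiable part $\p^uA$ never enters $\tilde\cS$, so only $J_{\tilde u}$ is involved.

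For the upper bound \eqref{min_seq_density_up_Ju} I would argue as in the proof of \eqref{min_seq_density_up}: for $x\in\Omega$ and $r$ small, the competitor $E:=(A\setminus\cl{Q_r(x)})\cup\p Q_r(x)$, paired with the restriction of $u$, lies in $\tilde\admissible$, does not increase the elastic energy, adds at most $2c_2\,\cH^1(\p Q_r(x))=16c_2r$ to the surface energy and at most $4\lambda r^2$ to the volume penalty; testing minimality of $\tilde\cF^\lambda$ and letting the enclosing square shrink to $\cl{Q_r(x)}$ yields $\tilde\cS(A,u;\cl{Q_r(x)})\le(16c_2+4\lambda)r$, whence \eqref{min_seq_density_up_Ju} by the left-hand inequality above.

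For the lower bounds I would re-run Lemma~\ref{lem:approximation_chambolle}, Proposition~\ref{prop:conti_prop_3.4}, Proposition~\ref{prop:functional_decay} and Proposition~\ref{prop:lower_density_cG} with $\p A$ replaced throughout by $J_{\tilde u}$, the crack term $2\int_{(A^{(1)}\cup A^{(0)})\cap\p A}\varphi$ replaced by $2\int_{S_u^A}\varphi$, and $m=\infty$ fixed. Every quantitative input survives: the surface part is two-sidedly comparable to $\cH^1\res J_{\tilde u}$ (so the isoperimetric step \eqref{relate_isopp} and the geometric decay go through), the elastic part is the same elliptic quadratic form (so \eqref{c_gamma} and the blow-up/compactness argument of Proposition~\ref{prop:conti_prop_3.4}, including the treatment of non-constant $\C$, are untouched), and minimality holds up to the term $\lambda|A\Delta B|$. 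This produces $\varsigma^*\in(0,1)$ and $R^*>0$ depending only on $c_1,c_2,c_3,c_4,\lambda$ --- in particular not on $(A,u)$ --- so that \eqref{min_seq_density_low_Ju} holds for $x$ in the closure of $\{y\in\Omega\cap J_{\tilde u}:\theta_*(J_{\tilde u},y)>0\}$, the extension to the closure being granted, as in Proposition~\ref{prop:lower_density_cG}, by the fact that $\openset\mapsto\tilde\cS(A,u;\openset)+\cW(A,u;\openset)$ extends to a positive Radon measure. The estimate \eqref{min_seq_density_low_Su} for $S_u^A$ is then a corollary: whenever $r<\dist(x,\cl{\p^*A})$ one has $Q_r(x)\cap\p^*A=\emptyset$ and hence $Q_r(x)\cap J_{\tilde u}=Q_r(x)\cap S_u^A$; in particular $\theta_*(J_{\tilde u},x)=\theta_*(S_u^A,x)$ for $x$ at positive distance from $\cl{\p^*A}$, so a point in the closure of $\{x\in S_u^A:\theta_*(S_u^A,x)>0\}$ with $\dist(x,\cl{\p^*A})>0$ lies in the set governing \eqref{min_seq_density_low_Ju}, and \eqref{min_seq_density_low_Su} follows with $S_u^A$ in place of $J_{\tilde u}$.

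Finally, \eqref{essential_closed_things} follows from the lower estimates together with the classical upper-density theorem for $1$-sets \cite{AFP:2000,Ma:2012}: since $\cH^1(J_{\tilde u})<+\infty$ one has $\theta^*(J_{\tilde u},x)=0$ for $\cH^1$-a.e.\ $x\notin J_{\tilde u}$, while by \eqref{min_seq_density_low_Ju} every point of the closure of $\{\theta_*(J_{\tilde u})>0\}$ that is not in $J_{\tilde u}$ has $\theta_*(J_{\tilde u},x)>0$; hence that difference set is $\cH^1$-negligible, and the same argument run inside the open set $\Int A^{(1)}$ (on which every point is at positive distance from $\cl{\p^*A}=\p(A^{(1)})$, so \eqref{min_seq_density_low_Su} applies) with $S_u^A$ in place of $J_{\tilde u}$ gives the second identity. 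The main obstacle is the faithful re-derivation of Lemma~\ref{lem:approximation_chambolle} in $\tilde\admissible$: one has to verify that the Chambolle-type surgery, now arranged so that $J_{\tilde u}$ captures $\p^*A\cup S_u^A$ inside the working square, keeps the modified pair in $\tilde\admissible$ and produces constants uniform in the minimizer; the absence of the component constraint makes this easier than in Section~\ref{sec:decay_estimates}, but the bookkeeping of the two weights $\varphi$ and $2\varphi$ on the disjoint pieces $\p^*A$ and $S_u^A$ (in place of $\p^*A$ and $(A^{(1)}\cup A^{(0)})\cap\p A$) must be carried through consistently.
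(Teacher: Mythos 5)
Your proposal is sound and, for the upper bound \eqref{min_seq_density_up_Ju} and the lower bound \eqref{min_seq_density_low_Ju}, it follows exactly the paper's route: pass to $(A,u+u_0)$ \`a la Remark \ref{rem:passage_toU0_teng0}, work with the localized $\tilde\cS(\cdot;\openset)+\cW(\cdot;\openset)$ which is two-sidedly comparable to $\cH^1\res J_{\tilde u}$, and re-run Lemma \ref{lem:approximation_chambolle}, Propositions \ref{prop:conti_prop_3.4}--\ref{prop:lower_density_cG} with $m=\infty$ and $J_{\tilde u}$ in place of $\p A$. The constants depending only on $c_1,\dots,c_4,\lambda$ is exactly what the paper records.

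Where you diverge from the paper is in \eqref{min_seq_density_low_Su} and \eqref{essential_closed_things}. For \eqref{min_seq_density_low_Su}, the paper does not deduce it from \eqref{min_seq_density_low_Ju}; it instead observes that $\p\Int{A^{(1)}}=\cl{\p^*A}$, shows $\tilde u\big|_{\Int{A^{(1)}}}$ minimizes a Griffith functional on $\Int{A^{(1)}}$ with Dirichlet data outside a compact set, and cites the density estimates for Griffith minimizers from \cite{ChC:2019_arxiv} (with possibly smaller $\varsigma^*,R^*$). Your shortcut --- $Q_r(x)\cap J_{\tilde u}=Q_r(x)\cap S_u^A$ once $Q_r(x)$ misses $\cl{\p^*A}$ --- is valid in spirit, but the literal statement ``$r<\dist(x,\cl{\p^*A})\Rightarrow Q_r(x)\cap\p^*A=\emptyset$'' is off by a factor of $\sqrt{2}$: $Q_r(x)$ has half-diagonal $\sqrt{2}r$, so you only know $Q_r(x)\cap\cl{\p^*A}=\emptyset$ for $r<\dist(x,\cl{\p^*A})/\sqrt{2}$. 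This is harmless (apply your estimate at radius $r/\sqrt{2}$ and absorb the factor into $\varsigma^*$; the paper itself concedes a possibly smaller constant here), but it should be said. For \eqref{essential_closed_things}, the paper runs a Vitali covering argument directly; your appeal to the classical fact that $\theta^{*}(\cH^1\res K,x)=0$ for $\cH^1$-a.e.\ $x\notin K$ (e.g.\ \cite[Theorem 2.56]{AFP:2000}), combined with the uniform $\theta_*>0$ just proved, is a genuinely shorter and equally correct alternative. Both variants reach the same conclusions with the same generality; your versions trade some self-containedness (the paper's Vitali argument avoids invoking the density theorem; its Griffith reduction avoids the $\sqrt{2}$ bookkeeping) for brevity.
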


\begin{proof}[Proof of Theorem \ref{teo:density_estimates_tilde}]
As in Remark \ref{rem:passage_toU0_teng0} $(A,u)$ is a minimizer of $\tilde \cF^{\lambda }$ if and only if 
$(A,u+u_0)$ minimizes the $ \widehat{\tilde  \cF^{\lambda }}(\cdot):=\tilde \cF^{\lambda }(\cdot -u_0).$ Thus, we can introduce the following localized version of $\tilde \cF$ in open subsets $\openset$ of $\Omega$ which does not see the substrate:
$$ 
\tilde \cF(B,v;\openset):=\tilde \cS(B;\openset) + \cW(B,v; \openset) 
$$ 
where
$$
\tilde \cS(B,v;\openset):= \int_{\openset \cap \p^*B} \varphi(y,\nu_B)d\cH^1
+ 2\int_{\openset \cap B^{(1)}\cap \p B\cap S_v} \varphi(y,\nu_B)d\cH^1, 
$$
the $\cW(\cdot;\openset)$ is given as in \eqref{local_cF}
and $S_v^A$ is defined as in \eqref{defini_Su}.
Then the minimality of $(A,u)$ implies that $(A,u+u_0)$ is a quasi-minimizer of $\tilde \cF(\cdot;\openset)$  
in $\openset,$ namely, 
\begin{equation*} 
\tilde \cF(A,u +u_0;\openset) \le \tilde \cF(B,v;\openset) + \lambda_0|A\Delta B|
\end{equation*}
whenever $(B,v)\in\tilde \admissible$ with $A\Delta B\strictlyincluded \openset$ and $\supp(u+u_0-v)\strictlyincluded \openset.$ 
Now the proof of the existence of $\varsigma^*$ and $R^*$ satisfying \eqref{min_seq_density_up_Ju} and \eqref{min_seq_density_low_Ju} runs along the same lines of the proof of Theorem \ref{teo:density_estimates} for $m=\infty,$ therefore, we do not repeat it here. Note that $\varsigma^*$ and $R^*$ depend only on $c_i$ and $\lambda.$

Let $A_{\circ}:=\Int{A^{(1)}}.$ We claim that 
$$ 
\p A_{\circ} = \overline{\p^*A}. 
$$ 
Indeed, note that $A^{(1)}\setminus A_{\circ}\subset \p A^{(1)} =\cl{\p^*A},$ where in the equality we used $\cl{\p^*A} = \cl{\p^*A^{(1)}}=\p A^{(1)}$ see e.g., \cite[Eq. 15.3]{Ma:2012}. Thus, $A_{\circ}$ is also equivalent to $A,$ and hence, $\p^*A_{\circ}=\p^*A=\p^*A^{(1)}.$ In particular, $\p A^{(1)}=\overline{\p^*A_{\circ}}\subset \p A_{\circ}.$ On the other hand, assume that there exists $x\in \p A_{\circ}\setminus \p A^{(1)}.$ Since $\p A^{(1)}$ is closed, there exists $r>0$ such that $\cl{B_r(x)}\cap \p A^{(1)}=\emptyset.$ Hence, either $B_r(x)\subset \Int{A^{(1)}}=A_{\circ}$ or $\overline{B_r(x)}\cap \overline{A^{(1)}}=\emptyset.$ Since $A_{\circ}$ is open and $x\in\p A_{\circ},$ the inclusion $B_r(x)\subset A_{\circ}$ is not possible. On the other hand, since $\cl{A_{\circ}}\subset \cl{A^{(1)}}$ and $x\in \p A_{\circ},$ the relation  $\overline{B_r(x)}\cap \overline{A^{(1)}}=\emptyset$ is also not possible. Thus, $\p A_{\circ}\subseteq\p A^{(1)}.$

To prove \eqref{min_seq_density_low_Su}  we fix $\Omega'\strictlyincluded\Omega.$ 
We claim that $\tilde u\big|_{A_{\circ}}$  is a minimizer of Griffith functional $\cG:GSBD^2(\Ins{A_{\circ}}) \to\R,$
$$
\cG(v):=\int_{A_{\circ}\cap J_{v}} \varphi(x,\nu_{J_v}) d\cH^1 + \int_{A_{\circ}} \C(x)\str{v}:\str{v}dx 
$$ 
with Dirichlet boundary condition $v=\tilde u=u$ in  $A_{\circ}\setminus\Omega'.$  Indeed, for every $v\in GSBD^2(A_{\circ})$ with $\tilde u=v$ in $A_{\circ}\setminus \Omega'$ we define $B:=A_{\circ}\setminus \cl{J_v}.$ Then $(B,v)\in\tilde\admissible$ and by the minimality of $(A,u)$ 
$$
\cG(u) - \cG(v) =\tilde \cF(A,u) - \tilde \cF(B,v)\le 0.
$$
Since $S_v^B = J_{\tilde u\big|_{A_{\circ}}}$ up to a $\cH^1$-negligible set,  \eqref{min_seq_density_low_Su} directly follows from the density estimates for the jump set of  Griffith minimizers (see e.g. \cite{ChC:2019_arxiv}) with possibly smaller $\varsigma^*\in(0,1)$ and $R^*>0.$

Finally, we prove \eqref{essential_closed_things} only for $S_u^A,$ the other being similar. Let $\Gamma:=\{x\in S_u^A:\, \theta_*(S_u^A,x)>0\}.$ Note that $S_u^c=\overline \Gamma.$

We claim that 
\begin{equation}\label{essential_closed_gamma}
\cH^1(A_{\circ}\cap (\overline\Gamma \setminus \Gamma))=0. 
\end{equation}
Indeed, let $\mu:=\cH^1\res \Gamma.$  Then $\mu(\overline{\Gamma}\setminus \Gamma)=0.$ By the regularity of $\mu,$ for every $\epsilon>0$ there exists an open set $U\subset\R^2$ such that $L:=A_{\circ}\cap (\overline{\Gamma}\setminus \Gamma) \subset U$ and $\mu(U)=\cH^1(U \cap \Gamma)<\epsilon.$ Note that $\overline\Gamma\subset \cl{\{y\in\Omega\cap J_{\tilde u}:\,\theta_*(J_{\tilde u},x)>0\}},$ where $\tilde u$ is given by Theorem \ref{teo:density_estimates_tilde}. Hence, for \eqref{min_seq_density_low_Ju} holds for all points of $\overline\Gamma.$ By the definition of the closure, and Vitali Covering Lemma we can find at most countable pairwise disjoint family $\{\cl{B_{r_i}(x_i)}\}_i$ of closed balls $\cl{B_{r_i}(x_i)}$ with $x_i\in A_{\circ}\cap \Gamma,$ $r_i\le \min\{R^*,\epsilon,\dist(x,\p \overline A)\}$  such that $A_{\circ}\cap (\overline\Gamma\setminus \Gamma) \subset \cup_i \cl{B_{5r_i}(x_i)}.$ Without loss of generality we may assume that $B_{r_i}(x_i)\subset U.$ Since $Q_{r_i/\sqrt{2}}(x_i)\subset  B_{r_i}(x_i)\subset Q_{r_i}(x_i),$ from the definition of Hausdorff premeasure, \eqref{min_seq_density_low_Ju} and disjointness of $\{B_{r_i}(x_i)\}$ as well as the choice of $U$ we obtain
$$
\begin{aligned}
\cH_{10\epsilon}(A_{\circ}\cap (\overline\Gamma \setminus \Gamma))\le  & \sum\limits_{i\ge1} 2\pi (5r_i) \le 
\frac{10\pi\sqrt{2}}{\varsigma^*} \sum\limits_{i\le 1} \cH^1(Q_{r_i/\sqrt{2}}(x_i) \cap \Gamma) \\
=
& \frac{10\pi\sqrt{2}}{\varsigma^*}\, \cH^1\big(\cup_i Q_{r_i/\sqrt{2}}(x_i) \cap \Gamma\big) \le \frac{10\pi\sqrt{2}}{\varsigma^*}\, \cH^1\big(U \cap \Gamma\big) <
\frac{10\pi\sqrt{2}\epsilon}{\varsigma^*}.
\end{aligned}
$$
Now letting $\epsilon\to0$ we get \eqref{essential_closed_gamma}. 
\end{proof}

 In the following proposition we construct a ``regular'' minimizer of $\cF$ starting from a minimizer of $\tilde \cF$ in $\tilde\admissible.$

\begin{proposition}\label{prop:constructed_min_F}
Given $\lambda>0,$ let $(A,u)\in\tilde \admissible$ be any minimizer of $\tilde \cF^\lambda.$
Define 
$$
 A':=\Int{A^{(1)}}\setminus\overline \Gamma, 
$$
where $\Gamma:=\{x\in S_u^A:\,\theta_*(S_u^A,x)>0\},$ and, with a slight abuse of notation,  consider $u$ as defined in $A'\cup \substrate$ \emph{(}and so, also on  the $\mathcal{L}^2$-negligible set $A'\setminus \Int{A}$\emph{)}. %
Then $(A',u)\in\admissible$ is such that $\tilde\cF(A,u) = \cF(A',u)$ and satisfy the following assertions: 
\begin{itemize}
\item[(1)] $A'$ is open, $\theta_*(S_{u}^{A'},x)>0$ for all $x\in S_{u}^{A'},$ $|A\Delta A'|=0$ and $u\chi_{A\cup \substrate} = u\chi_{A'\cup\substrate}$ a.e.\ in $\Omega\cup \substrate.$

\item[(2)] The closure of $A'^{(1)}\cap\p A'$ coincide with $\cl{S_{u}^{A'}}$ and $\cH^1(\cl{S_{u}^{A'}}\setminus S_{u}^{A'})=0.$

\item[(3)] Let $\varsigma*$ and $R^*$ be given by Theorem \ref{teo:density_estimates_tilde}. Then 
\begin{equation}\label{eq:density_ssss}
\frac{\cH^1(Q_r(x)\cap \p A')}{r} \le \frac{16c_2 + 4\lambda_0}{c_1} 
\end{equation}
for every square $Q_r(x)\subset\Omega$ 
and 
\begin{equation}\label{eq:density_tttt}
\frac{\cH^1(Q_r(x)\cap\p A')}{r} \ge \varsigma*  
\end{equation}
for every $Q_r(x)\subset\Omega$ with for any $x\in \p A'$ and  $r\in(0,R^*).$
\end{itemize}

\end{proposition}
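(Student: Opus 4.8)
The plan is to carry over the analysis already performed in the proof of Theorem \ref{teo:density_estimates_tilde}. There it is shown that $A_\circ:=\Int{A^{(1)}}$ satisfies $\p A_\circ=\overline{\p^*A}$ and $|A_\circ\Delta A|=0$, that $\tilde u\big|_{A_\circ}$ (with $\tilde u:=u\chi_{A\cup\substrate}+\xi\chi_{\Omega\setminus A}$, $\xi$ generic) is a local minimizer of the Griffith functional with jump set $S_u^A$ up to a $\cH^1$-negligible set, and that the essential-closedness relations \eqref{essential_closed_things} hold. First I would set $\Gamma:=\{x\in S_u^A:\,\theta_*(S_u^A,x)>0\}$, note that $\cH^1(S_u^A\setminus\Gamma)=0$ since $S_u^A$ is $\cH^1$-rectifiable, put $\bar\Gamma:=\cl\Gamma=S_u^c$, and define
\[
A':=A_\circ\setminus\bar\Gamma,
\]
letting $u'$ be any extension of $u$ from $A'\cup\substrate$ to $\p A'$ (equivalently $u'=\tilde u$ on $A'\cup\substrate$ modulo the constant $\xi$ on $\Omega\setminus A$).

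Next I would verify assertion (1). The set $A'$ is open, being $A_\circ$ minus the closed set $\bar\Gamma$. By \eqref{essential_closed_things}, $\cH^1(A_\circ\cap(\bar\Gamma\setminus S_u^A))=0$, hence $\cH^1(A_\circ\cap\bar\Gamma)<+\infty$ and therefore $|A_\circ\cap\bar\Gamma|=0$; together with $|A_\circ\Delta A|=0$ this gives $|A'\Delta A|=0$ and $u\chi_{A\cup\substrate}=u'\chi_{A'\cup\substrate}$ a.e. Since $A'$ differs from the open set $A_\circ$ by a set of finite $\cH^1$-measure, a local argument gives $\cH^1(A'\cap S_u^A)=0$, so by the Poincar\'e--Korn inequality $u'\in H^1_\loc(\Int{A'}\cup\substrate)$, while on $\Sigma$ the jump set is unchanged, yielding $(A',u')\in\tilde\admissible$. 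For the \emph{pointwise} estimate $\theta_*(S_{u'}^{A'},x)>0$, I would first identify $\Omega\cap [A']^{(1)}\cap\p A'$ with $A_\circ\cap\bar\Gamma$ up to $\cH^1$-negligible sets, using $\p A_\circ=\overline{\p^*A}$ and $|A_\circ\cap\bar\Gamma|=0$; this yields $\cH^1(S_{u'}^{A'}\Delta S_u^A)=0$ and $S_{u'}^{A'}\subset\bar\Gamma=S_u^c$. Then for every $x\in S_{u'}^{A'}$, the lower estimate \eqref{min_seq_density_low_Su} gives $\cH^1(Q_r(x)\cap S_u^A)\ge\varsigma^* r$ for small $r$, and since $\cH^1(Q_r(x)\cap S_u^A)=\cH^1(Q_r(x)\cap S_{u'}^{A'})$ we conclude $\theta_*(S_{u'}^{A'},x)\ge\varsigma^*/2>0$.

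To finish I would show $\p A'$ is $\cH^1$-rectifiable, hence $(A',u')\in\admissible$, and that $\tilde\cF(A,u)=\cF(A',u')$. For rectifiability, $\Omega\cap\p A'\subset(\Omega\cap\overline{\p^*A})\cup(A_\circ\cap\bar\Gamma)$; the first piece lies in $J^c_{\tilde u}$ and the second in $S_u^c$, and by \eqref{essential_closed_things} these are $\cH^1$-equivalent to the rectifiable sets $J_{\tilde u}$ and $S_u^A$, while $\p A'\cap\p\Omega$ lies in the Lipschitz set $\p\Omega$. The energy identity is checked term by term: $\p^*A'=\p^*A$ because $|A'\Delta A|=0$, so the free-boundary, contact and delamination terms match (note $\Sigma\cap J_{u'}=\Sigma\cap J_u$ by construction); the crack term $2\int_{\Omega\cap[A']^{(1)}\cap\p A'}\varphi$ of $\cF$ equals $2\int_{S_u^A}\varphi$ of $\tilde\cS$ since those sets coincide up to $\cH^1$-negligible sets with $\nu_{A'}=\nu_{S_u^A}$; and $A'$ carries no external filaments and no wetting layer, i.e.\ $\cH^1([A']^{(0)}\cap\p A')=0$, because a point of $[A']^{(0)}\cap\p A'$ would lie in $\overline{\p^*A}\setminus\p^*A$, whose $\cH^1$-measure vanishes by \eqref{essential_closed_things}. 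The elastic energies are literally equal since $\str{u'}=\str u$ a.e.\ on $A'\cup\substrate=A\cup\substrate$. Property (2) then follows from $\overline{\Omega\cap[A']^{(1)}\cap\p A'}=\cl\Gamma=\cl{S_{u'}^{A'}}$ together with \eqref{essential_closed_things}, and property (3) from the upper bound \eqref{min_seq_density_up_Ju} (applied to $J_{\tilde u}\supset\p^*A'$, controlling $\p A'$ via $J^c_{\tilde u}$) and the lower bound \eqref{min_seq_density_low_Ju}.

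The main obstacle is the bookkeeping around $\p A'$: one must simultaneously control $\overline{\p^*A}$, $\bar\Gamma$, the cracks $[A']^{(1)}\cap\p A'$ and the jump set $S_{u'}^{A'}$, pass fluidly between pointwise density statements and $\cH^1$-a.e.\ statements, and in particular exclude external filaments and wetting layers for $A'$ so that $\cF(A',u')$ genuinely equals $\tilde\cF(A,u)$ --- every one of these steps hinges on the essential-closedness relations \eqref{essential_closed_things} and the two-sided density estimates of Theorem \ref{teo:density_estimates_tilde}.
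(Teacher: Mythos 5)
Your construction $A'=\Int{A^{(1)}}\setminus\bar\Gamma$ with $\Gamma=\{x\in S_u^A:\theta_*(S_u^A,x)>0\}$, the use of the essential-closedness relations \eqref{essential_closed_things} and the density estimates of Theorem \ref{teo:density_estimates_tilde}, and the term-by-term verification of the energy identity are exactly the route taken in the paper's proof. The only cosmetic differences are that you invoke the Poincar\'e--Korn inequality directly where the paper cites Lemma \ref{lem:gcbd_with_zero_jumps}, and you spell out the exclusion of external filaments and wetting layers more explicitly than the paper's one-line display.
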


\begin{proof}
 Note that by definition $A'$ is open and $|A'\Delta A|=0.$ Moreover, $S_{u}^{A'}\subset \Gamma,$ and by \eqref{min_seq_density_low_Su} 
all points of $\Omega\cap \overline\Gamma$ satisfy uniform lower density estimates, hence, $\theta_*(S_{u}^{A'},x)>0$ for any $x\in S_{u}^{A'}.$

We claim that $A'\in\fA.$ Indeed, let $\tilde u$ be given as in Theorem \ref{teo:density_estimates_tilde}.  By definition 
\begin{equation}\label{boundary_newww}
\Omega\cap J_{\tilde u}^c = \Omega\cap \p A'\quad\text{and}\quad  \p A' \subset J_{\tilde u}^c\cup \Sigma, 
\end{equation}
where $J_{\tilde u}^c$ is the closure of the set $\{x\in J_{\tilde u}:\,\theta_*(J_{\tilde u},x)>0\}.$ Since $J_{\tilde u}$ is $\cH^1$-rectifiable, so is $J_{\tilde u}^c$ in view of \eqref{essential_closed_things}. 
Therefore, $\p A'$ is $\cH^1$-rectifiable, i.e., $A'\in\fA.$ 
Note that by construction $\cH^1(A'\cap J_{\tilde u})=0$ hence, by Proposition \ref{lem:gcbd_with_zero_jumps} $\tilde u\in H_\loc^1(A')$ and, since $u=\tilde u$ a.e.\ in $A'$ it follows that $u\in H_\loc^1(A').$ 

Since $|A\Delta A'|=0$ and $u=u$ a.e.\ in $A',$ it follows that 
$$
\cW(A,u) =\cW(A',u).
$$
Moreover, by the definition of $\Gamma$ and $S_u^A,$
$$
|\cS(A',u) - \tilde \cS(A,u)| = \int_{\Int{A^{(1)}}\cap (\overline \Gamma\setminus S_u^A)} \varphi(x,\nu_\Gamma)d\cH^1 \le c_2\cH^1(\Int{A^{(1)}}\cap (\overline\Gamma\setminus \Gamma)) =0, 
$$
where in the last equality we used \eqref{essential_closed_things}. 
Finally, \eqref{eq:density_ssss} and \eqref{eq:density_tttt} follows from \eqref{boundary_newww} and density estimates of \blue Theorem \ref{teo:density_estimates_tilde}. \black 
\end{proof}

Now we are ready to prove the existence of global minimizers of $\cF.$

\begin{proof}[Proof of Theorem \ref{teo:global_existence}]
First we prove the assertion for $\cG=\cF.$

Let $(A_m,u_m)\in\admissible_m$ be a minimizer of $\cF$ satisfying the volume constraint $|A_m|=\fm$ and  
let $(A_{m_h},u_{m_h}),$ and $A$ and $u$ 
be as in Proposition \ref{prop:lsc_surface}.
By \eqref{zur_tenglik}, \eqref{lsc_surface_energy} and  \eqref{lsc_elastic1} we have
$$
\inf\limits_{(B,v)\in\admissible,\,|B|=\fm} \cF(B,v) = \lim\limits_{h\to+\infty} \cF(A_{m_h},u_{m_h}) \ge \tilde \cF(\Int{A},u). 
$$
Since $ |\Int{A}|=\fm,$ by Propositions \ref{prop:min_extend} and \ref{prop:minimizers_tilde_cf} 
\begin{equation}\label{min_tengliklar}
\inf\limits_{(B,v)\in\admissible,\,|B|=\fm} \cF(B,v)= 
\inf\limits_{(B,v)\in\tilde \admissible,\,|B|=\fm} \tilde\cF^{\lambda_0}(B,v)=\tilde\cF^{\lambda_0}(\Int{A},u)=
\tilde\cF(\Int{A},u),
\end{equation}
hence, $(\Int{A},u)$ is a minimizer of $\tilde \cF^{\lambda_0}$ in $\tilde\admissible.$ Then by Proposition \ref{prop:constructed_min_F} there exists $(A',u)\in\admissible$ such that 
$$
\tilde \cF(\Int{A},u) =\cF(A',u), 
$$
and hence, in view of \eqref{min_tengliklar}, $(A',u)$ is a solution to \eqref{min_prob_globals}. 

The proof of the second assertion (i.e., the existence of $\lambda_1$ for which the set of minimizers in $\admissible$ of both $\cF$ and $\cF^\lambda$ coincide for all $\lambda\ge\lambda_1$) can be done using the first one and also following the arguments of \cite[Theorem 1.1]{EF:2011} and \cite[Proposition A.6]{HP:2019}. Without loss of generality we assume that $\lambda_1\ge\lambda_0,$ where $\lambda_0$ is given by Proposition \ref{prop:min_extend}.

Now we prove Theorem \ref{teo:global_existence} for $\cG=\tilde\cF.$ We have already shown above that the configuration $(\Int{A},u)$ given by Proposition \ref{prop:lsc_surface} solves the minimum problem \eqref{min_prob_globals} with $\cG=\tilde\cF.$  In view of \eqref{equiv_min_problem} every volume-constraint  minimizer of $\tilde \cF$  also minimizer of $\tilde \cF^\lambda$ for all $\lambda\ge\lambda_1.$ To prove the converse assertion, we fix any minimizer $(A,u)\in\tilde\admissible$ of $\tilde \cF^\lambda$ for $\lambda\ge\lambda_1.$ By Proposition \ref{prop:constructed_min_F} there exists $(A',u)\in \admissible$ such that $|A'|=|A|$ and $\cF(A',u)=\tilde\cF(A,u).$ By the first part of the proof and \eqref{equiv_min_problem} we know that 
$$
\inf\limits_{(B,v)\in\admissible}\cF^\lambda(B,v)  = \inf\limits_{(B,v)\in\admissible,|B|=\fm}\cF(B,v) = \inf\limits_{(B,v)\in\tilde \admissible}\tilde \cF^\lambda(B,v) =\cF^\lambda(A',u).
$$
Hence, $(A',u)$ is the minimizer of $\cF^\lambda.$ Since $\lambda\ge\lambda_1$ according to the first part of the proof, $ |A'|=\fm.$ Hence, $|A|=\fm$ and $(A,u)$ minimizer of \eqref{min_prob_globals}. 
\end{proof}

We are ready now to study the properties of the minimizers of $\cF$ in $\admissible$ provided by Theorem \ref{teo:global_existence}.

\begin{proof}[Proof of Theorem \ref{teo:regularity_of_minimizers}]
First we properties (1)-(4) the assertion for $\cG=\tilde \cF.$

Consider any solution $(A,u)\in\tilde\admissible$ of \eqref{min_prob_globals}. 
By Proposition \ref{prop:constructed_min_F} there exists a $(A',u)\in\admissible$ with $A'$ defined as in \eqref{def_a_prime},  such that the properties (1)-(4) hold except the conditions $\cH^1(\p A\Delta\p A')=0$ and $\cH^1(S_u^A\Delta S_{u}^{A'})=0$ of (1). To prove these two equations it is enough to observe that 
$$
0=|\cF(A,u) - \cF(A',u)| =2\int_{A^{(1)}\cap (\p A\Delta \p A')} \varphi(x,\nu_A)d\cH^1
$$
and
$$
0=|\tilde \cF(A,u) - \tilde \cF(A',u)| =2\int_{A^{(1)}\cap  (S_u^A\Delta S_{u}^{A'})} \varphi(x,\nu_A)d\cH^1.
$$

Now we assume that $\cG=\cF$ and let $(A,u)\in\admissible$ be a solution to \eqref{min_prob_globals}. Since $(A,u)\in\tilde\admissible,$ by Proposition \ref{prop:min_extend}
$$
\inf\limits_{(B,v)\in\tilde\admissible,|B|=\fm} \tilde\cF(B,v) = \cF(A,u) \ge \tilde\cF(A,u).
$$
Therefore,  $(A,u)$ is also a volume-constraint minimizer of $\tilde  \cF.$ Thus, applying first part of the proof we establish that $(A',u)\in\admissible$ satisfies (1)-(4). 

Finally, notice that if $E\subset A'$ is a connected component of $A'$ with $\cH^1(\p E\cap \Sigma\setminus J_u)=0,$ then   for $(A',v)$ with $v=u\chi_{(A\cup \substrate)\setminus E} + (u_0 +a)\chi_E,$ where $a$ is any rigid displacement,
we have 
$$
\cS(A',u) \ge \cS(A',v)
$$
and 
\begin{equation}\label{smaller_somssj}
\cW(A',u) \ge \cW(A',v), 
\end{equation}
where in \eqref{smaller_somssj} equality holds if and only of $u=u_0+a$ in $E.$ Therefore, by the minimality of $(A',u)$ it follows that $u=u_0+a$ in $E.$  It remains to prove 
\begin{equation}\label{smallest_element_sss}
|E|\ge 4\pi \,\Big(\frac{c_1}{\lambda_0}\Big)^2. 
\end{equation}
Consider the competitor $(A'\setminus E,u)\in\admissible.$ By minimality and Theorem \ref{teo:global_existence}, $\cF^{\lambda_1}(A',u) \le \cF^{\lambda_1}(A'\setminus E,u),$ so that by \eqref{smaller_somssj} and the additivity of the surface energy, 
$
\cS(E,u) \le \lambda_1 |E|.
$
Then by \eqref{finsler_norm} and the isoperimetric inequality in $\R^2$ 
$$
\lambda_1 |E| \ge c_1 \cH^1(\p E) \ge c_1\sqrt{4\pi} |E|^{1/2}.
$$
Hence, \eqref{smallest_element_sss} follows. 
\end{proof} 

\appendix

\section{}\label{appendix}
 
We include in this section auxiliary results used in the paper for the convenience of the Reader. We begin by a property satisfied by the free-crystal regions in $\mathcal{A}$ and $\widetilde{\mathcal{A}}$.

\begin{proposition} \label{prop:adm_sets_have_finite_per}
\red Let $A\subset\R^2$ be a bounded \ba  $\mathcal{L}^2$-measurable set with $\cH^1(\p A)<+\infty.$  Then $A$ is a set of finite perimeter in $\R^2$.   
\end{proposition}

\begin{proof}
\red Since $A\Delta \Int{\cl{A}}\subset  \cl{A}\setminus \Int{A} = \p A,$ we have 
$
|A\Delta \Int{\cl{A}}| \le |\p A| = 0,
$
and hence, it suffices to prove that the open set $E:=\Int{\cl{A}}$ has finite perimeter in $\R^2.$ Note that by construction, $\p E\subset \p A$ and $\cH^1(\p E)\le \cH^1(\p A)<+\infty.$

We divide the proof of $E\in BV(\R^2,\{0,1\})$ into three steps. 

{\it Step 1.} We claim that if $E$ is simply connected, then $E\in BV(\R^2;\{0,1\})$. Indeed, in this case $\p E$ is  a \ba connected compact set with $\cH^1(\p E) \le \cH^1(\p A)<+\infty$ and by \cite[Lemma 3.12]{Fa:1985} it contains a closed curve $\Gamma$ enclosing $\cl{E}$. Since $\cH^1(\Gamma)<+\infty,$ it is rectifiable in the sense of \cite[Section 3.2]{Fa:1985}: its length $\cH^1(\Gamma)$ is well-approximated by the length of closed polygonal curves $\pi_k$ whose vertices lie on $\Gamma,$ i.e.,  $\cH^1(\pi_k)\to \cH^1( \Gamma )$.  Let $E_k$ be the set enclosed by $\pi_k$ and observe that $\pi_k\overset{K}{\to}  \Gamma $. Since $E_k$ are Lipschitz sets, they are sets of finite perimeter and $P(E_k) =\cH^1(\pi_k) \le \cH^1(\Gamma)+1$ for large $k.$ Since $E$ is open, for every $x\in E$ there exists a ball $B_r(x)\subset E$ and by the Kuratowski convergence of $\pi_k$ to $\Gamma$, it follows that $B_r(x)\subset E_k$ for large $k$, and hence $\chi_{E_k}(x)=\chi_{E}(x)=1$. Similarly,  $\chi_{E_k}(x)=\chi_{E}(x)=0$ for every $x\in\R^2\setminus \cl{E}$ provided $k$ is large enough. Therefore, $\chi_{E_k} \to \chi_E$ a.e.\ in $\R^2$ and hence, $E_k\to E$ in $L^1(\R^2).$ Now  by  the $L^1$-lower semicontinuity of perimeter  (see \cite[Proposition 12.15]{Ma:2012}), $E$ is a set of finite perimeter.
\smallskip

\red 
{\it Step 2.} We claim that if $E$ is connected, then $E\in BV(\R^2;\{0,1\}).$ \ba Indeed, let $E'$ be the smallest simply connected open set containing $E$ (basically, $E'$ is contructed by filling in all ``holes'' in $E$) and let
$$
F:= E'\setminus \cl{E}
$$
be the union of all holes. Since $\p E'\subset \p E$ and $\cH^1(\p E)\le \cH^1(\p A) <+\infty,$ by Step 1 $E'\in BV(\R^2;\{0,1\}).$ Observing $E=E'\setminus \cl{F},$ to conclude this step it is enough to prove that $F$ has finite perimeter. Since every open set in $\R^2$ is a union of at most countably many connected components\footnote{This property easily follows by fact that we can always choose  in each connected component a different point with rational coordinates}, \red we have $F = \cup_j F_j,$ where $\{F_j\}$ are  open, connected and $F_i\cap F_j=\emptyset$ for $i\ne j.$ \ba Since $E$ is connected, each $F_j$ is simply connected, and hence, by Step \red 1 \ba $F_j\in BV(\R^2;\{0,1\}).$ \red Moreover, the set $\p F_i\cap \p F_j,$ $i\ne j,$ can have at most one point. Indeed, otherwise, by the definition of $F$ and the connectedness of $E$ we could find a curve $\gamma\subset \p F_i\cap \p F_j\cap \p E$ with $\cH^1(\gamma)>0,$  which contradicts the equality $E = \Int{\cl{E}}.$  Therefore, observing $\p F=\bigcup\p F_j\subset \p E,$ we obtain 
$$
\sum\limits_{j} P(F_j) \le \sum\limits_{j} \cH^1(\p F_j) = \cH^1\Big(\bigcup_j \p F_j\Big) = \cH^1(\p F)\le \cH^1(\p E) < +\infty.
$$
Thus, $F=\bigcup_jF_j$ has finite perimeter in $\R^2$. 

{\it Step 3.} Now we prove that $E\in BV(\R^2;\{0,1\})$ (without assuming any extra connectedness assumption). Let $\{E_j\}$ be the family of  connected components of $E.$ Since $\cH^1(\p E_j)\le \cH^1(\p E)<+\infty,$ by Step 2 $E_j\in BV(\R^2;\{0,1\}).$ Therefore, since  $\p E = \cup_j\p E_j$ we obtain that 
$$
\sum_{j} P(E_j) \le \sum_{j} \cH^1(\p E_j) \le \cH^1\Big(\bigcup_j \p E_j\Big) + \sum\limits_{i < j} \cH^1(\p E_i\cap \p E_j) \le 2 \cH^1\Big(\bigcup_j \p E_j\Big) = 2\cH^1(\p E),
$$
and hence, by the finiteness of $\cH^1(\p E),$ the set $E=\cup_j E_j$ has finite perimeter in $\R^2.$
\end{proof}
 
The following proposition, which is based on \cite[Proposition 2.16]{Ma:2012},  is used throughout the paper.

\begin{proposition}\label{prop:maggi_foliation}
Let $K\subset\R^2$ be such that $\cH^1(K)<+\infty$ and let $\{E_t\}_{t\in \varUpsilon}$ be a family of sets parametrized by $t\in\varUpsilon$ such that 
\begin{equation}\label{essential_disjointness}
\cH^1(K\cap E_t\cap E_s)=0 
\end{equation}
and  $\cH^1(K\cap E_t)>0.$ Then $\varUpsilon$ is at most countable.
\end{proposition}

\begin{proof}
The proof runs along the lines of the proof of \cite[Proposition 2.16]{Ma:2012}.
For $j\in\N$ let $\varUpsilon_j\subset \varUpsilon$ be the set of all $t\in \varUpsilon$ such that $\cH^1(K\cap E_t)>\frac{1}{j}.$ Then by \eqref{essential_disjointness}  $\varUpsilon_j$ cannot contain more than $j\cH^1(K)$ elements. Since $\varUpsilon=\cup_j \varUpsilon_j,$ the set $\varUpsilon$ is at most countable.
\end{proof} 

We finally state a regularity property of $GSBD$ functions with $\cH^{d-1}$-negligible jump.

\begin{proposition}\label{lem:gcbd_with_zero_jumps}
Let $U\subset\R^d$ be a connected bounded open set and $u\in GSBD^2(U)$ be such that $\cH^{d-1}(J_u)=0.$ Then $u\in H_\loc^1(U).$
\end{proposition}

\begin{proof}
Indeed, for $r>0$ let $Q:=x_0+(-r,r)^d\subset U$ be any cube centered at $x\in U$ and let $0<\theta''<\theta'<1.$ For shortness, write $Q':=x_0+(-\theta' r,\theta'r)^d$ and $Q'':=x_0+(-\theta''r,\theta''r)^d.$ By \cite[Proposition 3.1 (1)]{CCI:2019.jmpa} (see also \cite[Theorem 1.1]{CCF:2016}) there exists a $\mathcal{L}^2$-measurable set $\omega\subset Q'$ and a rigid displacement $a:\bu\R^d\ba\to\bu\R^d\ba$ such that $|\omega|\le c_* r\cH^{d-1}(J_u)=0$ and
$$
\int_{Q'} |u -a|^{\frac{2d}{d-1}}dx = \int_{Q'\setminus\omega} |u -a|^{\frac{2d}{d-1}}dx\le c_*r^2 \Big(\int_Q |\str{u}|^2\Big)^{\red \frac{d}{d-1}},
$$
where $c_*$ depends only on $\red d.$ Hence, $u\in L_\loc^{\frac{2d}{d-1}}(Q).$ Next, fix any mollifier $\rho_1\in C^\infty(B_r(0))$ with $\rho_\epsilon\in C_c^\infty(B_{\red(\theta' - \theta'')\epsilon}),$ where $\rho_\epsilon(x):=\rho_1(x/\epsilon),$ $\epsilon\in(0,r).$
By \cite[Proposition 3.1]{CCI:2019.jmpa} there exists $\overline p>0$ depending on $n$ and $\epsilon$ such that
$$
\int_{Q''} |e(u*\rho_\epsilon) - e(u)*\rho_\epsilon|^2 dx \le c \Big(\frac{\cH^{d-1}(J_u)}{r^{d-1}}\Big)^{\overline p} \int_Q |\str{u}|^2dx =0,
$$
where $c$ depends on $n,$ $\rho_1$ and $\epsilon.$
Hence,
\begin{equation}\label{aeconvergence}
e(u*\rho_\epsilon) = e(u)*\rho_\epsilon \quad \text{a.e. in $Q''.$}
\end{equation}
Recall that $u*\rho_\epsilon \in C^\infty(Q'').$ Since $\str{u}\in L^2(Q),$ $\str{u}*\rho_\epsilon \in C^\infty(Q'')\cap L^2(Q'')$ in particular, $\str{u*\rho_\epsilon} \in C^\infty(Q'')\cap L^2(Q'').$ By Poincar\'e-Korn inequality $u*\rho_\epsilon\in H^1(Q'').$ Since $e(u)*\rho_\epsilon \to e(u)$ in $L^2(Q'')$ as $\epsilon\to0,$ in view of \eqref{aeconvergence} there exists $\epsilon_0>0$ such that
$$
\|\str{u*\rho_\epsilon}\|_{L^2(Q'')} \le \|\str{u}\|_{L^2(Q'')} +1\quad \text{for all $\epsilon\in(0,\epsilon_0).$}
$$
Moreover, by Poincar\'e-Korn inequality for any $\epsilon\in(0,\epsilon_0)$ there exists a rigid displacement $a_\epsilon$ such that
$$
\| u*\rho_\epsilon - a_\epsilon \|_{H^1(Q'')} \le C \|\str{u*\rho _\epsilon}\|_{L^2(Q'')} \le C(\|\str{u}\|_{L^2(Q'')} + 1),
$$
where $C$ is the Poincar\'e-Korn constant for a cube.
Thus, the family $\{u*\rho_\epsilon\}_\epsilon$ is uniformly bounded in $H^1(Q'')$. Since $u*\rho_\epsilon \to u$ in $L^2(Q''),$ there exists a rigid displacement $a$ such that $a_\epsilon \to a$ in $L^2(Q'').$ Then $u*\rho_\epsilon - a_\epsilon $ weakly converges to $u-a$ in $H^1(Q''),$ i.e., $u-a\in H^1(Q'').$ Since $a$ is linear and $\theta''$ is arbitrary, $u\in H_\loc^1(Q).$ Now covering $U$ with finitely many cubes of edgelength $2r$ we get $u\in H_\loc^1(U).$
\end{proof}
\black

\section*{Acknowledgments} 
Sh. Kholmatov acknowledges support from the Austrian Science Fund (FWF) 
projects M~2571 and P 33716. P. Piovano acknowledges the support from the Austrian Science Fund (FWF) projects P 29681 and TAI 293, from the Vienna Science and Technology Fund (WWTF) together with the City of Vienna and Berndorf Privatstiftung through Project MA16-005, and from BMBWF through the OeAD-WTZ project HR 08/2020.  \bu Furthermore, P. Piovano acknowledges the support obtained by the Italian Ministry of University and Research  (MUR) through  the PRIN Project ``Partial differential equations and related geometric-functional inequalities'', is member of the Italian ``Gruppo Nazionale per l'Analisi Matematica, la Probabilit\`a e le loro Applicazioni'' (GNAMPA-INdAM) and has received funding from the GNAMPA 2022 project CUP: E55F22000270001.  Finally, \ba P. Piovano is grateful for the support received as \emph{Visiting Professor and Excellence Chair} at the Okinawa Institute of Science and Technology (OIST), Japan.


\begin{thebibliography}{99}

 \bibitem{ADT:2017} {\sc S. Almi, G. Dal Maso, R. Toader:},
A lower semicontinuity result for a free 
discontinuity functional with a boundary term. 
J. Math. Pures Appl. {\bf 108} (2017), 952--990. \black

\bibitem{AFP:2000} {\sc L. Ambrosio, N. Fusco, D. Pallara:} Functions of Bounded Variation and Free Discontinuity problems. Oxford University Press, New York 2000.

\bibitem{AT:1972}  {\sc R. Asaro, W. Tiller:}  Interface morphology development during stress corrosion cracking: Part I. Via surface diffusion. Metall. Trans. \textbf{3} (1972), 1789--1796.  
 
\bibitem{Babadjian:2016}  {\sc J.-F. Babadjian, D. Henao:}
Reduced models for linearly elastic thin films allowing for fracture, debonding or delamination. Interface Free Bound.  \textbf{18} (2016), 545--578.

\bibitem{BGZ:2015} {\sc P. Bella, M. Goldman, B. Zwicknagl:} 
Study of island formation in epitaxially strained films on unbounded domains. Arch. Rational Mech. Anal. {\bf 218} (2015), 163--217.
 
\bibitem{BCh:2002} {\sc E. Bonnetier, A. Chambolle:} Computing the equilibrium configuration of epitaxially strained crystalline films.  SIAM J. Appl. Math. {\bf 62} (2002), 1093--1121.

\bibitem{BFM:2008} {\sc B. Bourdin, G. Francfort, J.-J. Marigo:} 
The variational approach to fracture. Springer, Amsterdam, 2008.
 
\bibitem{BChS:2007} {\sc A. Braides, A. Chambolle, M. Solci:} A relaxation result for energies defined on pairs set-function and applications. ESAIM: Control Optim. Calc. Var. {\bf 13} (2007), 717--734.


\bibitem{CM:2007} {\sc L. Caffarelli, A. Mellet:} Capillary drops: contact angle hysteresis and sticking drops. Calc. Var. Partial Differential Equations  {\bf 29} (2007), 141--160.

\bibitem{CCF:2016} {\sc A. Chambolle, S. Conti, G. Francfort:}   
Korn-Poincar\'e inequalities for functions with a small jump set. 
Indiana Univ. Math. J. {\bf 65} (2016), 1373--1399. 

\bibitem{CCI:2019.jmpa} {\sc A. Chambolle, S. Conti, F. Iurlano:} Approximation of functions with small jump sets  and existence of strong minimizers of Griffith's energy. J. Math. Pures Appl. {\bf 128} (2019), 119--139.

\bibitem{ChC:2019_arxiv} {\sc A. Chambolle, V. Crismale:}
Existence of strong solutions to the Dirichlet problem for the Griffith energy.  Calc. Var. Partial Differential Equations {\bf 58} (2019), 136. 

 \bibitem{ChC:2019_arma} {\sc A. Chambolle, V. Crismale:}
A density result in $GSBD^p$ with applications
to the approximation of brittle fracture
energies.  Arch. Rational Mech. Anal. {\bf 232} (2019) 1329--1378. \black

 \bibitem{ChC:2019_jems} {\sc A. Chambolle, V. Crismale:}
Compactness and lower-semicontinuity in $GSBD$.  J. Eur. Math. Soc. (JEMS) {\bf 23} (2021), 701--719. \black

\bibitem{ChL:2003} {\sc A. Chambolle, Ch.J. Larsen:}  $C^\infty$-regularity of the free boundary for a two-dimensional optimal compliance problem. Calc. Var. Partial Differential Equations {\bf 18} (2003), 77--94.

\bibitem{CF:2020_arma} {\sc V. Crismale, M. Friedrich:} 
Equilibrium configurations for epitaxially strained films and material voids in three-dimensional linear elasticity.
Arch. Rational Mech. Anal. {\bf 237} (2020), 1041--1098.

\bibitem{CFI:2016} {\sc S. Conti, M. Focardi, F. Iurlano:}
Existence of minimizers for the 2d stationary Griffith fracture model.
C.R. Math. {\bf 354} (2016), 1055--1059. 

\bibitem{CFI:2018ccm} {\sc S. Conti, M. Focardi, F. Iurlano:}
A note on the Hausdorff dimension of the singular set of solutions
to elasticity type systems. Commun. Contemp. Math. {\bf 21} (2019),  1950026.


\bibitem{CFI:2018poincare} {\sc S. Conti, M. Focardi, F. Iurlano:}
Existence of strong minimizers for the Griffith static fracture model
in dimension two. Ann. Inst. H. Poincar\'e Anal. Non Lin\'eare {\bf36} (2019), 455--474.

\bibitem{D:1993} {\sc  G. Dal Maso:} An Introduction to $\Gamma$-Convergence. Birkh\"auser, Boston, 1993.

\bibitem{D:2013} {\sc  G. Dal Maso:} Generalised functions of bounded deformation. J. Eur. Math. Soc. {\bf 15} (2013), 1943--1997.


\bibitem{DMMS:1992} {\sc G. Dal Maso, J. Morel, S. Solimini:} A variational method in image segmentation: Existence and approximation results. Acta Math. {\bf168} (1992), 89--151.

\bibitem{D:2001} {\sc A. Danescu:} The Asaro-Tiller-Grinfeld instability revisited. Int. J. Solids Struct. {\bf 38} (2001), 4671--4684.


\bibitem{DP:2017} {\sc  E. Davoli, P. Piovano:} Analytical validation of the Young-Dupr\'e law for epitaxially strained thin films. Math. Models Methods Appl. Sci. {\bf 29} (2019), 2183--2223.

\bibitem{DP:2018_1} {\sc E. Davoli,  P. Piovano:} Derivation of a heteroepitaxial thin-film model. Interface Free Bound. {\bf 22} (2020), 1--26.

\bibitem{GB-W:2004} {\sc P.-G. De Gennes, F. Brochard-Wyart,  D. Qu\'er\'e:} Capillarity and Wetting Phenomena: Drops, Bubbles, Pearls, Waves.  Springer, New York, 2004. 

 
\bibitem{DBD:1983} {\sc E. De Giorgi, G. Buttazzo, G. Dal Maso:} On the lower semicontinuity of certain integral
functionals.
Atti Acc. Naz. Lin. Cl. Sc. Fis. Mat.  Nat. Rend. 8 {\bf74} (1983), 274--282.

\bibitem{DCL:1989} {\sc E. De Giorgi, M. Carriero, A. Leaci:} Existence theorem for a minimum problem with free discontinuity set.
Arch. Rational Mech. Anal. {\bf108} (1989), 195--218.


\bibitem{D:2008_book} {\sc C. De Lellis:} Rectifiable sets, densities and tangent measures. Z\"urich Lectures in Advanced Mathematics. European Mathematical Society (EMS), Z\"urich, 2008
\black

\bibitem{DPhM:2015} {\sc G. De Philippis, F.  Maggi:} Regularity of free boundaries in anisotropic capillarity problems and the validity of Young's law. Arch. Rational Mech. Anal. {\bf 216} (2015), 473--568. 

\bibitem{Deng:1995} {\sc X. Deng:} Mechanics of debonding and delamination in composites: Asymptotic studies. Compos. Eng. {\bf 5} (1995),  1299--1315.

\bibitem{EF:2011} {\sc L. Esposito,  N. Fusco:} A remark on a free
interface problem with volume constraint. J. Convex Anal. {\bf18} (2011), 417--426.

\bibitem{Fa:1985} {\sc K. Falconer:} The Geometry of Fractal Sets.
Cambridge University Press, Cambridge, 1985.

\bibitem{FFLM:2007} {\sc I. Fonseca, N. Fusco, G. Leoni, 
M. Morini:}  Equilibrium configurations of epitaxially strained crystalline films: existence and regularity results. Arch. Rational Mech. 
Anal.  \textbf{186} (2007), 477--537.


\bibitem{FFLM:2011} {\sc  I. Fonseca, N.  Fusco, G.Leoni, V.
Millot:} Material voids in elastic solids with anisotropic surface energies. J. Math. Pures Appl. \textbf{96} (2011), 591--639.        



\bibitem{FGL:2019} {\sc G. Francfort, A. Giacomini, O. Lopez-Pamies:} Fracture with healing: a first step towards a new view of cavitation. Anal. PDE {\bf 12} (2019), 417--447.

 
\bibitem{FM:1998.jmps}  {\sc G.A. Francfort, J.-J. Marigo:} Revisiting brittle fracture as an energy minimization problem. J. Mech.
Phys. Solids {\bf 46} (1998), 1319--1342. 
\black



\bibitem{FJM:2018} {\sc N. Fusco, V. Julin, M.  Morini:} The surface diffusion flow with elasticity in the plane. Commun. Math. Phys. {\bf 362} (2018), 571--607. 

\bibitem{FM:2012} {\sc N. Fusco, M. Morini:} Equilibrium configurations of epitaxially strained elastic films: second order minimality conditions and qualitative properties of solutions. 
Arch. Rational Mech. Anal. \textbf{203} (2012), 247--327. 

\bibitem{Gi:2002} {\sc A. Giacomini:}  A generalization of Golab's theorem and applications to fracture mechanics. Math. Models Methods Appl. Sci. {\bf 12} (2002), 1245--1267.

\bibitem{Gi:1984} {\sc E. Giusti:}  Minimal Surfaces and Functions of Bounded Variation.  Birkh\"auser, Boston, 1984. 



\bibitem{GZ:2014} {\sc M. Goldman, B. Zwicknagl:} Scaling law and reduced models for epitaxially  strained crystalline films. SIAM J. Math. Anal. {\bf 46} (2014), 1--24.

\bibitem{Gr:1993} {\sc M.A. Grinfeld:}  The stress driven instabilities in crystals: mathematical models and physical manifestations.  J. Nonlinear Sci. \textbf{3} (1993), 35--83. 


\bibitem{HS:1991} {\sc J. Hutchinson, Z. Suo:} Mixed mode cracking in layered materials.  Adv. Appl. Mech. {\bf 29} (1991), 63--191.

\bibitem{HP:2019} {\sc Sh. Kholmatov, P. Piovano:} A unified model for stress-driven rearrangement instabilities.
Arch. Rational Mech. Anal. \textbf{238} (2020), 415--488. 

\bibitem{HP:2022} {\sc Sh. Kholmatov, P. Piovano:} Existence of minimizers for the SDRI model in $\mathbb{R}^d$ with $d>1$. In preparation. 

 \bibitem{KP:2021} {\sc L.C. Kreutz, P. Piovano:} Microscopic validation of a variational model of epitaxially strained crystalline films. SIAM J. Math. Anal. {\bf 53} (2021), 453--490.
\black

\bibitem{Baldelli:2013} {\sc A.A. Le\'on Baldelli, B. Bourdin,  J.-J. Marigo, C. Maurini:}  Fracture and debonding of a thin film on a stiff substrate: analytical and  numerical solutions of a one-dimensional variational model.  Cont. Mech. Thermodyn. \textbf{25} (2013), 243--268.

\bibitem{Baldelli:2014} {\sc A.A. Le\'on Baldelli, J.-F. Babadjian, B. Bourdin, D. Henao, C. Maurini:} A variational model for fracture and debonding of thin  films under in-plane loadings.
J. Mech. Phys. Solids \textbf{70} (2014), 320--348. 


\bibitem{Li-etal} {\sc B. Li, J. Lowengrub, A. R\"atz, A. Voigt:} Geometric Evolution Laws for Thin Crystalline Films: Modeling and Numerics. Commun. Comput. Phys. {\bf 6-3} (2009), 433--482. 

\bibitem{MS:2001} {\sc F. Maddalena, S.  Solimini:} Lower semicontinuity properties of functionals  with free discontinnuities. Arch. Rational Mech. Anal. {\bf159} (2001), 273--294.


\bibitem{Ma:2012} {\sc F. Maggi:}  Sets of Finite Perimeter and Geometric Variational Problems: An Introduction to Geometric Measure Theory.  Cambridge University Press, Cambridge, 2012.

\bibitem{P:2012}  {\sc P. Piovano:} Evolution and Regularity Results for Epitaxially Strained Thin Films and Material Voids. Ph.D. Thesis - Carnegie Mellon University, 2012. 
 

\bibitem{SMV:2004}{\sc M. Siegel, M. Miksis,  P. Voorhees:}  Evolution of material voids for highly anisotropic surface energy. J. Mech. Phys. Solids \textbf{52} (2004), 1319--1353. 



\bibitem{S2}
      {\sc B.J. Spencer:} Asymptotic derivation of the glued-wetting-layer model and the contact-angle condition for Stranski-Krastanow islands.  Phys.  Rev. B {\bf 59} (1999), 2011--2017.	

\bibitem{WL:2003}  {\sc H. Wang,  Z. Li:}  The instability of the diffusion-controlled grain-boundary void in stressed solid. Acta Mech. Sinica  \textbf{19} (2003),  330--339.	

\bibitem{Xia:2000} {\sc Z.C. Xia,  J.W. Hutchinson:} Crack patterns in thin films. J.  Mech. Phys. Solids \textbf{48} (2000), 1107--1131. 
\end{thebibliography}
\end{document}